\DeclareMathAlphabet{\mathpzc}{OT1}{pzc}{m}{it}
\theoremstyle{plain}
\newtheorem{theorem}{Theorem}[section]
\newtheorem{lemma}[theorem]{Lemma}
\newtheorem{proposition}[theorem]{Proposition}
\newtheorem{corollary}[theorem]{Corollary}
\theoremstyle{definition}
\newtheorem{definition}[theorem]{Definition}
\newtheorem{examples}[theorem]{Examples}
\newtheorem{example}[theorem]{Example}
\theoremstyle{remark}
\newtheorem{remark}[theorem]{Remark}
\newenvironment{eqcond}{\begin{enumerate}\renewcommand{\theenumi}{\roman{enumi}}}{\end{enumerate}}
\renewcommand{\theenumi}{\arabic{enumi}}
\newcommand{\Rw}{\Rightarrow}
\newcommand{\hrw}{\hookrightarrow}
\newcommand{\RLw}{\Leftrightarrow}
\newcommand{\fa}{\mathfrak{a}}
\newcommand{\ff}{\mathfrak{f}}
\newcommand{\fj}{\mathfrak{j}}
\newcommand{\fp}{\mathfrak{p}}
\newcommand{\fq}{\mathfrak{q}}
\newcommand{\fv}{\mathfrak{v}}
\newcommand{\fw}{\mathfrak{w}}
\newcommand{\fx}{\mathfrak{x}}
\newcommand{\fy}{\mathfrak{y}}
\newcommand{\fz}{\mathfrak{z}}
\newcommand{\calA}{\mathcal{A}}
\newcommand{\calB}{\mathcal{B}}
\newcommand{\calO}{\mathcal{O}}
\newcommand{\fX}{\mathfrak{X}}
\DeclareMathOperator{\yoneda}{\mathpzc{y}}
\DeclareMathOperator{\coyoneda}{\mathpzc{h}}
\DeclareMathOperator{\yonedaT}{\mathpzc{Y}}
\DeclareMathOperator{\yonmult}{\mathpzc{m}}
\DeclareMathOperator{\coyonmult}{\mathpzc{w}}
\DeclareMathOperator{\colim}{colim}
\DeclareMathOperator{\Sup}{Sup}
\DeclareMathOperator{\Inf}{Inf}
\DeclareMathOperator{\upc}{\uparrow\!}
\DeclareMathOperator{\downc}{\downarrow\!}
\DeclareMathOperator{\Coalg}{\mathsf{Coalg}}
\DeclareMathOperator{\ev}{ev}
\DeclareMathOperator{\Spl}{Spl}
\DeclareMathOperator{\kar}{kar}
\newcommand{\mate}[1]{\,^\ulcorner\! #1^\urcorner}
\newcommand{\umate}[1]{\,_\llcorner\! #1_\lrcorner}
\newcommand{\fspstrP}[2]{\llbracket #1,#2\rrbracket}
\newcommand{\fspstrV}[2]{\langle\!\langle #1,#2\rangle\!\rangle}
\newcommand{\catfont}[1]{\mathsf{#1}}
\newcommand{\SET}{\catfont{Set}}
\newcommand{\REL}{\catfont{Rel}}
\newcommand{\ORD}{\catfont{Ord}}
\newcommand{\MET}{\catfont{Met}}
\newcommand{\UMET}{\catfont{UMet}}
\newcommand{\TOP}{\catfont{Top}}
\newcommand{\STCOMP}{\catfont{StablyComp}}
\newcommand{\COMPHAUS}{\catfont{CompHaus}}
\newcommand{\SPECDIST}{\catfont{SpecDist}}
\newcommand{\SPEC}{\catfont{Spec}}
\newcommand{\STONEDIST}{\catfont{StoneDist}}
\newcommand{\STONE}{\catfont{Stone}}
\newcommand{\AP}{\catfont{App}}
\newcommand{\UAP}{\catfont{UApp}}
\newcommand{\SUP}{\catfont{Sup}}
\newcommand{\INF}{\catfont{Inf}}
\newcommand{\CONTLAT}{\catfont{ContLat}}
\newcommand{\DLAT}{\catfont{DLat}}
\newcommand{\TAL}{\catfont{Tal}}
\newcommand{\CCD}{\catfont{CCD}}
\newcommand{\BOOL}{\catfont{Bool}}
\newcommand{\HEYT}{\catfont{Heyt}}
\newcommand{\two}{\catfont{2}}
\newcommand{\V}{\mathcal{V}}
\newcommand{\Pp}{\catfont{P}_{_{\!\! +}}}
\newcommand{\Pm}{\catfont{P}_{_{\!\! \wedge}}}
\newcommand{\Mat}[1]{#1\text{-}\catfont{Rel}}
\newcommand{\Mod}[1]{#1\text{-}\catfont{Dist}}
\newcommand{\Cat}[1]{#1\text{-}\catfont{Cat}}
\newcommand{\Gph}[1]{#1\text{-}\catfont{Gph}}
\newcommand{\CatSep}[1]{#1\text{-}\catfont{Cat}_\mathrm{sep}}
\newcommand{\Cocts}[1]{#1\text{-}\catfont{CoCts}}
\newcommand{\Cts}[1]{#1\text{-}\catfont{Cts}}
\newcommand{\Repr}[1]{#1\text{-}\catfont{ReprCat}}
\newcommand{\ReprDist}[1]{#1\text{-}\catfont{ReprDist}}
\newcommand{\relto}{{\longrightarrow\hspace*{-2.8ex}{\mapstochar}\hspace*{2.6ex}}}
\newcommand{\modto}{{\longrightarrow\hspace*{-2.8ex}{\circ}\hspace*{1.2ex}}}
\newcommand{\kto}{\relbar\joinrel\rightharpoonup}
\newcommand{\krelto}{\,{\kto\hspace*{-2.5ex}{\mapstochar}\hspace*{2.6ex}}}
\newcommand{\kmodto}{\,{\kto\hspace*{-2.8ex}{\circ}\hspace*{1.3ex}}}
\newcommand{\xrelto}[1]{{\xrightarrow{#1}\hspace*{-2.8ex}{\mapstochar}\hspace*{2.8ex}}}
\newcommand{\blackright}{\mbox{ $-\!\mbox{\footnotesize $\bullet$}$ }}
\newcommand{\blackleft}{\mbox{ $\mbox{\footnotesize $\bullet$}\!-$ }}
\newcommand{\whiteright}{\multimap}
\newcommand{\whiteleft}{\mbox{ $\mbox{\footnotesize $\circ$}\!-$ }}
\newcommand{\monadfont}[1]{\mathbbm{#1}}
\newcommand{\mT}{\monadfont{T}}
\newcommand{\mU}{\monadfont{U}}
\newcommand{\mV}{\monadfont{V}}
\newcommand{\mtV}{\widetilde{\monadfont{V}}}
\newcommand{\mL}{\monadfont{L}}
\newcommand{\mP}{\monadfont{P}}
\newcommand{\monad}{(T,e,m)}
\newcommand{\umonad}{(U,e,m)}
\newcommand{\wmonad}{(L,e,m)}
\newcommand{\vmonad}{(V,\coyoneda,\coyonmult)}
\newcommand{\pmonad}{(P,\yoneda,\yonmult)}
\newcommand{\tvmonad}{(\widetilde{V},\widetilde{\coyonmult},\widetilde{\coyoneda})}
\newcommand{\theoryfont}[1]{\mathscr{#1}}
\newcommand{\Tth}{\theoryfont{T}}
\newcommand{\Uth}{\theoryfont{U}}
\newcommand{\Ith}{\theoryfont{I}}
\newcommand{\toptheory}{(\mT,\V,\xi)}
\newcommand{\Txi}{T_\xi}
\newcommand{\Uxi}{U_\xi}
\newcommand\adjunct[2]{\xymatrix@=8ex{\ar@{}[r]|{\top}\ar@<1mm>@/^2mm/[r]^{{#2}} & \ar@<1mm>@/^2mm/[l]^{{#1}}}}
\newcommand{\eps}{\varepsilon}
\newcommand{\op}{\mathrm{op}}
\newcommand{\sep}{\mathrm{sep}}
\newcommand{\spl}{\mathrm{spl}}
\newcommand{\kleisli}{\circ}
\newcommand{\trunminus}{\ominus}
\newcommand{\Tast}{\circledast}
\newcommand{\homV}{\hom_\xi}
\newcommand{\field}[1]{\mathds{#1}}
\newcommand{\N}{\field{N}}
\title{The enriched Vietoris monad on representable spaces}
\author{Dirk Hofmann}
\thanks{Partial financial assistance by FEDER funds through COMPETE -- Operational Programme Factors of Competitiveness (Programa Operacional Factores de Competitividade) and by Portuguese funds through the Center for Research and Development in Mathematics and Applications (University of Aveiro) and the Portuguese Foundation for Science and Technology (FCT -- Funda\c{c}\~ao para a Ci\^encia e a Tecnologia), within project PEst-C/MAT/UI4106/2011 with COMPETE number FCOMP-01-0124-FEDER-022690, and the project MONDRIAN under the contract PTDC/EIA-CCO/108302/2008 with COMPETE number FCOMP-01-0124-FEDER-010047 is gratefully acknowledge.}
\address{CIDMA -- Center for Research and Development in Mathematics and Applications, Department of Mathematics, University of Aveiro, 3810-193 Aveiro, Portugal}
\email{dirk@ua.pt}
\date{\today}
\subjclass[2010]{18A05, 18B10, 18B30, 18B35, 18C15, 18C20, 18D15, 18D20, 54A05, 54A20, 54B20, 54B30}
\keywords{Vietoris construction, enriched category, monad, quantale, topological theory, weighted limit and colimit, adjunction}
\begin{document}

\begin{abstract}
Employing a formal analogy between ordered sets and topological spaces, over the past years we have investigated a notion of cocompleteness for topological, approach and other kind of spaces. In this new context, the down-set monad becomes the filter monad, cocomplete ordered set translates to continuous lattice, distributivity means disconnectedness, and so on. Curiously, the dual(?) notion of completeness does not behave as the mirror image of the one of cocompleteness; and in this paper we have a closer look at complete spaces. In particular, we construct the ``up-set monad'' on representable spaces (in the sense of L.\ Nachbin for topological spaces, respectively C.\ Hermida for multicategories); we show that this monad is of Kock-Z\"oberlein  type; we introduce and study a notion of weighted limit similar to the classical notion for enriched categories; and we describe the Kleisli category of our ``up-set monad''. We emphasize that these generic categorical notions and results can be indeed connected to more ``classical'' topology: for topological spaces, the ``up-set monad'' becomes the lower Vietoris monad, and the statement ``$X$ is totally cocomplete if and only if $X^\op$ is totally complete'' specialises to O.\ Wyler's characterisation of the algebras of the Vietoris monad on compact Hausdorff spaces.
\end{abstract}

\maketitle

\section*{Introduction}

In this paper we continue the work presented in \citep{Hof11} on ``injective spaces via adjunction'' whose fundamental aspect can be described by the slogan \emph{topological spaces are categories}, and therefore can be studied using notions and techniques from (enriched) Category Theory. The use of the word ``continue'' here is slightly misleading as we do not follow directly the path of \citep{Hof11} but rather develop ``the second aspect'' of this theory. To explain this better, recall that an order relation on a set $X$ defines a monotone map of type
\[
 X^\op\times X\to\two,
\]
and from that one obtains the two Yoneda embeddings
\begin{align*}
X\to\two^{X^\op}=:PX &&\text{and}&& X\to(\two^X)^\op=:VX.
\end{align*}
Furthermore, both $PX$ and $VX$ are part of monads $\mP$ and $\mV$ on $\ORD$ (the category of ordered sets and monotone maps) with Eilenberg--Moore categories $\ORD^{\mP}\simeq\SUP$ (the category of complete lattices and $\sup$-preserving maps) and $\ORD^{\mV}\simeq\INF$ (the category of complete lattices and $\inf$-preserving maps) respectively. One has full embeddings
\begin{align*}
 \ORD_{\mP}\to\ORD^{\mP} &&\text{and}&& \ORD_{\mV}\to\ORD^{\mV}
\end{align*}
from the Kleisli categories into the Eilenberg--Moore categories, and from that one obtains an equivalence (see \citep{RW94,RW04})
\begin{align*}
 \kar(\ORD_{\mP})\simeq \CCD_{\sup} &&\text{and}&&
 \kar(\ORD_{\mV})\simeq \CCD_{\inf}
\end{align*}
between the idempotent split completion of the Kleisli categories on one side and the categories of completely distributive complete lattice and $\sup$-preserving, respectively $\inf$-preserving, maps on the other. These equivalences restrict to
\begin{align*}
 \ORD_{\mP}\simeq \TAL_{\sup} &&\text{and}&&
 \ORD_{\mV}\simeq \TAL_{\inf},
\end{align*}
where ``Tal'' stands for totally algebraic lattice. Finally, having both sides restricted to adjoint morphisms leads to the equivalence
\[
  \ORD^\op\simeq\TAL
\]
between the dual category of $\ORD$ and the category $\TAL$ of totally algebraic lattices and $\sup$- and $\inf$-preserving maps.

In \citep{Hof11,Hof13} and \citep{CH09a} we followed the path on the left described above, but now with geometric objects like topological or approach spaces in lieu of ordered sets (the latter representing ``metric'' topological spaces, see \citep{Low97}). To illustrate this analogy, note that the ultrafilter convergence of a topological space defines a continuous map
\[
 (UX)^\op\times X\to \two
\]
(where $\mathsf{2}$ is the Sierpi\'nski space and $(UX)^\op$ is explained in Section \ref{sect:ReprDual}). Moreover, the space $(UX)^\op$ turns out to be exponentiable, therefore we obtain the Yoneda embedding
\[
 \yoneda_X:X\to\mathsf{2}^{(UX)^\mathrm{op}}=:PX.
\]
The ``story of cocompleteness'' can now be told almost as for ordered sets, and we refer to the above-mentioned papers for detailed information. However, in contrast to the ordered case, the subsequent development of the right side cannot be seen as the dual image of the left side; and it is the aim of this work to explore this path.

The paper is organised as follows.
\begin{itemize}
\item In Section \ref{sect:setting} we describe our general framework, namely that of a topological theory $\Tth=\toptheory$ (see \citep{Hof07}) consisting of a monad $\mT=\monad$ on $\SET$, a quantale $\V$ and a $\mT$-algebra structure $\xi:T\V\to\V$ on $\V$. The associated notion of $\Tth$-category embodies several types of spaces such as topological, metric or approach spaces, and together with $\Tth$-functors and $\Tth$-distributors defines the categories $\Cat{\Tth}$ and $\Mod{\Tth}$ respectively. We recall succinctly the main constructions and results, in particular that core-compactness implies tensor-exponentiability. In this context, for topological spaces we give a variation of Alexander's Subbase Lemma for core-compactness using a simple convergence-theoretic argument (Example \ref{ex:AlexSubbase}).
\item Section \ref{sect:ReprDual} is devoted to the important notion of representable $\Tth$-categories (Definition \ref{def:Repr}) defined as precisely the pseudo-algebras for a natural lifting of the $\SET$-monad $\mT$ to a monad of Kock-Z\"oberlein type on $\Cat{\Tth}$. We also introduce the concept of a dualisable $\Tth$-graph (Definition \ref{def:dualTGph}). Our interest in representable $\Tth$-categories derives from the fact that these are precisely those $\Tth$-categories for which the associated dual $\Tth$-graph is a $\Tth$-category (Definition \ref{def:dualOfTGph} and Proposition \ref{prop:DualVsCore}). For topological T$_0$-spaces, the concept of representability specialises to the classical notion of a stably compact space which is closely related to L.\ Nachbin's ordered compact Hausdorff spaces.
\item In Section \ref{sect:CoCts} we recall the principal facts about weighted colimits and cocomplete $\Tth$-categories obtained in \citep{Hof11} and \citep{CH09a}. We stress that, unlike the classical case of enriched categories, here it is necessary to consider weights with arbitrary codomain, not just the one-element category $G$. Compared to previous work we change notation and use the designation ``totally cocomplete'' for a $\Tth$-category admitting all weigthed colimits, and say that a $\Tth$-category is ``cocomplete'' whenever it has all those weighted colimits where the codomain of the weight is $G$.
\item From Section \ref{sect:Vietoris} on we assume that our monad $\mT=\monad$ satisfies $T1=1$. In this section we show that the exponential $\V^X$ is always a dualisable $\mT$-graph and, in the topological case, its dual $(\two^X)^\op$ turns out to be the lower Vietoris topological space. We point out how this can be used to deduce the classical characterisation of exponentiable topological spaces as precisely the core-compact ones (Example \ref{ex:UpperVietoris}). The main results of this section state that the construction $X\mapsto(\V^X)^\op=:VX$ leads to a monad $\mV=\vmonad$ of Kock-Z\"oberlein type on both $\Cat{\Tth}$ and $\Repr{\Tth}$ (the category of representable $\Tth$-categories and pseudo-homomorphisms), see Theorem \ref{thm:Vietoris}. 
\item In Section \ref{sect:Cts} we analyse the notion of weighted limit in $\Tth$-categories.
\item Section \ref{sect:Isbell} lifts the classical adjunction between ``up-sets'' and ``down-sets'' (see \citep[Section 5]{Woo04}) into the realm of $\Tth$-categories.
\item In Section \ref{sect:TotCts} we introduce totally complete $\Tth$-categories and show that they are precisely the duals of totally cocomplete $\Tth$-categories (Theorem \ref{thm:TotCoComplDual} and Examples \ref{ex:TotCoComplDual}).
\item Finally, in Section \ref{sect:Kleisli} we give a characterisation of the morphisms of the Kleisli category $\Repr{\Tth}_\mV$ of $\mV$. We also observe how the notion of an Esakia space arises naturally in this context via splitting idempotents of the full subcategory of $\Repr{\Tth}_\mV$ defined by all $\mT$-algebras. We find it worthwhile to mention that this implies in particular that the category $\HEYT_{\bot,\vee}$ of Heyting algebras and finite suprema preserving maps is the idempotent split completion of the category $\BOOL_{\bot,\vee}$ of Boolean algebras and finite suprema preserving maps (Example \ref{ex:HeytSplCpl}). 
\end{itemize}

\section{The setting}\label{sect:setting}

In this paper we will work with $\Tth$-categories, $\Tth$-functors and $\Tth$-distributors, for a (strict) topological theory $\Tth$. Below we recall some of the main facts and refer to \citep{Hof07}, \citep{Hof11} and \citep{CH09a} for details.

\begin{definition}
A \emph{topological theory} $\Tth=\toptheory$ consists of:
\begin{enumerate}
\item a monad $\mT=\monad$ on $\SET$ (with multiplication $m$ and unit $e$),
\item a commutative and unital quantale $\V=(\V,\otimes,k)$,
\item a function $\xi:T\V\to\V$,
\end{enumerate}
such that
\begin{enumerate}\renewcommand{\theenumi}{\alph{enumi}}
\item $T$ preserves weak pullbacks and each naturality square of $m$ is a weak pullback,
\item\label{condition} the pair $(\V,\xi)$ is an Eilenberg--Moore algebra for $\mT$ and the monoid structure on $\V$ in $(\SET,\times,1)$ lifts to a monoid structure on $(\V,\xi)$ in $(\SET^\mT,\times,1)$, that is, the following diagrams have to commute:
\[
\xymatrix@C=10ex{
T(\V\times \V)\ar[r]^{T(-\otimes-)}\ar[d]_{\langle\xi\cdot T\pi_1,\xi\cdot T\pi_2\rangle} & T\V\ar[d]^{\xi} & T1\ar[d]^{!}\ar[l]_{T(k)} \\
\V\times\V\ar[r]_{-\otimes-} & \V & 1,\ar[l]^{k}}
\]
\item writing $P_{\V}:\SET\to\ORD$ for the functor that sends a function $f:X\to Y$ to the left adjoint of the ``inverse image'' function $f^{-1}:\V^Y\to\V^X,\;\varphi\mapsto\varphi\cdot f$ (where $\V^X$ is the set of functions from $X$ to $\V$, with pointwise order), the functions $\xi_X:V^X\to V^{TX},\;f\mapsto \xi\cdot Tf$ (for $X$ in $\SET$) are the components of a natural transformation $(\xi_X)_X:P_{\V}\to P_{\V}T$.
\end{enumerate}
\end{definition}
\begin{remark}\label{rem:homxi}
As shown in \citep[Lemma 3.2]{Hof07}, the \emph{internal hom} in $\V$ defined by
\[x\otimes y\leq z\iff x\leq\hom(y,z)\]
automatically satisfies
\[
\xymatrix{T(\V\times\V)\ar[rr]^-{T(\hom)}\ar[d]_{\langle\xi\cdot T\pi_1,\xi\cdot T\pi_2\rangle}\ar@{}[drr]|{\ge} && T\V\ar[d]^\xi\\ \V\times\V\ar[rr]_-{\hom} && \V.}
\]
\end{remark}
\begin{remark}
Our notation differs here from \citep{Hof07} where $\xi$ is only assumed to be a lax Eilenberg--Moore structure on $\V$ and the diagrams in \eqref{condition} are only required to commute laxly. A theory satisfying the stronger conditions above is called \emph{strict topological theory} there. However, in this paper all theories are assumed to be strict, therefore we simply use the term ``topological theory''.
\end{remark}
Throughout this paper we will assume that a topological theory $\Tth=\toptheory$ is given. Moreover, we will \emph{always assume that $\V$ is non-trivial}, that is, $\bot\neq k$. Consequently, since $\V$ is assumed to be a $\mT$-algebra, the monad $\mT$ must be non-trivial. We recall here that there are two trivial monads $\mT=\monad$ on $\SET$: one with $TX=1$ for every set $X$, and one with $TX=1$ for every non-empty set and $T\varnothing=\varnothing$. A monad $\mT=\monad$ different from these two is called non-trivial. For a non-trivial monad, $T$ is faithful and $e$ is point-wise injective (see \citep{Man76}, for instance).

Our leading examples are the following:
\begin{examples}\label{ex:Theories}
\mbox{}
\begin{enumerate}
\item For any quantale $\V$ we can consider the theory whose monad-part is the identity monad on $\SET$ and where $\xi:\V\to\V$ is the identity function. We write this trivial topological theory as $\Ith_{\V}$ or simply as $\Ith$.
\item Let $\V$ be the $2$-element chain $\two$, and consider the ultrafilter monad $\mU=\umonad$ on $\SET$. This together with the ``identity'' function $\xi:U\two\to\two$ is a topological theory which we denote by $\Uth_{\two}$.
\item More general, for a non-trivial monad $\mT=\monad$ on $\SET$ where $T$ preserves weak pullbacks and each naturality square of $m$ is a weak pullback and every completely distributive complete lattice $\V$ (considering $\otimes=\wedge$ and $k=\top$), $\toptheory$ is a topological theory where
\[
\xi:T\V\to\V,\;\;\fx\mapsto\bigvee\{v\in\V\mid\fx\in T(\upc v)\}.
\]
\item\label{ex:UMetTh} In particular, for the ultrafilter monad $\mU=\umonad$ on $\SET$ and the complete lattice $[0,\infty]$ ordered by the ``greater or equal'' relation $\ge$ (so that the infimum of two numbers is there maximum and the supremum of $S\subseteq[0,\infty]$ is given by $\inf S$), we write $\Pm=([0,\infty],\max,0)$ for the corresponding quantale and $\Uth_{\Pm}=(\mU,\Pm,\xi)$ for the corresponding theory where
\[
\xi:U([0,\infty])\to[0,\infty],\;\;\fx\mapsto\inf\{v\in[0,\infty]\mid[0,v]\in\fx\}.
\]
Also note that
\[
 \hom(u,v)=
\begin{cases}
 0 & \text{if $u\ge v$,}\\
 v & \text{otherwise}
\end{cases}
\]
in $\Pm$.
\item\label{ex:MetTh} Let $\V$ be the quantale $\Pp=([0,\infty],+,0)$ of extended non-negative real numbers ordered by the ``greater or equal'' relation (see \citep{Law73}), and consider again the ultrafilter monad $\mU=\umonad$ on $\SET$. Together with the function $\xi:U([0,\infty])\to[0,\infty]$ as above this makes up a topological theory, denoted as $\Uth_{\Pp}$. For later use we record here that the internal hom of the quantale $\Pp$ is given by truncated minus:
\[\hom(u,v)=v\ominus u:=\max\{v-u,0\}.\]
\item\label{WordTh} For any quantale $\V$, the word monad $\mL=\wmonad$ on $\SET$ together with the function
\[\xi:L(\V)\to\V,\;(v_1,\ldots,v_n)\mapsto v_1\otimes\ldots\otimes v_n, \; (\ )\mapsto k\]
determine a topological theory.
\end{enumerate}
\end{examples}

Since some of our principal examples involve the ultrafilter monad, we note here two important results.

\begin{proposition}\label{prop:UltExtExc}
Let $X$ be a set, $\ff$ a filter and $\fj$ an ideal on $X$ with $\ff\cap\fj=\varnothing$. Then there exists an ultrafilter $\fx$ on $X$ with $\ff\subseteq\fx$ and $\fx\cap\fj=\varnothing$.
\end{proposition}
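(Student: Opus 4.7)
The plan is a classical Zorn's lemma argument. First I would consider the collection $\fF$ of all (proper) filters $\fg$ on $X$ with $\ff\subseteq\fg$ and $\fg\cap\fj=\varnothing$, ordered by inclusion. This collection is non-empty because $\ff$ itself lies in $\fF$ (since $\varnothing\in\fj$, the hypothesis $\ff\cap\fj=\varnothing$ already forces $\ff$ to be proper), and it is inductively ordered: for any chain $(\fg_i)_i$ in $\fF$, the directed union $\bigcup_i\fg_i$ is again a filter, extends $\ff$, and remains disjoint from $\fj$ since any offending set would have to appear already in some $\fg_i$. Zorn's lemma then produces a maximal element $\fx$ of $\fF$.

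The second step is to show that this maximal $\fx$ is an ultrafilter. Fix an arbitrary $A\subseteq X$ and assume, towards a contradiction, that neither $A$ nor $X\setminus A$ belongs to $\fx$. Form the filters $\fx_A$ and $\fx_{X\setminus A}$ generated by $\fx\cup\{A\}$ and $\fx\cup\{X\setminus A\}$, respectively. A quick sanity check shows both are proper: if $\varnothing\in\fx_A$, then $F\cap A=\varnothing$ for some $F\in\fx$, so $F\subseteq X\setminus A$, forcing $X\setminus A\in\fx$ by upward closure, contrary to assumption; the symmetric argument handles $\fx_{X\setminus A}$. Since these proper filters strictly enlarge $\fx$, maximality of $\fx$ in $\fF$ forces each of them to meet $\fj$.

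The heart of the proof is then the short combinatorial step that turns these two meetings into a single contradiction with $\fx\in\fF$. Unfolding the definition of the generated filter and invoking downward closure of $\fj$, one obtains $F,F'\in\fx$ together with membership $F\cap A\in\fj$ and $F'\cap(X\setminus A)\in\fj$. Closure of $\fj$ under finite joins places $(F\cap A)\cup(F'\cap(X\setminus A))$ in $\fj$, and this set contains $F\cap F'$, which lies in $\fx$ by closure under finite meets. Downward closure of $\fj$ then yields $F\cap F'\in\fx\cap\fj$, contradicting $\fx\in\fF$. The only mildly delicate point is weaving filter closure and ideal closure together in this last display; once that is in place the result follows. (Note that this argument genuinely uses the Axiom of Choice via Zorn's lemma, as expected for an ultrafilter existence statement.)
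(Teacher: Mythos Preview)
Your argument is correct: the Zorn-style maximisation among filters disjoint from $\fj$, together with the combinatorial step $(F\cap A)\cup(F'\cap(X\setminus A))\supseteq F\cap F'$, is exactly what is needed, and all the closure properties you invoke (downward closure and finite joins for $\fj$, upward closure and finite meets for $\fx$) are used correctly.

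As for comparison with the paper: there is nothing to compare against, since the paper does not prove this proposition at all but simply refers the reader to \cite[Theorem~6]{Sto38}. Your write-up is the standard self-contained proof one would reconstruct from that reference, so it is entirely in line with what the author has in mind.
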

\begin{proof}
See \citep[Theorem 6]{Sto38}, for instance.
\end{proof}

\begin{theorem}[\citep{Man69}]
The Eilenberg--Moore category $\SET^\mU$ of the ultrafilter monad on $\SET$ is equivalent to the category $\COMPHAUS$ of compact Hausdorff spaces and continuous maps.
\end{theorem}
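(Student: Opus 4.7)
The plan is to construct explicit functors $F \colon \SET^\mU \to \COMPHAUS$ and $G \colon \COMPHAUS \to \SET^\mU$ and verify that they are mutually inverse equivalences. The guiding idea is that a $\mU$-algebra structure $\alpha \colon UX \to X$ should encode precisely the ultrafilter-convergence of a compact Hausdorff topology: the map $\alpha$ assigns to each ultrafilter its unique limit point, so compactness and Hausdorffness correspond respectively to the totality and single-valuedness of $\alpha$.

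First I would define $F$ on a $\mU$-algebra $(X,\alpha)$ by equipping $X$ with the topology whose closed sets are those $A \subseteq X$ satisfying $\alpha(\fx) \in A$ for every $\fx \in UX$ with $A \in \fx$. The unit axiom $\alpha \cdot e_X = \mathrm{id}_X$ forces $A \subseteq \{\alpha(\fx) : A \in \fx\}$, so that the closed sets actually contain the ``candidate'' points; standard ultrafilter manipulations (using the extension principle of Proposition \ref{prop:UltExtExc}) show that this family is stable under arbitrary intersections and finite unions. Compactness is immediate since $\alpha$ is total, and Hausdorffness follows from $\alpha$ being a function. A $\mU$-algebra homomorphism $f \colon (X,\alpha) \to (Y,\beta)$ satisfies $f \cdot \alpha = \beta \cdot Uf$, which is precisely the statement that $f$ preserves ultrafilter-limits, hence is continuous between the resulting spaces.

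Conversely, $G$ sends a compact Hausdorff space $X$ to the $\mU$-algebra $(X,\alpha)$ where $\alpha(\fx)$ is the unique limit of $\fx$, existing by compactness and unique by Hausdorffness. The unit law $\alpha \cdot e_X = \mathrm{id}_X$ holds since the principal ultrafilter at $x$ converges to $x$. The main obstacle is the associativity law $\alpha \cdot m_X = \alpha \cdot U\alpha$. Given $\fX \in UUX$, I would set $x := \alpha(U\alpha(\fX))$ and argue directly that $m_X(\fX)$ converges to $x$: for any open neighbourhood $V$ of $x$, the condition $\alpha^{-1}(V) \in \fX$ holds because $V$ belongs to the ultrafilter $U\alpha(\fX) = \{B : \alpha^{-1}(B) \in \fX\}$ which converges to $x$, and the inclusion $\alpha^{-1}(V) \subseteq \{\fx \in UX : V \in \fx\}$ is valid since convergence to a point in the open set $V$ forces $V$ to belong to the ultrafilter. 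Together these give $\{\fx : V \in \fx\} \in \fX$, i.e.\ $V \in m_X(\fX)$, and uniqueness of limits yields $\alpha(m_X(\fX)) = x = \alpha(U\alpha(\fX))$.

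Finally, I would verify that the two round-trips are naturally isomorphic to identities. For a $\mU$-algebra $(X,\alpha)$, the composite $GF$ recovers $\alpha$ because the topology on $FX$ is built so that $\alpha(\fx)$ is the unique limit of each ultrafilter $\fx$. For a compact Hausdorff space $X$, the composite $FG$ returns the original topology: a set $A$ is closed iff it contains all limits of ultrafilters containing it, which is the well-known ultrafilter characterisation of closed sets in compact Hausdorff spaces. Functoriality and naturality of the comparisons are then essentially formal. The technical heart of the argument is the associativity verification, but it reduces, as sketched above, to the elementary observation that ultrafilter-convergence on an open set forces membership.
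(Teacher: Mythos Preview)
The paper does not supply its own proof of this theorem: it is stated with a citation to \citep{Man69} and used as background. There is therefore no ``paper's proof'' to compare against; your sketch is essentially the classical argument due to Manes, and the overall strategy is correct.

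One point in your outline deserves tightening. In the direction $F\colon\SET^\mU\to\COMPHAUS$ you assert that ``Hausdorffness follows from $\alpha$ being a function'' and later that ``the topology on $FX$ is built so that $\alpha(\fx)$ is the unique limit of each ultrafilter $\fx$''. Both claims require knowing that the ultrafilter convergence of the topology you defined coincides with the map $\alpha$ itself, and this is precisely where the associativity axiom $\alpha\cdot m_X=\alpha\cdot U\alpha$ enters on the $F$ side (not only on the $G$ side, where you do invoke it). Concretely, with the closure operator $\overline{A}=\{\alpha(\fx)\mid A\in\fx\}$, idempotency $\overline{\overline{A}}=\overline{A}$ is exactly associativity, and without it you cannot exclude that some $\fy$ converges topologically to a point different from $\alpha(\fy)$. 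Once this identification is established, single-valuedness of $\alpha$ does give Hausdorffness and the round-trip $GF$ is the identity as you claim. Your verification of associativity for $G$ is fine.
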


Every topological theory allows for a number of constructions and definitions which were succinctly recalled in \citep{CH09a}. Below we give a slightly revised version of \citep[Section 1]{CH09a}.

\subsection*{I}
\newcounter{subsect:one}
\setcounter{subsect:one}{1}
The quantaloid $\Mat{\V}$ (see \citep{BCSW83}) has sets as objects, and a morphism $r:X\relto Y$ from $X$ to $Y$ is a \emph{$\V$-relation} $r:X\times Y\to\V$ (also called $\V$-matrix). The composition of $\V$-relations $r:X\relto Y$ and $s:Y\relto Z$ is defined as matrix multiplication
\[s\cdot r(x,z)=\bigvee_{y\in Y}r(x,y)\otimes s(y,z),\]
and the identity arrow $1_X:X\relto X$ is the $\V$-relation which sends all diagonal elements $(x,x)$ to $k$ and all other elements to the bottom element $\bot$ of $\V$. The set $\Mat{\V}(X,Y)$ of all $\V$-relations from $X$ to $Y$ becomes a complete ordered set by putting
\[
r\le r'\;\text{whenever}\; \forall x\in X\;\forall y\in Y\;.\;r(x,y)\le r'(x,y),
\]
for $\V$-relations $r,r':X\relto Y$; composition from either side preserves this order.

The category $\Mat{\V}$ has an involution $(r:X\relto Y)\mapsto (r^\circ:Y\relto X)$ where $r^\circ(y,x)=r(x,y)$, satisfying
\begin{align*}
1_X^\circ=1_X, && (s\cdot r)^\circ=r^\circ\cdot s^\circ, && {r^\circ}^\circ=r,
\end{align*}
as well as $r^\circ\le s^\circ$ whenever $r\le s$. Furthermore, there is a faithful functor
\[\SET\to\Mat{\V},\;(f:X\to Y)\mapsto (f:X\relto Y)\]
sending a map $f:X\to Y$ to its graph $f:X\relto Y$ defined by
\[
f(x,y)=
\begin{cases}
k & \text{if $f(x)=y$,}\\
\bot & \text{else.}
\end{cases}
\]
In the sequel we will not distinguish between the function $f$ and the $\V$-relation $f$ and simply write $f:X\to Y$. We also note that $f\dashv f^\circ$ in the quantaloid $\Mat{\V}$.

Let $t:X\relto Z$ be a $\V$-relation. The composition functions
\begin{align*}
-\cdot t:\Mat{\V}(Z,Y)\to\Mat{\V}(X,Y) &&\text{and}&&
t\cdot-:\Mat{\V}(Y,X)\to\Mat{\V}(Y,Z).
\end{align*}
preserve suprema and therefore have respective right adjoints
\begin{align*}
(-)\blackleft t:\Mat{\V}(X,Y)\to\Mat{\V}(Z,Y) &&\text{and}&&
t\blackright(-):\Mat{\V}(Y,Z)\to\Mat{\V}(Y,X).
\end{align*}
Here, for $\V$-relations $r:X\relto Y$ and $s:Y\relto Z$, 
\begin{align*}
 (r\blackleft t)(z,y)=\bigwedge_{x\in X}\hom(t(x,z),r(x,y))
 &&&
 (t\blackright s)(y,x)=\bigwedge_{z\in Z}\hom(t(x,z),s(y,z)).
\end{align*}
We call $r\blackleft t$ the \emph{extension of $r$ along $t$}, and $t\blackright s$ the \emph{lifting of $r$ along $t$}. We note here that, for $\V$-distributors $\varphi:A\modto X$, $\beta:Y\modto X$ and $\alpha:Z\modto Y$ where $\alpha$ is left adjoint, one easily establishes 
\begin{equation}\label{eq:ext_adj}
\varphi\blackright(\beta\cdot\alpha)=(\varphi\blackright\beta)\cdot\alpha;
\end{equation}
which actually holds in any quantaloid (see \cite[Lemma 1.8]{Hof11}, for instance).

\subsection*{II} The $\SET$-functor $T$ extends to a 2-functor $\Txi:\Mat{\V}\to\Mat{\V}$. To each $\V$-relation $r:X\times Y\to\V$, $\Txi$ assigns the $\V$-relation $\Txi r:TX\times TY \to\V$ such that, for every map $s:TX\times TY\to\V$,
 \[\xi\cdot Tr\le s\cdot \langle T\pi_1,T\pi_2\rangle \iff \Txi r\leq s:\]
\[
\xymatrix{TX\times TY\ar@{..>}[ddrr]^{\Txi r}&&\ar@{}[ddll]|-(0.75){\le}\\
&&\\
T(X\times Y)\ar[uu]^{\langle T\pi_1,T\pi_2\rangle}\ar[rr]_{\xi\cdot Tr} &&\V}
\]
In other words, regarding $TX$, $TY$ and $TX\times TY$ as discrete ordered sets, $\Txi r$ is the left Kan extension in $\ORD$ of $\xi\cdot Tr$ along $\langle T\pi_1,T\pi_2\rangle$. Hence, for $\fx\in TX$ and $\fy\in TY$,
\[
\Txi r(\fx,\fy)=\bigvee\left\{\xi\cdot Tr(\fw)\;\Bigl\lvert\;\fw\in T(X\times Y), T\pi_1(\fw)=\fx,T\pi_2(\fw)=\fy\right\}.
\]
The 2-functor $\Txi$ preserves the involution in the sense that $\Txi(r^\circ)=\Txi(r)^\circ$ (and we write $\Txi r^\circ$) for each $\V$-relation $r:X\relto Y$, $m$ becomes a natural transformation $m:\Txi\Txi\to\Txi$ and $e$ an op-lax natural transformation $e:1\to\Txi$, that is, $e_Y\cdot r\leq \Txi r\cdot e_X$ for all $r:X\relto Y$ in $\Mat{\V}$.

For $\Tth=\Uth_\two$, the extension above coincides with the one given in \citep{Bar70}; and for $\Tth=\Uth_{\Pp}$ and $\Tth=\Uth_{\Pm}$ one obtains
\[
 \Uxi r(\fx,\fy)=\sup_{A\in\fx,B\in\fy}\inf_{x\in A,y\in B} r(x,y)
\]
for all $r:X\relto Y$, $\fx\in UX$ and $\fy\in UY$ (see also \citep{CT03}).

Different methods for extending $\SET$-functors to $\REL$ can be found in \citep{Sea05,SS08,Sea09}.

\subsection*{III} $\V$-relations of the form $\alpha:TX\relto Y$, called \emph{$\Tth$-relations} and denoted by $\alpha:X\krelto Y$, will play an important role here. Given two $\Tth$-relations $\alpha:X\krelto Y$ and $\beta:Y\krelto Z$, their \emph{Kleisli convolution} $\beta\kleisli\alpha:X\krelto Z$ is defined as
\[\beta\kleisli\alpha=\beta\cdot \Txi\alpha\cdot m_X^\circ.\]
This operation is associative and has the $\Tth$-relation $e_X^\circ:X\krelto X$ as a lax identity:
\begin{align*}
 a\kleisli e_X^\circ&=a &\text{and}&& e_Y^\circ\kleisli a&\ge a,
\end{align*}
for any $a:X\krelto Y$.

\subsection*{IV} Those $\Tth$-relations satisfying the usual unit and composition axioms of a category define $\Tth$-categories: a \emph{$\Tth$-category} is a pair $(X,a)$ consisting of a set $X$ and a $\Tth$-relation $a:X\krelto X$ on $X$ such that
\begin{align*}
e_X^\circ&\le a &&\text{and}& a\kleisli a&\le a.
\end{align*}
Expressed elementwise, these conditions become
\begin{align*}
k&\le a(e_X(x),x) &&\text{and}& \Txi a(\fX,\fx)\otimes a(\fx,x)\le a(m_X(\fX),x)
\end{align*}
for all $\fX\in TTX$, $\fx\in TX$ and $x\in X$. We refer to the first condition as \emph{reflexivity} and to the second one as \emph{transitivity}. A function $f:X\to Y$ between $\Tth$-categories $(X,a)$ and $(Y,b)$ is a \emph{$\Tth$-functor} if $f\cdot a\le b\cdot Tf$, which in pointwise notation reads as
\[
 a(\fx,x)\le b(Tf(\fx),f(x))
\]
for all $\fx\in TX$, $x\in X$. The category of $\Tth$-categories and $\Tth$-functors is denoted by
\[\Cat{\Tth}.\]
If $\Tth=\Ith_\V$ is an identity theory, for a quantale $\V$, then a $\Tth$-category is just a $\V$-category and $\Tth$-functor means $\V$-functor (in the sense of \citep{EK66}). Therefore we write $\V$-category instead of $\Ith_\V$-category, $\V$-functor instead of $\Ith_\V$-functor, and
\[\Cat{\V}\]
instead of $\Cat{\Ith_\V}$. We also recall that $\Cat{\two}\simeq\ORD$, $\Cat{\Pp}\simeq\MET$ (the category of generalised metric spaces and non-expansive maps, see \citep{Law73}) and $\Cat{\Pm}\simeq\UMET$ (the category of generalised ultrametric spaces and non-expansive maps). Our principal examples are the ultrafilter theories $\Uth_\two$ and $\Uth_{\Pp}$: the main result of \citep{Bar70} states that $\Cat{\Uth_\two}$ is isomorphic to the category $\TOP$ of topological spaces and continuous maps, and in \citep{CH03} it is shown that $\Cat{\Uth_{\Pp}}$ is isomorphic to the category $\AP$ of approach spaces and non-expansive maps \citep{Low89} (regarding notation and results about approach spaces we refer to \citep{Low97}). The category $\Cat{\Uth_{\Pm}}$ can be identified with the full subcategory $\UAP$ of $\AP$ defined by all those approach spaces $(X,a)$ which satisfy
\[
 \max(\Uxi a(\fX,\fx),a(\fx,x))\ge a(m_X(\fX),x),
\]
for all $\fX\in UUX$, $\fx\in UX$ and $x\in X$. \emph{In the sequel we always refer to these presentations when talking about $\ORD$, $\MET$, $\UMET$, $\TOP$, $\AP$ or $\UAP$.}

\subsection*{V} The forgetful functor $\Cat{\Tth}\to\SET,\;(X,a)\mapsto X$ \emph{is topological}, hence it has a left and a right adjoint. In particular, the free $\Tth$-category on a set $X$ is given by $(X,e_X^\circ)$. The $\Tth$-category $G=(1,e_1^\circ)$ is a generator in $\Cat{\Tth}$. Furthermore, there is a canonical forgetful functor $\Cat{\Tth}\to\Cat{\V}$, commuting with the forgetful functors to $\SET$, which sends a $\Tth$-category $(X,a)$ to the $\V$-category $(X,a_0)$ where $a_0=a\cdot e_X$; and $\Cat{\Tth}\to\Cat{\V}$ has a concrete left adjoint which sends a $\V$-category $(X,c)$ to $(X,e_X^\circ\cdot\Txi c)$.

\subsection*{VI} A $\Tth$-relation $\varphi:X\krelto Y$ between $\Tth$-categories $X=(X,a)$ and $Y=(Y,b)$ is a \emph{$\Tth$-distributor}, denoted as $\varphi:X\kmodto Y$, if $\varphi\kleisli a\le\varphi$ and $b\kleisli \varphi\le \varphi$. Note that we always have $\varphi\kleisli a\ge\varphi$ and $b\kleisli \varphi\ge \varphi$, so that the $\Tth$-distributor conditions above are in fact equalities. $\Tth$-categories and $\Tth$-distributors form a 2-category, denoted by
\[
\Mod{\Tth},
\]
with Kleisli convolution as composition and with the 2-categorical structure inherited from $\Mat{\V}$. The identity in $\Mod{\Tth}$ on a $\Tth$-category $X=(X,a)$ is given by $a:X\kmodto X$. As before, we write
\[
 \Mod{\V}
\]
whenever $\Tth=\Ith_\V$ is an identity theory, and use $\varphi:X\modto Y$ instead of $\varphi:X\kmodto Y$ in this case.

\subsection*{VII}
\newcounter{subsect:seven}
\setcounter{subsect:seven}{7}
Each $\Tth$-functor $f:(X,a)\to(Y,b)$ induces an adjunction \[f_\Tast\dashv f^\Tast\] in $\Mod{\Tth}$, with $f_\Tast:X\kmodto Y$ and $f^\Tast:Y\kmodto X$ defined as $f_\Tast=b\cdot Tf$ and $f^\Tast=f^\circ\cdot b$ respectively. In fact, these assignments define functors
\begin{align*}
 (-)_\Tast:\Cat{\Tth}\to\Mod{\Tth} &&\text{and}&& (-)^\Tast:\Cat{\Tth}^\op\to\Mod{\Tth},
\end{align*}
where $X_\Tast=X=X^\Tast$. More generally, the definition of $f_\Tast$ and of $f^\Tast$ makes sense for any map $f:X\to Y$ between $\Tth$-categories, not just for $\Tth$-functors; but then $f_\Tast:X\krelto Y$ and $f^\Tast:Y\krelto X$ are in general only $\Tth$-relations. However:
\begin{equation}\label{eq:DistVsFun}
 \text{$f$ is a $\Tth$-functor}\iff\text{$f_\Tast$ is a $\Tth$-distributor}\iff\text{$f^\Tast$ is a $\Tth$-distributor.}
\end{equation}
A $\Tth$-functor $f:(X,a)\to(Y,b)$ is called \emph{fully faithful} if $f^\Tast\kleisli f_\Tast=1_X^\Tast$. Note that $f$ is fully faithful if and only if, for all $\fx\in TX$ and $x\in X$, $a(\fx,x)=b(Tf(\fx),f(x))$.

For a $\V$-functor $f:X\to Y$ we will, however, use the traditional notation $f_*:X\modto Y$ and $f^*:Y\modto X$. This distinction is convenient since at some occasions we will consider simultaneously the $\Tth$-distributor $f_\Tast:(X,a)\kmodto(Y,b)$ (induced by the $\Tth$-functor $f:(X,a)\to(Y,b)$) and the $\V$-distributor $f_*:(X,a_0)\modto(Y,b_0)$ (induced by the underlying $\V$-functor $f:(X,a_0)\to(Y,b_0)$).

The category \emph{$\Cat{\Tth}$ becomes a 2-category} by transporting the order-structure on hom-sets from $\Mod{\Tth}$ to $\Cat{\Tth}$ via the functor $(-)^\Tast:\Cat{\Tth}^\op\to\Mod{\Tth}$: for $\Tth$-functors $f,g:(X,a)\to(Y,b)$ we define (see \citep[Lemma 4.7]{HT10})
\begin{align*}
f\le g \text{ in $\Cat{\Tth}$ } &:\iff f^\Tast\le g^\Tast \text{ in $\Mod{\Tth}$} &\iff g_\Tast\le f_\Tast\text{ in $\Mod{\Tth}$}\\
 &\iff \forall x\in X\,.\,k\le b_0(f(x),g(x))\\
 &\iff f^*\le g^* \text{ in $\Mod{\V}$} &\iff g_*\le f_*\text{ in $\Mod{\V}$}.
\end{align*}
We call $f,g:X\to Y$ \emph{equivalent}, and write $f\simeq g$, if $f\le g$ and $g\le f$. Hence, $f\simeq g$ if and only if $f^\Tast=g^\Tast$ if and only if $f_\Tast=g_\Tast$. A $\Tth$-category $X$ is called \emph{separated} (see \citep{HT10} for details) whenever $f\simeq g$ implies $f=g$, for all $\Tth$-functors $f,g:Y\to X$ with codomain $X$. One easily verifies that it is enough to consider the case $Y=G$, so that $X$ is separated if and only if the ordered set $\Cat{\Tth}(G,X)$ is anti-symmetric. The full subcategory of $\Cat{\Tth}$ consisting of all separated $\Tth$-categories is denoted by
\[\CatSep{\Tth}.\]
\emph{The 2-categorical structure on $\Cat{\Tth}$ allows us to consider adjoint $\Tth$-functors}: a $\Tth$-functor $f:X\to Y$ is \emph{left adjoint} if there exists a $\Tth$-functor $g:Y\to X$ such that $1_X\le g\cdot f$ and $1_Y\ge f\cdot g$. Considering the corresponding $\Tth$-distributors, $f$ is left adjoint to $g$ in $\Cat{\Tth}$ if and only if $g_\Tast\dashv f_\Tast$ in $\Mod{\Tth}$, that is, if and only if $f_\Tast=g^\Tast$.

\subsection*{VIII} For a $\Tth$-distributor $\alpha:X\kmodto Y$, the composition function $-\kleisli\alpha$ has a right adjoint
\[
(-)\kleisli\alpha \dashv (-)\whiteleft\alpha
\]
where, for a given $\Tth$-distributor $\gamma:X\kmodto Z$, the \emph{extension} $\gamma\whiteleft\alpha:Y\kmodto Z$ is constructed in $\Mat{\V}$ as the extension $\gamma\whiteleft \alpha=\gamma\blackleft (\Txi\alpha\cdot m_X^\circ)$.
\[
\xymatrix{TX\ar[r]|-{\object@{|}}^\gamma\ar[d]|-{\object@{|}}_{m_X^\circ} & Z.\\
TTX\ar[d]|-{\object@{|}}_{\Txi\alpha}\\ TY\ar@{.>}[ruu]|-{\object@{|}}}
\]
Unfortunately, \emph{in general liftings need not exist} in $\Mod{\Tth}$ (see \citep[Example 1.7]{HS11}). 
\subsection*{IX}
The tensor product on $\V$ can be transported to $\Cat{\Tth}$ by putting \[(X,a)\otimes(Y,b)=(X\times Y,c),\] with
\[
c(\fw,(x,y))=a(T\pi_1(\fw),x)\otimes b(T\pi_2(\fw),y),
\]
where $\fw\in T(X\times Y)$, $x\in X$, $y\in Y$. The $\Tth$-category $E=(1,k)$ is a $\otimes$-neutral object, where $1$ is a singleton set and $k:T1\times 1\to\V$ the constant relation with value $k\in\V$. In general, this construction does not result in a closed structure on $\Cat{\Tth}$; however, it does so when defined in the larger category $\Gph{\Tth}$ of $\Tth$-graphs and $\Tth$-graph morphisms. Here a $\Tth$-graph (see \citep{CHT03}) is a pair $(X,a)$ consisting of a set $X$ and a $\V$-relation $a:TX\times X\to\V$ which is only required to satisfy
\[
 k\le a(e_X(x),x),
\]
for all $x\in X$; $\Tth$-graph morphisms are defined as $\Tth$-functors. There is an obvious full embedding
\[
 \Cat{\Tth}\hrw\Gph{\Tth}.
\]
For $\Tth$-graphs $X=(X,a)$ and $Y=(Y,b)$, $X\otimes Y$ is defined as above, but now $X\otimes-:\Gph{\Tth}\to\Gph{\Tth}$ has a right adjoint $(-)^X:\Gph{\Tth}\to\Gph{\Tth}$ (see \citep{Hof07}) where the structure $d$ on
\[
Y^X=\{f:X\to Y\mid \text{$f$ is a $\Tth$-functor of type $G\otimes X\to Y$}\}
\]
is given by
\[
d(\fp,h)=\bigwedge_{\substack{\fq\in T(Y^X\times X),x\in X\\ \fq\mapsto \fp}}\hspace{-4ex}
\hom(a(T\pi_2(\fq),x),b(T\!\ev(\fq),h(x))).
\]
Here $\ev$ denotes the evaluation map $\ev:Y^X\times X\to Y, (h,x)\mapsto h(x)$. The following result can be found in \citep{Hof07}.

\begin{proposition}\label{prop:TensExp}
Let $X=(X,a)$ be a $\Tth$-category with $a\cdot\Txi a=a\cdot m_X$. Then, for each $\Tth$-category $Y$, the structure $d$ on $Y^X$ is transitive. Hence, $X\otimes-:\Cat{\Tth}\to\Cat{\Tth}$ has a right adjoint $(-)^X:\Cat{\Tth}\to\Cat{\Tth}$. Moreover, for $h,h'\in Y^X$,
\[d(e_{Y^X}(h'),h)=\bigwedge_{x\in X}b(e_Y(h'(x)),h(x))).\]
\end{proposition}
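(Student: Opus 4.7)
The proof breaks into three parts: (a) verifying transitivity of $d$, (b) observing that the $\Gph{\Tth}$-adjunction $X\otimes(-)\dashv (-)^X$ restricts to $\Cat{\Tth}$, and (c) computing $d$ on the image of the unit $e_{Y^X}$. Write $W:=Y^X$ and $b$ for the structure on $Y$.

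For (a), I unfold the condition $d\kleisli d\le d$ to the elementwise statement that, for all $\fP\in TTW$, $\fp\in TW$, $h\in W$, $\fQ\in T(W\times X)$ with $T\pi_1(\fQ)=m_W(\fP)$, and $x\in X$,
\[
\Txi d(\fP,\fp)\otimes d(\fp,h)\otimes a(T\pi_2(\fQ),x)\le b(T\ev(\fQ),h(x)).
\]
The pivotal move is a weak-pullback lift: because the naturality square of $m$ at $\pi_1:W\times X\to W$ is a weak pullback (by the assumptions on $\mT$), the equation $m_W(\fP)=T\pi_1(\fQ)$ yields some $\fQ'\in TT(W\times X)$ with $m_{W\times X}(\fQ')=\fQ$ and $T(T\pi_1)(\fQ')=\fP$. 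Setting $\fX':=T(T\pi_2)(\fQ')\in TTX$, naturality of $m$ gives $m_X(\fX')=T\pi_2(\fQ)$ and $T\ev(\fQ)=m_Y(TT\ev(\fQ'))$.

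Now the hypothesis $a\cdot\Txi a=a\cdot m_X$ applies to $\fX'$ and $x$ to rewrite
\[
a(T\pi_2(\fQ),x)=a(m_X(\fX'),x)=\bigvee_{\fx\in TX}\Txi a(\fX',\fx)\otimes a(\fx,x),
\]
reducing the task to bounding $\Txi d(\fP,\fp)\otimes d(\fp,h)\otimes\Txi a(\fX',\fx)\otimes a(\fx,x)$ for each fixed $\fx\in TX$. I then invoke the defining property of $d$ — namely that $\ev:W\otimes X\to Y$ is a $\Tth$-graph morphism, i.e.\ $d(T\pi_1(\fq),w)\otimes a(T\pi_2(\fq),x)\le b(T\ev(\fq),w(x))$ — at two levels: at the ground level, applied to any $\fq\in T(W\times X)$ with $T\pi_1(\fq)=\fp$ and $T\pi_2(\fq)=\fx$, it bounds $d(\fp,h)\otimes a(\fx,x)$; applied at the upper level via the 2-functoriality of $\Txi$, it bounds $\Txi d(\fP,\fp)\otimes\Txi a(\fX',\fx)$. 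Composing the two bounds and invoking transitivity of $b$ on $Y$ together with $T\ev(\fQ)=m_Y(TT\ev(\fQ'))$ yields the desired inequality. The main obstacle here is the careful bookkeeping of the two $T$-levels and the lifting; the hypothesis $a\cdot\Txi a=a\cdot m_X$ is precisely what collapses one $T$ and makes the argument close.

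Part (b) is then automatic: since $Y^X$ is now a $\Tth$-category, the existing adjunction $X\otimes(-)\dashv (-)^X$ in $\Gph{\Tth}$ restricts to $\Cat{\Tth}$, because for $Z\in\Cat{\Tth}$ both $X\otimes Z$ and $Y^X$ are already objects of $\Cat{\Tth}$ and the $\Tth$-functor condition is imposed by the same inequality on either side of the adjunction. For (c), the weak-pullback property of the naturality square of $e$ at $\pi_1:W\times X\to W$, combined with the pointwise injectivity of $e$ (valid because $\mT$ is non-trivial), forces every $\fq\in T(W\times X)$ with $T\pi_1(\fq)=e_W(h')$ to be of the form $\fq=e_{W\times X}(h',x)$ for some $x\in X$. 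Substituting this into the definition of $d$ and using $T\pi_2(\fq)=e_X(x)$ and $T\ev(\fq)=e_Y(h'(x))$, the infimum collapses to an infimum over $x\in X$; the factor $\hom(a(e_X(x),x),-)$ reduces under reflexivity and the interaction of $\hom$ with $k$, yielding the claimed formula.
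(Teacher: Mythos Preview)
The paper does not actually prove this proposition; it merely refers to \citep{Hof07}. So there is no in-paper argument to compare against, and your task was effectively to reconstruct a proof. Your overall strategy for part~(a) is the right one, but there are two genuine problems.

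\textbf{Part (c) contains a concrete error.} You invoke ``the weak-pullback property of the naturality square of $e$ at $\pi_1$'', but the standing hypotheses on the theory only require the naturality squares of $m$ to be weak pullbacks, not those of $e$. And indeed your conclusion --- that every $\fq\in T(W\times X)$ with $T\pi_1(\fq)=e_W(h')$ is of the form $e_{W\times X}(h',x)$ --- is false already for the ultrafilter monad: take any free ultrafilter on the fibre $\{h'\}\times X$. The correct move is to use the fact that $T$ preserves weak pullbacks, applied to the \emph{set-theoretic} pullback
\[
\xymatrix{X\ar[r]^-{\langle h',1_X\rangle}\ar[d]_{!} & W\times X\ar[d]^{\pi_1}\\ 1\ar[r]_-{h'} & W.}
\]
Since $e_W(h')=Th'(e_1(\star))$ by naturality of $e$, the weak pullback gives some $\fx\in TX$ with $\fq=T\langle h',1_X\rangle(\fx)$, hence $T\pi_2(\fq)=\fx$ and $T\ev(\fq)=Th'(\fx)$. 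Now the formula follows: the inequality $d(e_W(h'),h)\le\bigwedge_x b_0(h'(x),h(x))$ comes from the principal $\fq$'s as you indicate, while for the reverse one uses that $h'$ is a $\Tth$-functor, so $a(\fx,x)\le b(Th'(\fx),h'(x))$, and then transitivity of $Y$ gives $b_0(h'(x),h(x))\otimes a(\fx,x)\le b(Th'(\fx),h(x))$.

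\textbf{Part (a) has a gap at the ``upper level'' step.} Applying $\Txi$ (as a $2$-functor) to the counit inequality yields $\Txi c(\fQ',\fq)\le\Txi b(\fY',T\ev(\fq))$, where $c$ is the tensor structure on $W\otimes X$. But the comparison between $\Txi c$ and $\Txi d\otimes\Txi a$ goes the wrong way: one has $\Txi c(\fQ',\fq)\le\Txi d(\fP,T\pi_1(\fq))\otimes\Txi a(\fX',T\pi_2(\fq))$, not $\ge$. Your sketch implicitly needs, for given witnesses of $\Txi d(\fP,\fp)$ and $\Txi a(\fX',\fx)$, to manufacture a single $\fq\in T(W\times X)$ with $T\pi_1(\fq)=\fp$, $T\pi_2(\fq)=\fx$, together with a witness realising $\Txi c(\fQ',\fq)$ above the product of the two given values. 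This requires further weak-pullback lifts (aligning the two witnesses with $\fQ'$), which you have not indicated. The argument can be completed along these lines, but ``$2$-functoriality of $\Txi$'' alone does not do it.
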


\begin{definition}
A $\Tth$-category $X=(X,a)$ is called \emph{core-compact} whenever $a\cdot\Txi a=a\cdot m_X$.
\end{definition}

\begin{example}\label{ex:AlexSubbase}
The designation ``core-compact'' is motivated by the case of topological spaces. Classically, a topological space $X$ with topology $\calO$ is called core-compact whenever $x\in U\in\calO$ implies that there exists some $V\in\calO$ with $x\in V$ and $V$ is \emph{relatively compact} in $U$; the latter meaning that very open cover of $U$ contains a finite sub-cover of $V$, or, equivalently, every ultrafilter on $V$ has a convergence point in $U$. It is shown in \citep{Pis99} that $X$ is core-compact if and only if its convergence structure $a:UX\relto X$ satisfies $a\cdot \Uxi a=a\cdot m_X$. We find it worthwhile to note that the proof of the implication ``core-compact $\Rw$ $a\cdot \Uxi a=a\cdot m_X$'' can be adapted to subbases, under a certain condition. More in detail, for a set $X$ equipped with a subset $\calB$ of the powerset of $X$ (no axioms required), if $(X,\calB)$ is core-compact (defined as for topological spaces), then the induced convergence $a:UX\relto X$ (defined as for topological spaces) satisfies $a\cdot\Uxi a=a\cdot m_X$ provided that every ultrafilter has a smallest convergence point with respect to the convergence $a$ and the order relation $a\cdot e_X$. Since the topology induced by $\calB$ has the same convergence as $\calB$, one obtains a variation of Alexander's Sub-Base Lemma: A topological space where every ultrafilter has a smallest convergence point is core-compact if it is core-compact with respect to a sub-basis. We will apply this principle in Example \ref{ex:UpperVietoris}. We also note that this argument works for any property of topological spaces which can be equivalently expressed in terms of opens and in terms of ultrafilter convergence, without using the axioms of a topology; in particular in the classical case of compactness.
\end{example}

\section{Representable $\Tth$-categories and dualisation}\label{sect:ReprDual}


In \citep{CH09} we introduced a notion of dual $\Tth$-category as a crucial step towards the Yoneda lemma and related results. The basic idea is to associate to a $\Tth$-category $X$ a $\V$-category $MX$ which still contains all information about the $\Tth$-categorical structure of $X$, and then use the usual dualisation of $\V$-categories. Later, in \citep{Hof13,GH12}, we noted already that this construction is closely related to Nachbin's ordered compact Hausdorff spaces \citep{Nac50} as presented in \citep{Tho09}. In this section we continue this path and introduce a class of $\Tth$-categories (designated as representable $\Tth$-categories) which naturally admit a dual. 

Recall from \citep{Tho09} that the $\SET$-monad $\mT=\monad$ admits a natural extension to a monad on $\Cat{\V}$, in the sequel also denoted as $\mT=\monad$. Here the functor $T:\Cat{\V}\to\Cat{\V}$ sends a $\V$-category $(X,a_0)$ to $(TX,\Txi a_0)$, and with this definition $e_X:X\to TX$ and $m_X:TTX\to TX$ become $\V$-functors for each $\V$-category $X$. We also note that $T:\Cat{\V}\to\Cat{\V}$ is actually a 2-functor: if $f^*\le g^*$, then $(Tf)^*=\Txi(f^*)\le \Txi(g^*)=(Tg)^*$. 

Eilenberg--Moore algebras for this monad can be described as triples $(X,a_0,\alpha)$ where $(X,a_0)$ is a $\V$-category and $(X,\alpha)$ is an algebra for the $\SET$-monad $\mT$ such that $\alpha:T(X,a_0)\to(X,a_0)$ is a $\V$-functor. For $\mT$-algebras $(X,a_0,\alpha)$ and $(Y,b_0,\beta)$, a map $f:X\to Y$ is a homomorphism $f:(X,a_0,\alpha)\to(Y,b_0,\beta)$ precisely if $f$ preserves both structures, that is, whenever $f:(X,a_0)\to(Y,b_0)$ is a $\V$-functor and $f:(X,\alpha)\to(Y,\beta)$ is a $\mT$-homomorphism. Since the extension $\Txi$ of $T$ commutes with the involution $(-)^\circ$, with $(X,a_0,\alpha)$ also $(X,a_0^\circ,\alpha)$ is a $\mT$-algebra.

\begin{example}\label{ex:OrdMetComp}
It follows from Remark \ref{rem:homxi} that, for every topological theory $\Tth=\toptheory$, the internal $\hom$ in $\V$ combined with the $\mT$-algebra structure $\xi$ induces the Eilenberg--Moore algebra  $\V=(\V,\hom,\xi)$.

For $\Tth=\Uth_\two$, an algebra for the ultrafilter monad $\mU$ on $\ORD$ is an \emph{ordered compact Hausdorff space} as introduced in \citep{Nac50} (except that we do not assume anti-symmetry here). We recall that these ordered compact Hausdorff spaces are traditionally defined as triples $(X,\le,\calO)$ where $(X,\le)$ is an ordered set and $\calO$ is a compact Hausdorff topology on $X$ so that $\{(x,y)\mid x\le y\}$ is closed in $X\times X$, but the latter requirement means precisely that the convergence $\alpha:UX\to X$ of $\calO$ is monotone (see \citep{Tho09}). A trivial but important example of an ordered compact Hausdorff space is the two-element chain $\two=\{0,1\}$ with the discrete topology. We also note that, for an order relation $\le$ on $X$,
\[
 \fx\,(\Uxi\!\le)\,\fx'\iff \forall A\in\fx'\,.\,\downc A\in\fx,
\]
for all $\fx,\fx'\in UX$.

For $\Tth=\Uth_{\Pp}$ it seems natural to call an algebra for the ultrafilter monad $\mU$ on $\MET$ a \emph{metric compact Hausdorff space}. The set $[0,\infty]$ equipped with the metric $\hom(u,v)=v\trunminus u$ becomes a metric compact Hausdorff space $\Pp$ with the Euclidean compact Hausdorff topology whose convergence is given by $\xi:U[0,\infty]\to[0,\infty]$ (see Example \ref{ex:Theories} \eqref{ex:MetTh}). Similarly, for $\Tth=\Uth_{\Pm}$, we call an algebra for the ultrafilter monad $\mU$ on $\UMET$ an \emph{ultrametric compact Hausdorff space}, and $[0,\infty]$ becomes an ultrametric compact Hausdorff space where the metric is given by the internal hom of $\Pm$ (see Example \ref{ex:Theories} \eqref{ex:UMetTh}) and the topology is again the Euclidean compact Hausdorff topology.
\end{example}

There is a canonical functor
\[
K:(\Cat{\V})^\mT\to\Cat{\Tth}.
\]
which associates to each $X=(X,a_0,\alpha)$ in $(\Cat{\V})^\mT$ the $\Tth$-category $KX=(X,a)$ where $a=a_0\cdot\alpha$. Note that $(a_0\cdot\alpha)_0=a_0$, hence our notation remains consistent. The category $(\Cat{\V})^\mT$ is actually a 2-category with the order relation on hom-sets inherited from $\Cat{\V}$, and one easily verifies that $K$ is a 2-functor. Applying $K$ to $\V=(\V,\hom,\xi)$ produces the $\Tth$-category $\V=(\V,\hom_\xi)$ where
\[\hom_\xi:T\V\times\V\to\V,\;(\fv,v)\mapsto\hom(\xi(\fv),v).\]
We note that $\V=(\V,\hom_\xi)$ is separated since $\Cat{\Tth}(G,\V)\simeq\V$ in $\ORD$.

The functor $M:\Cat{\Tth}\to\Cat{\V}$ mentioned at the beginning of this section lifts to a functor $M:\Cat{\Tth}\to(\Cat{\V})^\mT$ sending $(X,a)$ to $(TX,\Txi a\cdot m_X^\circ,m_X)$. To see this, we have to show that
\[
 m_X:(TTX,\Txi(\Txi a\cdot m_X^\circ))\to(TX,\Txi a\cdot m_X^\circ)
\]
is a $\V$-functor. In fact, from $m_X\cdot m_{TX}=m_X\cdot Tm_X$ one obtains $m_{TX}\cdot Tm_X^\circ\le m_X^\circ\cdot m_X$ and then
\[
 m_X\cdot \Txi\Txi a\cdot Tm_X^\circ\le \Txi a\cdot m_{TX}\cdot Tm_X^\circ
\le \Txi a\cdot m_X^\circ\cdot m_X.
\]
Furthermore, one easily verifies that $M$ is a 2-functor. We denote from now on the ``original functor $M$'' going from $\Cat{\Tth}$ to $\Cat{\V}$ as $M_0:\Cat{\Tth}\to\Cat{\V}$.

\begin{examples}
For a topological space $X=(X,a)$, the order relation $\hat{a}=\Uxi a\cdot m_X^\circ$ is given by
\[
 \fx\,\hat{a}\,\fy\hspace{2em}\text{whenever }\overline{A}\in\fy\text{ for every }A\in\fx.
\]
For an approach space $X=(X,a)$, the metric $\hat{a}=\Uxi a\cdot m_X^\circ$ is given by
\[
\hat{a}(\fx,\fy)=\inf\{\eps\mid \forall A\in\fx\,.\,\overline{A}^{(\eps)}\in\fy\}.
\]
\end{examples}

\begin{theorem}
$M:\Cat{\Tth}\to(\Cat{\V})^\mT$ is left adjoint to $K:(\Cat{\V})^\mT\to\Cat{\Tth}$.
\end{theorem}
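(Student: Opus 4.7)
The plan is to exhibit a natural bijection
$$
\Cat{\Tth}((X,a), K(Y, b_0, \beta)) \;\cong\; (\Cat{\V})^\mT(M(X,a), (Y, b_0, \beta))
$$
via the assignments $g \mapsto g^\sharp := \beta \cdot Tg$ and $f \mapsto f^\flat := f \cdot e_X$. The unit and counit of the resulting adjunction will then be $\eta_X = e_X : (X, a) \to KM(X, a)$ and $\eps_{(Y, b_0, \beta)} = \beta$, respectively, and the triangle identities are immediate from the bijection.

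First I would verify that $f^\flat$ is a $\Tth$-functor whenever $f : M(X, a) \to (Y, b_0, \beta)$ is a morphism in $(\Cat{\V})^\mT$. The $\mT$-homomorphism axiom $f \cdot m_X = \beta \cdot Tf$ together with $m_X \cdot Te_X = 1_{TX}$ gives $\beta \cdot Tf^\flat = f$, so the $\Tth$-functor inequality reduces to $f \cdot e_X \cdot a \le b_0 \cdot f$. This follows by chaining the op-lax naturality $e_X \cdot a \le \Txi a \cdot e_{TX}$, the inequality $e_{TX} \le m_X^\circ$ (a consequence of $m_X \cdot e_{TX} = 1_{TX}$), and the $\V$-functor property $f \cdot \Txi a \cdot m_X^\circ \le b_0 \cdot f$.

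Next, and this is the main step, I would check that $g^\sharp = \beta \cdot Tg$ is a morphism in $(\Cat{\V})^\mT$ whenever $g : (X, a) \to K(Y, b_0, \beta)$ is a $\Tth$-functor. The $\mT$-homomorphism law is straightforward: $g^\sharp \cdot m_X = \beta \cdot Tg \cdot m_X = \beta \cdot m_Y \cdot TTg = \beta \cdot T\beta \cdot TTg = \beta \cdot Tg^\sharp$, using naturality of $m$ and $\beta \cdot T\beta = \beta \cdot m_Y$. For the $\V$-functor property, the key chain is
\begin{align*}
g^\sharp \cdot \Txi a \cdot m_X^\circ
 &= \beta \cdot \Txi(g \cdot a) \cdot m_X^\circ
 \;\le\; \beta \cdot \Txi(b_0 \cdot \beta) \cdot TTg \cdot m_X^\circ \\
 &= \beta \cdot \Txi b_0 \cdot T\beta \cdot m_Y^\circ \cdot Tg
 \;\le\; b_0 \cdot \beta \cdot T\beta \cdot m_Y^\circ \cdot Tg \\
 &= b_0 \cdot \beta \cdot m_Y \cdot m_Y^\circ \cdot Tg
 \;\le\; b_0 \cdot g^\sharp,
\end{align*}
whose justification combines: that $\Txi$ preserves composition with graphs of functions; the Beck--Chevalley equality $TTg \cdot m_X^\circ = m_Y^\circ \cdot Tg$, afforded by the assumption that naturality squares of $m$ are weak pullbacks; the $\V$-functor property $\beta \cdot \Txi b_0 \le b_0 \cdot \beta$ of the algebra $\beta$; and the algebra axiom $\beta \cdot T\beta = \beta \cdot m_Y$ combined with $m_Y \cdot m_Y^\circ \le 1_{TY}$.

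Finally I would verify mutual inverseness: $(f^\flat)^\sharp = f$ was already observed, while $(g^\sharp)^\flat = \beta \cdot Tg \cdot e_X = \beta \cdot e_Y \cdot g = g$ uses naturality of $e$ together with $\beta \cdot e_Y = 1_Y$. Naturality of the bijection in both variables is routine from the definitions. The main obstacle is the $\V$-functor verification above, where one must orchestrate simultaneously the hypotheses on $\mT$ (weak pullback preservation and BC for $m$) and on the $\mT$-algebra $\beta$ regarded as a $\V$-functor $(TY, \Txi b_0) \to (Y, b_0)$.
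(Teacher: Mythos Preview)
Your proof is correct and follows essentially the same approach as the paper: both establish the adjunction with unit $e_X$ and counit $\beta$ (respectively $\alpha$), and the core calculation is the $\V$-functoriality of the counit-side map. The only difference is presentational---the paper verifies unit and counit directly and checks the triangle identities, whereas you verify the general hom-set bijection; as a consequence your argument invokes the Beck--Chevalley equality $TTg\cdot m_X^\circ = m_Y^\circ\cdot Tg$ (from the weak-pullback hypothesis on the naturality squares of $m$), which the paper's proof avoids since that step becomes trivial when $g$ is an identity.
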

\begin{proof}
For every $\Tth$-category $X=(X,a)$, $e_X:X\to KM(X)$ is a $\Tth$-functor since
\[
 \Txi a\cdot m_X^\circ\cdot m_X\cdot Te_X=\Txi a\cdot m_X^\circ\ge\Txi a\cdot e_{TX}\ge e_X\cdot a,
\]
and we obtain a natural transformation $e:1\to KM$. Let now $X=(X,a_0,\alpha)$ be in $\Cat{\V}^\mT$. Then $\alpha$ is a $\mT$-algebra homomorphism $\alpha:(TX,m_X)\to (X,\alpha)$, and also a $\V$-functor $\alpha:(TX,\Txi(a_0\cdot\alpha)\cdot m_X^\circ)\to (X,a_0)$ since
\[
\alpha\cdot\Txi(a_0\cdot\alpha)\cdot m_X^\circ
=\alpha\cdot\Txi(a_0)\cdot T\alpha\cdot m_X^\circ
\le a_0\cdot\alpha\cdot T\alpha\cdot m_X^\circ
=a_0\cdot\alpha.
\]
Clearly, for every $f:X\to Y$ in $\Cat{\V}^\mT$ where $X=(X,a_0,\alpha)$ and $Y=(Y,a_0,\beta)$, the diagram
\[
\xymatrix{MK(X)\ar[r]^-{\alpha}\ar[d]_{Tf=MK(f)} & X\ar[d]^f\\ MK(Y)\ar[r]_-{\beta} & Y}
\]
commutes, hence the family $(\alpha)_{(X,a_0,\alpha)}$ is a natural transformation $MK\to 1$. Finally, for $(X,a)$ in $\Cat{\Tth}$ and $(Y,b_0,\beta)\in\Cat{\V}^\mT$,
\begin{align*}
 m_X\cdot Me_X=1_{MX} &&\text{and}&& \beta\cdot e_Y=1_Y;
\end{align*}
and the assertion follows.
\end{proof}

Via the adjunction $M\dashv K$ one obtains a lifting of the $\SET$-monad $\mT=\monad$ to a monad on $\Cat{\Tth}$, also denoted as $\mT=\monad$. Explicitly, $T:\Cat{\Tth}\to\Cat{\Tth}$ sends a $\Tth$-category $X=(X,a)$ to $(TX,\Txi a\cdot m_X^\circ\cdot m_X)$. Moreover, $e_X:X\to TX$ is fully faithful since $e_X^\circ\cdot \Txi a\cdot m_X^\circ\cdot m_X\cdot Te_X=a$.

\begin{proposition}
The monad $\mT=\monad$ on $\Cat{\Tth}$ is of Kock-Z\"oberlein type.
\end{proposition}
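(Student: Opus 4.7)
The plan is to establish the Kock--Z\"oberlein property in the form $m_X\dashv e_{TX}$ in the 2-category $\Cat{\Tth}$. The monad unit law supplies the identity $m_X\cdot e_{TX}=1_{TX}$ on the nose, so the only non-trivial thing is the 2-cell $1_{TTX}\le e_{TX}\cdot m_X$. Unpacking the order on $\Cat{\Tth}$ recalled in Section~\ref{sect:setting}, this amounts to the pointwise inequality
\[
 k\le c_0(\fY,\,e_{TX}(m_X(\fY)))\qquad(\fY\in TTX),
\]
where $c$ denotes the $\Tth$-structure on $TTX$ described just before the proposition and $c_0=c\cdot e_{TTX}$ is its underlying $\V$-structure.

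My strategy is to peel off one layer of $T$ at a time. Writing $b=\Txi a\cdot m_X^\circ\cdot m_X$ for the structure on $TX$, a direct computation (using $m_{TX}\cdot e_{TTX}=1_{TTX}$) gives
\[
 c_0(\fY,\fY')=\sup_{\fW\in TTTX,\,m_{TX}(\fW)=\fY}\Txi b(\fW,\fY').
\]
Choosing the witness $\fW:=e_{TTX}(\fY)$ and invoking the op-lax naturality $e_{TX}\cdot b\le\Txi b\cdot e_{TTX}$ from Section~\ref{sect:setting}, the target inequality drops one level to $k\le b(\fY,m_X(\fY))$. Expanding $b$ by the same bookkeeping and choosing $\mathfrak{U}:=Te_X(m_X(\fY))$---which satisfies $m_X(\mathfrak{U})=m_X(\fY)$ by the monad unit law $m_X\cdot Te_X=1_{TX}$---the task reduces once more to checking $k\le\Txi a(Te_X(\fz),\fz)$, where $\fz:=m_X(\fY)$.

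For this final bound I would apply the 2-functor $\Txi$ to the reflexivity axiom $e_X^\circ\le a$ of $(X,a)$: preservation of the involution together with $\Txi e_X=Te_X$ yields $(Te_X)^\circ\le\Txi a$, and evaluating this $\V$-relation at the pair $(Te_X(\fz),\fz)$ delivers exactly $k\le\Txi a(Te_X(\fz),\fz)$ as required.

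I expect the main source of friction to be purely bookkeeping: three layers of $T$ produce many similar-looking composites, and the chain of inequalities only closes once the ``principal'' witnesses $\fW=e_{TTX}(\fY)$ and $\mathfrak{U}=Te_X(m_X(\fY))$ are chosen to collapse the suprema. Conceptually, the argument is a direct instance of the general pattern that a monad lifted through the 2-adjunction $M\dashv K$ is KZ precisely because the free $T$-structure on $TX$ always makes the principal embedding $e_{TX}:TX\to TTX$ right adjoint to $m_X$.
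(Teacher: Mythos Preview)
Your proof is correct and follows essentially the same strategy as the paper's: both establish $m_X\dashv e_{TX}$ by verifying the non-trivial inequality $1_{TTX}\le e_{TX}\cdot m_X$. The paper carries this out in point-free relational calculus in $\Mat{\V}$, reducing the claim to $m_X^\circ\cdot r\cdot m_X\ge m_X^\circ\cdot m_X\ge 1_{TTX}$ via reflexivity $r\ge 1_{TX}$ of the $\V$-structure $r=\Txi a\cdot m_X^\circ$ on $TX$; you unfold the same inequalities element-wise by choosing explicit witnesses for the suprema---your choice $\fW=e_{TTX}(\fY)$ corresponds to the paper's implicit use of $m_{TX}^\circ\ge e_{TTX}$, and your final step $k\le\Txi a(Te_X(\fz),\fz)$ is exactly what underlies $r\ge 1_{TX}$.
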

\begin{proof}
We show that $m_X\dashv e_{TX}$ in $\Cat{\Tth}$. By definition, $m_X\cdot e_{TX}=1_{TX}$. To see
$k\le s(\fX,e_{TX}\cdot m_X(\fX))$ for all $\fX\in TTX$, we show $m_X^\circ\cdot e_{TX}^\circ\cdot s\ge 1_{TTX}$ (where $s=\Txi(r\cdot m_X)\cdot m_{TX}^\circ$ and $r=\Txi a\cdot m_X^\circ$) in $\Mat{\V}$. In fact,
\[
 m_X^\circ\cdot e_{TX}^\circ\cdot \Txi(r\cdot m_X)\cdot m_{TX}^\circ
=m_X^\circ\cdot r\cdot m_X\ge m_X^\circ\cdot m_X\ge 1_{TTX}.\qedhere
\]
\end{proof}

Since the monad $\mT$ on $\Cat{\Tth}$ is of Kock-Z\"oberlein type, an algebra structure $\alpha:TX\to X$ on a $\Tth$-category $X$ is left adjoint to the unit $e_X:X\to TX$. However, unless $X$ is separated, a left adjoint $\alpha:TX\to X$ to $e_X$ is in general only a pseudo-algebra structure on $X$, that is, 
\begin{align}\label{eq:PseudoAlg}
 \alpha\cdot e_X&\simeq 1_X &\text{and}&& \alpha\cdot T\alpha&\simeq \alpha\cdot m_X.
\end{align}

\begin{definition}\label{def:Repr}
We call a $\Tth$-category $X$ \emph{representable} whenever $e_X:X\to TX$ has a left adjoint in $\Cat{\Tth}$. A $\Tth$-functor $f:X\to Y$ between representable $\Tth$-categories $X$ and $Y$, with left adjoint $\alpha:TX\to X$ and $\beta:TY\to Y$ respectively, is called a \emph{pseudo-homomorphism} whenever
\[
\beta\cdot Tf \simeq f\cdot\alpha.                                                                                                                                                                                                                                                                                                                                                                                                                                                                                                                                                                                                                                                     \]
\end{definition}
Of course, if $Y$ is separated, then one has equality above. We denote the category of representable $\Tth$-categories and pseudo-homomorphism by
\[
 \Repr{\Tth},
\]
and its full subcategory defined by the separated representable $\Tth$-categories by $\Repr{\Tth}_\sep$.

\begin{remark}
We borrowed the designation ``representable'' from \citep{Her00} where the notion of representable multicategory via a ``monadic 2-adjuntion between the 2-category of strict monoidal categories and that of multicategories'' is introduced. In a nutshell, strict monoidal categories are to multicategories what ordered compact Hausdorff spaces are to topological spaces.
\end{remark}

\begin{remark}\label{rem:AdjReprTalg}
For a separated representable $\Tth$-category $X=(X,a)$, the left adjoint $\alpha:TX\to X$ to $e_X:X\to TX$ is unique and actually the structure of a $\mT$-algebra on $X$. Therefore there is a canonical forgetful functor $\Repr{\Tth}_\sep\to\SET^\mT$ sending $(X,a)$ to $(X,\alpha)$ which is part of an adjunction
\[
 \Repr{\Tth}_\sep\adjunct{}{}\SET^\mT
\]
where the left adjoint $\SET^\mT\to\Repr{\Tth}_\sep$ interprets the $\mT$-structure $\alpha$ on a set $X$ as a $\Tth$-structure on $X$.
\end{remark}

\begin{proposition}\label{prop:CharReprCat}
The following assertions are equivalent, for a $\Tth$-category $X=(X,a)$.
\begin{eqcond}
\item $X$ is representable.
\item\label{cond:eqii} $X$ is core-compact and there is a map $\alpha:TX\to X$ such that $a=a_0\cdot\alpha$.
\end{eqcond}
\end{proposition}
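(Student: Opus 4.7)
The plan is to exploit the characterisation of adjunctions in $\Cat{\Tth}$ recalled in \S VII, according to which $\alpha\dashv e_X$ in $\Cat{\Tth}$ amounts to the equality of $\Tth$-distributors $\alpha_\Tast=e_X^\Tast$, that is, the \emph{key identity}
\[ a\cdot T\alpha \;=\; e_X^\circ\cdot\Txi a\cdot m_X^\circ\cdot m_X. \]

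For the direction \emph{(i)$\Rightarrow$(ii)}, assume the key identity. Postcomposing it with $e_{TX}$ and using $m_X\cdot e_{TX}=1_{TX}$ together with the naturality $T\alpha\cdot e_{TX}=e_X\cdot\alpha$ collapses it to $a_0\cdot\alpha=e_X^\circ\cdot\Txi a\cdot m_X^\circ$. Since $e_X$ is fully faithful and $\mT$ is of Kock--Z\"oberlein type, the counit of $\alpha\dashv e_X$ is an equivalence, so $\alpha\cdot e_X\simeq 1_X$; hence $(\alpha\cdot e_X)_\Tast=a$, which reads $a\cdot T\alpha\cdot Te_X=a$. Postcomposing the key identity with $Te_X$ and using $m_X\cdot Te_X=1_{TX}$ therefore gives $a=e_X^\circ\cdot\Txi a\cdot m_X^\circ=a_0\cdot\alpha$. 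For core-compactness, postcomposing the just-derived $a=e_X^\circ\cdot\Txi a\cdot m_X^\circ$ with $m_X$ and combining with the key identity yields $a\cdot m_X=a\cdot T\alpha$. From $a=a_0\cdot\alpha$ and reflexivity of $a_0$ one has $\alpha\le a$ in $\Mat{\V}$, hence $T\alpha=\Txi\alpha\le\Txi a$ and so $a\cdot m_X\le a\cdot\Txi a$; the reverse inequality is transitivity $a\cdot\Txi a\cdot m_X^\circ\le a$ postcomposed with $m_X$ (using $m_X^\circ\cdot m_X\ge 1_{TTX}$).

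For the direction \emph{(ii)$\Rightarrow$(i)}, assume $a=a_0\cdot\alpha$ and $a\cdot\Txi a=a\cdot m_X$. Reflexivity of $a_0$ gives $\alpha\le a$ in $\Mat{\V}$. That $e_X\colon(X,a)\to(TX,\tilde a)$ (with $\tilde a=\Txi a\cdot m_X^\circ\cdot m_X$) is a $\Tth$-functor follows from the op-lax naturality $e_X\cdot a\le\Txi a\cdot e_{TX}$ combined with $e_{TX}\le m_X^\circ$. Core-compactness enters to show that $\alpha\colon(TX,\tilde a)\to(X,a)$ is a $\Tth$-functor: $\alpha\cdot\Txi a\cdot m_X^\circ\le a\cdot\Txi a\cdot m_X^\circ=a\cdot m_X\cdot m_X^\circ=a$, hence $\alpha\cdot\tilde a\le a\cdot m_X$, and the chain
\[ a\cdot m_X \;=\; a\cdot\Txi a \;=\; a\cdot\Txi(a_0\cdot\alpha) \;=\; a\cdot\Txi a_0\cdot T\alpha \;=\; a\cdot T\alpha, \]
where the last step uses $a\cdot\Txi a_0=a\cdot\Txi a\cdot Te_X=a\cdot m_X\cdot Te_X=a$, upgrades this to $\alpha\cdot\tilde a\le a\cdot T\alpha$. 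For the counit, evaluating $a=a_0\cdot\alpha$ at $\fx=e_X(x)$ gives $a_0(\alpha e_X(x),x)=a_0(x,x)\ge k$ and symmetrically $a_0(x,\alpha e_X(x))\ge k$, so $\alpha\cdot e_X\simeq 1_X$. For the unit $1_{TX}\le e_X\cdot\alpha$ one needs $k\le\tilde a_0(\fx,e_X\alpha(\fx))$; taking the witness $\fX=e_{TX}(\fx)$ in $\tilde a_0=\Txi a\cdot m_X^\circ$ and invoking op-lax naturality reduces the requirement to $a(\fx,\alpha(\fx))=a_0(\alpha(\fx),\alpha(\fx))\ge k$.

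The principal bookkeeping challenge is the interplay between the quantaloidal identities $m_X\cdot m_X^\circ=1_{TX}$, $m_X^\circ\cdot m_X\ge 1_{TTX}$, and $m_X\cdot e_{TX}=1_{TX}=m_X\cdot Te_X$, together with the strict extension property $\Txi(r\cdot f)=\Txi r\cdot Tf$ for a function $f$ (which rests on the weak pullback hypothesis on $\mT$): every step in both directions relies on choosing the right cancellation. Beyond this, the real conceptual move sits in the forward direction, where the Kock--Z\"oberlein character of $\mT$ is what promotes the bare existence of a left adjoint to $e_X$ into the computable distributor identity $\alpha_\Tast=e_X^\Tast$.
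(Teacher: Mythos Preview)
Your argument is correct, and in spirit it follows the same strategy as the paper: both directions hinge on the distributor identity $\alpha_\Tast=e_X^\Tast$, that is, $a\cdot T\alpha=e_X^\circ\cdot\Txi a\cdot m_X^\circ\cdot m_X$. There are, however, two points worth flagging.

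First, your closing commentary misattributes a step. The identity $\alpha_\Tast=e_X^\Tast$ is \emph{not} a consequence of the Kock--Z\"oberlein property of $\mT$: it is simply the general characterisation of adjunctions in $\Cat{\Tth}$ recalled in \S VII, valid for any adjunction. Likewise, the fact that the counit $\alpha\cdot e_X\simeq 1_X$ is invertible follows from $e_X$ being fully faithful, not from KZ. The KZ property would additionally force the pseudo-algebra law $\alpha\cdot T\alpha\simeq\alpha\cdot m_X$, but you never actually use that law --- so the appeal to KZ can be dropped entirely. (Relatedly, ``symmetrically $a_0(x,\alpha e_X(x))\ge k$'' is not literally symmetric; the correct argument is $a_0(x,\alpha e_X(x))=a_0(\alpha e_X(x),\alpha e_X(x))\ge k$, using $a_0(x,-)=a_0(\alpha e_X(x),-)$.)

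Second, your route in (ii)$\Rightarrow$(i) is more laborious than necessary. You verify separately that $\alpha$ is a $\Tth$-functor and then check the unit and counit inequalities pointwise. The paper instead observes that $e_X^\Tast=e_X^\circ\cdot\Txi a\cdot m_X^\circ\cdot m_X=a\cdot m_X$ (this uses only that $(X,a)$ is a $\Tth$-category) and that
\[
\alpha_\Tast=a\cdot T\alpha=a\cdot\Txi a_0\cdot T\alpha=a\cdot\Txi a=a\cdot m_X,
\]
using core-compactness in the last step and $a\cdot\Txi a_0=a$. The single equality $\alpha_\Tast=e_X^\Tast$ then simultaneously certifies that $\alpha$ is a $\Tth$-functor (by \eqref{eq:DistVsFun}) and that $\alpha\dashv e_X$. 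Your chain establishing $a\cdot m_X=a\cdot T\alpha$ is exactly this computation, so you already have all the ingredients; you just don't exploit that the distributor equality does the whole job at once.
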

\begin{proof}
Assume first that $X$ is representable. Then $e_X:X\to TX$ has a left adjoint $\alpha:TX\to X$ in $\Cat{\Tth}$ which necessarily satisfies \eqref{eq:PseudoAlg}. Hence also $\alpha\dashv e_X$ in $\Cat{\V}$, which gives (with $\hat{a}=\Txi\cdot m_X^\circ$)
\[
 a(\fx,x)=\hat{a}(\fx,e_X(x))=a_0(\alpha(\fx),x),
\]
and then we calculate $a_0\cdot\alpha\cdot m_X=a_0\cdot\alpha\cdot T\alpha\le a_0\cdot\alpha\cdot\Txi(a_0\cdot\alpha)$. Conversely, assume now \eqref{cond:eqii}. Then $e_X^\Tast=e_X^\circ\cdot\Txi a\cdot m_X^\circ\cdot m_X=a\cdot m_X$ and $\alpha_\Tast=a\cdot T\alpha=a\cdot\Txi a_0\cdot T\alpha=a\cdot\Txi a=a\cdot m_X$, hence $\alpha$ is a $\Tth$-functor and $\alpha\dashv e_X$ in $\Cat{\Tth}$.
\end{proof}

The following result is easy to prove.

\begin{lemma}\label{lem:MapsReprCats}
Let $(X,a)$, $(Y,b)$ be representable $\Tth$-categories with left adjoints $\alpha:TX\to X$ and $\beta:TY\to Y$ respectively, and let $f:X\to Y$ be a map. Then $f$ is a $\Tth$-functor $f:(X,a)\to(Y,b)$ if and only if $f:(X,a_0)\to(Y,b_0)$ is a $\V$-functor and $\beta\cdot Tf(\fx)\le f\cdot\alpha(\fx)$, for all $\fx\in TX$.
\end{lemma}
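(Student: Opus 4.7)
The plan is to exploit the factorisation $a=a_0\cdot\alpha$ and $b=b_0\cdot\beta$ afforded by Proposition \ref{prop:CharReprCat}, which reduces the $\Tth$-functor condition to an elementwise inequality in the underlying $\V$-categories that splits cleanly into conditions (i) and (ii).

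First I would rewrite the $\Tth$-functor condition $f\cdot a\le b\cdot Tf$ pointwise. Since $(X,a)$ and $(Y,b)$ are representable, Proposition \ref{prop:CharReprCat} yields $a(\fx,x)=a_0(\alpha(\fx),x)$ and $b(Tf(\fx),f(x))=b_0(\beta\cdot Tf(\fx),f(x))$ for every $\fx\in TX$ and $x\in X$. Hence $f$ is a $\Tth$-functor if and only if
\[
a_0(\alpha(\fx),x)\le b_0(\beta\cdot Tf(\fx),f(x))\qquad\text{for all }\fx\in TX,\;x\in X.\quad(\ast)
\]

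For the forward direction, I would derive (i) and (ii) by testing $(\ast)$ on two particular choices. Setting $\fx=e_X(x')$, the adjunction equality $\alpha\cdot e_X=1_X$ (combined with the naturality $Tf\cdot e_X=e_Y\cdot f$ and $\beta\cdot e_Y=1_Y$) turns $(\ast)$ into $a_0(x',x)\le b_0(f(x'),f(x))$, which is exactly the $\V$-functoriality (i). Setting $x=\alpha(\fx)$ and using reflexivity $k\le a_0(\alpha(\fx),\alpha(\fx))$ gives $k\le b_0(\beta\cdot Tf(\fx),f\cdot\alpha(\fx))$, which by the characterisation of $\le$ in $\V$-categories recalled in subsection VII is precisely the inequality (ii).

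For the converse direction, I would assemble $(\ast)$ from (i) and (ii) using transitivity of $b_0$. Condition (ii) reads $k\le b_0(\beta\cdot Tf(\fx),f\cdot\alpha(\fx))$, and condition (i) applied to $\alpha(\fx)$ and $x$ gives $a_0(\alpha(\fx),x)\le b_0(f\cdot\alpha(\fx),f(x))$. Tensoring and using $k$-unitality together with the transitivity $b_0(y,y')\otimes b_0(y',y'')\le b_0(y,y'')$ of the $\V$-category $(Y,b_0)$ yields $a_0(\alpha(\fx),x)\le b_0(\beta\cdot Tf(\fx),f(x))$, which is $(\ast)$.

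I do not anticipate a serious obstacle here: once Proposition \ref{prop:CharReprCat} is invoked, the argument is a routine unpacking, and the only point where care is needed is to recognise that the inequality in (ii) is to be interpreted in the $\V$-category $(Y,b_0)$ via the standard convention $y\le y'\iff k\le b_0(y,y')$.
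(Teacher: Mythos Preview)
Your argument is correct and is precisely the direct unpacking the paper has in mind (the paper gives no proof, merely calling the result ``easy to prove''). One small imprecision: you invoke ``the adjunction equality $\alpha\cdot e_X=1_X$'', but for a representable $\Tth$-category one only has $\alpha\cdot e_X\simeq 1_X$ (see \eqref{eq:PseudoAlg}); this does not affect your proof, since $\alpha(e_X(x'))\simeq x'$ in $(X,a_0)$ still yields $a_0(\alpha(e_X(x')),x)=a_0(x',x)$ by transitivity, and similarly on the $Y$ side.
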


We have the comparison functor
\[
 K^\mT:(\Cat{\V})^\mT\to(\Cat{\Tth})^\mT
\]
which sends $(X,a_0,\alpha)$ in $(\Cat{\V})^\mT$ to $(X,a_0\cdot\alpha,\alpha)$. Furthermore, the forgetful functor $(-)_0:\Cat{\Tth}\to\Cat{\V}$ lifts to a functor $(-)_0^\mT:\Cat{\Tth}^\mT\to\Cat{\V}^\mT$ since the identity map $T(X_0)\to(TX)_0$ is a $\V$-functor, for each $\Tth$-category $X$. Clearly, $(K^\mT X)_0^\mT=X$ for every $X$ in $(\Cat{\V})^\mT$, and for $X=(X,a)$ in $(\Cat{\Tth})^\mT$ with Eilenberg--Moore structure $\alpha:TX\to X$ one has $a=a_0\cdot\alpha$ by Proposition \ref{prop:CharReprCat}. We conclude:

\begin{theorem}\label{thm:AlgVAlgT}
 $(\Cat{\V})^\mT\simeq (\Cat{\Tth})^\mT$.
\end{theorem}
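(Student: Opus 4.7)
The plan is to exhibit the forgetful functor $(-)_0^\mT : (\Cat{\Tth})^\mT \to (\Cat{\V})^\mT$ as a strict inverse to the comparison functor $K^\mT$. The argument in the paragraph immediately preceding the theorem already settles the behaviour on objects: the equality $(K^\mT X)_0^\mT = X$ for every $X$ in $(\Cat{\V})^\mT$ is a direct computation using $(a_0 \cdot \alpha)_0 = a_0 \cdot \alpha \cdot e_X = a_0$, while for $(X,a,\alpha)$ in $(\Cat{\Tth})^\mT$ the identity $a = a_0 \cdot \alpha$ supplied by Proposition \ref{prop:CharReprCat} gives $K^\mT((X,a,\alpha)_0^\mT) = (X, a_0 \cdot \alpha, \alpha) = (X,a,\alpha)$.

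To upgrade the object-level bijection to a functorial isomorphism, I would next verify the bijection on hom-sets. On one side, $(-)_0^\mT$ acts as the identity on underlying functions, so faithfulness is immediate and every morphism of $(\Cat{\Tth})^\mT$ clearly restricts to a $\V$-functor which is still a $\mT$-homomorphism. In the opposite direction, $K^\mT$ sends a morphism $f: (X, a_0, \alpha) \to (Y, b_0, \beta)$ of $(\Cat{\V})^\mT$ to a map that trivially remains a $\mT$-homomorphism, and Lemma \ref{lem:MapsReprCats} shows it is automatically a $\Tth$-functor between the representable $\Tth$-categories $K^\mT X$ and $K^\mT Y$: the inequality $\beta \cdot Tf \leq f \cdot \alpha$ required there is a consequence of the equality $\beta \cdot Tf = f \cdot \alpha$ expressing that $f$ is a $\mT$-homomorphism. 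The 2-categorical enrichment poses no extra difficulty, since the order on hom-sets on both sides is inherited pointwise from the underlying $\V$-categories.

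The only substantive input is the identity $a = a_0 \cdot \alpha$ extracted from Proposition \ref{prop:CharReprCat}, which is where the strict algebra axioms are decisive: the multiplication axiom $\alpha \cdot T\alpha = \alpha \cdot m_X$ combined with $\alpha$ being a $\Tth$-functor yields core-compactness of $(X,a)$, and the unit axiom $\alpha \cdot e_X = 1_X$ ensures $\alpha$ is strictly left adjoint to $e_X$, making $(X,a)$ representable. I expect this to be the one potentially delicate moment; once Proposition \ref{prop:CharReprCat} has been invoked, everything reduces to routine bookkeeping between the two Eilenberg--Moore categories.
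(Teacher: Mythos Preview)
Your proposal is correct and follows essentially the same route as the paper: the paragraph preceding the theorem already establishes that $K^\mT$ and $(-)_0^\mT$ are mutually inverse on objects via Proposition~\ref{prop:CharReprCat}, and you have simply spelled out the (routine) hom-set verification that the paper leaves implicit. Your remark that the key substantive input is the identity $a = a_0 \cdot \alpha$ from Proposition~\ref{prop:CharReprCat} is exactly right, though note that the cleanest way to invoke it is to observe that the monad $\mT$ on $\Cat{\Tth}$ is of Kock--Z\"oberlein type, so any strict algebra structure $\alpha$ is automatically left adjoint to $e_X$, which places you directly in condition~(i) of that proposition.
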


The notion of pseudo-algebra was already lurking in the discussion above. In fact, just as for algebras, every pseudo-algebra structure $\alpha:TX\to X$ on a
$\V$-category $X=(X,a_0)$ gives rise to the representable $\Tth$-category $(X,a_0\cdot\alpha)$, and equivalent pseudo-algebra structures induce the same $\Tth$-category. Moreover, by Proposition \ref{prop:CharReprCat}, every representable $\Tth$-category is of this form.

Our next aim is to introduce a concept of dual $\Tth$-category which generalises the one for $\V$-categories. 

\begin{definition}\label{def:dualTGph}
A $\Tth$-graph $X=(X,a)$ is called \emph{dualisable} whenever $a_0=a\cdot e_X$ is transitive and $a=a_0\cdot\alpha$, for some map $\alpha:TX\to X$.
\end{definition}

Every $\Tth$-category $(X,a)$ where $a=a_0\cdot\alpha$ (for some map $\alpha:TX\to X$) is a dualisable $\Tth$-graph. Another important example will be provided by Lemma \ref{lem:VXdualisable}. For a dualisable $\Tth$-graph $X=(X,a)$, we write $X_0$ to denote its underlying $\V$-category $X_0=(X,a_0)$. We consider $TX$ as a discrete $\V$-category, so that $\alpha:TX\to X_0$ is a $\V$-functor. With this notation, $a_0\cdot\alpha=\alpha_*$ and, if $\alpha_*=a=\beta_*$, also $\alpha^*=\beta^*$ and therefore 
\[
 a_0^\circ\cdot\alpha=(\alpha^*)^\circ=(\beta^*)^\circ=a_0^\circ\cdot\beta.
\]

\begin{lemma}
Let $X=(X,a)$ be a dualisable $\Tth$-graph. Then $(X,a_0^\circ\cdot\alpha)$ is a dualisable $\Tth$-graph as well, and the underlying $\V$-category of $(X,a_0^\circ\cdot\alpha)$ is $(X_0)^\op$.
\end{lemma}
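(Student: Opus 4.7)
The plan is to verify the three defining conditions of a dualisable $\Tth$-graph for $(X, a_0^\circ \cdot \alpha)$: that it is a $\Tth$-graph, that precomposing its structure with $e_X$ yields a transitive $\V$-relation, and that the structure itself factors through that underlying $\V$-relation along some map $TX \to X$. Writing $f = \alpha \cdot e_X : X \to X$, unwinding the composition in $\Mat{\V}$ gives
\[
(a_0^\circ \cdot \alpha)(e_X(x), y) \;=\; a_0^\circ(f(x), y) \;=\; a_0(y, f(x)),
\]
so everything is controlled by the functions $a_0 \cdot f$ and $a_0^\circ \cdot f$.

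First I would extract the key identity. Combining the hypotheses $a = a_0 \cdot \alpha$ and $a_0 = a \cdot e_X$ gives $a_0(x, y) = a(e_X(x), y) = a_0(f(x), y)$ for all $x, y \in X$. Specialising $y = x$ and then $y = f(x)$ and invoking reflexivity of $a_0$ yields $a_0(f(x), x) \ge k$ and $a_0(x, f(x)) \ge k$, so $f(x)$ and $x$ are isomorphic in the $\V$-category $X_0$. Transitivity of $a_0$ then propagates this isomorphism to the other variable, producing $a_0(y, f(x)) = a_0(y, x)$ for all $x, y$. Equivalently $a_0^\circ \cdot f = a_0^\circ$, which means that the underlying $\V$-relation $(a_0^\circ \cdot \alpha) \cdot e_X$ coincides with $a_0^\circ$, i.e.\ the underlying $\V$-category of $(X, a_0^\circ \cdot \alpha)$ is precisely $(X_0)^\op$.

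With this identification in hand, the remaining conditions fall into place. The $\Tth$-graph axiom is $(a_0^\circ \cdot \alpha)(e_X(x), x) = a_0(x, f(x)) \ge k$, which we have just established; transitivity of $a_0^\circ$ follows from transitivity of $a_0$ via the involution on $\Mat{\V}$; and the required factorisation is witnessed by the same map $\alpha$, since $((a_0^\circ \cdot \alpha) \cdot e_X) \cdot \alpha = a_0^\circ \cdot \alpha$. The main obstacle is the left–right symmetry step: the hypothesis directly supplies the identity $a_0(f(x), -) = a_0(x, -)$, whereas what is needed for the dual is the identity with opposite variance $a_0(-, f(x)) = a_0(-, x)$, and it is exactly the two-sided isomorphism between $x$ and $f(x)$ together with the transitivity of $a_0$ that bridges this gap. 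Alternatively one could shortcut this step by invoking the remark immediately preceding the lemma, which guarantees that $a_0^\circ \cdot \alpha = (\alpha^*)^\circ$ depends only on the distributor $\alpha_* = a$ by uniqueness of right adjoints in $\Mod{\V}$, but the direct computation above is shorter and more elementary.
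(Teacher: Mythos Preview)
Your proof is correct. The paper's argument arrives at the same key identity $a_0^\circ\cdot\alpha\cdot e_X=a_0^\circ$ by a slightly different route: from $a_0=a_0\cdot(\alpha\cdot e_X)=(\alpha\cdot e_X)_*$ it concludes (by uniqueness of the right adjoint of the identity distributor in $\Mod{\V}$) that also $a_0=(\alpha\cdot e_X)^*=(\alpha\cdot e_X)^\circ\cdot a_0$, and then applies the involution. Your argument is more elementary and explicitly pointwise: you first extract the two inequalities $a_0(f(x),x)\ge k$ and $a_0(x,f(x))\ge k$ to see that $x$ and $f(x)$ are isomorphic in $X_0$, and then use transitivity in the second variable to flip the variance. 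Both methods hinge on the same content (that $f=\alpha\cdot e_X$ acts as an isomorphism in $X_0$), but the paper hides this inside the adjunction $f_*\dashv f^*$, whereas you unfold it. Your closing remark about the ``alternative shortcut'' is essentially the paper's actual proof.
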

\begin{proof}
It suffices to show $a_0^\circ=a_0^\circ\cdot\alpha\cdot e_X$. From $a=a_0\cdot\alpha$ we infer $a_0=a_0\cdot\alpha\cdot e_X=(\alpha\cdot e_X)_*$, hence $a_0=(\alpha\cdot e_X)^*$ and therefore $a_0^\circ=a_0^\circ\cdot\alpha\cdot e_X$.
\end{proof}

\begin{definition}\label{def:dualOfTGph}
Let $X=(X,a)$ be a dualisable $\Tth$-graph. Then the \emph{dual} $\Tth$-graph $X^\op$ of $X$ is $X^\op=(X,a_0^\circ\cdot\alpha)$.
\end{definition}
By the discussion above, this definition is independent of the choice of $\alpha$. Of course, the dual of a $\V$-category in the sense above is just the usual dual. Also note that, even if $X$ is a $\Tth$-category, $X^\op$ need not be a $\Tth$-category (see Proposition \ref{prop:DualVsCore} below). 

The following result is a variation of Lemma \ref{lem:MapsReprCats}.

\begin{lemma}\label{lem:dualTVGraphFun}
For a $(\mT,\V)$-functor $f:(X,a)\to(Y,b)$ between dualisable $(\mT,\V)$-graphs with $a=a_0\cdot\alpha$ and $b=b_0\cdot\beta$, the map $f:X\to Y$ also defines a $(\mT,\V)$-functor $f^\op:X^\op\to Y^\op$ if and only if $f\cdot\alpha\simeq\beta\cdot Tf$.
\end{lemma}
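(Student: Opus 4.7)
The plan is to reduce the $\Tth$-graph-morphism conditions on $f$ and $f^\op$ to pointwise inequalities in $\V$ and then pass between them via reflexivity and transitivity of the underlying $\V$-categories. Since any $\Tth$-functor $f:(X,a)\to(Y,b)$ is automatically a $\V$-functor $f:(X,a_0)\to(Y,b_0)$ (precompose $f\cdot a\le b\cdot Tf$ with $e_X$ and use $a_0=a\cdot e_X$, $b_0=b\cdot e_Y$), writing out $f\cdot a\le b\cdot Tf$ pointwise with $a=a_0\cdot\alpha$ and $b=b_0\cdot\beta$ gives
\[
a_0(\alpha(\fx),x)\le b_0(\beta(Tf(\fx)),f(x))\qquad\text{for all }\fx\in TX,\;x\in X,\qquad(\ast)
\]
while the analogous unfolding of the $\Tth$-graph-morphism condition for $f^\op:(X,a_0^\circ\cdot\alpha)\to(Y,b_0^\circ\cdot\beta)$ reads
\[
a_0(x,\alpha(\fx))\le b_0(f(x),\beta(Tf(\fx)))\qquad\text{for all }\fx\in TX,\;x\in X.\qquad(\ast\ast)
\]

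For the necessity direction, I would simply specialise $x:=\alpha(\fx)$ in $(\ast)$ and $(\ast\ast)$ and invoke reflexivity $k\le a_0(\alpha(\fx),\alpha(\fx))$; this yields
\[
k\le b_0(\beta Tf(\fx),f\alpha(\fx))\quad\text{and}\quad k\le b_0(f\alpha(\fx),\beta Tf(\fx)),
\]
i.e.\ $\beta\cdot Tf\le f\cdot\alpha$ and $f\cdot\alpha\le\beta\cdot Tf$ in $\Cat{\V}$, which together state exactly $f\cdot\alpha\simeq\beta\cdot Tf$.

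For sufficiency, assuming $f\cdot\alpha\simeq\beta\cdot Tf$, I only need the inequality $f\cdot\alpha\le\beta\cdot Tf$, i.e.\ $k\le b_0(f\alpha(\fx),\beta Tf(\fx))$; combining this with the $\V$-functor inequality $a_0(x,\alpha(\fx))\le b_0(f(x),f\alpha(\fx))$ via the unit law for $\otimes$ and transitivity of $b_0$ immediately yields $(\ast\ast)$. The single point worth flagging is that the reverse inequality $\beta\cdot Tf\le f\cdot\alpha$ is automatic from $f$ being a $\Tth$-functor, so the genuine content of the equivalence $f\cdot\alpha\simeq\beta\cdot Tf$ is precisely the extra inequality required to lift $f$ to a morphism between the dualised $\Tth$-graphs; this parallels the characterisation in Lemma \ref{lem:MapsReprCats} in the representable setting, and no obstacle beyond this bookkeeping is anticipated.
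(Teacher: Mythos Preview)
Your proof is correct and is exactly the argument the paper has in mind: the paper gives no explicit proof but simply flags the result as ``a variation of Lemma~\ref{lem:MapsReprCats}'', and your pointwise unfolding via reflexivity and transitivity of the underlying $\V$-categories is precisely that variation. Your observation that the inequality $\beta\cdot Tf\le f\cdot\alpha$ is already forced by $(\ast)$ (so that the content of $f\cdot\alpha\simeq\beta\cdot Tf$ is the single extra inequality needed for $(\ast\ast)$) is also in line with the paper's Lemma~\ref{lem:MapsReprCats}.
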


In the sequel we will extend our terminology to $(\mT,\V)$-functor$f:(X,a)\to(Y,b)$ between dualisable $(\mT,\V)$-graphs and call $f$ a \emph{pseudo-homomorphism} if $f\cdot\alpha\simeq\beta\cdot Tf$.

\begin{proposition}\label{prop:DualVsCore}
Let $X=(X,a)$ be a $\Tth$-category where $a=a_0\cdot\alpha$, for some map $\alpha:TX\to X$. Then the following assertions are equivalent.
\begin{eqcond}
\item The $\Tth$-graph $X^\op$ is actually a $\Tth$-category.
\item $X$ is core-compact.
\item $X$ is representable.
\end{eqcond}
\end{proposition}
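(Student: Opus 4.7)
The plan is to settle (ii) $\iff$ (iii) immediately from Proposition \ref{prop:CharReprCat} and then to close the equivalence by proving (iii) $\Rightarrow$ (i) and (i) $\Rightarrow$ (iii). Under the standing factorisation $a = a_0 \cdot \alpha$, Proposition \ref{prop:CharReprCat} identifies representability of $X$ with core-compactness, settling (ii) $\iff$ (iii).

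For (iii) $\Rightarrow$ (i), representability yields $\alpha \dashv e_X$ in $\Cat{\Tth}$, equivalently $\alpha_\Tast = e_X^\Tast$, that is, $a \cdot T\alpha = a \cdot m_X$. Combined with $a_0 = a_0 \cdot \alpha \cdot e_X$ (from $a_0 = a \cdot e_X$), this extracts in $X_0$ the identities $\alpha(T\alpha(\fZ)) \simeq \alpha(m_X(\fZ))$ and $\alpha(e_X(x)) \simeq x$ via the standard observation that $a_0(y,-) = a_0(z,-)$ is equivalent to $y \simeq z$ in $X_0$. Substituting $\fZ = Te_X(\fz)$ into $\hat{a} = \Txi a_0 \cdot T\alpha \cdot m_X^\circ$ (the $\V$-structure on $TX$ underlying $T(X,a)$) and invoking the second identity together with the 2-functoriality of $T$ on $\Cat{\V}$, one obtains $\hat{a} \ge \Txi a_0$; combined with the $\V$-functoriality $\alpha : (TX, \hat{a}) \to (X, a_0)$ implied by $\alpha$ being a $\Tth$-functor, this promotes $\alpha$ to a $\V$-functor $(TX, \Txi a_0) \to (X, a_0)$. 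Reflexivity $e_X^\circ \le a_0^\circ \cdot \alpha$ of $X^\op$ is then immediate from $x \le \alpha(e_X(x))$; for transitivity I chain, for any $\fz, x, \fZ, \fy$ with $m_X(\fZ) = \fz$,
\[
\Txi a_0(\fy, T\alpha(\fZ)) \otimes a_0(x, \alpha(\fy)) \le a_0(\alpha(\fy), \alpha(T\alpha(\fZ))) \otimes a_0(x, \alpha(\fy)) = a_0(\alpha(\fy), \alpha(\fz)) \otimes a_0(x, \alpha(\fy)) \le a_0(x, \alpha(\fz)),
\]
using $\V$-functoriality, the first identity above, and transitivity of $a_0$; taking suprema gives $(a_0^\circ \cdot \alpha) \kleisli (a_0^\circ \cdot \alpha) \le a_0^\circ \cdot \alpha$.

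Conversely, for (i) $\Rightarrow$ (iii), transitivity of $X^\op$ unfolds, for all $\fz, x, \fZ, \fy$ with $m_X(\fZ) = \fz$, to
\[
\Txi a_0(\fy, T\alpha(\fZ)) \otimes a_0(x, \alpha(\fy)) \le a_0(x, \alpha(\fz)).
\]
Specialising $\fy = T\alpha(\fZ)$ and using $\Txi a_0 \ge 1_{TX}$ yields $\alpha(T\alpha(\fZ)) \le \alpha(m_X(\fZ))$ in $X_0$, while the analogous specialisation in the transitivity of $X$ produces the reverse inequality. The resulting equivalence $\alpha(T\alpha(\fZ)) \simeq \alpha(m_X(\fZ))$ in $X_0$ translates into $a \cdot T\alpha = a \cdot m_X$, i.e.\ $\alpha_\Tast = e_X^\Tast$; by \eqref{eq:DistVsFun} this makes $\alpha$ a $\Tth$-functor satisfying $\alpha \dashv e_X$ in $\Cat{\Tth}$, proving representability. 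The principal obstacle is in the (iii) $\Rightarrow$ (i) direction, where bounding $\Txi a_0(\fy, T\alpha(\fZ))$ in the transitivity of $X^\op$ requires the $\V$-functoriality of $\alpha$ against the extended structure $(TX, \Txi a_0)$ rather than the $\Tth$-induced $(TX, \hat{a})$, and so forces the auxiliary comparison $\hat{a} \ge \Txi a_0$ extracted from $\alpha(e_X(x)) \simeq x$.
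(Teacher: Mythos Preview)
Your proof is correct and follows essentially the same strategy as the paper's. For (i)$\Rightarrow$(iii) both arguments extract $\alpha\cdot T\alpha\simeq\alpha\cdot m_X$ by specialising the transitivity laws of $X$ and of $X^\op$; the paper phrases this at the level of $\V$-relations (deriving $(\alpha\cdot T\alpha)_*\le(\alpha\cdot m_X)_*$ from transitivity of $X$ and $(\alpha\cdot T\alpha)^*\le(\alpha\cdot m_X)^*$ from transitivity of $X^\op$, then sandwiching $a\cdot\Txi a$ between $a\cdot T\alpha$ and $a\cdot m_X$ to get core-compactness directly), whereas you work pointwise and land on representability via $\alpha_\Tast=e_X^\Tast$. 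The only notable difference is (iii)$\Rightarrow$(i): the paper simply says this ``is clear'', because it was observed earlier that with $(X,a_0,\alpha)$ also $(X,a_0^\circ,\alpha)$ is a (pseudo-)$\mT$-algebra in $\Cat{\V}$ (since $\Txi$ commutes with the involution), so that $K(X,a_0^\circ,\alpha)=X^\op$ is automatically a $\Tth$-category. Your explicit argument is a faithful unwinding of this, and the auxiliary comparison $\hat{a}\ge\Txi a_0$ you derive is exactly the statement (recorded in the paper before Theorem~\ref{thm:AlgVAlgT}) that the identity $T(X_0)\to(TX)_0$ is a $\V$-functor; in fact this holds for every $\Tth$-category since $Te_X\le m_X^\circ$ gives $\Txi a_0=\Txi a\cdot Te_X\le\Txi a\cdot m_X^\circ=\hat{a}$, so you need not invoke $\alpha\cdot e_X\simeq 1_X$ for it.
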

\begin{proof}
Proposition \ref{prop:CharReprCat} affirms (ii)$\Leftrightarrow$(iii), and (iii)$\Rightarrow$(i) is clear. Assume now that $X^\op$ is a $\Tth$-category. Since $X$ is a $\Tth$-category,
\[
 (\alpha\cdot T\alpha)_*=a_0\cdot\alpha\cdot T\alpha\le a_0\cdot\alpha\cdot \Txi a_0\cdot T\alpha\le a_0\cdot\alpha\cdot m_X=(a_0\cdot\alpha)_*;
\]
similarly, since $X^\op$ is a $\Tth$-category,
\[
 a_0^\circ\cdot\alpha\cdot T\alpha\le a_0^\circ\cdot\alpha\cdot m_X
\]
and therefore $(\alpha\cdot T\alpha)^*\le(a_0\cdot\alpha)^*$. Consequently, $(\alpha\cdot T\alpha)_*=(a_0\cdot\alpha)_*$, hence $a\cdot\Txi a=a\cdot m_X$.
\end{proof}

In conclusion, taking duals gives a functor $(-)^\op:\Repr{\Tth}\to\Repr{\Tth}$ which makes the diagram
\[
 \xymatrix{\Repr{\Tth}\ar[d]_{(-)_0}\ar[r]^{(-)^\op} & \Repr{\Tth}\ar[d]^{(-)_0}\\ \Cat{\V}\ar[r]_{(-)^\op} &  \Cat{\V}}
\]
commutative. 

By Propositions \ref{prop:TensExp} and \ref{prop:CharReprCat}, every representable $\Tth$-category is $\otimes$-exponentiable. It is interesting to observe that the canonical map $Y^{(X,a_0\cdot\alpha)}\to Y^{(X,\alpha)}$ is actually an embedding, for every $(X,a_0,\alpha)$ in $(\Cat{\V})^\mT$ (see \citep[Lemma 5.2]{Hof13} for a proof for the approach case, the general case is similar). For a $\Tth$-category $X$, its \emph{presheaf $\Tth$-category} $PX$ is defined as $PX:=\V^{(TX)^\op}$ with structure relation denoted as $\fspstrP{-}{-}$. By the observation above, this definition coincides with the one given in \citep{CH09}. Proposition \ref{prop:TensExp} implies that the underlying $\V$-category $(PX)_0$ is a full subcategory of the presheaf $\V$-category of $M_0(X)$, where, for $\psi,\psi'\in PX$,
\[
[\psi,\psi']:=\fspstrP{e_{PX}(\psi)}{\psi'}=\bigwedge_{\fx\in TX}\hom(\psi(\fx),\psi'(\fx)).
\]
A slight adaptation of \citep[Theorem 2.5]{CH09} gives
\begin{theorem}\label{CharTDist}
Let $\varphi:X\krelto Y$ be a $\Tth$-relation. The following assertions are equivalent.
\begin{eqcond}
\item $\varphi:X\kmodto Y$ is a $\Tth$-distributor.
\item $\varphi:(TX)^\op\otimes Y\to\V$ is a $\Tth$-functor.
\item $\mate{\varphi}:Y\to PX$ is a $\Tth$-functor.
\end{eqcond}
\end{theorem}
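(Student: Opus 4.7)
The plan is to split the equivalence into two parts, treating (ii)$\Leftrightarrow$(iii) as a consequence of the tensor-hom adjunction in $\Cat{\Tth}$ and (i)$\Leftrightarrow$(ii) by a direct pointwise calculation.

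For (ii)$\Leftrightarrow$(iii) I would first argue that $(TX)^\op$ is core-compact, hence exponentiable by Proposition \ref{prop:TensExp}. Indeed, $TX$ with the Kock-Z\"oberlein structure $\Txi a\cdot m_X^\circ\cdot m_X$ is the free $\mT$-algebra on $X$, represented by $\alpha:=m_X$; it is therefore representable and, by Proposition \ref{prop:CharReprCat}, core-compact. Its underlying $\V$-category is $(TX,\hat a)$ with $\hat a=\Txi a\cdot m_X^\circ$, and by Proposition \ref{prop:DualVsCore} the dual $(TX)^\op=(TX,\hat a^\circ\cdot m_X)$ is again a representable, hence core-compact, $\Tth$-category. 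Consequently $(TX)^\op\otimes(-)\dashv(-)^{(TX)^\op}$, and the bijection between $\Tth$-functors $(TX)^\op\otimes Y\to\V$ and $\Tth$-functors $Y\to\V^{(TX)^\op}=PX$ is precisely the exponential-mate correspondence $\varphi\longleftrightarrow \mate{\varphi}$.

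For (i)$\Leftrightarrow$(ii) I would unpack both conditions. Condition (ii) says that for all $\fw\in T(TX\times Y)$, $\fx\in TX$ and $y\in Y$,
\[
\hat a(\fx,m_X T\pi_1\fw)\otimes b(T\pi_2\fw,y)\otimes\xi\cdot T\varphi(\fw)\le\varphi(\fx,y).
\]
Taking the supremum over all $\fw$ with fixed $T\pi_1\fw=\mathfrak{X}$, $T\pi_2\fw=\fy$ and invoking the defining formula for $\Txi\varphi$, this is equivalent to
\[
\hat a(\fx,m_X\mathfrak{X})\otimes\Txi\varphi(\mathfrak{X},\fy)\otimes b(\fy,y)\le\varphi(\fx,y) \tag{$*$}
\]
for all $\mathfrak{X}\in TTX$, $\fy\in TY$, $\fx\in TX$, $y\in Y$. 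On the other hand, unpacking $\varphi\kleisli a\le\varphi$ and $b\kleisli\varphi\le\varphi$ pointwise (using $\Txi a\cdot m_X^\circ=\hat a$) yields respectively
\[
\text{(a)}\quad\hat a(\fx',\fx)\otimes\varphi(\fx,y)\le\varphi(\fx',y),\qquad
\text{(b)}\quad\Txi\varphi(\mathfrak{X},\fy)\otimes b(\fy,y)\le\varphi(m_X\mathfrak{X},y).
\]
The implication (a)$\wedge$(b)$\Rightarrow(*)$ is a two-step chain: apply (b) to the right two factors of $(*)$, then (a) to the result, with $\fx':=\fx$ and intermediate variable $m_X\mathfrak{X}$. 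For $(*)\Rightarrow$(a), specialise $\mathfrak{X}:=e_{TX}(\fx)$ and $\fy:=e_Y(y)$, using the monad law $m_X\cdot e_{TX}=1$, the reflexivity $b(e_Y y,y)\ge k$, and the op-lax naturality $e_Y\cdot\varphi\le\Txi\varphi\cdot e_{TX}$ which yields $\varphi(\fx,y)\le\Txi\varphi(e_{TX}\fx,e_Y y)$. For $(*)\Rightarrow$(b), specialise $\fx:=m_X\mathfrak{X}$ and use reflexivity $\hat a(m_X\mathfrak{X},m_X\mathfrak{X})\ge k$.

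The main subtlety will be the passage between the ``$\forall\,\fw$'' form of (ii) and the ``$\forall\,\mathfrak{X},\fy$'' form $(*)$: one needs the sup-formula for $\Txi\varphi$ together with the observation that if $(\mathfrak{X},\fy)$ is not in the image of $\langle T\pi_1,T\pi_2\rangle$ then $\Txi\varphi(\mathfrak{X},\fy)=\bot$ and $(*)$ is automatic, so no weak-pullback hypothesis beyond those already assumed on $\mT$ is needed. Everything else is a routine manipulation of the Kleisli convolution together with the representability of free algebras.
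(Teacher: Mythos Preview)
Your argument is correct. The paper itself does not give a proof of this theorem; it merely states that it is ``a slight adaptation of \citep[Theorem 2.5]{CH09}'', so there is nothing to compare against beyond the referenced source.

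Two small remarks on your write-up. First, for (ii)$\Leftrightarrow$(iii) you do not actually need to argue core-compactness of $(TX)^\op$: the tensor--hom adjunction $X\otimes(-)\dashv(-)^X$ already holds at the level of $\Gph{\Tth}$ for \emph{any} $\Tth$-graph $X$ (see the paragraph preceding Proposition~\ref{prop:TensExp}), and the notion of $\Tth$-functor (=\,$\Tth$-graph morphism) makes sense regardless of whether $PX$ is transitive. So the mate correspondence $\varphi\leftrightarrow\mate{\varphi}$ is immediate. Core-compactness of $(TX)^\op$ is what guarantees that $PX$ is a $\Tth$-\emph{category}, but that fact is not needed for the bare equivalence. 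Second, in your derivation of (a) from $(*)$ there is a harmless variable clash: the $\fx$ you feed into $e_{TX}$ should be the \emph{second} argument of $\hat a$ in (a), not the first (which plays the role of the free $\fx$ in $(*)$). With that relabelling the specialisation goes through exactly as you indicate, using $m_X\cdot e_{TX}=1$, reflexivity of $b$, and $\varphi(\fx,y)\le\Txi\varphi(e_{TX}\fx,e_Yy)$ from op-lax naturality of $e$.
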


For each $\Tth$-category $X=(X,a)$, $a:X\kmodto X$ is a $\Tth$-distributor which gives us the \emph{Yoneda functor}
\[
 \yoneda_X=\mate{a}:X\to PX.
\]
We recall the following result from \citep{Hof11}.
\begin{theorem}\label{Yoneda}
Let $\psi:X\kmodto Z$ and $\varphi:X\kmodto Y$ be $\Tth$-distributors. Then, for all $\fz\in TZ$ and $y\in Y$,
\[
\fspstrP{T\mate{\psi}(\fz)}{\mate{\varphi}(y)}=(\varphi\whiteleft\psi)(\fz,y).
\]
In particular, for each $\psi\in PX$ and each $\fx\in TX$, $\psi(\fx)=\fspstrP{T\yoneda_X(\fx)}{\psi}$.
\end{theorem}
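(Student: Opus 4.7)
The plan is to compute both sides explicitly and match them. Proposition \ref{prop:TensExp}, applied to the core-compact exponent $(TX)^\op$ (representable because $TX$ is representable via $m_X$, and duals of representable $\Tth$-categories are representable) and the base $\V = (\V,\hom_\xi)$, gives
\[
\fspstrP{\fp}{h} = \bigwedge_{\substack{\fq \in T(PX \times TX),\ \fx \in TX \\ T\pi_1(\fq) = \fp}} \hom\bigl(c(T\pi_2(\fq),\fx),\ \hom(\xi \cdot T\ev(\fq),\ h(\fx))\bigr),
\]
with $c$ the structure of $(TX)^\op$ and $\ev:PX \times TX \to \V$ the evaluation. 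A short calculation using $c = b_0^\circ \cdot m_X$, where $b_0 = \Txi a \cdot m_X^\circ$ is the underlying $\V$-category structure of $TX$, yields $c(\fx',\fx) = b_0(\fx, m_X(\fx'))$.

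Set $\fp = T\mate{\psi}(\fz)$ and $h = \mate{\varphi}(y) = \varphi(-,y)$. Since $T$ preserves weak pullbacks, every $\fq \in T(PX \times TX)$ with $T\pi_1(\fq) = T\mate{\psi}(\fz)$ has the form $T\langle\mate{\psi}\pi_1,\pi_2\rangle(\fw)$ for some $\fw \in T(Z \times TX)$ with $T\pi_1(\fw) = \fz$. Using $\mate{\psi}(z)(\fx) = \psi(\fx,z)$, one has $T\pi_2(\fq) = T\pi_2(\fw)$ and $T\ev(\fq) = T\psi'(\fw)$, where $\psi'(z,\fx) := \psi(\fx,z)$; both expressions depend on $\fw$ only through $\fq$, so the infimum over $\fq$ coincides with the infimum over such $\fw$.

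Substituting and applying $\hom(u,\hom(v,w)) = \hom(u \otimes v, w)$ to pull $\xi \cdot T\psi'(\fw)$ outside the inner infimum, we are left with $\bigwedge_\fx \hom(b_0(\fx,\fu),\varphi(\fx,y))$, where $\fu := m_X \cdot T\pi_2(\fw)$. The distributor identity $\varphi \kleisli a = \varphi$ unfolds to $\bigvee_{\fu'} b_0(\fx,\fu') \otimes \varphi(\fu',y) = \varphi(\fx,y)$, so $b_0(\fx,\fu) \otimes \varphi(\fu,y) \le \varphi(\fx,y)$ for all $\fx$; together with $b_0(\fu,\fu) \ge k$ (at $\fx = \fu$) this gives the Yoneda-style identity
\[
\bigwedge_{\fx \in TX} \hom(b_0(\fx,\fu),\varphi(\fx,y)) = \varphi(\fu,y).
\]
Therefore $\fspstrP{T\mate{\psi}(\fz)}{\mate{\varphi}(y)} = \bigwedge_{\fw:\, T\pi_1(\fw) = \fz} \hom(\xi \cdot T\psi'(\fw),\,\varphi(m_X T\pi_2(\fw),y))$.

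Finally, expanding $(\varphi \whiteleft \psi)(\fz,y) = (\varphi \blackleft (\Txi\psi \cdot m_X^\circ))(\fz,y)$ via the formula for $\blackleft$ and the left-Kan-extension description of $\Txi\psi$ produces exactly the same expression, once variables are relabelled through the swap $T(Z \times TX) \cong T(TX \times Z)$. The particular case $\psi(\fx) = \fspstrP{T\yoneda_X(\fx)}{\psi}$ then follows by taking $\psi = a$ (so $\mate{a} = \yoneda_X$ and $\varphi \whiteleft a = \varphi$) and reading any $\psi \in PX$ as $\mate{\varphi}(\ast)$ for the unique distributor $\varphi: X \kmodto G$. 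The main technical obstacle is the weak-pullback reparametrization: one must verify that all admissible $\fq$ are captured and that the integrand is well-defined, independent of the lift $\fw$.
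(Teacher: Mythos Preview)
The paper does not prove this theorem here; it explicitly recalls it from \citep{Hof11} without argument, so there is no in-paper proof to compare against.

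Your computation is correct and follows exactly the route one would expect from unwinding the definitions assembled in this paper. A few remarks on the points you flag as delicate. The weak-pullback step is fine: the square
\[
\xymatrix{Z\times TX\ar[rr]^-{\langle\mate{\psi}\pi_1,\pi_2\rangle}\ar[d]_{\pi_1} && PX\times TX\ar[d]^{\pi_1}\\ Z\ar[rr]_-{\mate{\psi}} && PX}
\]
is an honest pullback in $\SET$, so $T$ sends it to a weak pullback; this is precisely the surjectivity you need. Your worry about well-definedness of the integrand is unnecessary: since $\pi_2\cdot\langle\mate{\psi}\pi_1,\pi_2\rangle=\pi_2$ and $\ev\cdot\langle\mate{\psi}\pi_1,\pi_2\rangle=\psi'$, the quantities $T\pi_2(\fw)$ and $T\psi'(\fw)$ factor through $\fq$, so the infimum over $\fw$ automatically equals the infimum over $\fq$. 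The Yoneda-style identity $\bigwedge_{\fx}\hom(\hat a(\fx,\fu),\varphi(\fx,y))=\varphi(\fu,y)$ is indeed the $\V$-categorical Yoneda lemma applied to the $\V$-category $M_0X=(TX,\hat a)$, legitimised by $\varphi\kleisli a=\varphi$; you use it correctly. The final swap $T(TX\times Z)\cong T(Z\times TX)$ is harmless. Your derivation of the ``in particular'' clause is right (modulo the mild notational overload of $\psi$ for both a distributor $X\kmodto Z$ in the main statement and an element of $PX$ in the special case).
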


\subsection*{Example: topological (and approach) spaces}

Regarding $\Tth=\Uth_\two$, an ordered compact Hausdorff space $X=(X,\le,\alpha)$ induces a topological space $X$ by stipulating that an ultrafilter $\fx\in UX$ converges to $x\in X$ whenever $\alpha(\fx)\le x$, that is, by making $\alpha(\fx)$ the smallest convergence point of $\fx$. The ordered compact Hausdorff space $\two$ (see Example \ref{ex:OrdMetComp}) induces the Sierpi\'nski space $\two$ where $\{1\}$ is closed and $\{0\}$ is open, and $\two^\op$ has $\{1\}$ open and $\{0\}$ closed. Representable topological T$_0$-spaces (under the name \emph{stably compact spaces} or \emph{well-compact spaces}) are well studied, we refer to  \citep{Sim82,Jun04,Law11} for more information. Below and until the end of this section we develop some well-known basic properties of these spaces, mainly to connect the convergence-theoretic perspective of this paper with the classical account via open subsets.

By Proposition \ref{prop:CharReprCat}, every representable topological space $X=(X,a)$ satisfies $a\cdot \Uxi a=a\cdot m_X$, which is equivalent to $X$ being core-compact (see \citep{Pis99}). In fact, slighly more can be said:

\begin{lemma}
Every representable topological space is locally compact.
\end{lemma}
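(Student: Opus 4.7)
My plan is to construct a compact neighborhood of $x$ directly from the data of Proposition~\ref{prop:CharReprCat}: a map $\alpha : UX \to X$ with $a = a_0 \cdot \alpha$, satisfying the pseudo-algebra identities $\alpha \cdot e_X \simeq 1_X$ and $\alpha \cdot m_X \simeq \alpha \cdot U\alpha$ of \eqref{eq:PseudoAlg}. The key observation is that $\alpha(\fx)$ is the least convergence point of $\fx$ in the specialization order $a_0$, and open subsets of $X$ are down-closed in $a_0$; hence whenever $\fx$ converges to some point of an open set $U$, already $\alpha(\fx) \in U$.

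Given $x \in X$ and an open neighborhood $U$ of $x$, core-compactness (in its classical reformulation via ultrafilter convergence, see \citep{Pis99} and Example~\ref{ex:AlexSubbase}) produces an open set $V$ with $x \in V$ and $V \ll U$, in the sense that every ultrafilter containing $V$ converges to some point of $U$. I then take
\[
 K \;:=\; V \;\cup\; \bigl\{\alpha(\fx) \mid \fx \in UX,\; V \in \fx\bigr\}
\]
as candidate for a compact neighborhood of $x$; the inclusions $V \subseteq K \subseteq U$ are immediate from the previous paragraph.

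The main step is to show that $K$ is compact, i.e., every ultrafilter $\fy$ on $X$ with $K \in \fy$ converges to a point in $K$. The ultrafilter property forces either $V \in \fy$ or $W := \bigl\{\alpha(\fx) \mid V \in \fx\bigr\} \in \fy$. The first case is easy: $\fy$ converges by $V \ll U$, and $\alpha(\fy)$ lies in $K$ by definition. In the second case, for every $y \in W$ choose an ultrafilter $\fx_y \in UX$ with $V \in \fx_y$ and $\alpha(\fx_y) = y$, extend $y \mapsto \fx_y$ arbitrarily to a map $s : X \to UX$, and set $\fz := m_X \cdot Us(\fy)$. A short calculation shows $V \in \fz$ (because $\{y : V \in s(y)\} \supseteq W \in \fy$), and, using $(\alpha \circ s)_*\fy = \fy$ (since $\alpha \circ s$ is the identity on $W \in \fy$) together with the pseudo-algebra identity $\alpha \cdot m_X \simeq \alpha \cdot U\alpha$, one obtains $\alpha(\fz) \simeq \alpha(\fy)$. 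Consequently $\fy \to \alpha(\fz) \in K$.

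The principal subtlety is the lack of separation: $\alpha$ is only a pseudo-algebra structure, so $\alpha \cdot e_X = 1_X$ and $\alpha \cdot m_X = \alpha \cdot U\alpha$ hold merely up to the specialization equivalence $\simeq$. This is precisely why $V$ must be adjoined to the $\alpha$-image in the definition of $K$ (otherwise the inclusion $V \subseteq K$ might fail), and why the case split on whether $V \in \fy$ is necessary for the compactness argument.
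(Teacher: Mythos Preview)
Your proof is correct, but it follows a genuinely different route from the paper's. The paper argues as follows: for any topological space $X$, the $\Tth$-category $UX$ is locally compact because the basic open sets $A^\# = \{\fx \in UX \mid A \in \fx\}$ (for $A \subseteq X$ open) are themselves compact --- indeed, any $\fX \in UUX$ with $A^\# \in \fX$ satisfies $m_X(\fX) \in A^\#$. When $X$ is representable, one may choose $\alpha$ so that $\alpha \cdot e_X = 1_X$ exactly (any choice of smallest convergence points satisfies $a = a_0 \cdot \alpha$, and Proposition~\ref{prop:CharReprCat} then guarantees $\alpha$ is a $\Tth$-functor), exhibiting $X$ as a continuous retract of $UX$; local compactness then descends along the retraction.

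Your approach instead starts from the consequence that $X$ is core-compact (Proposition~\ref{prop:CharReprCat}), obtains a way-below open $V \ll U$, and then \emph{constructs} the compact neighbourhood $K = V \cup \{\alpha(\fx) \mid V \in \fx\}$ directly, using the pseudo-algebra law $\alpha \cdot m_X \simeq \alpha \cdot U\alpha$ in the key ``lifting'' step. The paper's argument is shorter and more structural --- it never invokes classical core-compactness or the associativity law, and the compactness verification is a one-liner. Your argument, on the other hand, gives an explicit description of the compact neighbourhood inside $X$ and makes transparent exactly where the pseudo-algebra identity is needed; it also handles the non-separated case head-on rather than by adjusting $\alpha$. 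Both are valid; the paper's is more economical, yours more hands-on.
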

\begin{proof}
For every topological space $X$, the topology on $UX$ is generated by all sets of the form
\[
 A^\#=\{\fa\in UX\mid A\in\fa\},
\]
where $A\subseteq X$ is open. Furthermore, for any ultrafilter $\fX\in UUX$ with $A^\#\in\fX$, one has $m_X(\fX)\in A^\#$ and therefore $A^\#$ is compact; hence $UX$ is locally compact. If $X$ is representable, then $X$ is a split subobject of $UX$ (since $\alpha:UX\to X$ can be chosen so that $\alpha(e_X(x))=x$) and therefore also locally compact.
\end{proof}

Hence, a topological space $X$ is representable if and only if $X$ is locally compact and every ultrafilter on $X$ has a smallest convergence point (see Proposition \ref{prop:CharReprCat}). The latter condition says in particular that the set of limit points of an ultrafilter $\fx$ is irreducible. Since in any topological space an irreducible closed subset is the set of limit points of some ultrafilter, we find that $X$ is representable if and only if $X$ is locally compact, weakly sober (every irreducible closed subset is the closure of some point) and, for every $\fx\in UX$, the set of limit points of $\fx$ is irreducible. For any core-compact topological space $X$, the last condition is equivalent to stability of the way-below relation on the lattice of open subsets under finite intersections: $\bigcap_i U_i\ll\bigcap_i V_i$, for open subsets $U_1,\ldots,U_n$ and $V_1,\ldots,V_n$ ($n\in\N$) of $X$ with $U_i\ll V_i$ for each $1\le i\le n$ (see \citep{Sim82}). By definition, $U\ll V$ whenever every open cover of $V$ contains a finite subcover of $U$, which is the case if and only if every ultrafilter $\fx$ with $U\in\fx$ has a limit point in $V$. If $X$ is representable, $U\ll V$ if and only if any smallest limit point of an ultrafilter $\fx$ with $U\in\fx$ belongs to $V$. Hence:
\begin{proposition}
A topological space $X$ is representable if and only if $X$ is locally compact, weakly sober and the way-below relation on the lattice of opens is stable under finite intersection.
\end{proposition}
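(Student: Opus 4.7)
The plan is to assemble the proof from the equivalences already developed in the paragraph immediately preceding the proposition; each is a short convergence-theoretic argument, and the statement is essentially their concatenation. I would proceed in three steps.

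First, by Proposition \ref{prop:CharReprCat} together with the preceding lemma, $X$ is representable if and only if $X$ is locally compact and every ultrafilter $\fx$ on $X$ admits a smallest convergence point with respect to the specialization order $x\le y :\Leftrightarrow y\in\overline{\{x\}}$. Here one uses that local compactness implies core-compactness, and that any map $\alpha:UX\to X$ witnessing $a=a_0\cdot\alpha$ must pick out a smallest limit of $\fx$ since $a_0(\alpha(\fx),x)=k$ is equivalent to $\alpha(\fx)\le x$.

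Second, I would establish the equivalence \emph{``every $\fx$ has a smallest limit point''} $\Leftrightarrow$ \emph{``$X$ is weakly sober and $\lim(\fx)$ is irreducible for every $\fx\in UX$''}. The key observations are that $\lim(\fx)$ is always a closed subset of $X$ and is downward-closed in $\le$. Hence if $x_0$ is its smallest element, then $\overline{\{x_0\}}\subseteq\lim(\fx)$ while the smallest-element property forces the reverse inclusion, so $\lim(\fx)=\overline{\{x_0\}}$ is irreducible. The weak-sobriety half follows from the standard fact (applied via Proposition \ref{prop:UltExtExc}) that every irreducible closed set $F$ is of the form $\lim(\fx)$ for some ultrafilter $\fx$: such an $\fx$ has a smallest limit $x_0$, whence $F=\overline{\{x_0\}}$. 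Conversely, weak sobriety upgrades irreducibility of $\lim(\fx)$ into the equality $\lim(\fx)=\overline{\{x_0\}}$, providing the smallest limit.

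Third, and finally, I would prove the remaining equivalence: \emph{in a core-compact space,} \emph{``$\lim(\fx)$ is irreducible for every $\fx$''} $\Leftrightarrow$ \emph{``$\ll$ is stable under finite intersections on the lattice of opens.''} Using the characterisation $U\ll V\Leftrightarrow$ every ultrafilter with $U\in\fx$ has a limit in $V$, stability under binary intersection amounts to the property that whenever $\lim(\fx)\cap V_1\ne\varnothing$ and $\lim(\fx)\cap V_2\ne\varnothing$ for $V_1,V_2$ open sub-covers of opens $U_1\in\fx$, $U_2\in\fx$, then $\lim(\fx)\cap V_1\cap V_2\ne\varnothing$; this is exactly irreducibility of $\lim(\fx)$. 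The one direction uses that if $\lim(\fx)$ meets two opens $V_1,V_2$, one can choose $U_i\ll V_i$ with $U_i\in\fx$ by core-compactness, whence $U_1\cap U_2\ll V_1\cap V_2$ and $U_1\cap U_2\in\fx$ forces a limit in $V_1\cap V_2$. The converse constructs, from a failure of irreducibility, an ultrafilter witnessing failure of $\ll$-stability via Proposition \ref{prop:UltExtExc}.

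The main obstacle is the third step: carefully matching the ultrafilter-theoretic form of $\ll$ to the lattice-theoretic form of stability, and handling the ``base case'' (the empty intersection) which corresponds to compactness; this is the classical argument alluded to via the references \citep{Sim82,Jun04,Law11}, and the remaining equivalences amount to unwinding definitions.
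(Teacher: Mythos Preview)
Your proposal is correct and follows the paper's approach exactly: there is no separate proof environment for this proposition in the paper, it being stated as a summary of the discussion immediately preceding it, and you have correctly identified and assembled its three constituent equivalences (representable $\Leftrightarrow$ locally compact plus smallest limits; smallest limits $\Leftrightarrow$ weakly sober plus irreducible limit sets; irreducible limit sets $\Leftrightarrow$ $\ll$-stability, the last step being deferred to \citep{Sim82} in the paper whereas you sketch it).

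Two small slips worth flagging. First, with your convention $x\le y:\Leftrightarrow y\in\overline{\{x\}}$, closed sets are \emph{upward}-closed, not downward-closed; this does not affect the argument since you never actually use that claim, and your conclusion $\lim(\fx)=\overline{\{x_0\}}$ is correct regardless. Second, in Step~3 your ``one direction'' (stability plus core-compactness $\Rightarrow$ irreducibility of $\lim(\fx)$) and your ``converse'' (failure of irreducibility $\Rightarrow$ failure of stability) are the \emph{same} implication, the latter being the contrapositive of the former; you have therefore not sketched the direction irreducibility $\Rightarrow$ stability. That direction is, however, the easier one and requires no core-compactness: given $U_i\ll V_i$ and $\fx\ni U_1\cap U_2$, each $U_i\in\fx$ forces $\lim(\fx)\cap V_i\neq\varnothing$, whence irreducibility gives $\lim(\fx)\cap V_1\cap V_2\neq\varnothing$.
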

This stability condition on the way-below relation is sometimes replaced by a stability condition on the compact down-sets of $X$, as we explain next. Clearly, for $X$ representable, $\bigcap\varnothing=X$ is compact, and binary intersections of pairs of compact down-sets are compact: if $A,B\subseteq X$ are compact down-sets and $A\cap B\in\fx$, then any smallest convergence point of $\fx$ belongs to both $A$ and $B$ and therefore also to $A\cap B$. Secondly, since open subsets are down-closed, the down-closure (with respect to the underlying order) of a compact subset of a topological space is compact. Therefore, for a locally compact space $X$ and $U,V\subseteq X$ open,
\begin{equation}\label{eq:LocCompWayBelow}
 U\ll V\iff U\subseteq K\subseteq V\text{ for some compact down-set $K\subseteq X$.}
\end{equation}
From that one sees at once that stability of the way-below relation under finite intersection follows from stability of compact down-sets under finite intersection.
\begin{proposition}
A topological space $X$ is representable if and only if $X$ is locally compact, weakly sober and finite intersections of compact down-sets are compact.
\end{proposition}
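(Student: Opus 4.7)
The plan is to derive this characterisation directly from the preceding proposition by rewriting its way-below condition in terms of compact down-sets via the equivalence \eqref{eq:LocCompWayBelow}. The only new content beyond what the preceding discussion already provides is to package the two implications cleanly, with care for the empty intersection case (which amounts to compactness of $X$ itself).

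For the \emph{only if} direction, assume $X$ is representable. The preceding proposition already yields local compactness and weak sobriety; it remains to show that finite intersections of compact down-sets are compact. The empty intersection is $X$, which is compact because the left adjoint $\alpha\colon UX\to X$ provides a convergence point $\alpha(\fx)$ for every ultrafilter $\fx\in UX$. For a binary intersection $A\cap B$ of compact down-sets and an ultrafilter $\fx$ with $A\cap B\in\fx$, we have $A,B\in\fx$; since $A$ and $B$ are compact, $\fx$ admits convergence points in $A$ and in $B$, so the smallest convergence point $\alpha(\fx)$ of $\fx$ lies below a point in $A$ and below a point in $B$, and as both $A$ and $B$ are down-closed we conclude $\alpha(\fx)\in A\cap B$. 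All finite intersections follow by induction on the number of factors.

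For the \emph{if} direction, by the preceding proposition it suffices to prove that the way-below relation on the lattice of opens of $X$ is stable under finite intersection. So let $U_1,\ldots,U_n$ and $V_1,\ldots,V_n$ (with $n\ge 0$) be opens of $X$ with $U_i\ll V_i$ for each $i$. Since $X$ is locally compact, \eqref{eq:LocCompWayBelow} supplies a compact down-set $K_i$ with $U_i\subseteq K_i\subseteq V_i$ for each $i$. By hypothesis $\bigcap_{i=1}^{n}K_i$ is again a compact down-set (the empty case being the compactness of $X$ itself), and the inclusions $\bigcap_{i=1}^{n}U_i\subseteq \bigcap_{i=1}^{n}K_i\subseteq \bigcap_{i=1}^{n}V_i$ combined with \eqref{eq:LocCompWayBelow} yield $\bigcap_{i=1}^{n}U_i\ll \bigcap_{i=1}^{n}V_i$, as required. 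No significant obstacle is anticipated: the argument is essentially bookkeeping around \eqref{eq:LocCompWayBelow} and the preceding characterisation, and the only subtlety worth flagging is to handle the $n=0$ case explicitly on both sides, so that the phrase \emph{finite intersections of compact down-sets} is correctly interpreted as including the compactness of $X$.
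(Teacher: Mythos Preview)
Your proposal is correct and follows essentially the same approach as the paper: the discussion preceding the proposition already establishes both directions in exactly the way you describe, using the smallest-convergence-point argument for the forward direction and \eqref{eq:LocCompWayBelow} to translate between the two stability conditions for the converse. Your write-up simply makes the induction and the $n=0$ case more explicit than the paper's informal discussion.
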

By definition, a pseudo-homomorphism between representable topological spaces is a continuous map $f:X\to Y$ which preserves smallest convergence points of ultrafilters. 
\begin{proposition}\label{prop:SpecMap}
Let $f:X\to Y$ be a continuous map between representable topological spaces. Then the following assertions are equivalent.
\begin{eqcond}
\item\label{cond:spec1} $f$ is a pseudo-homomorphism.
\item\label{cond:spec2} For every compact down-set $K\subseteq Y$, $f^{-1}(K)$ is compact.
\item\label{cond:spec3} The frame homomorphism $f^{-1}:\calO Y\to\calO X$ preserves the way-below relation.
\end{eqcond}
\end{proposition}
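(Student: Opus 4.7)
The plan is to prove the cycle \eqref{cond:spec1}$\Rightarrow$\eqref{cond:spec2}$\Rightarrow$\eqref{cond:spec3}$\Rightarrow$\eqref{cond:spec1}. Throughout I would write $\alpha\colon UX\to X$ and $\beta\colon UY\to Y$ for the left adjoints to the respective units, so $\alpha(\fx)$ is the smallest convergence point of $\fx\in UX$. Two standing observations will be invoked repeatedly. First, continuity of $f$ alone already yields $\beta\cdot Uf\le f\cdot\alpha$ in the specialization order (since $\fx\to\alpha(\fx)$ forces $Uf(\fx)\to f(\alpha(\fx))$, and $\beta(Uf(\fx))$ is the least such limit), so only the reverse inequality is at stake in \eqref{cond:spec1}. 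Second, a continuous map is monotone for the specialization order, hence the preimage of a down-set is a down-set; since open sets are themselves down-sets in this order, preimages of opens are both open and down-closed.

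For \eqref{cond:spec1}$\Rightarrow$\eqref{cond:spec2}, I would take a compact down-set $K\subseteq Y$ and an ultrafilter $\fx$ on $X$ with $f^{-1}(K)\in\fx$, and aim to show $\alpha(\fx)\in f^{-1}(K)$. Since $K\in Uf(\fx)$, compactness should produce a convergence point of $Uf(\fx)$ in $K$; because $K$ is down-closed and $\beta(Uf(\fx))$ lies below every such limit, $\beta(Uf(\fx))\in K$. The pseudo-homomorphism identity then gives $f(\alpha(\fx))\le\beta(Uf(\fx))$, and the down-set property again places $f(\alpha(\fx))$ in $K$, as required.

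For \eqref{cond:spec2}$\Rightarrow$\eqref{cond:spec3}, the argument should be a direct application of \eqref{eq:LocCompWayBelow}: given $U\ll V$ in $\calO Y$, choose a compact down-set $K$ with $U\subseteq K\subseteq V$; then $f^{-1}(K)$ is compact by hypothesis and a down-set by monotonicity of $f$, so \eqref{eq:LocCompWayBelow} in $X$ yields $f^{-1}(U)\ll f^{-1}(V)$.

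The crucial step is \eqref{cond:spec3}$\Rightarrow$\eqref{cond:spec1}, which I would argue by contradiction. Suppose the inequality $f\cdot\alpha\le\beta\cdot Uf$ fails; then there exist $\fx\in UX$ and an open $V\subseteq Y$ containing $\beta(Uf(\fx))$ but not $f(\alpha(\fx))$. Local compactness combined with \eqref{eq:LocCompWayBelow} should furnish an open $W\ni\beta(Uf(\fx))$ with $W\ll V$, whence by hypothesis $f^{-1}(W)\ll f^{-1}(V)$. Since $W\in Uf(\fx)$ we have $f^{-1}(W)\in\fx$, so $\fx$ must have a convergence point in $f^{-1}(V)$; by minimality $\alpha(\fx)$ lies below that point, and because $f^{-1}(V)$ is open and therefore down-closed, this forces $\alpha(\fx)\in f^{-1}(V)$, i.e., $f(\alpha(\fx))\in V$, a contradiction. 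The main thing that requires care is keeping the specialization-order convention straight (open = down-set, closed = up-set) so that monotonicity of $f$ and the ``down-closure of a compact set is compact'' principle interact correctly with the way-below relation; beyond that, no serious obstacle is anticipated.
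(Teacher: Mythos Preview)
Your proof is correct and follows essentially the same cycle \eqref{cond:spec1}$\Rightarrow$\eqref{cond:spec2}$\Rightarrow$\eqref{cond:spec3}$\Rightarrow$\eqref{cond:spec1} as the paper, with the same use of \eqref{eq:LocCompWayBelow}; the paper merely says ``clearly'' for \eqref{cond:spec1}$\Rightarrow$\eqref{cond:spec2} and phrases \eqref{cond:spec3}$\Rightarrow$\eqref{cond:spec1} directly (showing $f(\alpha(\fx))\le y$ for every limit $y$ of $Uf(\fx)$) rather than by contradiction, but the substance is identical.
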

\begin{proof}
Cleary, \eqref{cond:spec1}$\Rw$\eqref{cond:spec2}; and the implication \eqref{cond:spec2}$\Rw$\eqref{cond:spec3} follows from \eqref{eq:LocCompWayBelow}. Assume now \eqref{cond:spec3} and let $x\in X$ be a smallest convergence point of $\fx\in UX$. Assume that $Uf(\fx)\to y\in Y$. Let $U,V\subseteq Y$ be open subsets with $y\in U\ll V$. Then $f^{-1}(U)\ll f^{-1}(V)$ and $f^{-1}(U)\in\fx$, hence $x\in f^{-1}(V)$ and therefore $f(x)\in V$. We conclude that $f(x)\le y$.
\end{proof}
A continuous map $f:X\to Y$ satisfying condition \eqref{cond:spec2} (and hence also \eqref{cond:spec1} and \eqref{cond:spec3}) above is called \emph{spectral}. 
\begin{corollary}
Let $X$ be a representable topological space. A continuous map $\varphi:X\to\two$ is a homomorphism if and only if the open set $\varphi^{-1}(0)$ is compact.
\end{corollary}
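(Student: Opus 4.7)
The plan is to deduce this as an immediate corollary of Proposition \ref{prop:SpecMap}, once two small remarks are in place. First, every representable topological space $X$ is compact: by Proposition \ref{prop:CharReprCat} there exists a map $\alpha:UX\to X$ with $a=a_0\cdot\alpha$, so that $\alpha(\fx)$ is a convergence point of every ultrafilter $\fx\in UX$; hence every ultrafilter on $X$ converges. Second, the Sierpi\'nski space $\two$ is $\mathrm{T}_0$, hence separated (its specialisation order $0\le 1$ is antisymmetric), so pseudo-homomorphisms $X\to\two$ coincide with (strict) homomorphisms.

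Next I would identify the compact down-sets of $\two$ with respect to its induced order $0\le 1$. Since $\two$ is finite, every subset is compact; the down-sets are precisely $\varnothing$, $\{0\}$ and $\{0,1\}=\two$. By Proposition \ref{prop:SpecMap}, the continuous map $\varphi$ is a (pseudo-)homo\-morphism if and only if each of $\varphi^{-1}(\varnothing)=\varnothing$, $\varphi^{-1}(\{0\})=\varphi^{-1}(0)$ and $\varphi^{-1}(\{0,1\})=X$ is compact. The first of these is trivially compact, and the third is compact by the first remark above; thus the only non-trivial condition is compactness of $\varphi^{-1}(0)$, which is already known to be open as the preimage of the open point $\{0\}\subseteq\two$.

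There is no serious obstacle here: the argument is essentially bookkeeping on the three-element lattice $\calO\two$ together with the observation that representability already forces compactness of $X$, so that the general criterion of Proposition \ref{prop:SpecMap} collapses to a single non-trivial condition.
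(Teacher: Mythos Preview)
Your argument is correct and is exactly the intended one: the paper states this corollary without proof immediately after Proposition~\ref{prop:SpecMap}, and your derivation---specialising condition~(ii) of that proposition to $Y=\two$, noting that the compact down-sets of $\two$ are $\varnothing$, $\{0\}$, $\two$, and that $\varnothing$ and $X$ are automatically compact (the latter by representability)---is precisely how one fills in the details. Your remark that separatedness of $\two$ collapses pseudo-homomorphism to homomorphism is also the right observation to make here.
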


From Lemma \ref{lem:MapsReprCats} we obtain

\begin{proposition}
Let $(X,\le,\alpha)$ be an ordered compact Hausdorff space and let $a=\le\cdot\alpha$ be the induced topology. A subset $A\subseteq X$ is open in $(X,a)$ if and only if $A$ is down-closed and open in the compact Hausdorff space $(X,\alpha)$.
\end{proposition}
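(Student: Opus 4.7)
My plan is to apply Lemma~\ref{lem:MapsReprCats} to the characteristic map of $A$. First I will recall that a subset $A\subseteq X$ of a topological space is open precisely when the map $\chi_A:X\to\two$ sending $A$ to $0$ and $X\setminus A$ to $1$ is continuous, where $\two$ carries the Sierpi\'nski topology with $\{0\}$ open---that is, the topology induced by the ordered compact Hausdorff space $(\two,\le,\mathrm{id}_{\two})$ of Example~\ref{ex:OrdMetComp}, whose compact Hausdorff structure $\beta$ is forced to be the identity since $\two$ is finite. Both $(X,a)$ and $\two$ being representable, Lemma~\ref{lem:MapsReprCats} reduces continuity of $\chi_A$ to the conjunction of two conditions: that $\chi_A:(X,\le)\to(\two,\le)$ is monotone, and that
\[
 \beta\cdot U\chi_A(\fx)\le\chi_A\cdot\alpha(\fx)\qquad\text{for every }\fx\in UX.
\]

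Next I will translate each of these conditions separately. The monotonicity condition unwinds to ``$x\le y$ and $y\in A$ imply $x\in A$'', which is exactly down-closedness of $A$ with respect to $\le$. For the second condition I will use finiteness of $\two$: the image ultrafilter $U\chi_A(\fx)$ is the principal ultrafilter at $0$ precisely when $A\in\fx$, and at $1$ otherwise, so $\beta\cdot U\chi_A(\fx)=0$ iff $A\in\fx$. Consequently the displayed inequality holds automatically when $\alpha(\fx)\notin A$, whereas for $\alpha(\fx)\in A$ it forces $A\in\fx$; this is precisely the condition that $A$ be open in the compact Hausdorff space $(X,\alpha)$. Combining the two translations yields the proposition.

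The main thing to watch for is bookkeeping of orientations---the Sierpi\'nski convention $\{0\}$ open, the corresponding order $0\le 1$ on $\two$, and the convention that characteristic maps of open sets send points of the open set to $0$---but no deeper obstacle arises. As a sanity check, the direct convergence-theoretic argument is equally short: if $A$ is down-closed and open in $(X,\alpha)$ and $\fx\to x$ in $(X,a)$, meaning $\alpha(\fx)\le x$, with $x\in A$, then $\alpha(\fx)\in A$ by down-closedness, hence $A\in\fx$ by openness in $(X,\alpha)$; conversely, applying openness in $(X,a)$ to the convergence $\fx\to\alpha(\fx)$ and to the principal ultrafilters $e_X(y)\to x$ for $y\le x$ recovers both properties of $A$.
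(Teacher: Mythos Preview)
Your proof is correct and follows exactly the approach the paper intends: the paper states this proposition as an immediate consequence of Lemma~\ref{lem:MapsReprCats}, and your argument is precisely the unwinding of that lemma applied to the characteristic map $\chi_A:X\to\two$, with the orientations handled correctly. The direct convergence-theoretic sanity check you add at the end is a welcome bonus but not needed.
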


\begin{proposition}
Let $X$ be a representable space, $\fx\in UX$ and $x_0\in X$ be a smallest convergence point of $\fx$. For any $x\in X$, $x\le x_0$ if and only if $\fx$ contains all complements of compact down-sets $B$ with $x\notin B$.
\end{proposition}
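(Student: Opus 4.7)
The plan is to prove the two implications separately, using the two main features of representable spaces established earlier in this section: the fact that $x_0=\alpha(\fx)$ is actually a convergence point of $\fx$, and local compactness (hence a neighbourhood base of compact down-sets around $x_0$, via~\eqref{eq:LocCompWayBelow}).

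For the forward direction, assume $x\le x_0$ and let $B\subseteq X$ be a compact down-set with $x\notin B$; I want to show $B\notin\fx$, hence $X\setminus B\in\fx$. Suppose for contradiction that $B\in\fx$. Then the trace of $\fx$ on $B$ is an ultrafilter on $B$, and by compactness of $B$ it converges in $B$ to some $y\in B$; this forces $\fx\to y$ in $X$ as well, since open neighbourhoods of $y$ in $X$ restrict to open neighbourhoods of $y$ in $B$. Because $x_0=\alpha(\fx)$ is the \emph{smallest} convergence point of $\fx$, we get $x_0\le y$, and since $B$ is a down-set, $x_0\in B$. But then $x\le x_0\in B$ forces $x\in B$, contradicting $x\notin B$.

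For the converse, I argue contrapositively: suppose $x\not\le x_0$ and construct a compact down-set $B$ with $x\notin B$ but $B\in\fx$. Since the order on a representable space is exactly the specialization preorder of its topology (the open sets being down-closed), $x\not\le x_0$ means there is an open set $U\subseteq X$ with $x_0\in U$ and $x\notin U$. Because $x_0$ is a convergence point of $\fx$, we have $U\in\fx$. Now I invoke local compactness (the Lemma preceding Proposition~\ref{prop:SpecMap}) together with equivalence~\eqref{eq:LocCompWayBelow}: there exist an open $V$ and a compact down-set $B$ with $x_0\in V\subseteq B\subseteq U$. Then $V\in\fx$ (again since $\fx\to x_0$), so $B\in\fx$; and $B\subseteq U$ together with $x\notin U$ yields $x\notin B$. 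This $B$ contradicts the assumed property of $\fx$, completing the proof.

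The only mildly delicate point is the extraction of an honest convergence point $y\in B$ from ``$B\in\fx$ and $B$ compact''; everything else is a direct application of results already assembled in this section (local compactness of representable spaces and the down-set form~\eqref{eq:LocCompWayBelow} of the way-below relation). No additional separation hypothesis on $X$ is needed, because the statement is about the order already fixed by the representable structure.
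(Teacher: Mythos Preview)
Your proof is correct and follows essentially the same route as the paper's: the forward direction is exactly the compactness-plus-down-set argument you spell out (the paper merely asserts it in one line), and your contrapositive for the converse is the same local-compactness argument the paper gives directly, just rephrased. The only cosmetic difference is that the paper argues ``$x$ lies in every compact down-set neighbourhood of $x_0$'' and leaves the local-compactness conclusion implicit, whereas you start from $x\not\le x_0$ and explicitly produce the offending $B$ via \eqref{eq:LocCompWayBelow}.
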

\begin{proof}
If $x\le x_0$, then $\fx$ cannot contain any compact down-sets $B$ with $x\notin B$. Assume now that $\fx$ contains these subsets. Take a neighbourhood $B$ of $x_0$ where $B$ is a compact down-set. Then $x\in B$ since otherwise $B\in\fx$ and $X\setminus B\in\fx$. 
\end{proof}

\begin{corollary}\label{cor:TopDual}
Let $X$ be a representable space. Then the topology of $X^\op$ is generated by the complements of compact down-sets $B$ of $X$. Furthermore, the ultrafilter convergence of the topology generated by the opens and the complements of compact down-sets of $X$ is given by taking smallest convergence points of ultrafilters of $X$.
\end{corollary}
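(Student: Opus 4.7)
My plan is to determine each of the two topologies through its ultrafilter convergence, exploiting the fact that for topological spaces the topology is completely determined by its ultrafilter convergence. The preceding Proposition then does almost all of the work.

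For the first assertion, let $\calS$ denote the collection of complements of compact down-sets of $X$. Since $\calS$ is a subbase and ultrafilters are closed under finite intersection and up-closure, an ultrafilter $\fx$ converges to $x$ in the topology generated by $\calS$ if and only if $X\setminus B\in\fx$ for every compact down-set $B$ of $X$ with $x\notin B$. By the preceding Proposition this is precisely the condition $x\le\alpha(\fx)$, where $\alpha(\fx)$ denotes the smallest convergence point of $\fx$ in $X$. On the other hand, the topology of $X^\op=(X,a_0^\circ\cdot\alpha)$ has convergence $\fx\to x$ iff $(a_0^\circ\cdot\alpha)(\fx,x)=\top$ in $\two$; unwinding matrix composition gives $a_0(x,\alpha(\fx))=\top$, that is, $x\le\alpha(\fx)$. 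Hence the two topologies induce the same ultrafilter convergence and therefore coincide.

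For the second assertion, the topology generated by the opens of $X$ together with $\calS$ is the join, in the lattice of topologies on $X$, of the topology of $X$ and that of $X^\op$. Since the union of the two subbases is again a subbase, an ultrafilter $\fx$ converges to $x$ in this join iff it converges to $x$ both in $X$ and in $X^\op$, i.e.\ iff $\alpha(\fx)\le x$ and $x\le\alpha(\fx)$. Consequently $\fx$ converges precisely to $\alpha(\fx)$ (and to whatever points are equivalent to it in the underlying order), so the convergence of the patch topology is obtained by sending each ultrafilter to its smallest convergence point in $X$.

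No serious obstacle is expected: the substance sits in the preceding Proposition, and the remaining ingredients (that the subbase-induced convergence coincides with the convergence of the generated topology, and that a topology is determined by its ultrafilter convergence) are elementary. The only mildly delicate point is to keep in mind that the same map $\alpha\colon UX\to X$ provides the representable structure for both $X$ and $X^\op$, so that the smallest convergence points in $X$ and the largest convergence points in $X^\op$ are captured by the single value $\alpha(\fx)$.
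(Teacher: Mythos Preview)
Your proposal is correct and follows exactly the route the paper intends: the corollary is stated without proof because it is meant to drop out of the preceding Proposition by comparing ultrafilter convergences, and you carry this out cleanly. The only thing to tidy is presentation (the final paragraph is commentary rather than proof), but mathematically there is nothing to add.
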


We note that this notion of dual space was introduced by M. Hochster (see \citep{Hoc69}).

\begin{corollary}
Let $(X,\le,\alpha)$ be an anti-symmetric ordered compact Hausdorff space. Then the topology of $(X,\alpha)$ is generated by the open subsets and the complements of compact down-sets of the representable space $(X,\le\cdot\alpha)$.
\end{corollary}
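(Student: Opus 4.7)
The plan is to reduce this corollary directly to Corollary \ref{cor:TopDual} by comparing ultrafilter convergences, using the fact that the ultrafilter convergence of a topological space determines the topology uniquely.

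Let $\tau$ denote the compact Hausdorff topology on $X$ whose convergence is $\alpha:UX\to X$, and let $\tau'$ denote the topology generated by the open sets of $Y=(X,\le\cdot\alpha)$ together with the complements of compact down-sets of $Y$. We must show $\tau=\tau'$. First I would observe that, by Corollary \ref{cor:TopDual}, the ultrafilter convergence of $\tau'$ is precisely the map sending each $\fx\in UX$ to its smallest convergence point in $Y$. Since $\fx$ converges to $x$ in $Y$ exactly when $\alpha(\fx)\le x$, and $\alpha(\fx)\le\alpha(\fx)$ trivially, the element $\alpha(\fx)$ is always a convergence point of $\fx$ in $Y$; by anti-symmetry it is the unique smallest one. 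Hence the convergence of $\tau'$ coincides with $\alpha$, which is also the convergence of the compact Hausdorff topology $\tau$.

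Now I would invoke the classical fact that a topology is determined by its ultrafilter convergence (a subset $C\subseteq X$ is closed in a topological space iff every ultrafilter containing $C$ converges only to points of $C$). Since $\tau$ and $\tau'$ induce the same ultrafilter convergence on $X$, we conclude $\tau=\tau'$.

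The potential subtlety — and the one point worth verifying carefully — is that $\tau'$ is really a topology on $X$ whose ultrafilter convergence is single-valued and equal to $\alpha$; formally this is exactly the content of Corollary \ref{cor:TopDual} applied to the representable space $Y$, together with anti-symmetry of the order $\le$ giving uniqueness of smallest convergence points. Everything else is a direct application of the preceding results, so no further obstacle is expected.
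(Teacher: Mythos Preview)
Your argument is correct and is precisely the intended one: the paper states this result as an immediate corollary of Corollary~\ref{cor:TopDual} without further proof, and your derivation---identifying the unique smallest convergence point of $\fx$ in $(X,\le\cdot\alpha)$ as $\alpha(\fx)$ via anti-symmetry, and then using that a topology is determined by its ultrafilter convergence---is exactly how one unpacks that corollary. The only minor quibble is phrasing: Corollary~\ref{cor:TopDual} gives the convergence of $\tau'$ as the \emph{relation} ``$\fx\to x$ iff $x$ is a smallest convergence point of $\fx$'', which only becomes a single-valued map after you invoke anti-symmetry; but you do invoke it, so the logic is sound.
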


It is ``folklore'' that the category of anti-symmetric ordered compact Hausdorff spaces and homomorphisms is equivalent to the category $\STCOMP$ of stably compact spaces and spectral maps (the first appearance of this result seems to be \citep{GHK+80}), which is the restriction of Theorem \ref{thm:AlgVAlgT} to separated spaces. A stably compact space $X$ is called \emph{spectral} whenever the compact opens form a basis for the topology (which is equivalent to the statement that the source $(\varphi:X\to\two)$ of all homomorphisms into $\two$ is point-separating and initial in $\TOP$, and also in the category of stably compact spaces and spectral maps). For a spectral space $X$ and $U,V\subseteq X$ open, $U\ll V$ if and only if $U\subseteq K\subseteq V$  for some compact open subset $K\subseteq X$; hence a continuous map $f:Y\to X$ (where $Y$ is representable) is spectral if and only if the inverse image of every compact open subset of $X$ is compact in $Y$. A famous result of M.H. Stone \citep{Sto38} states that the category of spectral spaces and spectral maps is dually equivalent to the category $\DLAT$ of distributive lattices and homomorphisms. A different perspective on this duality was given in \citep{Pri70}: $\DLAT$ is also dually equivalent to the category of (nowadays called) \emph{Priestley spaces} and homomorphisms. Here a Priestley spaces is an anti-symmetric ordered compact Hausdorff space where $x\not\le y$ implies the existence of an clopen down-set $V$ and a clopen up-set $U$ with $x\in U$, $y\in V$ and $U\cap V=\varnothing$ (equivalently: the source $(\varphi:X\to\two)$ of all homomorphisms into $\two$ is point-separating and initial in the category of ordered compact Hausdorff spaces and homomorphisms). In particular, both results together imply the equivalence between spectral spaces and Priestley spaces which is a restriction of the aforementioned equivalence between stably compact spaces and anti-symmetric ordered compact Hausdorff spaces. We also note that, for $X$ compact Hausdorff, $X$ is spectral if and only if the simultaneously closed and open subsets of $X$ form a basis for the topology of $X$, i.e.\ if $X$ is a \emph{Stone space}. Every continuous map between Stone spaces is spectral, and the full subcategory $\STONE$ of $\SPEC$ defined by all Stone spaces is dually equivalent to the category $\BOOL$ of Boolean algebras and homomorphisms (\citep{Sto36,Joh86}).

The case of metric compact Hausdorff spaces (we consider now $\Tth=\Uth_{\Pp}$) was studied in \citep{GH12}). An approach space $X=(X,a)$ is representable if and only if $X$ is weakly sober, $+$-exponentiable and has the property that $a(\fx,-)$ is an approach prime element, for every $\fx\in UX$ (we refer to \citep{BLO06} and \citep{Olm05} for the theory of sober approach space). The metric compact Hausdorff space $\Pp$ (see Example \ref{ex:OrdMetComp}) induces the ``Sierpi\'nski approach space'' $\Pp$ with approach convergence structure $\lambda(\fx,x)=x\trunminus\xi(\fx)$; but, in contrast to the topological case, $\Pp$ is not isomorphic to $\Pp^\op$ (for instance, $\Pp$ is injective but $\Pp^\op$ is not). Similarly, the ultrametric compact Hausdorff space $\Pm$ produces the approach space $\Pm$. We note that both $\Pp$ and $\Pm$ have the same underlying topological space.

\section{Cocomplete $\Tth$-categories}\label{sect:CoCts}

By an appropriate translation from the $\V$ to the $\Tth$-case one can transport the notions of weighted colimit (see \citep{EK66,Kel82}) and cocompleteness into the realm $\Tth$-categories, as we recall now briefly from \citep{Hof11} and \citep{CH09a}. A weighted colimit diagram in a $\Tth$-category $X$ is given by a $\Tth$-functor $d:D\to X$ and a $\Tth$-distributor $\varphi:D\kmodto G$ (where $G=(1,e_1^\circ)$).
\[
 \xymatrix{D\ar@{-_{>}}|-{\object@{o}}[d]_\varphi\ar[r]^d & X\\ G}
\]
A \emph{colimit} of such weighted diagram is an element $x\in X$ which represents $d_\Tast\whiteleft \varphi$, that is, $x_\Tast=d_\Tast\whiteleft \varphi$. If such $x$ exists, it is unique up to equivalence, and one calls $x$ a \emph{$\varphi$-weighted colimit of $d$} and writes $x\simeq \colim (d,\varphi)$. We say that a $\Tth$-functor $f:X\to Y$ \emph{preserves} the $\varphi$-weighted colimit $x$ of $d$ if $f(x)$ is the $\varphi$-weighted colimit of $f\cdot d$, that is, if $f(x)_\Tast=(f\cdot d)_\Tast\whiteleft \varphi$. A $\Tth$-functor $f:X\to Y$ is called \emph{cocontinuous} if it preserves all weighted colimits which exist in $X$, and a $\Tth$-category $X$ is \emph{cocomplete} if every weighted colimit diagram in $X$ has a colimit in $X$. As in the $\V$-category case, cocompleteness of $X$ follows from the existence of colimits along identities. In fact, for any weight $\varphi:D\modto G$, the $\varphi$-weighted colimit of $d$ exists if and only if the $(\varphi\kleisli d^\Tast)$-weighted colimit of $1_X:X\to X$ exists, and in that case one has $\colim(d,\varphi)\simeq\colim(1_X,\varphi\kleisli d^\Tast)$. Moreover, a $\Tth$-functor $f:X\to Y$ preserves the $\varphi$-weighted colimit of $d$ if and only if it preserves the $(\varphi\kleisli d^\Tast)$-weighted colimit of $1_X$. In the sequel we will write $\Sup_X(\psi)$ (or simply $\Sup(\psi)$) instead of $\colim(1_X,\psi)$.

For a cocomplete $\Tth$-category $X$, the map $\Sup_X:PX\to X$ turns out to be left adjoint to the Yoneda embedding $\yoneda_X:X\to PX$ in $\Cat{\V}$; however, $\Sup_X$ is in general not a $\Tth$-functor (see \cite[Example 5.7]{HW11}). A $\Tth$-category $X$ is called \emph{totally cocomplete} whenever $\yoneda_X:X\to PX$ has a left adjoint $\Sup_X:PX\to X$ in $\Cat{\Tth}$. Curiously, total cocompleteness can be characterised by the existence of a slighly more general type of colimits, as we explain next. From now on we let in a \emph{weighted colimit diagram the weight $\varphi:D\kmodto A$ be an arbitrary $\Tth$-distributor}. A colimit of such a diagram is a $\Tth$-functor $g:A\to X$ which represents $d_\Tast\whiteleft \varphi$ in the sense that $g_\Tast=d_\Tast\whiteleft \varphi$, and we write $g\simeq \colim (d,\varphi)$. We note that one still has $\colim(d,\varphi)\simeq\colim(1_X,\varphi\kleisli d^\Tast)$. A $\Tth$-functor $f:X\to Y$ preserves the $\varphi$-weighted colimit $g$ of $d$ if $f\cdot g$ is the $\varphi$-weighted colimit of $f\cdot d$, that is, if $(f\cdot g)_\Tast=(f\cdot d)_\Tast\whiteleft \varphi$. We write
\[
 \Cocts{\Tth}
\]
to denote the category of totally cocomplete $\Tth$-categories and weighted colimit preserving $\Tth$-functors, and $\Cocts{\Tth}_\sep$ for its full subcategory defined by the separated $\Tth$-categories. As before, in the $\V$-case we use the designations $\Cocts{\V}$ and $\Cocts{\V}_\sep$.

For every $\Tth$-distributor $\varphi:X\kmodto Y$, the function $-\kleisli\varphi:PY\to PX$ is actually a $\Tth$-functor $P\varphi:PY\to PX$, and this construction yields a functor $P:\Mod{\Tth}^\op\to\Cat{\Tth}$. In fact:
\begin{theorem}
The functor $P:\Mod{\Tth}^\op\to\Cat{\Tth}$ is right adjoint to $(-)^\Tast:\Cat{\Tth}\to\Mod{\Tth}^\op$. The units of this adjunction are given by $\yoneda_X:X\to PX$ and $(\yoneda_X)_\Tast:X\kmodto PX$ respectively. The induced monad $\mP=\pmonad$ on $\Cat{\Tth}$ is of Kock-Z\"oberlein type (here $\yonmult_X=-\kleisli(\yoneda_X)_\Tast$).
\end{theorem}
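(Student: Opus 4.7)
The plan is to extract the adjunction from the mate correspondence and then derive the Kock--Z\"oberlein property from the $2$-functoriality of $P$. By Theorem~\ref{CharTDist}, for each pair of $\Tth$-categories $X, Y$ there is a natural bijection $\mate{-}:\Mod{\Tth}(Y,X)\to\Cat{\Tth}(X,PY)$, which, under the identification $\Mod{\Tth}(Y,X) = \Mod{\Tth}^\op(X^\Tast, Y)$, witnesses $(-)^\Tast\dashv P$. We would verify naturality in $X$ as $\mate{\varphi\kleisli f_\Tast} = \mate{\varphi}\cdot f$ for a $\Tth$-functor $f:X'\to X$, and naturality in $Y$ as $\mate{\varphi\kleisli\alpha} = P\alpha\cdot\mate{\varphi}$ for a $\Tth$-distributor $\alpha:Y'\kmodto Y$, which is exactly the defining rule $P\alpha = -\kleisli\alpha$.

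Next we identify the units. The unit at $X$ corresponds to $1_{X^\Tast}$, i.e.\ the structure distributor $a:X\kmodto X$, whose mate is $\yoneda_X$ by definition of the Yoneda functor. The counit at $Y$ corresponds to $1_{PY}$; its inverse mate $\gamma:Y\kmodto PY$ satisfies $\gamma(\fy,\psi) = 1_{PY}(\psi)(\fy) = \psi(\fy)$, and by Yoneda's Theorem~\ref{Yoneda} this equals $\fspstrP{T\yoneda_Y(\fy)}{\psi} = (\yoneda_Y)_\Tast(\fy,\psi)$, identifying $\gamma = (\yoneda_Y)_\Tast$. Consequently the induced monad has endofunctor $T = P\circ(-)^\Tast$, unit $\yoneda$, and multiplication $\yonmult_X = P(\epsilon_{X^\Tast}) = -\kleisli (\yoneda_X)_\Tast$, as asserted.

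For the Kock--Z\"oberlein property, the strategy is to show $T\yoneda_X \dashv \yonmult_X$ in $\Cat{\Tth}$, which is equivalent (by general $2$-categorical facts) to the form $\yonmult_X\dashv\yoneda_{PX}$ employed earlier for $\mT$. The pivotal observation will be that $P$ is locally monotone: if $\alpha\le\beta$ in $\Mod{\Tth}(X',Y')$, then $\psi\kleisli\alpha\le\psi\kleisli\beta$ pointwise for every $\psi\in PY'$, which by the hom-set characterisation in Subsection~VII translates to $P\alpha\le P\beta$ in $\Cat{\Tth}(PY',PX')$. Since locally monotone functors preserve adjunctions, starting from $(\yoneda_X)_\Tast\dashv(\yoneda_X)^\Tast$ in $\Mod{\Tth}$ (valid because $\yoneda_X$ is a $\Tth$-functor, by~\eqref{eq:DistVsFun} and Subsection~VII), reversing orientation in $\Mod{\Tth}^\op$ and applying $P$ yields $P((\yoneda_X)^\Tast)\dashv P((\yoneda_X)_\Tast) = \yonmult_X$ in $\Cat{\Tth}$. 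Since $P((\yoneda_X)^\Tast) = T\yoneda_X$, this gives the claimed adjunction; $\yonmult_X\cdot T\yoneda_X = 1_{PX}$ follows as a triangle identity of the induced monad.

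The main subtlety we anticipate is the bookkeeping of orientations of $2$-cells and adjunctions as one moves between $\Mod{\Tth}$ and $\Mod{\Tth}^\op$; this should become routine once one observes that the mate correspondence is precisely what makes $P$ a $2$-functor with the correct variance.
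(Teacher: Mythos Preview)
Your proof is correct. The paper does not actually supply a proof of this theorem here---it is recalled from \citep{Hof11} and \citep{CH09a} at the start of Section~\ref{sect:CoCts}---so there is no local argument to compare against. Your approach is the standard one and matches the spirit of those references: extract the adjunction directly from the bijection of Theorem~\ref{CharTDist}, read off the (co)unit via the Yoneda lemma (Theorem~\ref{Yoneda}), and obtain the Kock--Z\"oberlein inequality by applying the locally monotone functor $P$ to the adjunction $(\yoneda_X)_\Tast\dashv(\yoneda_X)^\Tast$ in $\Mod{\Tth}$. One small point worth making explicit in a final write-up: the passage to $\Mod{\Tth}^\op$ reverses only $1$-cells (not $2$-cells), which is precisely what is needed for $P$ to be a covariant $2$-functor and hence to preserve the adjunction in the direction you claim; your closing remark hints at this but it deserves one sentence of justification. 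The equivalence you invoke between $T\yoneda_X\dashv\yonmult_X$ and $\yonmult_X\dashv\yoneda_{PX}$ is indeed a general fact about monads (both are equivalent to $T\yoneda_X\le\yoneda_{PX}$), but either inequality alone already establishes the Kock--Z\"oberlein property, so no further argument is needed there.
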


\begin{theorem}
The following assertions are equivalent, for a $\Tth$-category $X$.
\begin{eqcond}
\item $X$ is injective w.r.t.\ fully faithful $\Tth$-functors.
\item $\yoneda_X:X\to PX$ has a left inverse $\Sup_X:PX\to X$, that is, $\Sup_X\cdot\yoneda_X\simeq 1_X$.
\item $\yoneda_X:X\to PX$ has a left adjoint $\Sup_X:PX\to X$.
\item $X$ has all weighted colimits (in the generalised sense).
\end{eqcond}
\end{theorem}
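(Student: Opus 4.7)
The argument rests on two facts. First, the Yoneda $\Tth$-functor $\yoneda_X:X\to PX$ is fully faithful: specialising the Yoneda formula of Theorem \ref{Yoneda} to $\psi=\yoneda_X(x)$ yields $\fspstrP{T\yoneda_X(\fx)}{\yoneda_X(x)}=\yoneda_X(x)(\fx)=a(\fx,x)$, that is, $(\yoneda_X)^\Tast\kleisli(\yoneda_X)_\Tast=1_X^\Tast$. Second, $\mP$ is of Kock--Z\"oberlein type, so that a left inverse of its unit is automatically a left adjoint. I would prove the four conditions equivalent by the implications (i) $\Rightarrow$ (ii) $\Leftrightarrow$ (iii), (iv) $\Rightarrow$ (ii), (iii) $\Rightarrow$ (iv), and (ii) $\Rightarrow$ (i).

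For (i) $\Rightarrow$ (ii), injectivity of $X$ combined with full faithfulness of $\yoneda_X$ produces an extension $\Sup_X:PX\to X$ of $1_X:X\to X$ with $\Sup_X\cdot\yoneda_X\simeq 1_X$. The implication (iii) $\Rightarrow$ (ii) is the one-line computation
\[
(\Sup_X\cdot\yoneda_X)_\Tast=(\Sup_X)_\Tast\kleisli(\yoneda_X)_\Tast=(\yoneda_X)^\Tast\kleisli(\yoneda_X)_\Tast=1_X^\Tast,
\]
while (ii) $\Rightarrow$ (iii) is the standard KZ upgrade of a left inverse to a left adjoint, extracted from the adjunction $\yonmult_X\dashv\yoneda_{PX}$ and the hypothesis $\Sup_X\cdot\yoneda_X\simeq 1_X$.

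For (iv) $\Rightarrow$ (ii), I would specialise to the weighted diagram $d=1_X:X\to X$ with weight $(\yoneda_X)_\Tast:X\kmodto PX$; its colimit is a $\Tth$-functor $\Sup_X:PX\to X$ with $(\Sup_X)_\Tast=1_X^\Tast\whiteleft(\yoneda_X)_\Tast$. Full faithfulness gives $(\yoneda_X)^\Tast\le(\Sup_X)_\Tast$, whence $1_X^\Tast\le(\Sup_X\cdot\yoneda_X)_\Tast$, and the opposite inequality comes from the counit of $-\kleisli(\yoneda_X)_\Tast\dashv-\whiteleft(\yoneda_X)_\Tast$, so that $\Sup_X\cdot\yoneda_X\simeq 1_X$. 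For (iii) $\Rightarrow$ (iv), given $d:D\to X$ and $\varphi:D\kmodto A$ with mate $\mate{\varphi}:A\to PD$, I propose
\[
g:=\Sup_X\cdot P(d^\Tast)\cdot\mate{\varphi}:A\to X
\]
as the colimit, and verify $g_\Tast=d_\Tast\whiteleft\varphi$ by a distributor-level calculation based on Theorem \ref{Yoneda}, the functoriality of $P$ on $\Mod{\Tth}^\op$, and the adjunction $\Sup_X\dashv\yoneda_X$.

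Finally, (ii) $\Rightarrow$ (i) exhibits $X$ as a split subobject of $PX$ in $\Cat{\Tth}$, so injectivity of $X$ reduces to injectivity of $PX$; since each $PY$ is a free $\mP$-pseudo-algebra via $\yonmult_Y\dashv\yoneda_{PY}$, the general fact that pseudo-algebras for a KZ monad admit left Kan extensions of morphisms along fully faithful ones supplies the required extension property. The main obstacle will be the distributor identity $g_\Tast=d_\Tast\whiteleft\varphi$ in (iii) $\Rightarrow$ (iv), which is routine in spirit but notationally dense and requires careful bookkeeping of the mate construction, the bracket $\fspstrP{-}{-}$, and the adjunctions produced by the Kock--Z\"oberlein property.
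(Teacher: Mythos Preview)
Your proposal is correct and aligns with the paper's own account: the paper does not spell out a proof here but records in the subsequent Remark precisely the two steps you highlight, namely that any left inverse of $\yoneda_X$ is automatically a left adjoint (your (ii)$\Rightarrow$(iii) via the Kock--Z\"oberlein property), and that under (iii) the colimit of $(d,\varphi)$ is computed as $\Sup_X\cdot Pd\cdot\mate{\varphi}$, which is exactly your formula $\Sup_X\cdot P(d^\Tast)\cdot\mate{\varphi}$. Your remaining implications (i)$\Rightarrow$(ii), (iv)$\Rightarrow$(ii), and (ii)$\Rightarrow$(i) via retracting from $PX$ are the standard route and match what is done in the cited source \citep{Hof11}; the only place where you could be more concrete is (ii)$\Rightarrow$(i), where instead of invoking the general KZ fact you can verify injectivity of $PX$ directly by noting that an extension of $f=\mate{\psi}:A\to PX$ along a fully faithful $i:A\to B$ is given by $\mate{i_\Tast\kleisli\psi}:B\to PX$, using $i^\Tast\kleisli i_\Tast=1_A^\Tast$.
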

Here a $\Tth$-category $X$ is called \emph{injective} if, for all $\Tth$-functors $f:A\to X$ and fully faithful $\Tth$-functors $i:A\to B$, there exists a $\Tth$-functor $g:B\to X$ such that $g\cdot i\simeq f$. Clearly, for a separated $\Tth$-category $X$ we have then $g\cdot i= f$.
\begin{remark}
In the proof of (ii)$\Rw$(iii) one shows that any left inverse of $\yoneda_X:X\to PX$ is actually left adjoint to $\yoneda_X$. Then, given a left adjoint $\Sup_X:PX\to X$ of $\yoneda_X$, the colimit of a diagram defined by $d:D\to X$ and $\varphi:D\kmodto A$ can be calculated as $\Sup_X\cdot Pd\cdot\mate{\varphi}$.
\end{remark}
\begin{corollary}
For each $\Tth$-category $X$, $PX$ is cocomplete where $\Sup_{PX}=-\kleisli(\yoneda_X)_\Tast$.
\end{corollary}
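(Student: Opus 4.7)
My plan is to reduce the corollary directly to the two theorems immediately preceding it. The previous characterisation theorem tells us that a $\Tth$-category $Y$ is cocomplete precisely when the Yoneda functor $\yoneda_Y:Y\to PY$ has a left adjoint in $\Cat{\Tth}$; moreover, in that case the left adjoint serves as $\Sup_Y$. Applied to $Y=PX$, we only need to exhibit a left adjoint to $\yoneda_{PX}:PX\to PPX$ and check that it is given by $-\kleisli(\yoneda_X)_\Tast$.

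Such a left adjoint is supplied for free by the fact that the monad $\mP=\pmonad$ on $\Cat{\Tth}$ is of Kock-Z\"oberlein type. Indeed, one of the equivalent defining properties of a KZ monad is that, for every object $X$, the multiplication $\yonmult_X:PPX\to PX$ is left adjoint to the unit $\yoneda_{PX}:PX\to PPX$ in the ambient 2-category. Applying this to $\mP$ in $\Cat{\Tth}$ yields
\[
\yonmult_X\dashv\yoneda_{PX}\quad\text{in }\Cat{\Tth},
\]
so by the characterisation theorem $PX$ is cocomplete and $\Sup_{PX}\simeq\yonmult_X$. Finally, the preceding theorem explicitly records the formula $\yonmult_X=-\kleisli(\yoneda_X)_\Tast$, so $\Sup_{PX}=-\kleisli(\yoneda_X)_\Tast$ as claimed.

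In short, the entire argument is a one-line invocation of the KZ property combined with the characterisation of cocompleteness via a left adjoint to Yoneda. I do not anticipate any real obstacle: the only subtlety worth emphasising in the write-up is to point to the KZ property of $\mP$ (established in the adjunction theorem cited just above) as the source of the adjunction $\yonmult_X\dashv\yoneda_{PX}$, rather than verifying it from scratch.
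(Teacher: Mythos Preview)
Your argument is correct and is exactly the route the paper intends: the corollary carries no proof in the paper precisely because it follows immediately from the two preceding theorems in the way you describe. One could even shortcut the appeal to the KZ property by observing that the monad unit law $\yonmult_X\cdot\yoneda_{PX}=1_{PX}$ already gives a left inverse of $\yoneda_{PX}$, and then condition~(ii) of the characterisation theorem (together with the remark that any left inverse of Yoneda is automatically left adjoint) yields the conclusion; but this is the same idea, and your use of the KZ property is equally valid.
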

\begin{proposition}
Let $f:X\to Y$ be a $\Tth$-functor between totally cocomplete $\Tth$-categories. Then the following assertions are equivalent.
\begin{eqcond}
\item $f$ preserves all weighted colimits (in the generalised sense).
\item $f$ preserves all weighted colimits with weight of type $D\kmodto G$.
\item The diagram
\[
 \xymatrix{PX\ar[r]^{Pf}\ar[d]_{\Sup_X}\ar@{}[dr]|\simeq & PY\ar[d]^{\Sup_Y}\\ X\ar[r]_f & Y}
\]
commutes up to equivalence. 
\end{eqcond}
\end{proposition}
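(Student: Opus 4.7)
The plan is to prove the implications $(i) \Rightarrow (ii) \Rightarrow (iii) \Rightarrow (i)$. The first one is immediate since $(ii)$ is just the restriction of $(i)$ to weights with codomain $G$, and the real content sits in the remaining two implications. Throughout, the key reduction I will invoke is the formula $\colim(d,\varphi) \simeq \colim(1_X, \varphi\kleisli d^\Tast)$ together with the fact, recorded in the remark preceding the statement, that the colimit of $d:D\to X$ weighted by $\varphi:D\kmodto A$ is computed as $\Sup_X\cdot Pd\cdot \mate{\varphi}$.

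For $(ii)\Rightarrow(iii)$, I fix $\psi\in PX$, which I view as a weight $\psi:X\kmodto G$. Because $\psi$ is a weight of the allowed type, preservation gives $f(\Sup_X(\psi)) = f(\colim(1_X,\psi)) \simeq \colim(f,\psi)$. The reduction formula then yields $\colim(f,\psi)\simeq \colim(1_Y,\psi\kleisli f^\Tast) = \Sup_Y(\psi\kleisli f^\Tast)$. Since $Pf:PX\to PY$ acts as $\psi\mapsto \psi\kleisli f^\Tast$, the right-hand side is precisely $\Sup_Y(Pf(\psi))$. This yields $f\cdot \Sup_X \simeq \Sup_Y\cdot Pf$ pointwise, which, by the description of the 2-categorical order on $\Cat{\Tth}$ recalled in Paragraph VII, is the required equivalence of $\Tth$-functors $PX\to Y$.

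For $(iii)\Rightarrow(i)$, I take a general weighted diagram $d:D\to X$, $\varphi:D\kmodto A$. Total cocompleteness of $X$ yields the colimit $g=\Sup_X\cdot Pd\cdot \mate{\varphi}:A\to X$. Postcomposing with $f$ and inserting the equivalence from $(iii)$ gives
\[
f\cdot g \;=\; f\cdot \Sup_X\cdot Pd\cdot \mate{\varphi} \;\simeq\; \Sup_Y\cdot Pf\cdot Pd\cdot \mate{\varphi} \;=\; \Sup_Y\cdot P(f\cdot d)\cdot \mate{\varphi},
\]
where the last equality uses functoriality of $P\circ(-)^\Tast$, i.e.\ $P(f\cdot d) = Pf\cdot Pd$ on $\Cat{\Tth}$. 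The right-hand expression is exactly the $\varphi$-weighted colimit of $f\cdot d$ (again via the remark), so $f\cdot g$ represents it and $f$ preserves $\colim(d,\varphi)$.

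I do not anticipate any real obstacle here; the whole statement is essentially a transcription of the classical enriched result into the $\Tth$-setting. The only point requiring a little care is the bookkeeping of variance: one must recall that $Pf$ denotes $P(f^\Tast)$ under the adjunction $(-)^\Tast\dashv P$, so that $Pf$ sends $\psi$ to $\psi\kleisli f^\Tast$, which is exactly the combination needed to mesh the reduction formula with the commutativity of the square.
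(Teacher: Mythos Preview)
Your proof is correct and follows the standard cycle $(i)\Rightarrow(ii)\Rightarrow(iii)\Rightarrow(i)$; the paper itself does not supply a proof here (this proposition is recalled from \citep{Hof11,CH09a}), and your argument is precisely the expected one, using the reduction $\colim(d,\varphi)\simeq\colim(1_X,\varphi\kleisli d^\Tast)$ together with the formula $\colim(d,\varphi)\simeq\Sup_X\cdot Pd\cdot\mate{\varphi}$ from the preceding remark. Your bookkeeping about $Pf=P(f^\Tast)=(-)\kleisli f^\Tast$ and the functoriality $P(f\cdot d)=Pf\cdot Pd$ is accurate.
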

\begin{theorem}
The category $\Cocts{\Tth}_\sep$ is precisely the category $(\Cat{\Tth})^\mP$ of Eilenberg--Moore algebras for $\mP$. Moreover, the canonical forgetful functors $\Cocts{\Tth}_\sep\to\Cat{\V}$ and $\Cocts{\Tth}_\sep\to\SET$ are both monadic.
\end{theorem}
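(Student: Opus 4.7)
I would split the argument into two parts: first identifying $\Cocts{\Tth}_\sep$ with $(\Cat{\Tth})^\mP$ as a category, then deducing monadicity of the two forgetful functors via Beck's theorem.

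For the first part, I rely on the Kock--Z\"oberlein nature of the monad $\mP=\pmonad$ recorded just above. Because of this, a $\mP$-algebra structure on a $\Tth$-category $X$ is precisely a left adjoint $\alpha\colon PX\to X$ to the unit $\yoneda_X$ satisfying $\alpha\cdot\yoneda_X=1_X$ strictly; existence up to equivalence of such a left adjoint is exactly total cocompleteness by the previous theorem in the section, and the separatedness hypothesis upgrades $\Sup_X\cdot\yoneda_X\simeq 1_X$ to a genuine equality and forces the algebra structure to be unique. On morphisms, the preceding proposition characterises weighted-colimit-preserving $\Tth$-functors $f\colon X\to Y$ between totally cocomplete $\Tth$-categories by $\Sup_Y\cdot Pf\simeq f\cdot\Sup_X$; separatedness of $Y$ converts this equivalence into a strict equality, i.e.\ into the $\mP$-algebra homomorphism condition. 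Thus objects and morphisms of $\Cocts{\Tth}_\sep$ coincide with those of $(\Cat{\Tth})^\mP$.

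For the monadicity claim, I would factor the canonical forgetful functor as
\[
\Cocts{\Tth}_\sep = (\Cat{\Tth})^\mP \xrightarrow{U^\mP} \Cat{\Tth} \xrightarrow{(-)_0} \Cat{\V} \longrightarrow \SET,
\]
where the last two arrows are topological, as recalled earlier in the excerpt. Each factor has a left adjoint: the free-algebra functor for $U^\mP$, the concrete left adjoint $(X,c)\mapsto (X,e_X^\circ\cdot\Txi c)$ recalled earlier for $(-)_0\colon\Cat{\Tth}\to\Cat{\V}$, and the discrete structure functors for the other topological legs. Thus both composites $\Cocts{\Tth}_\sep\to\Cat{\V}$ and $\Cocts{\Tth}_\sep\to\SET$ admit left adjoints; explicitly, from $\SET$ the left adjoint sends $X$ to $P(X,e_X^\circ)$. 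To invoke Beck I would verify creation of coequalizers of $U$-split pairs: topological functors preserve and reflect all colimits, so a $U$-split pair in $\Cocts{\Tth}_\sep$ descends to a split pair in $\Cat{\Tth}$; and by Eilenberg--Moore $U^\mP$ creates coequalizers of $U^\mP$-split pairs. Combining these two creations yields creation of coequalizers of $U$-split pairs by the composite. Reflection of isomorphisms is immediate since topological functors are faithful and reflect isos, and $U^\mP$ is conservative.

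The main obstacle I anticipate is the delicate handling of the separatedness hypothesis: one must be careful to confirm that the passage from equivalences to equalities is genuine both in the object part (uniqueness of $\Sup_X$ as a strict algebra structure, not merely up to equivalence) and in the morphism part (strict commutativity of the Sup-square). A secondary technical point is making sure that, in the Beck argument, the coequalizer produced in $\Cat{\Tth}$ carries a strict $\mP$-algebra structure compatible with the given algebra homomorphisms; this should again follow automatically from $\mP$ being of Kock--Z\"oberlein type together with separatedness, but warrants explicit verification.
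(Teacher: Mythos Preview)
The paper does not actually prove this theorem here; it is recalled from \citep{Hof11} and \citep{CH09a}, so there is no in-text proof to compare against. That said, your outline can still be assessed on its own merits.

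Your first part is correct and is the standard argument for a Kock--Z\"oberlein monad: a strict $\mP$-algebra structure on $X$ is exactly a left adjoint $\Sup_X\dashv\yoneda_X$ with $\Sup_X\cdot\yoneda_X=1_X$, and every such $X$ is automatically separated as a retract of the separated $\Tth$-category $PX$, so the object classes agree; the morphism identification via the preceding proposition is fine once separatedness of the codomain turns $\simeq$ into $=$.

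Your monadicity argument, however, has a genuine gap. You write that ``a $U$-split pair in $\Cocts{\Tth}_\sep$ descends to a split pair in $\Cat{\Tth}$'', but this is false in general: the splitting maps of a $U$-split coequalizer live only in $\SET$ and need not be $\Tth$-functors. Topologicity of $\Cat{\Tth}\to\SET$ does give you a coequalizer of the pair in $\Cat{\Tth}$ lying over the split one in $\SET$, but that coequalizer is \emph{not} $U^\mP$-split, so you cannot invoke the standard creation property of $U^\mP$ at that point. (A minor related issue: you claim $(-)_0:\Cat{\Tth}\to\Cat{\V}$ is topological ``as recalled earlier'', but the excerpt only records a concrete left adjoint for it, not topologicity.)

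To repair the argument you must show directly that the coequalizer formed in $\Cat{\Tth}$ carries a $\mP$-algebra structure making the quotient a homomorphism, and that the result is a coequalizer in $(\Cat{\Tth})^\mP$. One clean route is to use the crude monadicity theorem instead: establish that $\Cocts{\Tth}_\sep$ has coequalizers of reflexive pairs (this is where results of the type of \citep{Lin69}, which the paper itself invokes in Remark~\ref{rem:CoctsOverSetT}, enter), that the forgetful functors to $\Cat{\V}$ and $\SET$ preserve them, and that they reflect isomorphisms. Alternatively, one can verify directly that the comparison functor to algebras for the induced monad on $\SET$ (resp.\ $\Cat{\V}$) is an equivalence. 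Either way, the step you flagged as ``the main obstacle'' is not the real difficulty; the missing ingredient is the passage from $\SET$-split to an honest coequalizer in the Eilenberg--Moore category.
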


\begin{proposition}
Every left adjoint $\Tth$-functor is cocontinuous. A $\Tth$-functor between cocomplete $\Tth$-categories is left adjoint if and only if it is cocontinuous.
\end{proposition}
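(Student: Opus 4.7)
The plan is to prove the two implications separately, using the dictionary between $\Cat{\Tth}$-adjunctions and $\Mod{\Tth}$-adjunctions from subsection~VII.

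For the first statement (left adjoint implies cocontinuous), I would argue as follows. Suppose $f\dashv g$ in $\Cat{\Tth}$, so that in $\Mod{\Tth}$ we have $g_\Tast\dashv f_\Tast$, giving the counit $g_\Tast\kleisli f_\Tast\le 1_X^\Tast$ and the unit $1_Y^\Tast\le f_\Tast\kleisli g_\Tast$. Given a weighted colimit diagram $d:D\to X$, $\varphi:D\kmodto A$ with colimit $h:A\to X$ (so $h_\Tast=d_\Tast\whiteleft\varphi$), the inequality $(f\cdot h)_\Tast\le(f\cdot d)_\Tast\whiteleft\varphi$ is formal: composing $h_\Tast\kleisli\varphi\le d_\Tast$ on the left with $f_\Tast$ gives $(f\cdot h)_\Tast\kleisli\varphi\le(f\cdot d)_\Tast$. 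For the reverse inequality I take any $\chi:A\kmodto Y$ with $\chi\kleisli\varphi\le f_\Tast\kleisli d_\Tast$; composing on the left with $g_\Tast$ and invoking the counit yields $g_\Tast\kleisli\chi\kleisli\varphi\le d_\Tast$, hence $g_\Tast\kleisli\chi\le d_\Tast\whiteleft\varphi=h_\Tast$; now composing on the left with $f_\Tast$ and invoking the unit gives $\chi\le f_\Tast\kleisli g_\Tast\kleisli\chi\le f_\Tast\kleisli h_\Tast=(f\cdot h)_\Tast$. Setting $\chi:=(f\cdot d)_\Tast\whiteleft\varphi$ finishes this direction.

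For the converse I assume $f:X\to Y$ is cocontinuous between cocomplete $\Tth$-categories, and construct a right adjoint $g:Y\to X$ pointwise. For each $y\in Y$ form the presheaf $\psi_y:=y^\Tast\kleisli f_\Tast\in PX$; by Theorem~\ref{CharTDist} the assignment $y\mapsto\psi_y$ is precisely the $\Tth$-functor $\mate{f_\Tast}:Y\to PX$. Cocompleteness of $X$ allows me to set $g(y):=\colim(1_X,\psi_y)=\Sup_X(\psi_y)$, so that on underlying $\V$-functors $g=\Sup_X\cdot\mate{f_\Tast}$. I then verify the pointwise unit and counit. The counit $f(g(y))\le y$ uses cocontinuity: $f(g(y))_\Tast=(f\cdot 1_X)_\Tast\whiteleft\psi_y=f_\Tast\whiteleft\psi_y$, and from the standard counit $y_\Tast\kleisli y^\Tast\le 1_Y^\Tast$ one gets $y_\Tast\kleisli\psi_y\le f_\Tast$, hence $y_\Tast\le f_\Tast\whiteleft\psi_y=f(g(y))_\Tast$. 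The unit $x\le g(f(x))$ uses dually $1_X^\Tast\le f^\Tast\kleisli f_\Tast$ to deduce $x^\Tast=\yoneda_X(x)\le\psi_{f(x)}$, then applies monotonicity of $\Sup_X$ together with $\Sup_X\cdot\yoneda_X\simeq 1_X$ to obtain $g(f(x))\ge x$.

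The main obstacle is to lift these pointwise statements into an adjunction in $\Cat{\Tth}$, which requires that $g$ itself be a $\Tth$-functor rather than merely a $\V$-functor. Because the presheaves $\psi_y$ need not lie in the essential image of $\yoneda_X$, one cannot rely on a naive factorisation through $X$; instead I would use that $\mate{f_\Tast}$ is a $\Tth$-functor and that, under the (totally) cocomplete hypothesis, $\Sup_X$ is $\Tth$-left-adjoint to $\yoneda_X$, so $g=\Sup_X\cdot\mate{f_\Tast}$ is a composite of $\Tth$-functors. Once $g$ is a $\Tth$-functor, the pointwise unit and counit promote to $1_X\le g\cdot f$ and $f\cdot g\le 1_Y$ in $\Cat{\Tth}$ since $G$ is a generator (Section~V), giving $f\dashv g$; the identity $f_\Tast=g^\Tast$ then follows from subsection~VII.
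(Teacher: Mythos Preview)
The paper does not supply its own proof of this proposition; it is recalled from \citep{Hof11,CH09a}. Your argument for the first implication (left adjoint $\Rightarrow$ cocontinuous) is correct and is the standard ``right adjoints preserve extensions'' argument carried out in $\Mod{\Tth}$.

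For the converse there is a genuine gap. You correctly isolate the obstacle---showing that the map $g=\Sup_X\cdot\mate{f_\Tast}$ is a $\Tth$-functor---but your proposed resolution invokes $\Sup_X:PX\to X$ as a $\Tth$-functor ``under the (totally) cocomplete hypothesis''. The paper is explicit (just before the definition of totally cocomplete) that for a merely \emph{cocomplete} $\Tth$-category $X$ the map $\Sup_X$ is in general only a $\V$-functor, becoming a $\Tth$-functor precisely when $X$ is \emph{totally} cocomplete. Since the proposition is stated for cocomplete $\Tth$-categories, silently upgrading the hypothesis does not close the argument.

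The repair is to bypass $\Sup_X$ and prove $g^\Tast=f_\Tast$ directly; by~\eqref{eq:DistVsFun} this simultaneously shows that $g$ is a $\Tth$-functor and that $f\dashv g$. One inequality you essentially have: the $\Cat{\V}$-unit of $\Sup_X\dashv\yoneda_X$ gives $\psi_y\le\yoneda_X(\Sup_X(\psi_y))=g(y)^\Tast$, i.e.\ $f_\Tast(\fx,y)=\psi_y(\fx)\le a(\fx,g(y))=g^\Tast(\fx,y)$. For the reverse inequality, combine the fact that $f$ is a $\Tth$-functor with your already-established pointwise counit $f(g(y))\le y$ (which, by the description of the order in Subsection~VII, is equivalent to $f(g(y))^\Tast\le y^\Tast$, i.e.\ $b(-,f(g(y)))\le b(-,y)$):
\[
 g^\Tast(\fx,y)=a(\fx,g(y))\le b(Tf(\fx),f(g(y)))\le b(Tf(\fx),y)=f_\Tast(\fx,y).
\]
Thus $g^\Tast=f_\Tast$ and the adjunction follows without any appeal to total cocompleteness.
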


For a $\Tth$-category $X=(X,a)$, one has the $\V$-category structure $\hat{a}:=\Txi a\cdot m_X^\circ:TX\relto TX$ on $TX$, which is indeed a $\Tth$-distributor $\hat{a}:X\kmodto TX$ since
\begin{align*}
& \hat{a}\cdot m_X\cdot \Txi\hat{a}\cdot m_X^\circ\le\hat{a}\cdot\hat{a}\cdot m_X\cdot m_X^\circ=\hat{a},\\
& \hat{a}\cdot \Txi a\cdot m_X^\circ=\hat{a}\cdot\hat{a}=\hat{a}.
\end{align*}
Therefore one obtains a $\Tth$-functor $\yonedaT_X:TX\to PX$, and one easily verifies $\yonedaT_X\cdot e_X=\yoneda_X$. Consequently:
\begin{proposition}\label{prop:TotCoComplRepr}
Every totally cocomplete $\Tth$-category is representable. Moreover, every left adjoint $\Tth$-functor between representable $\Tth$-categories is a pseudo-homomorphism.
\end{proposition}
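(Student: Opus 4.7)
The plan is to realize a candidate left adjoint of $e_X:X\to TX$ explicitly as $\alpha:=\Sup_X\cdot\yonedaT_X$, and then use the Kock-Z\"oberlein character of the 2-monad $\mT$ on $\Cat{\Tth}$ to upgrade the obvious retraction identity to a genuine adjunction. The second assertion will then follow from uniqueness of adjoints together with 2-functoriality of $T$.

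Total cocompleteness of $X$ supplies $\Sup_X\dashv\yoneda_X$ in $\Cat{\Tth}$ with $\Sup_X\cdot\yoneda_X\simeq 1_X$, since $\mP$ is of Kock-Z\"oberlein type with unit $\yoneda_X$. Combining this with the factorisation $\yonedaT_X\cdot e_X=\yoneda_X$ recorded just above the proposition, the candidate $\alpha=\Sup_X\cdot\yonedaT_X$ automatically satisfies the counit triangle
\[
 \alpha\cdot e_X=\Sup_X\cdot\yonedaT_X\cdot e_X=\Sup_X\cdot\yoneda_X\simeq 1_X.
\]
For the unit triangle $1_{TX}\le e_X\cdot\alpha$ -- the principal step -- I apply the 2-functor $T$ to $\alpha\cdot e_X\simeq 1_X$ to get $T\alpha\cdot Te_X\simeq 1_{TX}$, and then combine the unit inequality $1_{TTX}\le e_{TX}\cdot m_X$ of the KZ adjunction $m_X\dashv e_{TX}$, the naturality identity $T\alpha\cdot e_{TX}=e_X\cdot\alpha$ and the monad axiom $m_X\cdot Te_X=1_{TX}$:
\[
 1_{TX}\simeq T\alpha\cdot Te_X\le T\alpha\cdot e_{TX}\cdot m_X\cdot Te_X=e_X\cdot\alpha\cdot m_X\cdot Te_X=e_X\cdot\alpha.
\]
Hence $\alpha\dashv e_X$ in $\Cat{\Tth}$, so $X$ is representable. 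This KZ upgrading is the main obstacle, because it relies on stitching together those three distinct ingredients in the right order.

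For the second assertion, let $f:X\to Y$ be left adjoint to $g:Y\to X$ in $\Cat{\Tth}$, with $X,Y$ representable via $\alpha\dashv e_X$ and $\beta\dashv e_Y$. Since $T$ is a 2-functor on $\Cat{\Tth}$, it preserves adjunctions and so $Tf\dashv Tg$. Composing adjunctions yields $\beta\cdot Tf\dashv Tg\cdot e_Y$ and $f\cdot\alpha\dashv e_X\cdot g$; naturality of $e$ gives $Tg\cdot e_Y=e_X\cdot g$, so both $\beta\cdot Tf$ and $f\cdot\alpha$ are left adjoint to the same $\Tth$-functor. Uniqueness of left adjoints up to equivalence then yields $\beta\cdot Tf\simeq f\cdot\alpha$, i.e.\ $f$ is a pseudo-homomorphism.
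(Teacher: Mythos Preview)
Your proof is correct and follows precisely the approach the paper intends with its terse ``Consequently:'' after establishing $\yonedaT_X\cdot e_X=\yoneda_X$: the candidate $\alpha=\Sup_X\cdot\yonedaT_X$ is exactly what the preceding paragraph sets up, and your KZ upgrading of the retraction $\alpha\cdot e_X\simeq 1_X$ to a full adjunction, together with the uniqueness-of-adjoints argument for the second clause, are the natural details to supply. One cosmetic remark: your chain for the unit inequality can be shortened by first deducing $Te_X\le e_{TX}$ directly from $1_{TTX}\le e_{TX}\cdot m_X$ and $m_X\cdot Te_X=1_{TX}$, and then writing $1_{TX}\simeq T\alpha\cdot Te_X\le T\alpha\cdot e_{TX}=e_X\cdot\alpha$ in one step.
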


\begin{remark}\label{rem:CoctsOverSetT}
From the proposition above we obtain a forgetful functor $\Cocts{\Tth}\to\Repr{\Tth}$ which restricts to separated objects. Therefore we also have functors (see Remark \ref{rem:AdjReprTalg})
\[
 \Cocts{\Tth}_\sep\to\Repr{\Tth}_\sep\to\SET^\mT,
\]
which commute with the canonical forgetful functors to $\SET$. The composite $\Cocts{\Tth}_\sep\to\SET^\mT$ is even monadic since both $\Cocts{\Tth}_\sep$ and $\SET^\mT$ are monadic over $\SET$ (see \citep{Lin69}).
\end{remark}

\section{The Vietoris monad}\label{sect:Vietoris}

\subsection*{General assumption}
\emph{From now on until the end of this paper, $\Tth=\toptheory$ denotes a (strict) topological theory where, moreover, $T1=1$.}

\bigskip
Under these conditions, the $\Tth$-category $\V=(\V,\homV)\simeq P1$ is totally cocomplete. Furthermore, since $e_1:1\to T1$ is a bijection, we can identify a $\V$-relation $\varphi:1\relto X$ with the $\Tth$-relation $\varphi\cdot e_1^\circ:1\krelto X$. If, moreover, $X=(X,a)$ is a $\Tth$-category, then $\varphi\cdot e_1^\circ$ is a $\Tth$-distributor of type $G\kmodto X$ if and only if
\[
 a\cdot\Txi\varphi\cdot e_1\le \varphi.
\]
Note that $a\cdot\Txi\varphi\cdot e_1\ge \varphi$ holds for every $\V$-relation $\varphi:1\relto X$.

\begin{lemma}\label{lem:VXdualisable}
For every $\Tth$-category $X=(X,a)$, the $\Tth$-graph $\V^X$ is dualisable.
\end{lemma}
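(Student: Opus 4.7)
The plan is to check the two dualisability requirements by first giving an explicit description of $d_0$, then constructing $\alpha$ as an infimum of $\Tth$-functors.

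First, I make $d_0 = d \cdot e_{\V^X}$ explicit. Since $T$ preserves weak pullbacks and $T1 = 1$, every $\fq \in T(\V^X \times X)$ with $T\pi_1(\fq) = e_{\V^X}(h')$ is of the form $T\iota_{h'}(\fy)$ for a unique $\fy \in TX$, where $\iota_{h'}(y) = (h', y)$; hence $T\pi_2(\fq) = \fy$, $T\ev(\fq) = T(h')(\fy)$, and
\[
 d_0(h', h) = \bigwedge_{\fy, x} \hom\bigl(a(\fy, x),\, \hom(\xi(T(h')(\fy)), h(x))\bigr).
\]
Specialising $\fy = e_X(x)$ gives $d_0(h', h) \le \bigwedge_x \hom(h'(x), h(x))$; the reverse inequality follows from $h' \in \V^X$ being a $\Tth$-functor, which yields $\xi(T(h')(\fy)) \otimes a(\fy, x) \le h'(x)$. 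So $d_0$ is the pointwise hom structure, in particular transitive.

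Next, set
\[
 \beta(\fp, x) := \bigvee_{\fq\colon T\pi_1(\fq) = \fp} a(T\pi_2(\fq), x) \otimes \xi(T\ev(\fq)),
\]
so that $d(\fp, h) = \bigwedge_x \hom(\beta(\fp, x), h(x))$ by the quantale identity $\hom(\bigvee_i u_i, v) = \bigwedge_i \hom(u_i, v)$. I define $\alpha(\fp)$ to be the pointwise infimum of all $h \in \V^X$ with $h \ge \beta(\fp, -)$ pointwise (non-empty since the constant-top function lies in $\V^X$). Two closure properties of $\V^X$ finish the proof: (i) $\V^X$ is closed under pointwise infima, so $\alpha(\fp) \in \V^X$; and (ii) for any $v \in \V$ and $h \in \V^X$, the function $h_v := \hom(v, h(-))$ again lies in $\V^X$. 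Given (i) and (ii), the identity $d = d_0 \cdot \alpha$ is immediate: $d_0(\alpha(\fp), h) \le d(\fp, h)$ because $\alpha(\fp) \ge \beta(\fp, -)$; conversely, for $v \le d(\fp, h)$ one has $\beta(\fp, -) \le h_v$ pointwise, whence $\alpha(\fp) \le h_v$ by the minimality definition of $\alpha(\fp)$, giving $v \le d_0(\alpha(\fp), h)$.

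The main obstacle is (ii), which uses the strictness of $\Tth$. From $T(\mathrm{const}_v)(\fx) = e_\V(v)$ (by naturality of $e$ combined with $T1 = 1$), the commutative diagram of axiom (b) yields $\xi(T(v \otimes h_v)(\fx)) = v \otimes \xi(Th_v(\fx))$. Combining this with $v \otimes h_v \le h$ pointwise, the monotonicity of $T$ on $\Cat{\V}$, the monotonicity of $\xi$, and the $\Tth$-functor condition for $h$, one obtains $v \otimes \xi(Th_v(\fx)) \otimes a(\fx, x) \le h(x)$; taking $\hom$-adjoints gives the $\Tth$-functor condition for $h_v$. Closure (i) is proved in the same spirit, using only that $T$ and $\xi$ are monotone.
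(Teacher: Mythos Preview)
Your proof is correct, but it takes a somewhat longer route than the paper's.  Both arguments begin identically: one computes
\[
 d(\fp,h)=\bigwedge_{x\in X}\hom(\beta(\fp,x),h(x)),
\qquad
 \beta(\fp,x)=\bigvee_{\fq:\,T\pi_1(\fq)=\fp} a(T\pi_2(\fq),x)\otimes\xi(T\ev(\fq)),
\]
which in relational notation is $\beta(\fp,-)=a\cdot\Txi\ev\cdot i_\fp$.  At this point the paper simply verifies that $\beta(\fp,-)$ already lies in $\V^X$, via the one-line relational estimate
\[
 a\cdot\Txi a\cdot\Txi\Txi\ev\cdot Ti_\fp\cdot e_1\le a\cdot\Txi a\cdot m_X^\circ\cdot\Txi\ev\cdot i_\fp=a\cdot\Txi\ev\cdot i_\fp,
\]
so one may take $\alpha(\fp)=\beta(\fp,-)$ directly.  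You instead define $\alpha(\fp)$ as the pointwise infimum of $\{h\in\V^X\mid h\ge\beta(\fp,-)\}$ and justify the construction by the two closure properties (under pointwise infima and under $h\mapsto\hom(v,h(-))$).  These closure properties are correct---your monotonicity argument ``$f\le g$ pointwise $\Rightarrow$ $\xi\cdot Tf\le\xi\cdot Tg$'' follows from $\Txi$ being a 2-functor on $\Mat{\V}$ together with $T1=1$---so the proof stands.

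The trade-off is this: the paper's direct verification is shorter and, more importantly, it \emph{identifies} $\alpha(\fp)$ with the explicit formula $a\cdot\Txi\ev(\fp,-)=:\mu(\fp,-)$, which is used pervasively afterwards (Corollary~\ref{cor:reprspaceVpowerX}, Proposition~\ref{prop:coYoneda}, the definition of the Vietoris monad, etc.).  Your argument establishes dualisability but leaves the identity $\alpha(\fp)=\beta(\fp,-)$ implicit; one would still need to observe $\beta(\fp,-)\in\V^X$ (and hence $\alpha(\fp)=\beta(\fp,-)$) to connect with the rest of the section.
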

\begin{proof}
From Proposition \ref{prop:TensExp} we know that the underlying $\V$-graph structure of $\V^X$ is transitive. Furthermore,
\begin{align*}
 \V^X(\fp,h)&=
\bigwedge_{x\in X}\bigwedge_{\fx\in TX}\bigwedge_{\substack{\fw\in T(\V^X\times X)\\T\pi_1(\fw)=\fp,T\pi_2(\fw)=\fx}}\hom(a(\fx,x),\hom(\xi\cdot T\ev(\fw),h(x)))\\
&=\bigwedge_{x\in X}\bigwedge_{\fx\in TX}\bigwedge_{\substack{\fw\in T(\V^X\times X)\\T\pi_1(\fw)=\fp,T\pi_2(\fw)=\fx}}\hom(a(\fx,x)\otimes\xi\cdot T\ev(\fw),h(x))\\
&=\bigwedge_{x\in X}\hom(\bigvee_{\fx\in TX}a(\fx,x)\otimes\bigvee_{\substack{\fw\in T(\V^X\times X)\\T\pi_1(\fw)=\fp,T\pi_2(\fw)=\fx}}\xi\cdot T\ev(\fw),h(x))\\
&=\bigwedge_{x\in X}\hom(a\cdot\Txi\ev(\fp,x),h(x))\\
&=[a\cdot\Txi\ev(\fp,-),h],
\end{align*}
where in the last line we consider $\ev:\V^X\times X\to\V$ as a $\V$-relation $\ev:\V^X\relto X$. Finally, writing $i_\fp:1\to T(\V^X)$ for the mapping sending the unique point of $1$ to $\fp\in T(\V^X)$, the composite
\[
 1\xrightarrow{\,i_\fp\,}T(\V^X)\xrelto{\,\Txi\ev\,}TX\xrelto{\,a\,}X
\]
is a $\Tth$-distributor of type $G\kmodto X$ since 
\[
 a\cdot\Txi a\cdot\Txi\Txi\ev\cdot Ti_\fp\cdot e_1\le a\cdot\Txi a\cdot m_X^\circ\cdot\Txi\ev\cdot i_\fp=a\cdot \Txi\ev\cdot i_\fp.
\]
Therefore $a\cdot\Txi\ev(\fp,-)$ belongs to $\V^X$.
\end{proof}

In the sequel we denote the composite $\V$-relation $a\cdot\Txi\ev$ by $\mu:T(\V^X)\relto X$.

\begin{corollary}\label{cor:reprspaceVpowerX}
For each core-compact $\Tth$-category $X=(X,a)$, $\V^X$ is a separated representable $\Tth$-category where the left adjoint of the $\Tth$-functor $e_{\V^X}:\V^X\to T(\V^X)$ is given by
\[
 \mate{\mu}:T(\V^X)\to\V^X,\;\fp\mapsto a\cdot\Txi\ev(\fp,-).
\]
\end{corollary}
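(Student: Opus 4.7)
The plan is to apply Proposition \ref{prop:CharReprCat} to $\V^X$, building on Lemma \ref{lem:VXdualisable} and Proposition \ref{prop:TensExp}. The computation carried out in the proof of Lemma \ref{lem:VXdualisable} shows that the $\Tth$-graph structure on $\V^X$ is given by $\ell(\fp,h)=[\mu(\fp,-),h]=\bigwedge_{x}\hom(\mu(\fp,x),h(x))$ with $\mu=a\cdot\Txi\ev$. Setting $\alpha:=\mate{\mu}:T\V^X\to\V^X$, $\fp\mapsto\mu(\fp,-)$, this reads $\ell=\ell_0\cdot\alpha$ with $\ell_0(g,h)=[g,h]$ the pointwise $\V$-structure. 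Since $X$ is core-compact, Proposition \ref{prop:TensExp} promotes $\V^X$ from a $\Tth$-graph to a $\Tth$-category, and its final identity confirms that $\ell_0$ is indeed this pointwise structure.

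Separation of $\V^X$ is then immediate: $(\V,\hom_\xi)$ is separated, so the underlying $\V$-category $(\V^X,\ell_0)$ is separated pointwise, and hence so is $\V^X$ as a $\Tth$-category --- since equivalence of $\Tth$-functors into $\V^X$ is determined pointwise by $\ell_0$.

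With the structural ingredient $\ell=\ell_0\cdot\alpha$ in hand, representability of $\V^X$ together with the identification of $\alpha$ as the left adjoint to $e_{\V^X}$ reduce --- via the implication (ii)$\Rightarrow$(i) of Proposition \ref{prop:CharReprCat} --- to verifying core-compactness of $\V^X$, that is $\ell\cdot\Txi\ell=\ell\cdot m_{\V^X}$. The proof of that implication will then deliver $\alpha\dashv e_{\V^X}$ in $\Cat{\Tth}$ by showing $\alpha_\Tast=e_{\V^X}^\Tast$ as $\Tth$-distributors, with no further work needed.

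The main obstacle is therefore the core-compactness of $\V^X$. Using $\Txi(\ell_0\cdot\alpha)=\Txi\ell_0\cdot T\alpha$, the task reduces to comparing the composites $\ell_0\cdot\alpha\cdot\Txi\ell_0\cdot T\alpha$ and $\ell_0\cdot\alpha\cdot m_{\V^X}$. I expect the argument to exploit three ingredients: the explicit formula $\mu=a\cdot\Txi\ev$; the core-compactness of $X$, which provides $a\cdot\Txi a=a\cdot m_X$ and propagates through $\Txi\ev$ via naturality of $m$; and the weak-pullback preservation of $T$ combined with $T1=1$, which already in the derivation of the sanity identity $\mu(e_{\V^X}(h),x)=h(x)$ --- equivalently $\alpha\cdot e_{\V^X}=1_{\V^X}$ --- controls the fibres of $(T\pi_1,T\pi_2)$ on $T(\V^X\times X)$.
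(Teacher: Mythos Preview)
Your plan is viable but takes a genuinely different route from the paper. The paper does not verify Proposition~\ref{prop:CharReprCat}(ii) directly; instead it observes that $\V$ is separated and injective, and since $X$ is core-compact (hence $\otimes$-exponentiable), $\V^X$ inherits both properties. Injectivity is total cocompleteness, and Proposition~\ref{prop:TotCoComplRepr} then gives representability in one stroke. The identification of the left adjoint as $\mate{\mu}$ follows from Lemma~\ref{lem:VXdualisable} together with separatedness. This is short but imports all of Section~\ref{sect:CoCts}.

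Your approach stays within Sections~\ref{sect:setting}--\ref{sect:ReprDual} and Lemma~\ref{lem:VXdualisable}, trading conceptual machinery for a direct computation. That computation is actually cleaner than your sketch suggests, and does not really need weak-pullback preservation or $T1=1$. Writing $\ev:\V^X\relto X$ for the evaluation $\V$-relation, one has $\ev\cdot\alpha=\mu$, hence $\Txi\mu=\Txi\ev\cdot T\alpha$, and naturality of $m$ plus core-compactness of $X$ give
\[
\mu\cdot m_{\V^X}=a\cdot\Txi\ev\cdot m_{\V^X}=a\cdot m_X\cdot\Txi\Txi\ev=a\cdot\Txi a\cdot\Txi\Txi\ev=a\cdot\Txi\mu=\mu\cdot T\alpha,
\]
so $\alpha\cdot m_{\V^X}=\alpha\cdot T\alpha$ as maps. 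Then $\ell\cdot m_{\V^X}=\ell_0\cdot\alpha\cdot T\alpha\le\ell_0\cdot\alpha\cdot\Txi\ell_0\cdot T\alpha=\ell\cdot\Txi\ell$ since $\Txi\ell_0\ge 1_{T\V^X}$, and the reverse inequality is transitivity. This completes core-compactness, and Proposition~\ref{prop:CharReprCat} finishes the job exactly as you say. The ``sanity identity'' $\alpha\cdot e_{\V^X}=1$ is not needed as input; it is a consequence of $\alpha\dashv e_{\V^X}$ together with separatedness.
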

\begin{proof}
For $X$ core-compact, $\V^X$ is separated and injective since $\V$ is, hence $\V^X$ is totally cocomplete and therefore representable (see Proposition \ref{prop:TotCoComplRepr}).
\end{proof}

\begin{example}\label{ex:UpperVietoris}
We consider $\Tth=\Uth_\two$, and let $X$ be a topological space. We write $\calO$ for the collection of all open subsets of $X$, and $\calO(x)$ for the set of all open neighbourhoods of $x\in X$. We can identify $2^X$ with the set of all closed subsets of $X$. For any subset $V\subseteq X$, we put
\[
 V^\Diamond=\{A\in \two^X\mid A\cap V\neq\varnothing\}.
\]
For an ultrafilter $\fp$ on $\two^X$, the smallest convergence point $\mu(\fp)$ of $\fp$ can be calculated as
\begin{align*}
\mu(\fp) &= \{x\in X\mid \exists \fx\in UX\,.\,\fp\,(\Uxi\ev)\,\fx\,\&\,\fx\to x\}\\
 &= \{x\in X\mid \forall V\in\calO(x),\calA\in\fp\,\exists A\in\calA,y\in V\,.\,y\in A\}\\
 &= \{x\in X\mid \forall V\in\calO(x),\calA\in\fp\,.\,V^\Diamond\cap\calA\neq\varnothing\}\\
&= \{x\in X\mid \forall V\in\calO(x)\,.\,V^\Diamond\in\fp\}.
\end{align*}
Therefore, for any $A\in(\two^X)^\op$,
\begin{align*}
 \fp\to A &\iff A\subseteq\mu(\fp)\\
&\iff \forall x\in A,V\in\calO(x)\,.\,V^\Diamond\in\fp\\
&\iff \forall V\in\calO\,.\,(V\cap A\neq\varnothing\;\Rw\; V^\Diamond\in\fp)\\
&\iff \forall V\in\calO\,.\,(A\in V^\Diamond\;\Rw\; V^\Diamond\in\fp);
\end{align*}
hence the convergence of the pseudo-topological space $(2^X)^\op$ is induced by $\{V^\Diamond\mid V\in\calO\}$ and therefore $VX:=(2^X)^\op$ is actually a topological space. This topology on the set of closed subsets of a topological space is known as the \emph{lower Vietoris topology} (see \citep{CT97}, for instance). We find it remarkable that, albeit $2^X$ belongs to $\TOP$ if and only if $X$ is exponentiable (see \citep{Sch84}), its dual $(2^X)^\op$ belongs always to $\TOP$. In fact, we can now easily derive the well-known characterisation of exponentiable spaces as precisely the core-compact ones (see \citep{DK70} and \citep{Isb75,Isb86}):
\begin{align*}
 \text{$X$ is exponentiable} &\iff \text{$(VX)^\op$ is topological}\\
&\iff \text{$VX$ is core-compact}&&\text{[Proposition \ref{prop:DualVsCore}]}\\
&\iff \text{$X$ is core-compact.}
\end{align*}
The last equivalence follows from the ``Sub-Base Lemma'' of Example \ref{ex:AlexSubbase} (applied to the sub-base $\{V^\Diamond\mid V\in\calO\}$ of $VX$).

We also note that $V^\Diamond\cap\calA\neq\varnothing$ is equivalent to $V\cap\bigcup\calA\neq\varnothing$, and therefore
\begin{align*}
 \mu(\fp) &= \{x\in X\mid \forall\calA\in\fp, V\in\calO(x)\,.\,V\cap\bigcup\calA\neq\varnothing\}\\
&= \{x\in X\mid \forall\calA\in\fp\,.\,x\in\overline{\bigcup\calA}\}\\
&=\bigcap_{\calA\in\fp}\overline{\bigcup\calA}.
\end{align*}
Here $\overline{(-)}$ denotes the closure of the topological space $X$. For $K\subseteq X$ compact, $K^\Diamond$ is a compact down-set in $VX$ and therefore its complement is open in $(VX)^\op$. Furthermore, for $X$ locally compact, one easily verifies that the sets
\[
 (K^\Diamond)^\complement=\{A\in VX\mid A\cap K=\varnothing\}\hspace{3em}\text{($K\subseteq X$ compact)}
\]
generate the convergence of $VX^\op$ (defined by $\fp\to A\iff \mu(\fp)\subseteq A$), which confirms that the topology of $\two^X\simeq(VX)^\op$ is the compact-open topology.
\end{example}

\begin{proposition}
For each $\Tth$-category $X$, the $\Tth$-graph $(\V^X)^\op$ is a $\Tth$-category.
\end{proposition}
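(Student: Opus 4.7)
My plan is to verify reflexivity and transitivity of the dual structure $d$ on $B := \V^X$ by exploiting the concrete form given by Lemma~\ref{lem:VXdualisable}. Writing $\mu := a \cdot \Txi\ev$ and $\alpha \colon TB \to B$, $\fp \mapsto \mu_\fp := \mu(\fp,-)$, one has $b_0 \cdot \alpha = b$, where $b_0(h,h') = [\mu_{e_B(h)}, h']$ is the underlying $\V$-category structure of $\V^X$. Consequently $d(\fp,h) = b_0(h, \mu_\fp) = [\mu_{e_B(h)}, \mu_\fp]$, and reflexivity follows at once from $d(e_B(h),h) = [\mu_{e_B(h)}, \mu_{e_B(h)}] \ge k$.

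The real work is transitivity $d \cdot \Txi d \cdot m_B^\circ \le d$. Using the internal-hom property $[\mu_{e_B(h)}, \mu_{\fp'}] \otimes \mu_{e_B(h)}(x) \le \mu(\fp',x)$, this reduces pointwise to the inequality of $\V$-relations $\mu \cdot \Txi d \le \mu \cdot m_B$ (that is, $\Txi d(\fP,\fp') \otimes \mu(\fp',x) \le \mu(m_B\fP, x)$ for all $\fP$, $\fp'$, $x$). The key identity that makes this reduction work is $\ev \cdot d = \mu$ as $\V$-relations $TB \relto X$: one inequality follows from $\mu_{e_B(h)} \ge h$ (verified by evaluating $\mu(e_B(h),x)$ at $\fw = e_{B\times X}(h,x)$ and using $\xi \cdot e_\V = 1_\V$) together with the internal-hom property, while the reverse inequality is attained by taking $h = \mu_\fp$ and invoking the fact---shown in the proof of Lemma~\ref{lem:VXdualisable}---that $\mu_\fp$ is a $\Tth$-distributor $G \kmodto X$, so $\mu_{e_B(\mu_\fp)} = \mu_\fp$.

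Once $\ev \cdot d = \mu$ is known, the rest is routine manipulation in $\Mat{\V}$: 2-functoriality of $\Txi$ yields $\mu \cdot \Txi d = a \cdot \Txi\ev \cdot \Txi d = a \cdot \Txi(\ev \cdot d) = a \cdot \Txi\mu$, while naturality of $m$ at the $\V$-relation $\ev$ combined with transitivity of $a$ gives
\[
 \mu \cdot m_B = a \cdot \Txi\ev \cdot m_B = a \cdot m_X \cdot \Txi\Txi\ev \ge a \cdot \Txi a \cdot \Txi\Txi\ev = a \cdot \Txi\mu,
\]
so $\mu \cdot \Txi d \le \mu \cdot m_B$, completing the argument. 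I expect the principal obstacle to be spotting the identity $\ev \cdot d = \mu$; once recognised, it compresses the transitivity of $d$ into the transitivity of $a$ together with the naturality of $m$ and the functoriality of $\Txi$.
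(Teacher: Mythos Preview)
Your proposal is correct and follows essentially the same route as the paper. The paper writes the dual structure as $c=\ev\blackright\mu$, invokes \eqref{eq:ext_adj} to get $c\cdot m_B=\ev\blackright(\mu\cdot m_B)$, and thereby reduces transitivity to $\ev\cdot c\cdot\Txi c\le\mu\cdot m_B$; since $\ev\cdot c=\mu$ this is exactly your inequality $\mu\cdot\Txi d\le\mu\cdot m_B$, and the ensuing chain $a\cdot\Txi(\ev\cdot d)=a\cdot\Txi\mu=a\cdot\Txi a\cdot\Txi\Txi\ev\le a\cdot m_X\cdot\Txi\Txi\ev=\mu\cdot m_B$ is identical in both arguments. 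The only difference is presentational: you unfold the lifting adjunction pointwise via the internal hom, whereas the paper packages the same step in the $\blackright$ formalism; your detour through $b_0(h,h')=[\mu_{e_B(h)},h']$ rather than the direct $[h,h']$ is unnecessary (and relies on $\mu_{e_B(h)}=h$, which you only verify later) but harmless.
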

\begin{proof}
Let $X=(X,a)$ be a $\Tth$-category. We write $c:T(\V^X)\relto \V^X$ for the $\Tth$-graph structure of $(\V^X)^\op$, by definition:
\[
 c(\fp,h)=[h,\mu(\fp,-)]=\bigwedge_{x\in X}\hom(h(x),\mu(\fp,x)),
\]
for all $\fp\in T(\V^X)$ and $h\in\V^X$. Hence, $c=\ev\blackright\mu$ in $\Mat{\V}$, where $\ev:\V^X\relto\V$ and $\mu:T(\V^X)\relto X$. Since $m_{\V^X}:TT(\V^X)\to T(\V^X)$ is left adjoint in $\Mat{\V}$, it follows that $c\cdot m_{\V^X}=\ev\blackright(\mu\cdot m_{\V^X})$ (see \eqref{eq:ext_adj} in Subsection \Roman{subsect:one} of Section \ref{sect:setting}). To conclude $c\cdot\Txi c\le c\cdot m_{\V^X}$, we show $\ev\cdot c\cdot\Txi c\le \mu\cdot m_{\V^X}$; and to see this, we calculate:
\begin{multline*}
 \ev\cdot c\cdot\Txi c\le \mu \cdot\Txi c =a\cdot\Txi\ev\cdot\Txi c=a\cdot\Txi\mu=a\cdot\Txi a\cdot\Txi\Txi\ev\\
\le a\cdot m_X\cdot \Txi\Txi\ev=a\cdot\Txi\ev\cdot m_{\V^X}=\mu\cdot m_{\V^X}.
\qedhere
\end{multline*}
\end{proof}

We put $VX:=(\V^X)^\op$, and denote the structure on $VX$ by $\fspstrV{-}{-}$; and the underlying $\V$-category structure by $\langle-,-\rangle$. Hence, for $\fp\in TVX$ and $\varphi\in VX$,
\[
 \fspstrV{\fp}{\varphi}=\langle\mate{\mu}(\fp),\varphi\rangle=[\varphi,\mate{\mu}(\fp)].
\]

\begin{example}\label{ex:topappV}
For both $\Tth=\Uth_{\Pp}$ and $\Tth=\Uth_{\Pm}$ and a topological space $X$ (viewed as an approach space), the underlying set of the approach space $VX$ is the set of all lower semi-continuous functions from $X$ to $[0,\infty]$ (see \citep[Proposition 2.1.8]{Low97}).
\end{example}

Albeit liftings of $\Tth$-distributors do not exist in general in $\Mod{\Tth}$, it is shown in \citep{HW11} that $\Mod{\Tth}$ admits liftings of $\Tth$-distributors along $\Tth$-distributors of type $1\kmodto X$.

\begin{lemma}
\label{lemma:Tlifting}
For all $\Tth$-distributors $\varphi: Y\kmodto X$ and $\psi: G\kmodto X$, $\varphi$ has a lifting $\psi\whiteright \varphi$ along an $\psi$ in $\Mod{\Tth}$ which is given by $\psi\whiteright \varphi = \psi\cdot e_1\blackright\varphi$.
\[
\xymatrix{X & Y\ar@{-_{>}}|-{\object@{o}}[l]_{\phi}\ar@{._{>}}|-{\object@{o}}[dl]^{\psi\whiteright \phi}\\
  G\ar@{-_{>}}|-{\object@{o}}[u]^\psi}
\]
\end{lemma}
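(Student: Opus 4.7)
The plan is to exploit the simplification afforded by $T1=1$: under this hypothesis, $e_1\colon 1\to T1$ is the identity, so $\psi\cdot e_1=\psi$ when viewed as a $\V$-relation $1\relto X$, and both $m_1$ and $e_1^\circ$ are trivial. Writing $X=(X,a)$ and $Y=(Y,b)$, the candidate $\psi\whiteright\varphi:=\psi\blackright\varphi\colon TY\relto 1$ is just the lifting in the ambient quantaloid $\Mat{\V}$. The task then splits into (i) showing the universal property of lifting against $\Tth$-distributors, and (ii) verifying that the $\V$-relation $\psi\blackright\varphi$ is itself a $\Tth$-distributor $Y\kmodto G$.

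For (i), I would let $\chi\colon Y\kmodto G$ be any $\Tth$-distributor and chain a sequence of equivalences using the two adjunctions $(-)\cdot m_Y^\circ\dashv(-)\cdot m_Y$ and $\psi\cdot(-)\dashv\psi\blackright(-)$ in $\Mat{\V}$, together with \eqref{eq:ext_adj} applied to the left-adjoint map $m_Y$:
\begin{align*}
\psi\kleisli\chi\le\varphi
 &\iff \psi\cdot\Txi\chi\cdot m_Y^\circ\le\varphi
  \iff \psi\cdot\Txi\chi\le\varphi\cdot m_Y\\
 &\iff \Txi\chi\le(\psi\blackright\varphi)\cdot m_Y
  \iff \Txi\chi\cdot m_Y^\circ\le\psi\blackright\varphi
  \iff \chi\le\psi\blackright\varphi.
\end{align*}
The last step uses that for a $\Tth$-distributor $\chi$ the axiom $e_1^\circ\kleisli\chi=\chi$ simplifies, thanks to $T1=1$, to $\Txi\chi\cdot m_Y^\circ=\chi$.

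For (ii), I would check each of the two $\Tth$-distributor axioms for $\psi\blackright\varphi$ separately. The $b$-action $(\psi\blackright\varphi)\cdot\Txi b\cdot m_Y^\circ\le\psi\blackright\varphi$ should follow by post-composing with $\psi$: the counit $\psi\cdot(\psi\blackright\varphi)\le\varphi$ and the identity $\varphi\cdot\Txi b\cdot m_Y^\circ=\varphi$ (from $\varphi\kleisli b=\varphi$) give the bound, and the $\psi$-adjunction transports it back. The $e_1^\circ$-action $\Txi(\psi\blackright\varphi)\cdot m_Y^\circ\le\psi\blackright\varphi$, again through the $\psi$-adjunction, reduces to $\psi\cdot\Txi(\psi\blackright\varphi)\cdot m_Y^\circ\le\varphi$. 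I would prove this by rewriting $\psi=a\cdot\Txi\psi$ (from $a\kleisli\psi=\psi$ and $m_1=1_1$), combining $\Txi\psi\cdot\Txi(\psi\blackright\varphi)\le\Txi(\psi\cdot(\psi\blackright\varphi))\le\Txi\varphi$ via (lax) functoriality of $\Txi$ and the counit, and finally invoking $\varphi=a\cdot\Txi\varphi\cdot m_Y^\circ$.

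The main obstacle is this last $e_1^\circ$-action check, since it is the only part of the argument that goes beyond pure adjoint-chasing in $\Mat{\V}$: it relies on the (lax) functoriality $\Txi s\cdot\Txi r\le\Txi(s\cdot r)$ of the extension on $\Mat{\V}$. This is a standard consequence of the weak-pullback hypotheses on $\mT$ built into the definition of a topological theory, but I would flag it as the one conceptual ingredient distinguishing this from a routine calculation in a quantaloid.
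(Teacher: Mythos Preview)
Your argument is correct. The paper itself does not prove this lemma: it merely records the statement and attributes the result to \citep{HW11}. Your proposal therefore supplies a complete self-contained proof under the standing hypothesis $T1=1$ of Section~\ref{sect:Vietoris}, using only the quantaloid calculus in $\Mat{\V}$ (the adjunctions $(-)\cdot m_Y^\circ\dashv(-)\cdot m_Y$ and $\psi\cdot(-)\dashv\psi\blackright(-)$, together with formula~\eqref{eq:ext_adj}), the $\Tth$-distributor axioms for $\chi$, $\varphi$ and $\psi$, and functoriality of $\Txi$.

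One small comment on your closing caveat: the paper declares $\Txi$ to be a (strict) 2-functor on $\Mat{\V}$, not merely a lax one, so the inequality you flag,
\[
\Txi\psi\cdot\Txi(\psi\blackright\varphi)\le\Txi\bigl(\psi\cdot(\psi\blackright\varphi)\bigr),
\]
is in fact an equality here. There is thus no ``conceptual obstacle'' to single out; the whole argument is routine adjoint-chasing once $T1=1$ collapses $e_1$ and $m_1$ to identities.
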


Every $u\in\V$ can be interpreted as a $\Tth$-distributor $u:1\kmodto 1$, and then $u\otimes v$ corresponds to $v\kleisli u$. Liftings can be used to turn the ordered set $\Mod{\Tth}(G,X)$ into a $\V$-category by putting
\[
 [\varphi,\varphi']=\varphi\whiteright\varphi'=
\bigwedge_{x\in X}\hom(\varphi(x),\varphi'(x)),
\]
for all $\varphi,\varphi':G\kmodto X$. Hence, for a $\Tth$-category $X=(X,a)$, the $\V$-category $\Mod{\Tth}(G,X)$ is just the dual of the underlying $\V$-category $(VX)_0$ of $VX$. For every $\Tth$-distributor $\psi:X\kmodto Y$, composition with $\psi$ defines a mapping
\[
 \psi\kleisli-:\Mod{\Tth}(G,X)\to\Mod{\Tth}(G,Y)
\]
which is actually a $\V$-functor since
\[
 \varphi\whiteright\varphi'\le(\psi\kleisli\varphi)\whiteright(\psi\kleisli\varphi')
\]
follows from
\[
 \psi\kleisli\varphi\kleisli(\varphi\whiteright\varphi')\le \psi\kleisli\varphi'.
\]
One might hope that $\psi\kleisli-$ is even a $\Tth$-functor of type $VX\to VY$; unfortunately, this is in general not the case (see Proposition \ref{prop:nearlyOpen}). Fortunately, the situation is better if $\psi=f_\Tast$ for a $\Tth$-functor $f:X\to Y$, as we show next.
\begin{proposition}
Let $f:X\to Y$ be a $\Tth$-functor between $\Tth$-categories. Then $Vf:=f_\Tast\kleisli-:VX\to VY$ is a $\Tth$-functor. If, moreover, $X$ and $Y$ are representable and $f$ is a pseudo-homomorphism, then $Vf$ is a homomorphism.
\end{proposition}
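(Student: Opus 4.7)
My plan is to prove the two claims in turn.

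For the first, I will reduce the $\Tth$-functor inequality $\fspstrV{\fp}{\varphi} \le \fspstrV{T(Vf)(\fp)}{Vf(\varphi)}$ to a pointwise bound in $\V^Y$: unpacking $\fspstrV{\fp}{\varphi} = [\varphi,\mate{\mu_X}(\fp)]$ and invoking the $\V$-functoriality of $Vf$ noted just above, one has $[\varphi,\mate{\mu_X}(\fp)] \le [Vf(\varphi),Vf(\mate{\mu_X}(\fp))]$, so by monotonicity of $[u,-]$ in the second slot it suffices to show
\[
Vf(\mate{\mu_X}(\fp)) \;\le\; \mate{\mu_Y}(T(Vf)(\fp))
\]
pointwise in $\V^Y$. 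I plan to establish this by writing $\mate{\mu_X}(\fp) = a\cdot\Txi\ev_X\cdot i_\fp$ as a composition of $\V$-relations (where $i_\fp\colon 1\to T(VX)$ is the constant map at $\fp$), expanding $Vf(\mate{\mu_X}(\fp)) = b\cdot Tf\cdot\Txi a\cdot\Txi\Txi\ev_X\cdot Ti_\fp$ via $2$-functoriality of $\Txi$, and then running the following chain of rewrites: apply $\Txi$ to the $\Tth$-functor condition on $f$ to get $Tf\cdot\Txi a \le \Txi b\cdot TTf$; use the transitivity $b\cdot\Txi b = b\cdot m_Y$; invoke naturality of $m$; dispose of the double $T$ via $m_{VX}\cdot Ti_\fp = i_\fp$ (which follows from $T1=1$ yielding $Ti_\fp = e_{T(VX)}\cdot i_\fp$, together with the monad unit law); and finally apply the easy $\V$-relational inequality $f\cdot\ev_X \le \ev_Y\cdot Vf$ (verified by taking $\fx = e_X(x)$ in the supremum defining $Vf(\varphi)(f(x))$) along with local monotonicity of $\Txi$.

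For the second claim, the key observation is that the pseudo-homomorphism hypothesis collapses $Vf$ on elements of $VX$ to the cleaner $\V$-distributor composition $f_*\cdot(-)$. Each $\varphi\in VX$ is a $\Tth$-functor $\varphi\colon X\to(\V,\homV)$, and since $(\V,\homV)$ is representable with algebra $\xi$, Lemma~\ref{lem:MapsReprCats} applied to $\varphi$ gives $\xi T\varphi(\fx) \le \varphi(\alpha_X(\fx))$; together with $\xi T\varphi(e_X(x)) = \varphi(x)$ (from naturality of $e$ and $\xi\cdot e_\V = 1$) this yields $\bigvee_{\fx\colon\alpha_X(\fx)=x}\xi T\varphi(\fx) = \varphi(x)$. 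Substituting this and the pseudo-hom equality $b(Tf(\fx),y) = b_0(f\alpha_X(\fx),y)$ (from $\beta\cdot Tf = f\cdot\alpha_X$) into the sum defining $Vf(\varphi)(y)$ produces $Vf(\varphi) = f_*\cdot\varphi$. Combined with the general identity $b\cdot\Txi b_0 = b$ (which follows from $b_0 = b\cdot e_Y$, the transitivity $b\cdot\Txi b = b\cdot m_Y$, and the monad law $m_Y\cdot Te_Y = 1_{TY}$), a direct $\V$-enriched computation then shows $Vf(\mate{\mu_X}(\fp)) = f_\Tast\cdot\Txi\ev_X\cdot i_\fp = \mate{\mu_Y}(T(Vf)(\fp))$ as elements of $\V^Y$, yielding the required commutation $Vf\cdot\mate{\mu_X} = \mate{\mu_Y}\cdot T(Vf)$ with the pseudo-algebra structures; separatedness of $\V^Y$ (hence of $VY$) ensures that this strict equality is what the notion of homomorphism demands.

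The main obstacle is the careful bookkeeping in the first claim's chain of rewrites; in the second claim, the crucial insight---that under the pseudo-homomorphism hypothesis the $\Tth$-distributor composition $f_\Tast\kleisli(-)$ defining $Vf$ collapses to the $\V$-distributor composition $f_*\cdot(-)$---is what makes the computation clean and direct.
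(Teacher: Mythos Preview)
Your argument is correct and lands at the same intermediate expression $f_\Tast\cdot\Txi\ev_X\cdot i_\fp$ as the paper, but the bookkeeping differs. For the upper bound $Vf(\mate{\mu_X}(\fp))\le f_\Tast\cdot\Txi\ev_X\cdot i_\fp$, the paper uses $e_{T(\V^X)}\le m_{\V^X}^\circ$ together with the $\Tth$-distributor identity $f_\Tast\cdot\Txi a\cdot m_X^\circ=f_\Tast$, whereas you lift the $\Tth$-functor inequality $f\cdot a\le b\cdot Tf$ by $\Txi$, apply transitivity of $b$, and then collapse $m_{\V^X}\cdot Ti_\fp=i_\fp$ via $T1=1$; both routes are valid. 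For the lower bound and for the homomorphism part, the paper first isolates the pointwise inequality $b_0\cdot f\cdot\ev_X\le\ev_Y\cdot Vf$ (with equality under the pseudo-homomorphism hypothesis) and then feeds this back into the chain; you instead prove directly that $Vf(\varphi)=f_*\cdot\varphi$ under the hypothesis and compute from there. This last identity is exactly what the paper records as Remark~\ref{rem:fVastfast} \emph{after} the proposition, so you have effectively folded that remark into the proof. One small imprecision: transitivity gives only $b\cdot\Txi b\le b\cdot m_Y$, not equality (equality is core-compactness); your chain only needs $\le$, so the argument is unaffected, but the wording should be adjusted.
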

\begin{proof}
Let $X=(X,a)$ and $Y=(Y,b)$ be $\Tth$-categories and $f:X\to Y$ be a $\Tth$-functor. We put $\Phi:=f_\Tast\kleisli-$, and show first that $b_0\cdot f\cdot\ev_X\le\ev_Y\cdot\Phi$,
\[
 \xymatrix{\V^X\ar[r]^\Phi\ar[d]|-{\object@{|}}_{\ev_X}\ar@{}[dr]|\le & \V^Y\ar[d]|-{\object@{|}}^{\ev_Y}\\ X\ar[r]|-{\object@{|}}_{b_0\cdot f} & Y}
\]
with equality if $X$ and $Y$ are representable and $f$ is a pseudo-homomorphism. In fact, for $\varphi\in\V^X$ and $y\in Y$,
\begin{align*}
 b_0\cdot f\cdot\ev_X(\varphi,y)=\bigvee_{x\in X}\varphi(x)\otimes b_0(f(x),y)
\intertext{and}
 \ev_Y\cdot\Phi(\varphi,y)=\bigvee_{\fx\in TX}\xi\cdot T\varphi(\fx)\otimes b(Tf(\fx),y).
\end{align*}
Restricting the second  formula to elements of the form $\fx=e_X(x)$ with $x\in X$ gives the first claim. Assume now that $X$ and $Y$ are representable with $a=a_0\cdot\alpha$ and $b=b_0\cdot\beta$ and that $f$ is a pseudo-homomorphism. Since $\varphi:X\to\V$ is a $\Tth$-functor, $\xi(T\varphi(\fx))\le\varphi(\alpha(\fx))$ for every $\fx\in TX$ (see Lemma \ref{lem:MapsReprCats}), and therefore
\[
 \bigvee_{\fx\in TX}\xi\cdot T\varphi(\fx)\otimes b(Tf(\fx),y)
\le \bigvee_{\fx\in TX}\varphi(\alpha(\fx))\otimes b_0(f(\alpha(\fx)),y)
=\bigvee_{x\in X}\varphi(x)\otimes b_0(f(x),y).
\]
Hence, for any $\fp\in T(\V^X)$,
\begin{multline*}
\Phi\cdot\mate{\mu_X}(\fp)
= f_\Tast\cdot\Txi a\cdot\Txi\Txi\ev_X\cdot Ti_\fp\cdot e_1
=f_\Tast\cdot\Txi a\cdot\Txi\Txi\ev_X\cdot e_{T(\V^X)}\cdot i_\fp\\
\le f_\Tast\cdot\Txi a\cdot\Txi\Txi\ev_X\cdot m_{\V^X}^\circ\cdot i_\fp
=f_\Tast\cdot\Txi a\cdot m_X^\circ\cdot\Txi\ev_X\cdot i_\fp=f_\Tast\cdot\Txi\ev_X\cdot i_\fp
\end{multline*}
and
\[
 \mate{\mu_Y}\cdot T\Phi(\fp)=b\cdot\Txi\ev_Y\cdot T\Phi(\fp,-)
\ge b\cdot\Txi b_0\cdot Tf\cdot\Txi\ev_X(\fp,-)
=f_\Tast\cdot\Txi\ev_X(\fp,-);
\]
and we conclude that $\Phi$ is indeed a $\Tth$-functor of type $(\V^X)^\op\to(\V^Y)^\op$. If $f:X\to Y$ is a pseudo-homomorphism between representable $\Tth$-categories, then the second inequality is actually an equality thanks to the calculations above. Finally, 
\[
 f_\Tast\cdot\Txi a\cdot\Txi\Txi\ev_X\cdot Ti_\fp\cdot e_1
\ge f_\Tast\cdot e_X\cdot a\cdot\Txi\ev_X\cdot i_\fp
\]
and
\[
 f_\Tast\cdot e_X\cdot a=b_0\cdot f\cdot a\le b_0\cdot f\cdot\alpha=b_0\cdot b\cdot Tf=f_\Tast;
\]
and this shows that also the first inequality becomes also an equality in this case.
\end{proof}

Therefore $V$ can be seen as an endofunctor
\[
 V:\Cat{\Tth}\to\Cat{\Tth}
\]
on $\Cat{\Tth}$, and also as an endofunctor
\[
 V:\Repr{\Tth}\to\Repr{\Tth}
\]
on $\Repr{\Tth}$. Furthermore, in both cases $V$ is actually a 2-functor since, for all $\Tth$-functors $f,g:X\to Y$, $f\le g$ is equivalent to $f_\Tast\ge g_\Tast$, and therefore implies $f_\Tast\kleisli-\ge g_\Tast\kleisli-$ which is equivalent to $Vf\le Vg$.

\begin{remark}\label{rem:fVastfast}
If $f:(X,a)\to(Y,b)$ is a pseudo-homomorphism (where $a=a_0\cdot\alpha$) and $\varphi:Z\kmodto X$, then
\[
 f_\Tast\kleisli\varphi
=b\cdot Tf\cdot\Txi\varphi\cdot m_Z^\circ
=b_0\cdot f\cdot\alpha\cdot\Txi\varphi\cdot m_Z^\circ
=b_0\cdot f\cdot\varphi
=f_*\cdot\varphi.
\]
\end{remark}

Comparing with the situation for $\V$-categories, one might expect $Vf:VX\to VY$ to be right adjoint. If it is so, its left adjoint is necessarily given by $f^\Tast\kleisli-$, which is the dual of the exponential $\V^f:\V^Y\to\V^X$, for $f:X\to Y$. Therefore we have to investigate whether or not $\V^f:\V^Y\to\V^X$ is a homomorphism (see Lemma \ref{lem:dualTVGraphFun}); as it turns out, this is only true under additional conditions on $f$.

\begin{definition}\label{def:downwards_open}
A $\Tth$-functor $f:(X,a)\to(Y,b)$ is called \emph{downwards open} whenever $a\cdot Tf^\circ\cdot\Txi b_0\ge f^\circ\cdot b$.
\end{definition}
Note that every $\Tth$-functor $f:(X,a)\to(Y,b)$ satisfies $a\cdot Tf^\circ\cdot \Txi b_0\le f^\circ\cdot b$, hence for a downwards open $\Tth$-functor one actually has equality. Of course, every open $\Tth$-functor (see \citep[Definition 4.1]{CH04a}) is downwards open, and the reverse is true whenever $b_0=1_Y$. In particular, any non-open continuous map between compact Hausdorff spaces is an example of a homomorphism of representable spaces which is not downwards open.
\begin{lemma}
For every $\V$-functor $f:(X,c)\to(Y,d)$, the $\Tth$-functor $f:(X,e_X^\circ\cdot\Txi c)\to(Y,e_Y^\circ\cdot\Txi d)$ is downwards open. In particular, every $\V$-functor is downwards open in $\Cat{\V}$.
\end{lemma}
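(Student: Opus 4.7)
My plan is to verify downwards openness directly in the quantaloid $\Mat{\V}$, invoking only reflexivity of the $\Tth$-category $(X,a)$ and the op-lax naturality of $e$; the $\V$-functoriality of $f$ plays no explicit role in the inequality itself, and is needed only to know that $f$ actually lifts to a $\Tth$-functor between $(X,a)$ and $(Y,b)$ to begin with.

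Write $a = e_X^\circ\cdot\Txi c$ and $b = e_Y^\circ\cdot\Txi d$, so that $b_0 = b\cdot e_Y = e_Y^\circ\cdot\Txi d\cdot e_Y$. Passing to converses in the $\SET$-naturality $e_Y\cdot f = Tf\cdot e_X$ yields $f^\circ\cdot e_Y^\circ = e_X^\circ\cdot Tf^\circ$ in $\Mat{\V}$, whence the right-hand side of the target inequality $a\cdot Tf^\circ\cdot\Txi b_0 \ge f^\circ\cdot b$ simplifies to
\[
f^\circ\cdot b \;=\; f^\circ\cdot e_Y^\circ\cdot\Txi d \;=\; e_X^\circ\cdot Tf^\circ\cdot\Txi d.
\]
Two elementary lower bounds then finish the job. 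First, the reflexivity axiom for the $\Tth$-category $(X,a)$ supplies $e_X^\circ\le a$ directly. Second, the op-lax naturality $e_Y\cdot d\le \Txi d\cdot e_Y$ composed with $e_Y^\circ$ on the left, together with the unit inequality $1_Y\le e_Y^\circ\cdot e_Y$ of the adjunction $e_Y\dashv e_Y^\circ$, gives $d\le e_Y^\circ\cdot\Txi d\cdot e_Y = b_0$, and hence $\Txi d\le \Txi b_0$ by monotonicity of $\Txi$. Combining these two bounds,
\[
a\cdot Tf^\circ\cdot\Txi b_0 \;\ge\; e_X^\circ\cdot Tf^\circ\cdot\Txi b_0 \;\ge\; e_X^\circ\cdot Tf^\circ\cdot\Txi d \;=\; f^\circ\cdot b,
\]
which is precisely downwards openness.

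The second assertion is the degenerate case $\Tth = \Ith_\V$: there $T$, $e$ and $\Txi$ all reduce to identities, so $a=c$, $b=d=b_0$, and the condition becomes $c\cdot f^\circ\cdot d\ge f^\circ\cdot d$, which is immediate from $1_X\le c$. I do not foresee any serious obstacle; the only mildly non-trivial input is the inequality $d\le b_0$, and that is a standard consequence of op-lax naturality of $e$ (equivalently, of the unit of the reflection $\Cat{\V}\to\Cat{\Tth}$).
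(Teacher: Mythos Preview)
Your proof is correct and follows essentially the same route as the paper's: both rewrite $f^\circ\cdot b = e_X^\circ\cdot Tf^\circ\cdot\Txi d$ via naturality of $e$, then use $d\le b_0$ (hence $\Txi d\le\Txi b_0$) together with reflexivity $e_X^\circ\le a$. You simply spell out the two steps the paper leaves implicit, namely the justification of $d\le b_0$ via op-lax naturality of $e$ and the final passage from $e_X^\circ$ to $a$.
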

\begin{proof}
We put $a=e_X^\circ\cdot\Txi c$ and $b=e_Y^\circ\cdot\Txi d$. Then
$
 f^\circ\cdot b=e_X^\circ\cdot Tf^\circ\cdot \Txi d\le e_X^\circ\cdot Tf^\circ\cdot\Txi b_0
$.
\end{proof}

\begin{lemma}
For every $\Tth$-category $X=(X,a)$, $e_X:X\to TX$ is downwards open if and only if $X$ is core-compact.
\end{lemma}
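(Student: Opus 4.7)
The plan is to rewrite the inequality defining downwards open purely in terms of $a$, $\Txi a$ and $m_X$, by exploiting the explicit form of the $\Tth$-structure $b$ on $TX$, and then identify core-compactness as the missing half of the transitivity inequality.

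The key preliminary identity is
\[
 e_X^\circ\cdot\hat a = a,
\]
where $\hat a=\Txi a\cdot m_X^\circ$ is by construction the underlying $\V$-structure $b_0$ on $TX$. Pointwise, $e_X^\circ\cdot\hat a(\fx,x)=\bigvee_{\fX\in TTX,\,m_X(\fX)=\fx}\Txi a(\fX,e_X(x))$. The inequality $\le$ is a direct consequence of transitivity of $(X,a)$: for any $\fX$ with $m_X(\fX)=\fx$, the reflexivity bound $k\le a(e_X(x),x)$ combined with $\Txi a(\fX,e_X(x))\otimes a(e_X(x),x)\le a(m_X(\fX),x)$ gives $\Txi a(\fX,e_X(x))\le a(\fx,x)$. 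For the converse inequality I would use the monad law $m_X\cdot e_{TX}=1_{TX}$ to single out the summand $\fX=e_{TX}(\fx)$, whose contribution is $\Txi a(e_{TX}(\fx),e_X(x))\ge a(\fx,x)$ by the op-lax naturality of $e\colon 1\to\Txi$ applied to the $\V$-relation $a$.

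With this identity in hand, applying the $2$-functor $\Txi$ (which preserves composition and involution, and agrees with $T$ on the function $e_X$) to $e_X^\circ\cdot\hat a=a$ yields $Te_X^\circ\cdot\Txi\hat a=\Txi a$. Postcomposing with $a$, and using that $b=\hat a\cdot m_X$ so that $e_X^\circ\cdot b=e_X^\circ\cdot\hat a\cdot m_X=a\cdot m_X$, the downwards open inequality $a\cdot Te_X^\circ\cdot\Txi b_0\ge e_X^\circ\cdot b$ simplifies to $a\cdot\Txi a\ge a\cdot m_X$. Since transitivity of $(X,a)$ always guarantees the reverse inequality $a\cdot\Txi a\le a\cdot m_X$, the downwards open condition is thus equivalent to the equality $a\cdot\Txi a=a\cdot m_X$, which is precisely core-compactness. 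The one delicate step is the $\ge$ half of the preliminary identity, where one must invoke op-lax naturality of $e$ at the non-functional $\V$-relation $a$; everything else is purely formal manipulation.
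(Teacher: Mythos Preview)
Your proof is correct and follows essentially the same route as the paper: both sides of the downwards-open inequality are reduced, via the identity $e_X^\circ\cdot\hat a=a$ (and its $\Txi$-image), to $a\cdot m_X$ and $a\cdot\Txi a$ respectively, so that downwards openness becomes exactly the missing half of core-compactness. The only difference is presentational: the paper recognises $e_X^\circ\cdot\hat a$ as the Kleisli convolution $e_X^\circ\kleisli a$ and gets the identity in one line from reflexivity ($e_X^\circ\le a$, hence $e_X^\circ\kleisli a\le a\kleisli a\le a$) together with the lax-identity property ($e_X^\circ\kleisli a\ge a$), whereas you unpack the same two halves pointwise using transitivity and op-lax naturality of $e$---which is exactly what those abstract facts encode.
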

\begin{proof}
Recall that the $\Tth$-category structure on $TX$ is given by $\Txi a\cdot m_X^\circ\cdot m_X$, and the underlying $\V$-category structure is $\Txi a\cdot m_X^\circ$. We compute
\[e_X^\circ\cdot\Txi a\cdot m_X^\circ\cdot m_X=(e_X\kleisli a)\cdot m_X=a\cdot m_X\]
and
\[
a\cdot Te_X^\circ\cdot\Txi(\Txi a\cdot m_X^\circ) =a\cdot\Txi(e_X\kleisli a)=a\cdot\Txi a,
\]
which proves the claim.
\end{proof}

\begin{lemma}\label{lem:FFDO_CoreComp}
Let $f:X\to Y$ be a fully faithful downwards open $\Tth$-functor between $\Tth$-categories where $Y$ is core-compact. Then $X$ is core-compact.
\end{lemma}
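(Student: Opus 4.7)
Since $f$ is fully faithful, one has the identity $a = f^\circ \cdot b \cdot Tf$ in $\Mat{\V}$, and applying the $2$-functor $\Txi$ (which extends $T$) yields $\Txi a = Tf^\circ \cdot \Txi b \cdot TTf$. Since transitivity of $a$ always gives $a \cdot \Txi a \le a \cdot m_X$, to establish core-compactness of $X$ it suffices to prove the reverse inequality $a \cdot m_X \le a \cdot \Txi a$.

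My plan is to start from $a \cdot m_X$, use the fully-faithful expression for $a$ together with naturality $Tf \cdot m_X = m_Y \cdot TTf$ to rewrite it as $f^\circ \cdot b \cdot m_Y \cdot TTf$, and then invoke core-compactness of $Y$ to replace $b \cdot m_Y$ by $b \cdot \Txi b$. On the other side, $a \cdot \Txi a = a \cdot Tf^\circ \cdot \Txi b \cdot TTf$. Thus the desired inequality reduces to
\[
 f^\circ \cdot b \cdot \Txi b \;\le\; a \cdot Tf^\circ \cdot \Txi b.
\]
This is where downwards openness enters: it supplies the equality $f^\circ \cdot b = a \cdot Tf^\circ \cdot \Txi b_0$, so the left-hand side becomes $a \cdot Tf^\circ \cdot \Txi(b_0 \cdot b)$ after pulling $\Txi$ through the composition.

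What remains --- and this is the main, though mild, obstacle --- is the auxiliary compatibility $b_0 \cdot b \le b$. Since $e$ is op-lax natural one has $e_Y \cdot b \le \Txi b \cdot e_{TY}$, and from the monad identity $m_Y \cdot e_{TY} = 1_{TY}$ we obtain $e_{TY} \le m_Y^\circ$, hence $e_Y \cdot b \le \Txi b \cdot m_Y^\circ$. Pre-composing with $b$ and invoking transitivity of the $\Tth$-category $(Y,b)$ in the form $b \cdot \Txi b \cdot m_Y^\circ \le b$ then yields $b_0 \cdot b = b \cdot e_Y \cdot b \le b$. Applying $\Txi$ and the monotonicity of matrix composition closes the chain, producing $a \cdot m_X \le a \cdot \Txi a$ and so the core-compactness of $X$.
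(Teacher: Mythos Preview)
Your proof is correct and follows essentially the same route as the paper's: both arguments expand $a$ and $\Txi a$ via full faithfulness, invoke naturality of $m$ and core-compactness of $Y$ to turn $a\cdot m_X$ into $f^\circ\cdot b\cdot\Txi b\cdot TTf$, and then use downwards openness $f^\circ\cdot b = a\cdot Tf^\circ\cdot\Txi b_0$ together with the auxiliary fact $b_0\cdot b\le b$ to reach $a\cdot\Txi a$. The paper's version is more compressed (it uses the equality $\Txi b=\Txi b_0\cdot\Txi b$ without comment and absorbs an extra $f^\circ\cdot f$ via $f^\circ\cdot f\cdot f^\circ=f^\circ$), whereas you spell out the derivation of $b_0\cdot b\le b$ explicitly; but the logical content is the same.
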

\begin{proof}
Let $X=(X,a)$ and $Y=(Y,b)$ be $\Tth$-categories with $b\cdot\Txi b=b\cdot m_Y$, and let $f:X\to Y$ be a fully faithful downwards open $\Tth$-functor. Then
\begin{multline*}
a\cdot\Txi a=f^\circ\cdot b\cdot Tf\cdot Tf^\circ\cdot\Txi b\cdot TTf
\ge f^\circ\cdot f\cdot a\cdot Tf^\circ\cdot\Txi b_0\cdot\Txi b\cdot TTf\\
=f^\circ\cdot b\cdot \Txi b\cdot TTf
=f^\circ\cdot b\cdot m_Y\cdot TTf=f^\circ\cdot b\cdot Tf\cdot m_X=a\cdot m_X. \qedhere
\end{multline*}
\end{proof}

\begin{example}
A continuous map $f:X\to Y$ between topological spaces is downwards open if and only if the down closure $\downc f(A)$ of every open subset $A\subseteq X$ is open in $Y$. To see this, we recall (Example \ref{ex:OrdMetComp}) that
\[
 \fx\,(\Uxi\!\le)\,\fx'\iff \forall A\in\fx'\,.\,\downc A\in\fx,
\]
where $\le$ is an order relation on $X$ and $\fx,\fx'\in UX$. Assume first that $f$ is downwards open and let $A\subseteq X$ open. Let $\fy\to y\le f(x)$ in $Y$, for some $x\in A$. By hypothesis, there is some $\fx\in UX$ with $\fx\to x$ and $\fy\,(\Uxi\!\le)\,Uf(\fx)$. From $A$ open it follows that $A\in\fx$, hence $\downc f(A)\in\fy$. Conversely, assume now that $\downc f(A)$ is open, for every open subset $A\subseteq X$. Let $x\in X$ and $\fy\in UY$ with $\fy\to f(x)$. Then the ideal
\[
 \{A\subseteq X\mid \downc f(A)\notin\fy\}
\]
is disjoint from the neighbourhood filter of $x$, and therefore (see Proposition \ref{prop:UltExtExc}) there is some ultrafilter $\fx\in UX$ with $\fx\to x$ and $\fy\,(\Uxi\!\le)\,Uf(\fx)$.
\end{example}

\begin{proposition}\label{prop:nearlyOpen}
Let $f:(X,a)\to(Y,b)$ be a $\Tth$-functor between $\Tth$-categories. Then the following assertions are equivalent.
\begin{eqcond}
\item\label{cond:nearlyOpen1} $f$ is downwards open.
\item\label{cond:nearlyOpen2} $\V^f:\V^Y\to\V^X$ is a homomorphism.
\item\label{cond:nearlyOpen3} $Vf:VX\to VY$ has a left adjoint.
\end{eqcond}
\end{proposition}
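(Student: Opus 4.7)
The plan is to prove (\ref{cond:nearlyOpen1})$\Leftrightarrow$(\ref{cond:nearlyOpen2}) directly and then deduce (\ref{cond:nearlyOpen2})$\Leftrightarrow$(\ref{cond:nearlyOpen3}) from Lemma~\ref{lem:dualTVGraphFun} combined with the adjunction $f_\Tast\dashv f^\Tast$ in $\Mod{\Tth}$.

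For (\ref{cond:nearlyOpen1})$\Leftrightarrow$(\ref{cond:nearlyOpen2}), I first unfold the pseudo-homomorphism condition $\V^f\cdot\mate{\mu_Y}\simeq\mate{\mu_X}\cdot T\V^f$ pointwise at $(\fq,x)\in T\V^Y\times X$. Using $\ev_X\cdot(\V^f\times 1_X)=\ev_Y\cdot(1_{\V^Y}\times f)$ together with preservation of weak pullbacks by $T$ (applied to the pullback squares $\V^f\times 1_X\Rightarrow\pi_1$ and $1_{\V^Y}\times f\Rightarrow\pi_2$), one obtains the identity $\Txi\ev_X(T\V^f(\fq),\fx)=\Txi\ev_Y(\fq,Tf(\fx))$, so the condition reads
\[
\bigvee_{\fy\in TY}\Txi\ev_Y(\fq,\fy)\otimes b(\fy,f(x))
\;\approx\;
\bigvee_{\fx\in TX}\Txi\ev_Y(\fq,Tf(\fx))\otimes a(\fx,x).
\]
The inequality ``$\ge$'' is automatic from $f$ being a $\Tth$-functor (so $\V^f$ is already a $\Tth$-functor). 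For ``$\le$'' in the presence of (\ref{cond:nearlyOpen1}), I use two ingredients. First, because every $\psi\in\V^Y$ is a $\V$-functor $(Y,b_0)\to\V$, one has $b_0\cdot\ev_Y=\ev_Y$; then 2-functoriality of $\Txi$ on $\Mat{\V}$ yields $\Txi b_0\cdot\Txi\ev_Y=\Txi\ev_Y$, so $\alpha_\fq:=\Txi\ev_Y(\fq,-)$ satisfies $\Txi b_0\cdot\alpha_\fq=\alpha_\fq$. Secondly, the downwards open condition reads pointwise as $b(\fy,f(x))\le\bigvee_{\fx}\Txi b_0(\fy,Tf(\fx))\cdot a(\fx,x)$. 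Multiplying by $\alpha_\fq(\fy)$, summing over $\fy$, and applying the absorption identity produces the desired ``$\le$''.

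For (\ref{cond:nearlyOpen2})$\Rightarrow$(\ref{cond:nearlyOpen1}), I specialise the pseudo-homomorphism identity, for a fixed $\fy\in TY$, to $\fq:=Ty^b(\fy)$, where $y^b:Y\to\V^Y$ is the Yoneda-style map $y\mapsto b_0(y,-)$. One must first check that $b_0(y,-)\in\V^Y$, i.e.\ that $b_0(y,-)$ is a $\Tth$-functor $(Y,b)\to(\V,\hom_\xi)$; this reduces to the identity $b\cdot\Txi b_0=b$, which follows from transitivity $b\cdot\Txi b\le b\cdot m_Y$, the monad unit law $m_Y\cdot\Txi e_Y=1_{TY}$, and the reverse inequality $b\le b\cdot\Txi b_0$ coming from $1_{TY}\le\Txi b_0$. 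A second application of weak-pullback preservation, this time to the pullback $y^b\times 1_Y\Rightarrow\pi_1$, gives $\Txi\ev_Y(Ty^b(\fy),\fy')=\Txi b_0(\fy,\fy')$; feeding this into the pseudo-homomorphism identity and simplifying the left-hand side with $b\cdot\Txi b_0=b$ produces exactly the pointwise downwards open condition $b(\fy,f(x))=\bigvee_{\fx}\Txi b_0(\fy,Tf(\fx))\cdot a(\fx,x)$.

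Finally, for (\ref{cond:nearlyOpen2})$\Leftrightarrow$(\ref{cond:nearlyOpen3}), Lemma~\ref{lem:dualTVGraphFun} identifies (\ref{cond:nearlyOpen2}) with $(\V^f)^\op=f^\Tast\kleisli-:VY\to VX$ being a $\Tth$-functor. In $\Mod{\Tth}$ we have $f_\Tast\dashv f^\Tast$, inducing an adjunction $(f_\Tast\kleisli-)\dashv(f^\Tast\kleisli-)$ of $\V$-functors between $\Mod{\Tth}(G,X)$ and $\Mod{\Tth}(G,Y)$; since $\Mod{\Tth}(G,X)=(VX)_0^\op$ and $\Mod{\Tth}(G,Y)=(VY)_0^\op$, taking opposites yields $(f^\Tast\kleisli-)\dashv Vf$ as $\V$-functors between $(VY)_0$ and $(VX)_0$. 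Because the 2-structure on $\Cat{\Tth}$ is inherited from the underlying $\V$-categories, this lifts to a $\Cat{\Tth}$-adjunction precisely when $f^\Tast\kleisli-$ is a $\Tth$-functor, giving (\ref{cond:nearlyOpen2})$\Rightarrow$(\ref{cond:nearlyOpen3}). Conversely, any left adjoint of $Vf$ in $\Cat{\Tth}$ is a left adjoint on the underlying $\V$-categories and so must coincide, up to equivalence, with $f^\Tast\kleisli-$; since ``being a $\Tth$-functor'' is preserved by equivalence of $\V$-functors, this forces $f^\Tast\kleisli-$ to be a $\Tth$-functor, which is (\ref{cond:nearlyOpen2}).

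The main obstacle is (\ref{cond:nearlyOpen2})$\Rightarrow$(\ref{cond:nearlyOpen1}): guessing the correct test element $\fq=Ty^b(\fy)$ and then verifying that each $b_0(y,-)$ actually belongs to $\V^Y$, which in turn depends on the (not wholly obvious) identity $b\cdot\Txi b_0=b$. Everything else is routine manipulation with Kleisli convolution, the 2-functoriality of $\Txi$, and weak-pullback preservation.
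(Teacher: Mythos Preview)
Your proof is correct and follows essentially the same strategy as the paper's: reduce the pseudo-homomorphism condition on $\V^f$ to an identity involving $\ev_Y$, $a$, $b$ and $f$, and for the converse specialise along the Yoneda map $\coyoneda_Y$; then (ii)$\Leftrightarrow$(iii) via dualisation and the adjunction $f_\Tast\dashv f^\Tast$.

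The only real difference is presentational. The paper works relationally throughout: it records the $\V$-relation identity $\ev_X\cdot\V^f=f^\circ\cdot b_0\cdot\ev_Y$, applies the 2-functor $\Txi$ to it, and reads off directly that the homomorphism condition is $f^\circ\cdot b\cdot\Txi\ev_Y=a\cdot Tf^\circ\cdot\Txi b_0\cdot\Txi\ev_Y$; then precomposing with $T\coyoneda_Y$ and using $\ev_Y\cdot\coyoneda_Y=b_0$ yields (i). You instead use the set-map identity $\ev_X\circ(\V^f\times 1_X)=\ev_Y\circ(1_{\V^Y}\times f)$ and invoke weak-pullback preservation explicitly to obtain the pointwise form $\Txi\ev_X(T\V^f(\fq),\fx)=\Txi\ev_Y(\fq,Tf(\fx))$; your absorption step $\Txi b_0\cdot\Txi\ev_Y=\Txi\ev_Y$ is exactly what bridges the two formulations. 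Your verification that $b_0(y,-)\in\V^Y$ is correct (and, under $T1=1$, ``$\Tth$-functor $Y\to\V$'' indeed coincides with membership in $\V^Y$), but note this is already Proposition~\ref{prop:coYoneda}. For (ii)$\Leftrightarrow$(iii) your argument is fine but can be shortened: since $VX$ is separated, any left adjoint of $Vf$ in $\Cat{\Tth}$ is \emph{equal} to $f^\Tast\kleisli-$, so the ``preserved by equivalence'' step is unnecessary.
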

\begin{proof}
Assume first that $f$ is downwards open. Let $\fq\in T(\V^Y)$ and $x\in X$. Then
\begin{align*}
 \V^f(\mate{\mu_Y}(\fq))(x)&=\mate{\mu_Y}(\fq)(f(x))=b\cdot \Txi\ev_Y(\fq,f(x))=f^\circ\cdot b\cdot\Txi\ev_Y(\fq,x)
\intertext{and}
 \mate{\mu_X}(T(\V^f)(\fq))(x)&=a\cdot\Txi\ev_X(T(\V^f)(\fq),x).
\end{align*}
Furthermore, the diagram
\[
 \xymatrix{\V^Y\ar[r]^{\V^f}\ar[d]|-{\object@{|}}_{\ev_Y} & \V^X\ar[d]|-{\object@{|}}^{\ev_X}\\ Y\ar[r]|-{\object@{|}}_{f^\circ\cdot b_0} & X}
\]
commutes since, for every $\varphi:\V^Y$ and $x\in X$,
\[
\ev_X\cdot\V^f(\varphi,x)=\varphi(f(x))=b_0\cdot\varphi(f(x))=f^\circ\cdot b_0\cdot\ev_Y(\varphi,x).
\]
Therefore $a\cdot\Txi\ev_X(T(\V^f)(\fq),x)=a\cdot Tf^\circ\cdot\Txi b_0\cdot \Txi\ev_Y(\fq,x)$. We conclude that $\V^f$ is a homomorphism, hence \eqref{cond:nearlyOpen1}$\Rw$\eqref{cond:nearlyOpen2}. Since $\ev_Y\cdot\coyoneda_Y=b_0$, from $f^\circ\cdot b\cdot \Txi\ev_Y=a\cdot Tf^\circ\cdot\Txi b_0\cdot\Txi\ev_Y$ one obtains $f^\circ\cdot b=a\cdot Tf^\circ\cdot\Txi b_0$; hence \eqref{cond:nearlyOpen2}$\Rw$\eqref{cond:nearlyOpen1}. Finally, the equivalence \eqref{cond:nearlyOpen2}$\RLw$\eqref{cond:nearlyOpen3} is clear.
\end{proof}

Recall that the structure $a$ of a $\Tth$-category $X=(X,a)$ can be seen as a $\Tth$-functor $a:(TX)^\op\otimes X\to\V$, and therefore induces a morphism
\[
\mate{a}:(TX)^\op\to\V^X,\,\fx\mapsto a(\fx,-)
\]
in $\Gph{\Tth}$. 

\begin{proposition}\label{prop:coYoneda}
For each $\Tth$-category $X=(X,a)$, the \emph{Yoneda map} \[\coyoneda_X:X\to VX,\,x\mapsto a_0(x,-)\] is a fully faithful and downwards open $\Tth$-functor. If, moreover, $X$ is representable, then $\coyoneda_X:X\to VX$ is even a pseudo-homomorphism. Furthermore, $\coyoneda_X^\Tast(\fp,x)=\mu(\fp,x)$ for all $x\in X$ and $\fp\in T(\V^X)$.
\end{proposition}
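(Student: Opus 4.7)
My plan is to build the proof around the identity
\[
 \mu\bigl(T\coyoneda_X(\fx),y\bigr) \;=\; a(\fx,y)\qquad (\fx\in TX,\,y\in X).
\]
To derive it I observe that the commutative square
\[
 \xymatrix@=5ex{X\times X\ar[r]^-{\coyoneda_X\times 1_X}\ar[d]_{\pi_1} & \V^X\times X\ar[d]^{\pi_1}\\ X\ar[r]_-{\coyoneda_X} & \V^X}
\]
is a pullback in $\SET$, so weak-pullback preservation by $T$ ensures that every $\fw\in T(\V^X\times X)$ with $T\pi_1(\fw)=T\coyoneda_X(\fx)$ is of the form $T(\coyoneda_X\times 1_X)(\fv)$ for some $\fv\in T(X\times X)$ with $T\pi_1(\fv)=\fx$. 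Since $\ev\cdot(\coyoneda_X\times 1_X)=a_0$, this forces $\Txi\ev(T\coyoneda_X(\fx),\fz)=\Txi a_0(\fx,\fz)$, whence $\mu(T\coyoneda_X(\fx),y)=(a\cdot\Txi a\cdot Te_X)(\fx,y)$. Transitivity of $a$ together with $m_X\cdot Te_X=1_{TX}$ gives the inequality ``$\le a(\fx,y)$'', and the reflexivity-based bound $\Txi a_0\ge 1_{TX}$ (coming from $k\le a_0\cdot\Delta$ transported along $T\Delta$) yields the reverse one.

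Armed with this identity, the $\Tth$-functor property, fully faithfulness, and the last assertion $\coyoneda_X^\Tast=\mu$ all collapse to a single calculation: for any $\fp\in T(VX)$ and $x\in X$,
\[
 \fspstrV{\fp}{\coyoneda_X(x)} \;=\; \bigwedge_{z\in X}\hom\bigl(a_0(x,z),\mu(\fp,z)\bigr) \;=\; \mu(\fp,x).
\]
The ``$\le$'' sets $z=x$ and uses $k\le a_0(x,x)$; the ``$\ge$'' uses that $\mate\mu(\fp)\in\V^X$ is in particular a $\V$-functor $X_0\to\V_0$, so $\mu(\fp,x)\otimes a_0(x,z)\le\mu(\fp,z)$. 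Substituting $\fp=T\coyoneda_X(\fx)$ and plugging in the backbone identity yields $\fspstrV{T\coyoneda_X(\fx)}{\coyoneda_X(y)}=a(\fx,y)$, which simultaneously certifies that $\coyoneda_X$ is a $\Tth$-functor and fully faithful; the general case delivers $\coyoneda_X^\Tast(\fp,x)=\mu(\fp,x)$.

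For downwards openness I reduce the required inequality $a\cdot T\coyoneda_X^\circ\cdot\Txi b_0\ge\mu$ to the pointwise bound $\Txi b_0(\fp,T\coyoneda_X(\fx))\ge\Txi\ev(\fp,\fx)$. Since each $\varphi\in\V^X$ is a $\V$-functor $X_0\to\V_0$, one has $\ev(\varphi,x)=\varphi(x)\le\bigwedge_z\hom(a_0(x,z),\varphi(z))=b_0(\varphi,\coyoneda_X(x))$; given any witness $\fw\in T(\V^X\times X)$ for $\Txi\ev(\fp,\fx)$, the transported element $\fW=T(1_{VX}\times\coyoneda_X)(\fw)$ witnesses $\Txi b_0(\fp,T\coyoneda_X(\fx))$ and satisfies $\xi\cdot Tb_0(\fW)\ge\xi\cdot T\ev(\fw)$ by monotonicity of $\xi\cdot T(-)$. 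Finally, when $X$ is representable with $a=a_0\cdot\alpha$, the backbone identity immediately yields $\mate\mu(T\coyoneda_X(\fx))(y)=a(\fx,y)=a_0(\alpha(\fx),y)=\coyoneda_X(\alpha(\fx))(y)$, so $\mate\mu\cdot T\coyoneda_X=\coyoneda_X\cdot\alpha$ strictly, confirming the pseudo-homomorphism property. I expect the main technical hurdle to be the weak-pullback argument that launches the proof; once that is in place, everything else reduces to direct bookkeeping in $\Mat{\V}$.
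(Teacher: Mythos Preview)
Your proof is correct and follows the same logical skeleton as the paper's, but you work harder than necessary in two places because you do not exploit that $\Txi:\Mat{\V}\to\Mat{\V}$ is a 2-functor commuting with the involution. The paper simply observes the equality $\ev\cdot\coyoneda_X=a_0$ of $\V$-relations $X\relto X$ (your commuting triangle) and applies functoriality to obtain $\Txi\ev\cdot T\coyoneda_X=\Txi a_0$ in one step, whence $\mu\cdot T\coyoneda_X=a\cdot\Txi a_0=a$; your weak-pullback argument reproves this instance of functoriality by hand. Similarly, for downwards openness the paper notes (via the $\V$-categorical Yoneda lemma) that $\coyoneda_X^\circ\cdot\langle-,-\rangle=\ev$ holds \emph{exactly}, not merely as an inequality; functoriality together with $\Txi((-)^\circ)=(\Txi(-))^\circ$ then yields $a\cdot T\coyoneda_X^\circ\cdot\Txi\langle-,-\rangle=a\cdot\Txi\ev=\mu$ on the nose, bypassing your witness-transport argument. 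So the ``main technical hurdle'' you anticipate is not a hurdle at all: it is already packaged inside the standing hypothesis that $\Txi$ is a 2-functor. The remaining steps---the pointwise computation of $\fspstrV{\fp}{\coyoneda_X(x)}$ and the pseudo-homomorphism verification---match the paper's argument essentially verbatim.
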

\begin{proof}
Let $X=(X,a)$ be a $\Tth$-category. Since the diagram
\[
 \xymatrix{X\times X\ar[r]^{\coyoneda_X\times 1_X}\ar[rd]_{a_0} & \V^X\times X\ar[d]^\ev\\ & \V}
\]
commutes, for $\fp=T\coyoneda_X(\fx)$ with $\fx\in TX$ one has (where $x\in X$)
\[
\mu(\fp,x)=a\cdot\Txi\ev(T\coyoneda_X(\fx),x)=a\cdot\Txi a_0(\fx,x)=a(\fx,x)
\]
and therefore $\fspstrV{T\coyoneda_X(\fx)}{\coyoneda_X(x)}=a(\fx,x)$. We conclude that $\coyoneda_X:X\to VX$ is a fully faithful $\Tth$-functor. If $X$ is representable, just note that
\[
 \mu\cdot T\coyoneda_X(\fx)=a(\fx,-)=a_0(\alpha(\fx),-)=\coyoneda\cdot\alpha(\fx),
\]
where $\alpha:TX\to X$ is the pseudo-algebra structure of $X$. Furthermore,
\[
 \coyoneda_X^\Tast(\fp,x)=\fspstrV{\fp}{\coyoneda_X(x)}=[\coyoneda_X(x),\mate{\mu}(\fp)]=\mu(\fp,x),
\]
for all $x\in X$ and $\fp\in T(\V^X)$; where the last equality follows from the Yoneda Lemma for $\V$-categories. From that it follows that
\begin{align*}
 \coyoneda_X^\circ\cdot\fspstrV{-}{-}=\mu &&\text{and}&&
 \coyoneda_X^\circ\cdot\langle-,-\rangle=\ev,
\end{align*}
and from the latter equation we deduce $a\cdot T\yoneda_X^\circ\cdot\Txi\langle-,-\rangle=a\cdot\Txi\ev=\mu$. Therefore $\coyoneda_X:X\to VX$ is downwards open.
\end{proof}

\begin{corollary}
Let $X$ be a $\Tth$-category. Then the following assertions are equivalent.
\begin{eqcond}
\item\label{cond:CoreComp1}  $X$ is core-compact.
\item $VX$ is representable.
\item\label{cond:CoreComp3} $VX$ is core-compact.
\item\label{cond:CoreComp4} The $\Tth$-graph $\V^X$ is a $\Tth$-category.
\end{eqcond}
\end{corollary}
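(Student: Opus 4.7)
The plan is to reduce the four conditions to previously established results, with (ii), (iii) and (iv) being swept up by a single application of Proposition \ref{prop:DualVsCore} to $VX$. First I would verify that $VX$ meets the hypothesis of that proposition, i.e.\ that its $\Tth$-category structure factors through a map $T(VX)\to VX$. The formula $\fspstrV{\fp}{\varphi} = \langle\mate{\mu}(\fp),\varphi\rangle$ recorded immediately after the definition of $VX$ exhibits exactly such a factorisation, with $\alpha=\mate{\mu}$ and underlying $\V$-category structure $\langle-,-\rangle$. Since dualisation of dualisable $\Tth$-graphs is involutive (because $a_0^{\circ\circ}=a_0$), one has $(VX)^\op = \V^X$, and Proposition \ref{prop:DualVsCore} then reads: $\V^X$ is a $\Tth$-category iff $VX$ is core-compact iff $VX$ is representable, which is precisely (iv)$\Leftrightarrow$(iii)$\Leftrightarrow$(ii).

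For the implication (i)$\Rightarrow$(iv) I would simply invoke Corollary \ref{cor:reprspaceVpowerX}: whenever $X$ is core-compact, $\V^X$ is a separated representable $\Tth$-category, hence in particular a $\Tth$-category.

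Finally, for (iii)$\Rightarrow$(i), I would appeal to the Yoneda map. By Proposition \ref{prop:coYoneda}, $\coyoneda_X:X\to VX$ is fully faithful and downwards open, so Lemma \ref{lem:FFDO_CoreComp} applied with $f=\coyoneda_X$ and $Y=VX$ transports core-compactness from $VX$ back to $X$. The only step demanding any real care is the opening verification that $VX$ satisfies the hypothesis of Proposition \ref{prop:DualVsCore}; once the factorisation $c=c_0\cdot\mate{\mu}$ and the involutivity of dualisation are made explicit, every other implication is a direct citation and no serious obstacle is anticipated.
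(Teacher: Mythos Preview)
Your proposal is correct and follows essentially the same route as the paper, invoking the same ingredients (Corollary~\ref{cor:reprspaceVpowerX}, Proposition~\ref{prop:DualVsCore}, Proposition~\ref{prop:coYoneda}, Lemma~\ref{lem:FFDO_CoreComp}). The only difference is organisational: you obtain (ii)$\Leftrightarrow$(iii)$\Leftrightarrow$(iv) in one application of Proposition~\ref{prop:DualVsCore} to $VX$ after making the factorisation $c=c_0\cdot\mate{\mu}$ and the identity $(VX)^\op=\V^X$ explicit, whereas the paper argues (i)$\Rightarrow$(ii)$\Rightarrow$(iii)$\Rightarrow$(i) and treats (iii)$\Leftrightarrow$(iv) separately.
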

\begin{proof}
If $X$ is core-compact, then $VX=(\V^X)^\op$ is representable by Corollary \ref{cor:reprspaceVpowerX}, hence core-compact by Proposition \ref{prop:CharReprCat}. The implication \eqref{cond:CoreComp3}$\Rw$\eqref{cond:CoreComp1} follows from Lemma \ref{lem:FFDO_CoreComp} and Proposition \ref{prop:coYoneda}, and the equivalence \eqref{cond:CoreComp3}$\RLw$\eqref{cond:CoreComp4} from Proposition \ref{prop:DualVsCore}.
\end{proof}

\begin{proposition}
Let $X=(X,a)$ be a $\Tth$-category. Then $X$ is core-compact if and only if $\mate{a}:(TX)^\op\to\V^X$ is a homomorphism. In this case, $\coyoneda_X={\mate{a}}^\op\cdot e_X$.
\end{proposition}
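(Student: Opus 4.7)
The plan is to use Lemma \ref{lem:dualTVGraphFun} to reduce the claim to a pseudo-homomorphism property of $\mate{a}$ viewed as an ordinary $\Tth$-functor $TX\to VX$, and then to derive that property from a single identity in $\Mat{\V}$ coming from functoriality of $\Txi$.

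First I dispose of the easy direction ($\Leftarrow$): if $\mate{a}:(TX)^\op\to\V^X$ is a homomorphism, then in particular $\V^X$ must be a $\Tth$-category, and by the preceding corollary this forces $X$ to be core-compact. For the forward direction ($\Rightarrow$), assume $X$ is core-compact. My main claim is that $\mate{a}:TX\to VX$ is a pseudo-homomorphism between representable $\Tth$-categories, whence $\mate{a}^\op=\mate{a}:(TX)^\op\to(VX)^\op=\V^X$ is a $\Tth$-functor by Lemma \ref{lem:dualTVGraphFun}. Using Lemma \ref{lem:MapsReprCats} I first observe that $\mate{a}:TX\to VX$ is always a $\Tth$-functor, irrespective of core-compactness: the underlying $\V$-functor condition reduces to $a\cdot\hat{a}=a\kleisli a\le a$ (transitivity of $a$), and the second condition $\mate{\mu}\cdot T\mate{a}\le\mate{a}\cdot m_X$ between the pseudo-algebras will drop out of the computation below. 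By Lemma \ref{lem:dualTVGraphFun}, $\mate{a}^\op$ is then a $\Tth$-functor precisely when the stronger condition $\mate{a}\cdot m_X\simeq\mate{\mu}\cdot T\mate{a}$ holds.

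The key computation is the tautological factorisation
\[
a\;=\;\ev\cdot\mate{a}\quad\text{in }\Mat{\V},
\]
valid because $(\ev\cdot\mate{a})(\fx,x)=\mate{a}(\fx)(x)=a(\fx,x)$. Applying the $2$-functor $\Txi:\Mat{\V}\to\Mat{\V}$, which agrees with $T$ on set-maps, yields $\Txi a=\Txi\ev\cdot T\mate{a}$. Composing on the left with $a$ and recalling from Lemma \ref{lem:VXdualisable} that $\mate{\mu}(\fp)(x)=(a\cdot\Txi\ev)(\fp,x)$, I obtain
\[
(\mate{\mu}\cdot T\mate{a})(\fX)(x)=(a\cdot\Txi a)(\fX,x)\quad\text{and}\quad(\mate{a}\cdot m_X)(\fX)(x)=a(m_X(\fX),x),
\]
for every $\fX\in TTX$ and $x\in X$. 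These two expressions coincide exactly by the core-compactness identity $a\cdot\Txi a=a\cdot m_X$. Hence $\mate{a}\cdot m_X=\mate{\mu}\cdot T\mate{a}$ with strict equality, which simultaneously supplies the missing inequality in the $\Tth$-functor check for $\mate{a}:TX\to VX$ and the pseudo-homomorphism condition needed to dualise. The remaining identity is a one-line evaluation: $(\mate{a}^\op\cdot e_X)(x)=\mate{a}(e_X(x))=a(e_X(x),-)=a_0(x,-)=\coyoneda_X(x)$.

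I expect the main obstacle to be conceptual rather than technical. One must recognise that the correct reduction is to work with $\mate{a}:TX\to VX$ on the representable side and only dualise at the end via Lemma \ref{lem:dualTVGraphFun}, and that the matrix-level factorisation $a=\ev\cdot\mate{a}$ in $\Mat{\V}$ does all the heavy lifting once $\Txi$ is applied to it; the rest is bookkeeping around the defining formulas for $\mate{\mu}$ and $\hat a$.
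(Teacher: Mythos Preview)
Your central computation is exactly the paper's: from the factorisation $a=\ev\cdot\mate{a}$ one gets $\Txi a=\Txi\ev\cdot T\mate{a}$, hence $\mate{\mu}\cdot T\mate{a}(\fX)=a\cdot\Txi a(\fX,-)$ while $\mate{a}\cdot m_X(\fX)=a(m_X(\fX),-)$, and equality of these two is precisely core-compactness. That is the whole proof in the paper.

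What you add on top of this is an unnecessary detour. In the paper's usage, ``$\mate{a}:(TX)^\op\to\V^X$ is a homomorphism'' \emph{means} the condition $\mate{a}\cdot m_X\simeq\mate{\mu}\cdot T\mate{a}$ (pseudo-homomorphism between dualisable $\Tth$-graphs, as defined after Lemma~\ref{lem:dualTVGraphFun}; the map is already known to be a $\Gph{\Tth}$-morphism). Since the $\alpha$-maps of $TX$ and $(TX)^\op$ coincide (both are $m_X$) and those of $VX$ and $\V^X$ coincide (both are $\mate{\mu}$), passing to $\mate{a}:TX\to VX$ and then invoking Lemma~\ref{lem:dualTVGraphFun} to dualise buys nothing: the condition you verify there is literally the same equation. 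You can drop the whole $TX\to VX$ layer and the appeal to Lemma~\ref{lem:MapsReprCats} (which, incidentally, requires $VX$ representable and hence core-compactness, so your ``irrespective of core-compactness'' is not quite right).

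Your $\Leftarrow$ argument is also off. You write that if $\mate{a}$ is a homomorphism then ``$\V^X$ must be a $\Tth$-category''; but the (pseudo-)homomorphism condition is formulated for dualisable $\Tth$-graphs and does not presuppose that $\V^X$ is a $\Tth$-category. The correct backward direction is immediate from your own computation: the homomorphism condition reads $a(m_X(\fX),-)=a\cdot\Txi a(\fX,-)$ for all $\fX$, which is exactly $a\cdot m_X=a\cdot\Txi a$.
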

\begin{proof}
Just note that, for every $\fX\in TTX$,
\begin{align*}
 \mate{a}\cdot m_X(\fX) &= a(m_X(\fX),-)
\intertext{and}
 \mu\cdot T\mate{a}(\fX) &=a\cdot\Txi\ev(T\mate{a}_X(\fX),-)=a\cdot\Txi a(\fX,-),
\end{align*}
where the last equality follows from the fact that
\[
 \xymatrix{TX\times X\ar[r]^{\mate{a}\times 1_X}\ar[rd]_a & \V^X\times X\ar[d]^\ev\\ & \V}
\]
commutes.
\end{proof}

From $f_\Tast\kleisli x_\Tast=f(x)_\Tast$ it follows that $\coyoneda=(\coyoneda_X)$ is a natural transformation $\coyoneda:1\to V$, for $V:\Cat{\Tth}\to\Cat{\Tth}$ and for $V:\Repr{\V}\to\Repr{\V}$. We will now show that $V$ is part of a monad on both $\Repr{\Tth}$ and $\Cat{\Tth}$. 

Recall from Proposition \ref{prop:coYoneda} that $\coyoneda_X:X\to VX$ is downwards open, hence, by Proposition \ref{prop:nearlyOpen}, $V\coyoneda_X:VX\to VVX$ has a left adjoint given by
\[
 \coyonmult_X:=\coyoneda_X^\Tast\kleisli-:VVX\to VX.
\]
We show now that $\coyonmult=(\coyonmult_X)$ is the multiplication of a monad $\V=\vmonad$.

\begin{lemma}
For every $\Tth$-category $X$, $\coyonmult_X:VVX\to VX$ is right adjoint to $\coyoneda_{VX}:VX\to VVX$.
\end{lemma}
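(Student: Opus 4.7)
The plan is to verify the two adjunction inequalities directly; both reduce to the identity
\[
\mu(e_{VX}(\psi),x)=\psi(x)\qquad(\psi\in VX,\ x\in X).
\]
I would first establish this identity. The square with vertices $X$, $VX\times X$, $1$, $VX$ and arrows $i_\psi(x)=(\psi,x)$, $!_X$, $\pi_1$, $\psi_*(*)=\psi$ is a pullback in $\SET$. Since $T$ preserves weak pullbacks, it sends this square to a weak pullback; together with $T1=1$ and naturality of $e$ (giving $T\psi_*=e_{VX}\cdot\psi_*$), this identifies the only $\fw\in T(VX\times X)$ with $T\pi_1(\fw)=e_{VX}(\psi)$ and $T\pi_2(\fw)=\fx$ as $Ti_\psi(\fx)$. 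Using $\ev\cdot i_\psi=\psi$, one obtains $\Txi\ev(e_{VX}(\psi),\fx)=\xi T\psi(\fx)$; the $\Tth$-functor property $\xi T\psi(\fx)\otimes a(\fx,x)\le\psi(x)$ of $\psi$ and reflexivity at $\fx=e_X(x)$ then yield the desired equality. (In passing this recovers $\mate{\mu}\cdot e_{VX}=1_{VX}$, noted in Corollary \ref{cor:reprspaceVpowerX} for core-compact $X$, now in full generality.)

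Proposition \ref{prop:coYoneda} provides $\coyoneda_X^\Tast(\fp,x)=\mu(\fp,x)$, so the Kleisli formula for $\coyonmult_X=\coyoneda_X^\Tast\kleisli -$ reads
\[
\coyonmult_X(\Phi)(x)=\bigvee_{\fp\in T(VX)}\xi T\Phi(\fp)\otimes\mu(\fp,x).
\]
For the counit $\coyoneda_{VX}\cdot\coyonmult_X\le 1_{VVX}$: since the order on $VVX=(\V^{VX})^{\op}$ is pointwise-reversed relative to $\V^{VX}$, this inequality unfolds to $\Phi(\psi)\otimes\psi(x)\le\coyonmult_X(\Phi)(x)$ for all $\Phi,\psi,x$. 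Restricting the supremum above to $\fp=e_{VX}(\psi)$ and invoking the key identity immediately yields the summand $\Phi(\psi)\otimes\psi(x)$, establishing the counit.

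For the unit $\coyonmult_X\cdot\coyoneda_{VX}=1_{VX}$, which I expect to be an equality, note that $\coyoneda_{VX}(\varphi)\in VVX$ is the map $\psi\mapsto[\psi,\varphi]$ in $\V^X$, so
\[
\coyonmult_X(\coyoneda_{VX}(\varphi))(x)=\bigvee_{\fp}\xi T([-,\varphi])(\fp)\otimes\mu(\fp,x).
\]
The bound $\ge\varphi(x)$ again follows by restricting to $\fp=e_{VX}(\psi)$, specialising to $\psi=\coyoneda_X(x)$, and using the Yoneda identity $[\coyoneda_X(x),\varphi]=\varphi(x)$ in $\V^X$ together with $\coyoneda_X(x)(x)\ge k$. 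For $\le\varphi(x)$, I would start from the pointwise inequality $([-,\varphi]\cdot\pi_1)\otimes\ev\le\varphi\cdot\pi_2$ on $VX\times X$; applying $\xi\cdot T$ and using the multiplicativity axiom of $\xi$ on $\otimes$ to factor the left-hand side as $\xi T([-,\varphi])(\fp)\otimes\xi T\ev(\fw)$ for $\fw$ with projections $T\pi_1(\fw)=\fp$ and $T\pi_2(\fw)=\fx$, then multiplying by $a(\fx,x)$, summing over $\fw$ and $\fx$, and finally invoking the $\Tth$-functor property $\xi T\varphi(\fx)\otimes a(\fx,x)\le\varphi(x)$, closes the argument.

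The main obstacle is the key identity $\mu(e_{VX}(\psi),x)=\psi(x)$, whose proof crucially uses $T1=1$ and the weak-pullback property of $T$; once it is in hand, both adjunction inequalities reduce to disciplined bookkeeping with Kleisli composition, multiplicativity of $\xi$ on $\otimes$, and the Yoneda lemma in $\V^X$.
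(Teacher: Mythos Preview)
Your argument is correct, but it takes a considerably longer route than the paper's.

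The paper's proof rests on a single observation that you bypass: for \emph{every} $\Phi\in VVX$ one has
\[
\coyonmult_X(\Phi)(x)=(\coyoneda_X^\Tast\kleisli\Phi)(x)=\Phi(\coyoneda_X(x)).
\]
This falls out immediately from the distributor axiom $c\kleisli\Phi=\Phi$ on $VX$ together with $\coyoneda_X^\Tast(\fp,x)=c(\fp,\coyoneda_X(x))$: the Kleisli composite $\coyoneda_X^\Tast\kleisli\Phi$ is nothing but $(c\kleisli\Phi)(-,\coyoneda_X(-))$. With this in hand, the unit is an \emph{equality} by the $\V$-Yoneda lemma ($\coyoneda_{VX}(\varphi)(\coyoneda_X(x))=[\coyoneda_X(x),\varphi]=\varphi(x)$), and the counit reduces to the $\V$-functoriality of $\Phi:VX\to\V$ (giving $\varphi(x)\otimes\Phi(\varphi)\le\Phi(\coyoneda_X(x))=\coyonmult_X(\Phi)(x)$).

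Your approach instead expands $\coyonmult_X(\Phi)(x)$ as a supremum over $\fp\in T(VX)$ and argues termwise. This forces you to prove the ``key identity'' $\mu(e_{VX}(\psi),x)=\psi(x)$ via a weak-pullback argument (correct, and an interesting observation in its own right, since it gives $\mate{\mu}\cdot e_{VX}=1_{VX}$ without core-compactness), and then to run a separate argument for the $\le$ direction of the unit using multiplicativity of $\xi$. Both steps are sound, but neither is needed once one recognises $\coyonmult_X(\Phi)=\Phi\cdot\coyoneda_X$: your elaborate $\le$-argument for the unit is then entirely superfluous, and your counit bound is exactly the paper's $\V$-functoriality inequality, obtained without restricting the supremum. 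What your approach buys is a self-contained elementwise verification that avoids invoking the distributor axiom abstractly; what the paper's buys is a two-line proof.
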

\begin{proof}
Let first $\varphi\in\V^X$. Then
\[
 \coyoneda_X^\Tast\kleisli\varphi_\Tast=[\coyoneda_X(-),\varphi]=\varphi,
\]
hence $\coyonmult_X\cdot\coyoneda_{VX}=(\coyoneda_X^\Tast\kleisli-)\cdot \coyoneda_{VX}=1$. Let now $\Phi\in\V^{(VX)}$. For every $\varphi\in\V^X$ and $x\in X$,
\[
 [x_\Tast,\varphi]\otimes\Phi(\varphi)\le\Phi(x_\Tast)
\]
since $\Phi:VX\to\V$ is a $\Tth$-functor, hence
\[
 \Phi(\varphi)\le\bigwedge_{x\in X}\hom(\varphi(x),\Phi(x_\Tast))
\]
for all $\varphi\in\V^X$, that is $\Phi\le \coyoneda_{VX}\cdot(\coyoneda_X^\Tast\kleisli-)(\Phi)$. Therefore $\coyoneda_{VX}\cdot\coyonmult_X\le 1_{VX}$.
\end{proof}

By the lemma above,
\[
 \coyoneda_{VX}\dashv\coyonmult_X\dashv V\coyoneda_X
\]
in $\Cat{\Tth}$ and, if $X$ is representable, even in $\Repr{\Tth}$. Furthermore, $\coyonmult_X\cdot \coyoneda_{VX}=1_{VX}=\coyonmult_X\cdot V\coyoneda_X$ since $\coyoneda_{VX}$ is fully faithful. For every $f:X\to Y$ in $\Cat{\Tth}$, naturality of $\coyoneda$ gives us
\begin{align*}
 Vf=\coyonmult_Y\cdot VVf\cdot\yoneda_{VX} &&\text{and}&& Vf=\coyonmult_Y\cdot VVf\cdot V\yoneda_{X},
\end{align*}
hence $Vf\cdot\coyonmult_X\le\coyonmult_Y\cdot VVf$ and $Vf\cdot\coyonmult_X\ge\coyonmult_Y\cdot VVf$ since $\coyoneda_{VX}\dashv\coyonmult_X\dashv V\coyoneda_X$. Consequently, $\coyonmult=(\coyonmult_X):VV\to V$ is a natural transformation for $V:\Cat{\Tth}\to\Cat{\Tth}$ and, since every $\coyonmult_X$ is left adjoint, also for $V:\Repr{\Tth}\to\Repr{\Tth}$ (see Proposition \ref{prop:TotCoComplRepr}). Finally, for every $\Tth$-category $X$, the diagram
\[
 \xymatrix{VVVX\ar[d]_{\coyonmult_{VX}}\ar[r]^{V\!\coyonmult_X} & VVX\ar[d]^{\coyonmult_X}\\ VVX\ar[r]_{\coyonmult_X} & VX}
\]
commutes since the diagram of the corresponding left adjoints does. All told:
\begin{theorem}\label{thm:Vietoris}
$\mV=\vmonad$ is a monad on $\Cat{\Tth}$ and on $\Repr{\Tth}$ which, moreover, is of Kock-Z\"oberlein type.
\end{theorem}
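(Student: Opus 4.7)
The plan is to verify the monad axioms together with the Kock-Zöberlein condition by systematic use of the two adjunctions $\coyoneda_{VX}\dashv\coyonmult_X\dashv V\coyoneda_X$ established just above, combined with the fully faithfulness of $\coyoneda_X$ (Proposition~\ref{prop:coYoneda}). Naturality of $\coyoneda:1\to V$ has already been remarked, so the outstanding tasks are: naturality of $\coyonmult$, the two unit laws, the associative law, the KZ property, and the restriction from $\Cat{\Tth}$ to $\Repr{\Tth}$.

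Naturality of $\coyonmult$ I would obtain from naturality of $\coyoneda$ by adjoint transposition. For a $\Tth$-functor $f:X\to Y$, the naturality square $\coyoneda_{VY}\cdot Vf=VVf\cdot\coyoneda_{VX}$, transposed across the adjunctions $\coyoneda_{VX}\dashv\coyonmult_X$ and $\coyoneda_{VY}\dashv\coyonmult_Y$, yields $Vf\cdot\coyonmult_X\le\coyonmult_Y\cdot VVf$; the parallel square $V\coyoneda_Y\cdot Vf=VVf\cdot V\coyoneda_X$, transposed across $\coyonmult_X\dashv V\coyoneda_X$ and $\coyonmult_Y\dashv V\coyoneda_Y$, gives the reverse inequality, hence equivalence. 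For the unit laws, $\coyonmult_X\cdot\coyoneda_{VX}=1_{VX}$ is the content of the preceding lemma. The other identity $\coyonmult_X\cdot V\coyoneda_X=1_{VX}$ unfolds, using $V\coyoneda_X=(\coyoneda_X)_\Tast\kleisli-$ and $\coyonmult_X=\coyoneda_X^\Tast\kleisli-$, to the Kleisli calculation $(\coyoneda_X^\Tast\kleisli(\coyoneda_X)_\Tast)\kleisli\varphi=\varphi$ for every $\varphi\in VX$; since $\coyoneda_X$ is fully faithful one has $\coyoneda_X^\Tast\kleisli(\coyoneda_X)_\Tast=1_X^\Tast=a$, and $a\kleisli\varphi=\varphi$ because $a$ is the identity of $X$ in $\Mod{\Tth}$.

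For associativity I would avoid any direct distributor computation and appeal to uniqueness of adjoints. Since $V$ is a 2-functor it preserves adjunctions, so $V\coyonmult_X\dashv VV\coyoneda_X$. Hence the right adjoint of $\coyonmult_X\cdot\coyonmult_{VX}$ is $V\coyoneda_{VX}\cdot V\coyoneda_X$, while the right adjoint of $\coyonmult_X\cdot V\coyonmult_X$ is $VV\coyoneda_X\cdot V\coyoneda_X=V(V\coyoneda_X\cdot\coyoneda_X)=V(\coyoneda_{VX}\cdot\coyoneda_X)=V\coyoneda_{VX}\cdot V\coyoneda_X$, the middle equality being naturality of $\coyoneda$ at $\coyoneda_X$. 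Left adjoints of equivalent morphisms are equivalent, whence $\coyonmult_X\cdot\coyonmult_{VX}\simeq\coyonmult_X\cdot V\coyonmult_X$. The Kock-Zöberlein property is, in this paper's usage, exactly the existence of the adjunction $\coyoneda_{VX}\dashv\coyonmult_X$ (equivalently $\coyonmult_X\dashv V\coyoneda_X$), which is already on the table.

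For the passage to $\Repr{\Tth}$, I would check that each ingredient restricts. The functor $V$ was shown to send pseudo-homomorphisms between representable spaces to homomorphisms, and $\coyoneda_X:X\to VX$ is a pseudo-homomorphism whenever $X$ is representable by Proposition~\ref{prop:coYoneda}. The map $\coyonmult_X$ is a left adjoint in $\Cat{\Tth}$ between (totally cocomplete, hence) representable $\Tth$-categories, so by Proposition~\ref{prop:TotCoComplRepr} it is automatically a pseudo-homomorphism; the same observation promotes the equivalences above to equivalences in $\Repr{\Tth}$. The one step that requires genuine care is associativity, since any direct attack through iterated Kleisli convolution becomes unwieldy; the adjoint-uniqueness shortcut leveraging the 2-functoriality of $V$ is the route I would take, and is the only part of the argument I would expect to require real thought.
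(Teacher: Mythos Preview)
Your proposal is correct and follows essentially the same route as the paper: both derive naturality of $\coyonmult$ from naturality of $\coyoneda$ via the double adjunction $\coyoneda_{VX}\dashv\coyonmult_X\dashv V\coyoneda_X$, obtain the unit laws from fully faithfulness of $\coyoneda_X$, and establish associativity by uniqueness of adjoints together with 2-functoriality of $V$. The only cosmetic differences are that you compute the second unit law $\coyonmult_X\cdot V\coyoneda_X=1_{VX}$ directly via Kleisli convolution (the paper just invokes fully faithfulness), and for associativity you compare \emph{right} adjoints ($V\coyoneda_{VX}\cdot V\coyoneda_X$ versus $VV\coyoneda_X\cdot V\coyoneda_X$) whereas the paper compares \emph{left} adjoints ($\coyoneda_{VVX}\cdot\coyoneda_{VX}$ versus $V\coyoneda_{VX}\cdot\coyoneda_{VX}$); both reduce to the same naturality square for $\coyoneda$.
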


\begin{remark}\label{rem:MonTildeV}
The monad $\mV=\vmonad$ on $\Repr{\Tth}$ restricts to a monad $\mV=\vmonad$ on $\Repr{\Tth}_\sep$ since $VX$ is separated, for every $\Tth$-category $X$; and the categories $(\Repr{\Tth})^\mV$ and $(\Repr{\Tth}_\sep)^\mV$ of Eilenberg--Moore algebras are actually equal. Furthermore, we can lift $\mV$ to a monad $\mtV=\tvmonad$ on $\SET^\mT$ via the adjunction (see Remark \ref{rem:AdjReprTalg})
\[
 \Repr{\Tth}_\sep\adjunct{}{}\SET^\mT,
\]
that is, $\mtV=\tvmonad$ is the monad on $\SET^\mT$ induced by the composite of the adjunctions
\[
 (\Repr{\Tth}_\sep)^\mV\adjunct{}{}\Repr{\Tth}_\sep\adjunct{}{}\SET^\mT.
\]
Explicitly, the unit $\widetilde{\coyoneda}_X:X\to\widetilde{V}X$ at $X$ is defined by $i\cdot\widetilde{\coyoneda}_X=\coyoneda_X$, where $i:\widetilde{V}X\to VX,\,\varphi\mapsto\varphi$; and the multiplication $\widetilde{\coyonmult}_X$ at $X$ sends $\Phi:G\modto\widetilde{V}X$ to $\coyoneda_X^\Tast\kleisli i_\Tast\kleisli\Phi$. We shall see in Remark \ref{rem:TildeVviaRepr} that $\mtV=\tvmonad$ is also induced by the adjunction
\[
 \Cocts{\Tth}_\sep\adjunct{}{}\SET^\mT
\]
of Remark \ref{rem:CoctsOverSetT}.
\end{remark}

\begin{example}\label{ex:tildeVApp}
In the topological case, the monad $\mtV=\tvmonad$ on $\COMPHAUS$ is the Vietoris monad whose functor part was originally studied in \cite{Vie22}. In the approach case (i.e.\ $\Tth=\Uth_{\Pp}$), we obtain a monad $\mtV=\tvmonad$ on $\COMPHAUS$ where $\widetilde{V}X$ is the compact Hausdorff space with underlying set all lower semi-continuous functions of type $X\to[0,\infty]$ (see Example \ref{ex:topappV}), and where an ultrafilter $\fp$ on $\widetilde{V}X$ converges to (see Corollary \ref{cor:reprspaceVpowerX})
\[
 X\to[0,\infty],\; x\mapsto \inf\left\{\sup_{\calA\in\fp,A\in\fx}\inf_{\varphi\in\calA,z\in A}\varphi(z)\mid \fx\in UX, \fx\to x\right\}.
\]
The monad $\mtV=\tvmonad$ on $\COMPHAUS$ obtained from $\Uth_{\Pm}$ has the same functor and the same unit as for $\Uth_{\Pp}$, but the multiplication is different.
\end{example}

\section{Complete $\Tth$-categories}\label{sect:Cts}

Similar to what was done for colimits, we introduce now a notion of weighted limit following closely the $\V$-categorical case. A \emph{weighted limit diagram} $(h,\varphi)$ in a $\Tth$-category $X$ is given by a $\Tth$-functor $h:A\to X$ and a $\Tth$-distributor $\varphi:G\kmodto A$,
\[
 \xymatrix{A\ar[r]^h & X\\ G\ar@{-_{>}}|-{\object@{o}}[u]^\varphi}
\]
and $x_0\in X$ is a \emph{limit} of this diagram, written as $x_0\simeq\lim(h,\varphi)$, if $x_0$ represents $\varphi\whiteright h^\Tast$ in the sense that $x_0^\Tast=\varphi\whiteright h^\Tast$. We hasten to remark that \emph{we cannot consider an arbitrary $\Tth$-distributor $\varphi:B\kmodto A$ above since the lifting $\varphi\whiteright h^\Tast$ might not exist}. A $\Tth$-functor $f:X\to Y$ \emph{preserves the limit} of $h$ and $\varphi$ whenever $f(x_0)\simeq\lim(f\cdot h,\varphi)$, and $f:X\to Y$ is said to be \emph{continuous} whenever $f$ preserves all weighted limits which exist in $X$. Note that, for any $\fx\in TX$,
\[
 \varphi\whiteright h^\Tast(\fx)=\bigwedge_{z\in A}\hom(\varphi(z),a(\fx,h(z))),
\]
hence $x_0\simeq\lim(h,\varphi)$ precisely when, for all $\fx\in TX$,
\[
 a(\fx,x_0)=\bigwedge_{z\in A}\hom(\varphi(z),a(\fx,h(z))).
\]
In particular, the equality above holds for all $\fx=e_X(x)$, $x\in X$, therefore $x_0$ is also a limit of the underlying diagram in $\Cat{\V}$; however, a $\V$-categorical limit in $X_0$ does not need to be a limit in the $\Tth$-category $X$ (see Example \ref{ex:NotTopLim}). Nevertheless, if we know that a diagram has a limit in $X$, then this limit can be calculated in the underlying $\V$-category $X_0$.

\begin{remark}
A particular instance of a weighted limit in a topological space was considered in \citep{Luc11} and called directed conjunction there.
\end{remark}

\begin{example}\label{ex:NotTopLim}
We consider the ordered set $X_0=[0,1]$ (closed unit interval) with the usual order $\le$, and let $X$ be the induced Alexandroff space. Hence, the closed subsets of $X$ are precisely the up-closed subsets of $[0,1]$, and an ultrafilter $\fx\in UX$ converges to $x\in X$ if and only if, for all $A\in\fx$, there exists some $y\in A$ with $y\le x$. Clearly, $0$ is the infimum of $A=]0,1]$ in $X_0$, but we shall see that $0$ is not an infimum of the closed subset $A=]0,1]$ of $X$ in $\TOP$. In fact, let $\fx$ be any ultrafilter containing the sets $]0,r]$, for $r>0$. By construction, $\fx$ converges to every $x\in X$ different from $0$, but not to $x=0$.
\end{example}

A $\Tth$-category $X$ is called \emph{complete} whenever every weighted limit diagram in $X$ has a limit. To check for completeness it is enough to consider weighted limit diagrams where $h$ is the identity $1_X:X\to X$ on $X$ since a limit of $(h,\varphi)$ is a limit of $(1_X,h^\Tast\kleisli\varphi)$, and vice versa. A limit of $(1_X,\varphi)$ we also call \emph{infimum} of $\varphi$ and we denote such a limit as $\Inf_X(\varphi)$ (or simply $\Inf(\varphi)$). Note that, for a $\Tth$-category $X=(X,a)$ and $\varphi:G\kmodto X$, $x_0\simeq\Inf_X(\varphi)$ precisely when, for all $\fx\in TX$,
\[
 a(\fx,x_0)=\bigwedge_{x\in X}\hom(\varphi(x),a(\fx,x)).
\]
If $f:X\to Y$ is a $\Tth$-functor between complete $\Tth$-categories, then $f$ is continuous if and only if $f$ sends, for any $\varphi:G\kmodto X$, a infimum of $\varphi$ to a limit of the diagram $(f,\varphi)$ in $Y$. Since both limits can be calculated in the underlying $\V$-categories, we find that the $\Tth$-functor $f$ is continuous if the underlying $\V$-functor is continuous. In fact:

\begin{lemma}\label{lem:TimpliesVcompl}
For every complete $\Tth$-category $X$, the $\V$-category $X_0$ is complete. Moreover, a $\Tth$-functor $f:X\to Y$ between complete $\Tth$-categories is continuous if and only if the underlying $\V$-functor $f:X_0\to Y_0$ is continuous.
\end{lemma}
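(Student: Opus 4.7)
The plan is to reduce $\V$-completeness of $X_0$ to $\Tth$-completeness of $X$ via the adjunction $F\dashv(-)_0$ recalled in Subsection \textsc{v}. Given a $\V$-distributor $\psi:G\modto X_0$ — equivalently, a $\V$-functor $\psi:X_0\to\V$ — I would interpret it as a $\Tth$-distributor $\psi:G\kmodto F(X_0)$ on the free $\Tth$-category $F(X_0)=(X,e_X^\circ\cdot\Txi a_0)$. The key computation uses $T1=1$: since $T\pi_2:T(1\times X)\to TX$ is then an isomorphism, one gets $\Txi\psi(*,\fx)=\xi\cdot T\psi(\fx)$ for every $\fx\in TX$, and in particular $\Txi\psi(*,e_X(x))=\xi\cdot e_\V(\psi(x))=\psi(x)$ (using $\xi\cdot e_\V=1_\V$). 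Combined with the $\V$-distributor identity $a_0\cdot\psi=\psi$, this yields $(e_X^\circ\cdot\Txi a_0)\cdot\Txi\psi=e_X^\circ\cdot\Txi\psi=\psi$, so $\psi$ is indeed a $\Tth$-distributor on $F(X_0)$.

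Since the counit $h:F(X_0)\to X$ is a $\Tth$-functor (identity on the underlying set, by the concreteness asserted in Subsection \textsc{v}), the pair $(h,\psi)$ is a weighted limit diagram in $X$. Completeness of $X$ produces a $\Tth$-limit $x_0$ with
\[a(\fx,x_0)=\bigwedge_{z\in X}\hom(\psi(z),a(\fx,z))\qquad\text{for all }\fx\in TX;\]
specialising to $\fx=e_X(y)$ gives $a_0(y,x_0)=\bigwedge_z\hom(\psi(z),a_0(y,z))$, exactly the condition $x_0\simeq\Inf_{X_0}(\psi)$. This establishes the completeness of $X_0$.

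For the continuity equivalence, the forward direction is immediate from the construction above: the $\V$-infimum just built is the $\Tth$-limit of $(h,\psi)$ in $X$, so $\Tth$-continuity of $f$ sends it to the $\Tth$-limit of $(f\cdot h,\psi)$ in $Y$, whose defining identity — restricted again at $\fy=e_Y(y)$ — shows that $f(x_0)$ is the $\V$-infimum of $\psi$ in $Y_0$ along $f$. For the converse, let $(h,\varphi)$ be an arbitrary $\Tth$-weighted limit diagram in $X$ with $\Tth$-limit $x_0$. Restricting the limit identity at $\fx=e_X(y)$ shows that $x_0$ is also the $\V$-limit of the underlying $\V$-diagram $(h_0,\varphi_0)$ in $X_0$ (the $\Tth$-distributor condition restricted to $\fa=e_A(a)$ yielding the $\V$-distributor condition on $\varphi_0$). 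By $\V$-continuity, $f(x_0)$ is the $\V$-limit of $(f\cdot h_0,\varphi_0)$ in $Y_0$. Completeness of $Y$ then produces a $\Tth$-limit $y_0$ of $(f\cdot h,\varphi)$ in $Y$, which is likewise a $\V$-limit in $Y_0$; uniqueness of $\V$-limits gives $f(x_0)\simeq y_0$ in $Y_0$, which by Subsection \textsc{vii} coincides with equivalence in $Y$, whence $f(x_0)$ is the $\Tth$-limit.

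The main obstacle is the lift of $\V$-distributors to $\Tth$-distributors on $F(X_0)$; this is precisely where the standing assumption $T1=1$ enters essentially, since without it the identification $\Txi\psi(*,e_X(x))=\psi(x)$ fails and the reduction from the $\V$-world to the $\Tth$-world collapses.
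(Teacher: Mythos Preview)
Your proof is correct and follows essentially the same strategy as the paper's: lift a $\V$-distributor $\psi:G\modto X_0$ to a $\Tth$-distributor $\psi:G\kmodto A(X_0)$ on the free $\Tth$-category (the paper writes $A$ for your $F$), pair it with the counit $A(X_0)\to X$, and observe that the resulting $\Tth$-limit restricts along $e_X$ to the desired $\V$-infimum. The paper's proof is extremely terse (``can be seen as'' and ``The second claim is now clear''), whereas you spell out explicitly the verification that $\psi$ is a $\Tth$-distributor on $F(X_0)$ --- correctly isolating the role of $T1=1$ in the identification $\Txi\psi(*,e_X(x))=\psi(x)$ --- and you unfold both directions of the continuity equivalence in detail; the paper relies instead on the remark immediately preceding the lemma that $\Tth$-limits are computed in the underlying $\V$-category.
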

\begin{proof}
Every $\V$-distributor $\varphi:G\modto X_0$ can be seen as a $\Tth$-distributor $\varphi:G\kmodto A(X_0)$, and a limit of the diagram given by $\varphi:G\kmodto A(X_0)$, $A(X_0)\to X$ is also a infimum of $\varphi:G\modto X_0$ in $X_0$. The second claim is now clear.
\end{proof}

Similarly as in \eqref{eq:ext_adj} in Subsection \Roman{subsect:one} of Section \ref{sect:setting}, one has

\begin{lemma}\label{lem:CompExtDist}
Let $\varphi:G\kmodto X$, $\beta:Y\kmodto X$ and $\alpha:Z\kmodto X$ be $\Tth$-distributors where $\alpha$ is left adjoint. Then
\[
\varphi\whiteright(\beta\kleisli\alpha)=(\varphi\whiteright\beta)\kleisli\alpha.
\]
\end{lemma}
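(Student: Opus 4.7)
The plan is to imitate the proof of equation \eqref{eq:ext_adj}, carrying it out at the level of $\Mod{\Tth}$ rather than $\Mat{\V}$. Two preliminary observations are in order. First, Lemma \ref{lemma:Tlifting} yields the universal property of the lifting in $\Mod{\Tth}$: for $\Tth$-distributors of the appropriate types,
\[
 \rho\leq\varphi\whiteright\gamma\iff\varphi\kleisli\rho\leq\gamma.
\]
Second, writing $\alpha^*$ for the right adjoint of $\alpha$ in $\Mod{\Tth}$, I would derive the adjunction $(-)\kleisli\alpha^*\dashv(-)\kleisli\alpha$ between the relevant hom-orders of $\Mod{\Tth}$; this is routine from the unit $\alpha^*\kleisli\alpha\geq 1$, the counit $\alpha\kleisli\alpha^*\leq 1$, and associativity of Kleisli convolution.

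With these in hand, the inequality $(\varphi\whiteright\beta)\kleisli\alpha\leq\varphi\whiteright(\beta\kleisli\alpha)$ is free: by the universal property of the lifting it reduces to $\varphi\kleisli((\varphi\whiteright\beta)\kleisli\alpha)\leq\beta\kleisli\alpha$, which by associativity becomes $(\varphi\kleisli(\varphi\whiteright\beta))\kleisli\alpha\leq\beta\kleisli\alpha$, and this is immediate from $\varphi\kleisli(\varphi\whiteright\beta)\leq\beta$ together with the monotonicity of Kleisli convolution. Note that this direction does not use the left-adjointness of $\alpha$.

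For the reverse inequality $\varphi\whiteright(\beta\kleisli\alpha)\leq(\varphi\whiteright\beta)\kleisli\alpha$, the adjunction from the preliminary step reduces the goal to $(\varphi\whiteright(\beta\kleisli\alpha))\kleisli\alpha^*\leq\varphi\whiteright\beta$. Applying the universal property of the lifting again turns this into the inequality $(\varphi\kleisli(\varphi\whiteright(\beta\kleisli\alpha)))\kleisli\alpha^*\leq\beta$, whose left-hand side is bounded above by $(\beta\kleisli\alpha)\kleisli\alpha^*=\beta\kleisli(\alpha\kleisli\alpha^*)\leq\beta\kleisli 1_Y=\beta$, the last step using that the identity on $Y$ in $\Mod{\Tth}$ is the structure of $Y$, so that $\beta\kleisli 1_Y=\beta$ is just the $\Tth$-distributor condition on $\beta$ recalled in subsection VI. I do not anticipate any real obstacle: once the adjunction $(-)\kleisli\alpha^*\dashv(-)\kleisli\alpha$ is established, the argument is a purely formal quantaloid-style manipulation, entirely parallel to the proof of \eqref{eq:ext_adj}.
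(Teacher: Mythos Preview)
Your argument is correct and is exactly the quantaloid-style calculation the paper has in mind: the paper gives no explicit proof of Lemma~\ref{lem:CompExtDist} but only says ``similarly as in \eqref{eq:ext_adj}'', pointing to the $\Mat{\V}$-version (valid in any quantaloid) and to \cite[Lemma 1.8]{Hof11}. You have faithfully transported that argument to $\Mod{\Tth}$, using only the liftings that are guaranteed to exist there (those along $\varphi:G\kmodto X$, via Lemma~\ref{lemma:Tlifting}) together with the adjunction $(-)\kleisli\alpha^*\dashv(-)\kleisli\alpha$ coming from $\alpha\dashv\alpha^*$; nothing more is needed.
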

As expected:
\begin{proposition}\label{prop:RighAdjCont}
Every right adjoint $\Tth$-functor is continuous.
\end{proposition}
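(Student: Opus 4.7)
The plan is to translate the statement into the calculus of $\Tth$-distributors and then apply Lemma \ref{lem:CompExtDist} directly. The key observation is that if $f:X\to Y$ is right adjoint to $g:Y\to X$ in $\Cat{\Tth}$, then by the description of adjunctions in Subsection VII we have $f^\Tast=g_\Tast$ in $\Mod{\Tth}$, and in particular $f^\Tast$ is a \emph{left adjoint} $\Tth$-distributor (namely $g_\Tast\dashv g^\Tast$). This is precisely the hypothesis needed to invoke Lemma \ref{lem:CompExtDist}.

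Concretely, I would start with a weighted limit diagram $(h,\varphi)$ in $X$, with $h:A\to X$ a $\Tth$-functor and $\varphi:G\kmodto A$ a $\Tth$-distributor, and assume $x_0\simeq\lim(h,\varphi)$, i.e.\ $x_0^\Tast=\varphi\whiteright h^\Tast$. I need to show $f(x_0)\simeq\lim(f\cdot h,\varphi)$, that is, $f(x_0)^\Tast=\varphi\whiteright(f\cdot h)^\Tast$. Using the contravariant functoriality of $(-)^\Tast:\Cat{\Tth}^\op\to\Mod{\Tth}$ one rewrites
\[
f(x_0)^\Tast=(f\cdot x_0)^\Tast=x_0^\Tast\kleisli f^\Tast
\qquad\text{and}\qquad
(f\cdot h)^\Tast=h^\Tast\kleisli f^\Tast.
\]
Substituting $x_0^\Tast=\varphi\whiteright h^\Tast$ on the left gives $f(x_0)^\Tast=(\varphi\whiteright h^\Tast)\kleisli f^\Tast$, while Lemma \ref{lem:CompExtDist} applied with $\alpha:=f^\Tast=g_\Tast$ (left adjoint) and $\beta:=h^\Tast$ yields
\[
\varphi\whiteright(h^\Tast\kleisli f^\Tast)=(\varphi\whiteright h^\Tast)\kleisli f^\Tast.
\]
Combining these two identities gives $f(x_0)^\Tast=\varphi\whiteright(f\cdot h)^\Tast$, as required.

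There is no real obstacle here; the only subtlety is confirming that Lemma \ref{lem:CompExtDist} applies, which reduces to the observation that the $\Tth$-distributor $f^\Tast$ coming from a right adjoint $\Tth$-functor is automatically left adjoint in $\Mod{\Tth}$ (with right adjoint $f_\Tast=g^\Tast$). Everything else is formal rewriting using the functor $(-)^\Tast$ and Kleisli convolution. The resulting proof is essentially a one-line calculation once the translation to $\Tth$-distributors is in place.
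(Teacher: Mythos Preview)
Your argument is correct and essentially identical to the paper's: both reduce the claim to Lemma \ref{lem:CompExtDist} via the observation that $f^\Tast$ is a left adjoint $\Tth$-distributor when $f$ is a right adjoint $\Tth$-functor, and then perform the same one-line calculation $\varphi\whiteright(f\cdot h)^\Tast=(\varphi\whiteright h^\Tast)\kleisli f^\Tast=x_0^\Tast\kleisli f^\Tast=f(x_0)^\Tast$. One small slip: in your final parenthetical the right adjoint of $f^\Tast=g_\Tast$ is $g^\Tast$, but this is not equal to $f_\Tast$ in general (rather, $f_\Tast$ is the \emph{left} adjoint of $f^\Tast$); this does not affect the proof.
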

\begin{proof}
For a right adjoint $\Tth$-functor $f:X\to Y$, a $\Tth$-distributor $\varphi:G\kmodto A$, a $\Tth$-functor $h:A\to X$ and $x_0\in X$ with $x_0\simeq\lim(h,\varphi)$, we calculate
\[
\varphi\whiteright(f\cdot h)^\Tast
=(\varphi\whiteright h^\Tast)\kleisli f^\Tast
=x_0^\Tast\kleisli f^\Tast=f(x_0)^\Tast.
\]
Here we use Lemma \ref{lem:CompExtDist} and that $f^\Tast$ is a left adjoint $\Tth$-distributor since $f$ is right adjoint.
\end{proof}

The following proposition is in sharp contrast to the case of weighted colimits where the existence of all colimits with weights $X\kmodto G$ does not guarantee the existence of a left adjoint of the Yoneda embedding $\yoneda_X:X\to PX$ in $\Cat{\Tth}$.

\begin{proposition}
Let $X=(X,a)$ be a $\Tth$-category. Then $X$ is complete if and only if $\coyoneda_X:X\to VX$ has a right adjoint $\Inf_X:VX\to X$ in $\Cat{\Tth}$. 
\end{proposition}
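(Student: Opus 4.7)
The plan is to reduce everything to the single identity
\[
(\Inf_X)^\Tast = (\coyoneda_X)_\Tast \quad\text{in}\quad \Mod{\Tth},
\]
using the characterisation of adjoint $\Tth$-functors recalled in Subsection~\Roman{subsect:seven}. The preparatory computation unpacks the left-hand $\Tth$-distributor associated with the Yoneda $\Tth$-functor: combining the definition $\fspstrV{\fp}{\varphi}=[\varphi,\mate{\mu}(\fp)]$ of the $\Tth$-structure of $VX$, the identity $\mate{\mu}(T\coyoneda_X(\fx))(x)=a(\fx,x)$ established in the proof of Proposition~\ref{prop:coYoneda}, and the $\V$-enriched Yoneda lemma, one obtains
\[
(\coyoneda_X)_\Tast(\fx,\psi) = \fspstrV{T\coyoneda_X(\fx)}{\psi} = \bigwedge_{x\in X}\hom(\psi(x),a(\fx,x))
\]
for every $\fx\in TX$ and $\psi\in VX$. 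Under the identification (Theorem~\ref{CharTDist}) of $\psi\in VX$ with the $\Tth$-distributor $\psi:G\kmodto X$, the right-hand side is exactly the expression characterising an infimum of $\psi$.

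For the direction ``$\coyoneda_X\dashv\Inf_X$ implies $X$ is complete'', the adjunction gives $(\Inf_X)^\Tast=(\coyoneda_X)_\Tast$, which unfolds to
\[
a(\fx,\Inf_X(\psi)) = \bigwedge_{x\in X}\hom(\psi(x),a(\fx,x))
\]
for all $\fx\in TX$ and $\psi\in VX$. Thus $\Inf_X(\psi)$ is an infimum of $\psi$; since every weighted limit diagram $(h,\varphi)$ reduces to an infimum problem (as observed before the statement), $X$ is complete.

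For the converse, assume $X$ is complete. For each $\psi\in VX$ choose an infimum $\Inf_X(\psi)\in X$, producing a map $\Inf_X:VX\to X$ which by the defining property of infimum satisfies $(\Inf_X)^\Tast=(\coyoneda_X)_\Tast$ as $\V$-relations on $TX\times VX$. The only delicate point I expect is to see that $\Inf_X$ is genuinely a $\Tth$-functor and not merely a function. The cleanest way to dispatch it is indirect: the right-hand side $(\coyoneda_X)_\Tast$ is a $\Tth$-distributor because $\coyoneda_X$ is a $\Tth$-functor, so the equality forces $(\Inf_X)^\Tast$ to be a $\Tth$-distributor, and then by the equivalence \eqref{eq:DistVsFun} the map $\Inf_X$ is a $\Tth$-functor. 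Finally, the same equation $(\Inf_X)^\Tast=(\coyoneda_X)_\Tast$ is, via the characterisation of adjoints in Subsection~\Roman{subsect:seven}, exactly the statement $\coyoneda_X\dashv\Inf_X$ in $\Cat{\Tth}$.
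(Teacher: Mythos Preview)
Your proof is correct and follows essentially the same route as the paper: both reduce the statement to the single equation $(\Inf_X)^\Tast=(\coyoneda_X)_\Tast$, compute $(\coyoneda_X)_\Tast(\fx,\psi)=\fspstrV{T\coyoneda_X(\fx)}{\psi}=[\psi,a(\fx,-)]$ via the identity $\mate{\mu}(T\coyoneda_X(\fx))=a(\fx,-)$ from Proposition~\ref{prop:coYoneda}, and then invoke \eqref{eq:DistVsFun} to deduce that $\Inf_X$ is a $\Tth$-functor (and hence left adjoint to $\coyoneda_X$). Your write-up is slightly more explicit about the role of the $\V$-categorical Yoneda lemma and about how \eqref{eq:DistVsFun} is used, but the argument is the same.
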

\begin{proof}
First recall that $\mu\cdot T\coyoneda_X=a\cdot\Txi\ev\cdot T\coyoneda_X=a\cdot\Txi a_0=a$, hence $a(\fx,-)=\mate{\mu}(T\coyoneda_X(\fx))$ for all $\fx\in TX$. Therefore $X$ is complete if and only if there exists a map $\Inf_X:VX\to X$ satisfying
\[
 a(\fx,\Inf_X(\varphi))=[\varphi,a(\fx,-)]
=[\varphi,\mu_X(T\coyoneda_X(\fx))]
=\fspstrV{T\coyoneda_X(\fx)}{\varphi},
\]
for all $\fx\in TX$ and $\varphi:G\kmodto X$. But this conditions just means that $(\coyoneda_X)_\Tast=\Inf_X^\Tast$, and the assertion follows using \eqref{eq:DistVsFun} of Subsection \Roman{subsect:seven} of Section \ref{sect:setting}. 
\end{proof}

\begin{proposition}
Let $X=(X,a)$ and $Y=(Y,b)$ be complete $\Tth$-categories. Then a $\Tth$-functor $f:X\to Y$ is continuous if and only if the diagram
\[
\xymatrix{VX\ar[d]_{\Inf_X}\ar[r]^{Vf}\ar@{}[dr]|{\simeq} & VY\ar[d]^{\Inf_Y}\\ X\ar[r]_f & Y}
\]
commutes up to equivalence.
\end{proposition}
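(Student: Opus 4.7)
The plan is to reduce both implications to the two standard facts established earlier: (a) an arbitrary weighted limit $\lim(h,\varphi)$, with $h\colon A\to X$ and $\varphi\colon G\kmodto A$, can be computed as the infimum $\Inf_X(h_\Tast\kleisli\varphi)$ in the codomain, and (b) the action $Vf(\psi)=f_\Tast\kleisli\psi$ of $Vf$ on weights $\psi\colon G\kmodto X$, seen as elements of $VX$, is precisely Kleisli post-composition. The preceding proposition already identifies $\Inf_X(\varphi)$ with $\lim(1_X,\varphi)$, so both $\Inf_X$ and $\Inf_Y$ appear as the ``infimum'' instance of the limit operation.

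For the forward implication, assume $f$ is continuous. Pick $\varphi\in VX$, regarded as a $\Tth$-distributor $\varphi\colon G\kmodto X$. By the previous proposition, $\Inf_X(\varphi)\simeq\lim(1_X,\varphi)$. Continuity of $f$ yields
\[
f(\Inf_X(\varphi))\simeq \lim(f\cdot 1_X,\varphi)=\lim(f,\varphi).
\]
Now I reduce $\lim(f,\varphi)$ to an infimum in $Y$: by the general identification recalled in the paragraph preceding Lemma~\ref{lem:TimpliesVcompl}, $\lim(f,\varphi)\simeq\Inf_Y(f_\Tast\kleisli\varphi)$. Since $Vf(\varphi)=f_\Tast\kleisli\varphi$, this reads $f(\Inf_X(\varphi))\simeq\Inf_Y(Vf(\varphi))$, which is exactly the required equivalence $f\cdot\Inf_X\simeq\Inf_Y\cdot Vf$.

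For the converse, assume $f\cdot\Inf_X\simeq\Inf_Y\cdot Vf$ and let $(h,\varphi)$ be any weighted limit diagram in $X$, with $h\colon A\to X$ and $\varphi\colon G\kmodto A$, and set $x_0\simeq\lim(h,\varphi)\simeq\Inf_X(h_\Tast\kleisli\varphi)$ using completeness of $X$. Applying the hypothesis to the weight $h_\Tast\kleisli\varphi\in VX$ gives
\[
f(x_0)\simeq f(\Inf_X(h_\Tast\kleisli\varphi))\simeq \Inf_Y\bigl(Vf(h_\Tast\kleisli\varphi)\bigr)=\Inf_Y(f_\Tast\kleisli h_\Tast\kleisli\varphi).
\]
Using functoriality of $(-)_\Tast$, namely $f_\Tast\kleisli h_\Tast=(f\cdot h)_\Tast$, the right-hand side equals $\Inf_Y((f\cdot h)_\Tast\kleisli\varphi)\simeq\lim(f\cdot h,\varphi)$. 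Hence $f$ preserves this limit, and since $(h,\varphi)$ was arbitrary, $f$ is continuous.

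The only point that requires a bit of care is the reduction $\lim(h,\varphi)\simeq\Inf_X(h_\Tast\kleisli\varphi)$ and its compatibility with post-composition by $f$. The reduction is a lifting identity in $\Mod{\Tth}$ that follows from the absorption rule for extension/lifting along left adjoints, i.e.\ from Lemma~\ref{lem:CompExtDist} applied to the left-adjoint $\Tth$-distributor $h_\Tast$ (respectively $f_\Tast$), exactly parallel to the computation used in the proof of Proposition~\ref{prop:RighAdjCont}. No additional obstacle should appear, so the argument is essentially an unfolding of the characterizations of $\Inf$, $Vf$, and of weighted limits.
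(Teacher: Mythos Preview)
Your proof is correct and is essentially the argument the paper has in mind; the proposition is stated in the paper without proof, as it follows directly from the reduction of arbitrary weighted limits to infima discussed just before Lemma~\ref{lem:TimpliesVcompl} together with the definition $Vf=f_\Tast\kleisli-$.

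One small technical remark on your final paragraph: the identity $\lim(h,\varphi)\simeq\Inf_X(h_\Tast\kleisli\varphi)$, i.e.\ $\varphi\whiteright h^\Tast=(h_\Tast\kleisli\varphi)\whiteright a$, is not an instance of Lemma~\ref{lem:CompExtDist} but rather its companion formula for moving a left adjoint out of the \emph{first} argument of the lifting: from $h_\Tast\dashv h^\Tast$ one gets $(h_\Tast\kleisli\varphi)\whiteright a=\varphi\whiteright(h^\Tast\kleisli a)=\varphi\whiteright h^\Tast$. Since the paper already asserts this reduction (with an apparent typo $h^\Tast$ for $h_\Tast$), you may simply cite that paragraph instead.
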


\begin{theorem}
The category $(\Cat{\Tth})^\mV$ of Eilenberg--Moore algebras of $\mV$ is precisely the category of complete and separated $\Tth$-categories and continuous $\Tth$-functors.
\end{theorem}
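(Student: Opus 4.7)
The proof strategy is to match Eilenberg--Moore algebras for $\mV$ with complete separated $\Tth$-categories by exploiting the Kock-Z\"oberlein property of $\mV$ together with the characterisation of completeness via a right adjoint to $\coyoneda_X$ from the previous proposition. Crucially, the chain $\coyoneda_{VX}\dashv\coyonmult_X\dashv V\coyoneda_X$ shows that $\mV$ is KZ with the \emph{opposite} orientation to $\mT$, so that algebra structures $\alpha:VX\to X$ should be \emph{right} adjoint to $\coyoneda_X$ (rather than left, as was the case for $\mT$).

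For the forward direction, suppose $(X,\alpha)$ is an Eilenberg--Moore algebra. The KZ chain immediately yields $\coyoneda_{VX}\le V\coyoneda_X$ in $\Cat{\Tth}(VX,VVX)$, so using naturality of $\coyoneda$ at $\alpha$ together with the unit law $\alpha\cdot\coyoneda_X=1_X$, one obtains $\coyoneda_X\cdot\alpha=V\alpha\cdot\coyoneda_{VX}\le V\alpha\cdot V\coyoneda_X=V(\alpha\cdot\coyoneda_X)=1_{VX}$. This establishes $\coyoneda_X\dashv\alpha$, hence $X$ is complete with $\Inf_X=\alpha$ by the previous proposition. For separation, I would first note that $VX$ is separated, since $\Cat{\Tth}(G,VX)$ is anti-symmetric (its order is opposite to the pointwise order on $\V^X$, which is anti-symmetric because $\V$ is); then observe that the unit law exhibits $\coyoneda_X$ as a split monomorphism in $\Cat{\Tth}$, so that if $f\simeq g:Y\to X$ then $\coyoneda_X\cdot f\simeq\coyoneda_X\cdot g$, which by separation of $VX$ gives $\coyoneda_X\cdot f=\coyoneda_X\cdot g$, and applying $\alpha$ on the left yields $f=g$.

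For the converse, let $X$ be complete and separated, and set $\alpha:=\Inf_X$. The previous proposition supplies $\coyoneda_X\dashv\alpha$ in $\Cat{\Tth}$; full faithfulness of $\coyoneda_X$ (Proposition \ref{prop:coYoneda}) then gives $\alpha\cdot\coyoneda_X\simeq 1_X$, which by separation of $X$ becomes an equality. For the associativity axiom, I would exploit 2-functoriality of $V$: it transports $\coyoneda_X\dashv\alpha$ to $V\coyoneda_X\dashv V\alpha$, making $\alpha\cdot V\alpha$ right adjoint to $V\coyoneda_X\cdot\coyoneda_X$; and the chain adjunction makes $\alpha\cdot\coyonmult_X$ right adjoint to $\coyoneda_{VX}\cdot\coyoneda_X$. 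Naturality of $\coyoneda$ at $\coyoneda_X$ identifies these two left adjoints, and uniqueness of adjoints up to equivalence yields $\alpha\cdot V\alpha\simeq\alpha\cdot\coyonmult_X$; separation of $X$ lifts this to equality, producing a genuine Eilenberg--Moore algebra structure.

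Finally, a $\Tth$-functor $f:(X,\Inf_X)\to(Y,\Inf_Y)$ between complete separated $\Tth$-categories is an Eilenberg--Moore homomorphism precisely when $f\cdot\Inf_X=\Inf_Y\cdot Vf$, which by the preceding proposition is equivalent, up to equivalence, to continuity of $f$; separation of $Y$ then promotes $\simeq$ to $=$. The main obstacle throughout is the strictness of the Eilenberg--Moore conditions, since the KZ arguments naturally produce only equivalences rather than equalities; every strict identity in the proof is ultimately obtained by invoking separation of the codomain to collapse $\simeq$ to $=$.
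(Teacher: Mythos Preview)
Your argument is correct and is precisely the standard derivation one expects here; the paper itself states this theorem without proof, treating it as an immediate consequence of the Kock--Z\"oberlein property of $\mV$ (Theorem \ref{thm:Vietoris}) together with the two propositions immediately preceding it. Your proof spells out exactly those routine steps: the orientation $\coyoneda_{VX}\dashv\coyonmult_X\dashv V\coyoneda_X$ forces algebra structures to be right adjoint to the unit, the characterisation of completeness as $\coyoneda_X\dashv\Inf_X$ then identifies algebras with complete $\Tth$-categories, separation of $VX$ transfers to $X$ along the retraction, and uniqueness of adjoints handles associativity and the morphism condition, with separation everywhere upgrading $\simeq$ to $=$.
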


\begin{proposition}
Let $X$ be a core-compact $\Tth$-category. Then $X$ is complete if and only if $X$ is representable and $X_0$ is a complete $\V$-category.
\end{proposition}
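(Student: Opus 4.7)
The plan is to use the preceding proposition, which identifies completeness of $X$ with the existence of a right adjoint $\Inf_X\colon VX\to X$ to $\coyoneda_X$ in $\Cat{\Tth}$; core-compactness ensures that $VX$ is a genuine $\Tth$-category (see Proposition \ref{prop:coYoneda} and the subsequent corollary), so that $\coyoneda_X$ makes sense in $\Cat{\Tth}$ in both directions. The essential computational ingredient will be the identity
\[\fspstrV{T\coyoneda_X(\fy)}{\varphi}=[\varphi,a(\fy,-)]_{\V^X}=\bigwedge_{y\in X}\hom(\varphi(y),a(\fy,y)),\]
which rests on $\mate{\mu}(T\coyoneda_X(\fy))=a(\fy,-)\in VX$ from Proposition \ref{prop:coYoneda}.

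For the forward implication, assume $X$ is complete and denote by $\Inf_X$ the right adjoint of $\coyoneda_X$ in $\Cat{\Tth}$. Completeness of $X_0$ is immediate from Lemma \ref{lem:TimpliesVcompl}. To establish representability via Proposition \ref{prop:CharReprCat}, I will define $\alpha\colon TX\to X$ by $\alpha(\fx):=\Inf_X(a(\fx,-))$ and verify $a=a_0\cdot\alpha$. The $\Tth$-level adjunction $(\coyoneda_X)_\Tast=\Inf_X^\Tast$ combined with the formula above yields
\[a(\fy,\Inf_X(\varphi))=\bigwedge_{y\in X}\hom(\varphi(y),a(\fy,y))\]
for every $\fy\in TX$ and $\varphi\in VX$. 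Setting $\varphi=a(\fx,-)$ and $\fy=\fx$ forces $a(\fx,\alpha(\fx))\ge k$, while choosing $\fy=e_X(\alpha(\fx))$ and using $a_0(\alpha(\fx),\alpha(\fx))\ge k$ yields $a(\fx,z)\le a_0(\alpha(\fx),z)$ for all $z\in X$. For the reverse inequality I will combine the op-laxness $e_X\cdot a\le\Txi a\cdot e_{TX}$ with transitivity of $a$ to obtain $a(\fx,y)\otimes a_0(y,x)\le a(\fx,x)$; specialising $y=\alpha(\fx)$ and absorbing $a(\fx,\alpha(\fx))\ge k$ produces $a_0(\alpha(\fx),x)\le a(\fx,x)$.

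For the reverse implication, suppose $X$ is representable with $a=a_0\cdot\alpha$ and $X_0$ is complete as a $\V$-category. The $\V$-completeness of $X_0$ provides a $\V$-right adjoint $\Inf_X\colon(VX)_0\to X_0$ of $\coyoneda_X$, characterised by $a_0(x,\Inf_X(\varphi))=\bigwedge_y\hom(\varphi(y),a_0(x,y))$. Substituting $x=\alpha(\fx)$ and invoking representability upgrades this to the $\Tth$-level identity
\[a(\fx,\Inf_X(\varphi))=a_0(\alpha(\fx),\Inf_X(\varphi))=\bigwedge_y\hom(\varphi(y),a_0(\alpha(\fx),y))=\bigwedge_y\hom(\varphi(y),a(\fx,y)),\]
whose right-hand side equals $\fspstrV{T\coyoneda_X(\fx)}{\varphi}=(\coyoneda_X)_\Tast(\fx,\varphi)$ by the identity recalled above, and whose left-hand side is $\Inf_X^\Tast(\fx,\varphi)$. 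Hence $\Inf_X^\Tast=(\coyoneda_X)_\Tast$ as $\V$-relations; since $(\coyoneda_X)_\Tast$ is a $\Tth$-distributor, so is $\Inf_X^\Tast$, whence $\Inf_X$ is a $\Tth$-functor by \eqref{eq:DistVsFun}, and the same equality of $\Tth$-distributors says exactly $\coyoneda_X\dashv\Inf_X$ in $\Cat{\Tth}$.

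The main obstacle is bridging the $\V$-level and the $\Tth$-level. In the easier ($\Leftarrow$) direction the single equation $a=a_0\cdot\alpha$ transports the $\V$-adjunction to $\Cat{\Tth}$ essentially for free. In ($\Rightarrow$) the delicate step is the inequality $a_0(\alpha(\fx),x)\le a(\fx,x)$, which cannot be read off the adjunction identity directly and instead requires using $a(\fx,\alpha(\fx))\ge k$ as a weight to slice through transitivity of $a$ combined with the op-laxness of $e$.
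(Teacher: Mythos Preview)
Your proof is correct. The reverse implication is essentially the paper's argument: the paper verifies directly that a $\V$-infimum $x$ of $\varphi$ satisfies the $\Tth$-infimum formula by the chain $a(\fx,x)=a_0(\alpha(\fx),x)=\bigwedge_y\hom(\varphi(y),a_0(\alpha(\fx),y))=\bigwedge_y\hom(\varphi(y),a(\fx,y))$, and you recast this same computation as the equality $\Inf_X^\Tast=(\coyoneda_X)_\Tast$.

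The forward implication, however, is handled differently. The paper dispatches it in one line: since $X$ is core-compact, the earlier proposition gives $\coyoneda_X=\mate{a}^\op\cdot e_X$ with $\mate{a}^\op:TX\to VX$ a $\Tth$-functor, so $\Inf_X\cdot\mate{a}^\op$ is a $\Tth$-functorial left inverse of $e_X$; since $\mT$ is of Kock--Z\"oberlein type on $\Cat{\Tth}$, any such left inverse is automatically a left adjoint, and $X$ is representable. Your map $\alpha(\fx)=\Inf_X(a(\fx,-))$ is exactly $\Inf_X\cdot\mate{a}^\op$, but instead of invoking the Kock--Z\"oberlein property you verify $a=a_0\cdot\alpha$ by hand via the adjunction identity, the reflexivity bound $a(\fx,\alpha(\fx))\ge k$, and the action inequality $a_0\cdot a\le a$. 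Your route is more self-contained and makes the role of the transitivity axiom explicit, while the paper's route is shorter and exploits the structural fact (established earlier) that for a Kock--Z\"oberlein monad a retraction of the unit is already a pseudo-algebra structure.
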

\begin{proof}
Let $X=(X,a)$ be a core-compact $\Tth$-category. Assume first that $X$ is complete. Then $X_0$ is complete by Lemma \ref{lem:TimpliesVcompl}, and a left inverse of $e_X:X\to TX$ is given by $\Inf_X\cdot{\mate{a}}^\op$. Assume now that $X$ is representable (with algebra structure $\alpha:TX\to X$) and that $X_0$ is complete. Let $\varphi:G\kmodto X$ be a $\Tth$-distributor, then $\varphi$ can be also seen as a $\V$-distributor $\varphi:G\modto X_0$. Let $x$ be an infimum of $\varphi:G\modto X_0$ in $X_0$. Then, for every $\fx\in TX$,
\[
 a(\fx,x)=a_0(\alpha(\fx),x)
=\bigwedge_{x\in X}\hom(\varphi(x),a_0(\alpha(\fx),x))
=\bigwedge_{x\in X}\hom(\varphi(x),a(\fx,x)),
\]
hence $x$ is also an infimum of $\varphi:1\kmodto X$.
\end{proof}

\begin{proposition}
Let $X$ and $Y$ be $\Tth$-categories where $X$ is complete. Then the following assertions are equivalent for a $\Tth$-functor $f:X\to Y$.
\begin{eqcond}
\item\label{condAd1} $f$ is right adjoint.
\item\label{condAd2} $f$ preserves limits and $Vf$ is right adjoint.
\item\label{condAd3} $f$ preserves limits and is downwards open.
\end{eqcond}
\end{proposition}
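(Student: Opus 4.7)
The plan is to prove the cycle (i)$\Rw$(ii)$\RLw$(iii)$\Rw$(i). The equivalence (ii)$\RLw$(iii) is immediate from Proposition \ref{prop:nearlyOpen}, which lists ``$Vf$ has a left adjoint'' and ``$f$ is downwards open'' among equivalent conditions. For (i)$\Rw$(ii), Proposition \ref{prop:RighAdjCont} ensures that every right adjoint $\Tth$-functor preserves limits, and since $V\colon\Cat{\Tth}\to\Cat{\Tth}$ is a $2$-functor it transports any adjunction $g\dashv f$ to $Vg\dashv Vf$, making $Vf$ right adjoint.

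For the substantive implication (iii)$\Rw$(i), write $(Vf)^\ell\dashv Vf$ for the left adjoint furnished by Proposition \ref{prop:nearlyOpen} (its action on points being $(Vf)^\ell(\psi)=\psi\cdot f$, read off from the dualisation of $\V^f$ in that proof), and note that completeness of $X$ provides $\coyoneda_X\dashv\Inf_X$ in $\Cat{\Tth}$. The candidate left adjoint is
\[
 g:=\Inf_X\cdot(Vf)^\ell\cdot\coyoneda_Y\colon Y\longrightarrow X,
\]
a composite of $\Tth$-functors; I will verify $g\dashv f$ via the two triangle inequalities in $\Cat{\Tth}$.

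The counit $g\cdot f\le 1_X$ is formal: naturality $\coyoneda_Y\cdot f=Vf\cdot\coyoneda_X$ together with the counit $(Vf)^\ell\cdot Vf\le 1_{VX}$ and the fact that $\coyoneda_X$ is fully faithful yield
\[
 g\cdot f=\Inf_X\cdot(Vf)^\ell\cdot Vf\cdot\coyoneda_X\le\Inf_X\cdot\coyoneda_X\simeq 1_X.
\]

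The unit $1_Y\le f\cdot g$ is the main obstacle, because $Y$ is not assumed complete and so one cannot cancel $f\cdot\Inf_X$ against a global ``$\Inf_Y\cdot Vf$''. Instead, I apply preservation pointwise: for each $y\in Y$, set $\varphi_y:=(Vf)^\ell(\coyoneda_Y(y))\in VX$, explicitly $\varphi_y(x)=b_0(y,f(x))$. Since $f$ preserves the limit $\Inf_X(\varphi_y)$ existing in $X$, the weighted limit $\lim(f,\varphi_y)$ exists in $Y$ and equals $f(g(y))$, hence $f(g(y))^\Tast=\varphi_y\whiteright f^\Tast$. Evaluating this lifting at $e_Y(y)\in TY$ with the pointwise formula for $\whiteright$ recalled in Section \ref{sect:Cts} gives
\[
 b_0(y,f(g(y)))=\bigwedge_{x\in X}\hom(b_0(y,f(x)),b_0(y,f(x)))\ge k,
\]
i.e.\ $y\le f(g(y))$ for every $y\in Y$. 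By the pointwise characterisation of the order on $\Cat{\Tth}$-morphisms from Subsection \Roman{subsect:seven} of Section \ref{sect:setting}, this is precisely $1_Y\le f\cdot g$, completing the verification that $g\dashv f$.
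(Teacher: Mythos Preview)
Your proof is correct and follows exactly the paper's approach: the paper establishes (ii)$\RLw$(iii) via Proposition~\ref{prop:nearlyOpen}, (i)$\Rw$(ii) via Proposition~\ref{prop:RighAdjCont} together with $V$ being a 2-functor, and then simply asserts that a left adjoint of $f$ is given by $\Inf_X\cdot G\cdot\coyoneda_Y$ where $G\dashv Vf$. You construct the very same candidate and additionally supply the verification of the two triangle inequalities that the paper leaves to the reader; your pointwise argument for the unit using preservation of the limit $\Inf_X(\varphi_y)$ is the natural way to do this given that $Y$ is not assumed complete.
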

\begin{proof}
The equivalence \eqref{condAd2}$\RLw$\eqref{condAd3} follows from Proposition \ref{prop:nearlyOpen}. The assertion \eqref{condAd2} follows from \eqref{condAd1} by Proposition \ref{prop:RighAdjCont} and the fact that $V$ is a 2-functor. Finally, assuming \eqref{condAd2}, a left adjoint of $f$ is given by $\Inf_X\cdot G\cdot\coyoneda_Y$ where $G$ is a left adjoint of $Vf$.
\end{proof}

\begin{corollary}
Let $X$ be a $\Tth$-category. Then $X$ is complete if and only if $X$ is injective in $\Cat{\Tth}$ with respect to fully faithful downwards open $\Tth$-functors.
\end{corollary}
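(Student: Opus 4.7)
My plan is to use the coYoneda embedding $\coyoneda_X: X \to VX$, which by Proposition \ref{prop:coYoneda} is fully faithful and downwards open, as the universal test map for this type of injectivity.

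For the direction ``complete implies injective'', let $i: A \to B$ be a fully faithful downwards open $\Tth$-functor and $f: A \to X$ any $\Tth$-functor, with $X$ complete. By Proposition \ref{prop:nearlyOpen}, $Vi$ has a left adjoint $L = i^\Tast \kleisli -: VB \to VA$, and full faithfulness of $i$ gives $i^\Tast \kleisli i_\Tast = a$, so $L \cdot Vi(\varphi) = a \kleisli \varphi = \varphi$ for every $\varphi: G \kmodto A$ in $VA$; hence $L \cdot Vi = 1_{VA}$. I would set
\[
g := \Inf_X \cdot Vf \cdot L \cdot \coyoneda_B: B \to X
\]
and verify $g \cdot i \simeq f$ using two instances of naturality of $\coyoneda$, namely $\coyoneda_B \cdot i = Vi \cdot \coyoneda_A$ and $Vf \cdot \coyoneda_A = \coyoneda_X \cdot f$, combined with $\Inf_X \cdot \coyoneda_X \simeq 1_X$; the last equivalence is the unit of $\coyoneda_X \dashv \Inf_X$, which is an equivalence because $\coyoneda_X$ is fully faithful.

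For the converse, injectivity applied to $1_X: X \to X$ along the fully faithful downwards open map $\coyoneda_X: X \to VX$ produces a $\Tth$-functor $g: VX \to X$ with $g \cdot \coyoneda_X \simeq 1_X$. I would then show that $g$ is automatically right adjoint to $\coyoneda_X$, after which completeness follows from the characterisation of complete $\Tth$-categories via $\coyoneda_X \dashv \Inf_X$. For this step I would exploit the Kock-Z\"oberlein structure $\coyoneda_{VX} \dashv \coyonmult_X \dashv V\coyoneda_X$ supplied by Theorem \ref{thm:Vietoris}: from this triple one extracts $\coyoneda_{VX} \le V\coyoneda_X$ (both are sections of $\coyonmult_X$, and one applies the counit $\coyoneda_{VX} \cdot \coyonmult_X \le 1$); applying the 2-functor $V$ to $g \cdot \coyoneda_X \simeq 1_X$ gives $Vg \cdot V\coyoneda_X \simeq 1_{VX}$; and naturality of $\coyoneda$ at $g$ gives $\coyoneda_X \cdot g = Vg \cdot \coyoneda_{VX}$. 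Combining these yields $\coyoneda_X \cdot g \le Vg \cdot V\coyoneda_X \simeq 1_{VX}$, which together with $1_X \simeq g \cdot \coyoneda_X$ is precisely the adjunction $\coyoneda_X \dashv g$.

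The main obstacle I expect is this final Kock-Z\"oberlein argument, where the specific orientation of the adjunctions for $\mV$ (unit left adjoint to algebra structure, opposite to that of $\mP$) must be respected carefully. The construction of $g$ in the first direction is a straightforward dualisation of the standard argument for cocomplete spaces, the only new ingredient being the use of downwards openness to secure the left adjoint of $Vi$.
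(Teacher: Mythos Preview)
Your proof is correct. Both directions go through as you describe: the extension $g=\Inf_X\cdot Vf\cdot L\cdot\coyoneda_B$ works because $L=i^\Tast\kleisli-$ is a $\Tth$-functor by Proposition~\ref{prop:nearlyOpen} (this is where downwards openness is used) and $L\cdot Vi=1_{VA}$ by full faithfulness; and the Kock--Z\"oberlein argument for the converse is sound, with $\coyoneda_{VX}\le V\coyoneda_X$ following exactly as you say from the chain $\coyoneda_{VX}\dashv\coyonmult_X\dashv V\coyoneda_X$.

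The paper, however, does not give this argument at all: its proof is the single line ``This is an instance of \cite[Theorem 4.2.2]{Esc98}''. That theorem of Escard\'o is a general result characterising the algebras of a Kock--Z\"oberlein monad as the injectives with respect to a suitable class of embeddings; what you have written is essentially a specialisation of Escard\'o's proof to the monad $\mV$ on $\Cat{\Tth}$, made fully explicit. So your route is genuinely different in presentation: self-contained and spelled out, whereas the paper delegates the work to an external reference. The advantage of your approach is that the reader sees exactly how the pieces (Proposition~\ref{prop:coYoneda}, Proposition~\ref{prop:nearlyOpen}, Theorem~\ref{thm:Vietoris}) assemble; the paper's approach is more economical but requires the reader to check that the hypotheses of Escard\'o's general theorem are met in this setting.
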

\begin{proof}
This is an instance of \citep[Theorem 4.2.2]{Esc98}.
\end{proof}

\section{Isbell conjugation adjunction}\label{sect:Isbell}

For every $\Tth$-category $X=(X,a)$, there is an adjunction
\[
 \Mod{\Tth}(G,X)^\op\adjunct{(-)^+}{(-)^-}\Mod{\Tth}(X,G)
\]
in $\Cat{\V}$ where
\begin{align*}
\varphi^-(\fx)&=\varphi\whiteright 1_X^\Tast(\fx)
=\bigwedge_{x\in X}\hom(\varphi(x),a(\fx,x))
\intertext{for all $\varphi:G\kmodto X$ and $\fx\in TX$, and}
\psi^+(x)&=1_X^\Tast\whiteleft\psi(x)
=\bigwedge_{\fx\in TX}\hom(\psi(\fx),a(\fx,x))
\end{align*}
for all $\psi:X\kmodto G$ and $x\in X$. Following \citep{Woo04}, we refer to this adjunction as an \emph{Isbell conjugation adjunction}. Note that
\begin{align*}
(x_\Tast)^-=x^\Tast &&\text{and}&& (x^\Tast)^+=x_\Tast,
\end{align*}
for all $x\in X$. Furthermore $x\in X$ is an infimum of $\varphi:G\kmodto X$ if and only if $x^\Tast=\varphi^-$, and $x$ is a supremum of $\psi:X\kmodto G$ if and only if $x_\Tast=\psi^+$.
\begin{proposition}
For every $\Tth$-distributor $\varphi:G\kmodto X$ and $x\in X$, $x$ is an infimum of $\varphi$ in $X$ if and only if $x$ is a supremum of $\varphi^-$ in $X$. Similarly, for every $\Tth$-distributor $\psi:X\kmodto G$ and $x\in X$, $x$ is a supremum of $\psi$ if and only if $x$ is an infimum of $\psi^+$.
\end{proposition}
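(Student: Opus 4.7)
The plan is to reduce the proposition to pure Galois-connection formalities on top of the two identities already recorded, namely $(x_\Tast)^-=x^\Tast$ and $(x^\Tast)^+=x_\Tast$, together with the characterisations $x\simeq\Inf_X(\varphi)\iff x^\Tast=\varphi^-$ and $x\simeq\Sup_X(\psi)\iff x_\Tast=\psi^+$. So the whole argument is: apply $(-)^+$ or $(-)^-$ to the defining equation, and clean up with the triangular identities of the Isbell adjunction.

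For the first equivalence, suppose $x\simeq\Inf_X(\varphi)$, i.e.\ $x^\Tast=\varphi^-$. Applying $(-)^+$ and using $(x^\Tast)^+=x_\Tast$ gives $x_\Tast=(\varphi^-)^+$, which says $x\simeq\Sup_X(\varphi^-)$. Conversely, from $x_\Tast=(\varphi^-)^+$, apply $(-)^-$, use $(x_\Tast)^-=x^\Tast$, and invoke the triangular identity $((\varphi^-)^+)^-=\varphi^-$ (a standard consequence of the adjunction $(-)^+\dashv(-)^-$) to conclude $x^\Tast=\varphi^-$. The second equivalence is strictly dual: from $x_\Tast=\psi^+$ apply $(-)^-$ to get $x^\Tast=(\psi^+)^-$, and conversely, from $x^\Tast=(\psi^+)^-$ apply $(-)^+$ and use $((\psi^+)^-)^+=\psi^+$ to recover $x_\Tast=\psi^+$.

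The only nontrivial step is the triangular identity $((\varphi^-)^+)^-=\varphi^-$ (and its dual), which is immediate from the adjunction $(-)^+\dashv(-)^-$: since $\mathrm{id}\le(-)^{-+}$ in $\Mod{\Tth}(G,X)^\op$ and $\mathrm{id}\le(-)^{+-}$ in $\Mod{\Tth}(X,G)$, applying either map to the other and composing yields $(\varphi^-)^{+-}=\varphi^-$ and $(\psi^+)^{-+}=\psi^+$. If one prefers a direct verification, it can also be read off the explicit formulas for $(-)^\pm$. I do not anticipate any genuine obstacle: the whole argument is two lines of elementary manipulation once one has recorded that $x\mapsto x_\Tast$ and $x\mapsto x^\Tast$ are interchanged by $(-)^\pm$.
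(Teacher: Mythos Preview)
Your proof is correct and follows exactly the same route as the paper: from $x^\Tast=\varphi^-$ apply $(-)^+$ to get $x_\Tast=(\varphi^-)^+$, and conversely apply $(-)^-$ and use the triangular identity $((\varphi^-)^+)^-=\varphi^-$ of the Isbell adjunction. The paper's proof is the same two lines, only more terse.
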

\begin{proof}
If $x^\Tast=\varphi^-$, then $x_\Tast=(\varphi^-)^+$, and this in turn implies $x^\Tast=((\varphi^-)^+)^-=\varphi^-$. A similar argumentation proves the second claim.
\end{proof}
\begin{theorem}\label{thm:ComplIsCocompl}
A $\Tth$-category $X$ is complete if and only if $X$ is cocomplete.
\end{theorem}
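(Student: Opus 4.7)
The plan is to derive this as an essentially immediate consequence of the preceding proposition, combined with the Isbell conjugation adjunction and the reduction of arbitrary weighted (co)limits to suprema/infima of $\Tth$-distributors whose domain/codomain is $G$.

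First I would recall that, by the discussion at the start of Section~\ref{sect:CoCts}, cocompleteness of $X$ amounts to the existence of a supremum for every $\Tth$-distributor $\psi:X\kmodto G$ (since $\colim(d,\varphi)\simeq\Sup(\varphi\kleisli d^\Tast)$ for any weighted colimit diagram with codomain $G$). Dually, by the analogous reduction spelled out in Section~\ref{sect:Cts}, completeness of $X$ amounts to the existence of an infimum for every $\Tth$-distributor $\varphi:G\kmodto X$ (since $\lim(h,\varphi)\simeq\Inf(h^\Tast\kleisli\varphi)$). So I only need to establish the equivalence for these ``reduced'' (co)limits.

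Next I would exploit the Isbell conjugation adjunction recalled at the start of this section: it gives, functorially, operations $(-)^-:\Mod{\Tth}(G,X)\to\Mod{\Tth}(X,G)$ and $(-)^+:\Mod{\Tth}(X,G)\to\Mod{\Tth}(G,X)$, so in particular both conjugates of a $\Tth$-distributor are again $\Tth$-distributors. Assuming $X$ complete, for an arbitrary $\psi:X\kmodto G$ its conjugate $\psi^+:G\kmodto X$ admits an infimum $x_0$ by completeness; the second assertion of the preceding proposition then identifies $x_0$ as a supremum of $\psi$, so $X$ is cocomplete. The converse direction is symmetric: given $\varphi:G\kmodto X$, its conjugate $\varphi^-:X\kmodto G$ has a supremum $x_0$ by cocompleteness, which by the first assertion of the preceding proposition is an infimum of $\varphi$.

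There is essentially no obstacle here: all the work has already been done in setting up the Isbell adjunction in $\Mod{\Tth}$ (which requires that the liftings $\varphi\whiteright 1_X^\Tast$ exist, guaranteed by Lemma~\ref{lemma:Tlifting} since the domain of $\varphi$ is $G$) and in proving the preceding proposition. The theorem itself should occupy two or three lines.
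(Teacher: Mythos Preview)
Your proposal is correct and matches the paper's approach exactly: the theorem is stated immediately after the proposition on infima/suprema under Isbell conjugation, with no explicit proof given, precisely because it follows in the two or three lines you describe. Your reductions of (co)completeness to the existence of $\Inf$/$\Sup$ for distributors with domain/codomain $G$ and the subsequent application of the preceding proposition via $(-)^\pm$ are exactly what the paper intends.
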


\begin{lemma}
Let $f:(X,a)\to(Y,b)$ be a $\Tth$-functor where $(Y,b)$ is representable with $\mT$-algebra structure $\beta:TY\to Y$. Then
\[
 f_\Tast=\overline{f}_*,
\]
where $\overline{f}:TX\to Y$ is the extension of $f:(X,a)\to(Y,b)$ along $e_X:X\to TX$.
\end{lemma}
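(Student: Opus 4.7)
The plan is to evaluate both sides as $\V$-relations $TX\times Y\to\V$ and exploit representability of $Y$.

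First, I would identify the extension $\overline{f}$ explicitly. Since the monad $\mT$ on $\Cat{\Tth}$ is of Kock-Z\"oberlein type and $Y$ is representable with pseudo-algebra structure $\beta:TY\to Y$ (so that $\beta\dashv e_Y$ and $\beta\cdot e_Y\simeq 1_Y$), the canonical extension of $f$ along $e_X:X\to TX$ is given by $\overline{f}=\beta\cdot Tf:TX\to Y$. Indeed this is a composite of $\Tth$-functors, and naturality of $e$ combined with $\beta\cdot e_Y\simeq 1_Y$ yields $\overline{f}\cdot e_X=\beta\cdot Tf\cdot e_X=\beta\cdot e_Y\cdot f\simeq f$, as required of an extension along the unit in a KZ-setting.

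Next, I would compare the two $\V$-relations directly. By the definition in Subsection \Roman{subsect:seven}, $f_\Tast=b\cdot Tf$ (composition in $\Mat{\V}$), so for $\fx\in TX$ and $y\in Y$ one has $f_\Tast(\fx,y)=b(Tf(\fx),y)$. Using Proposition \ref{prop:CharReprCat} to write $b=b_0\cdot\beta$, this becomes
\[
f_\Tast(\fx,y)=b_0(\beta(Tf(\fx)),y)=b_0(\overline{f}(\fx),y).
\]
But $b_0\cdot\overline{f}$ is exactly the $\V$-distributor $\overline{f}_*:TX\modto Y$ associated to the underlying $\V$-functor $\overline{f}:(TX)_0\to(Y,b_0)$. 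Hence $f_\Tast=\overline{f}_*$.

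I do not anticipate any substantive obstacle here: the entire argument reduces to unwinding the definitions of $f_\Tast$ and $(-)_*$, the only non-trivial input being the identification of the KZ-extension $\overline{f}$ as $\beta\cdot Tf$, which is standard and uses only the KZ property of $\mT$ on $\Cat{\Tth}$.
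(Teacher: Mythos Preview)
Your proposal is correct and follows essentially the same approach as the paper: both unwind $f_\Tast=b\cdot Tf$, use representability to write $b=b_0\cdot\beta$, and identify $\beta\cdot Tf$ with $\overline{f}$ to conclude $f_\Tast=b_0\cdot\overline{f}=\overline{f}_*$. The paper's proof is a single line; you have simply made explicit the identification of the KZ-extension $\overline{f}$ as $\beta\cdot Tf$, which the paper leaves implicit.
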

\begin{proof}
We calculate:
\[
 f_\Tast=b\cdot Tf=b_0\cdot\beta\cdot Tf=b_0\cdot\overline{f}=\overline{f}_*.\qedhere
\]
\end{proof}
\begin{proposition}
For every $\Tth$-category $X=(X,a)$, the $\V$-functor $(-)^-$ is actually a $\Tth$-functor $(-)^-:VX\to PX$.
\end{proposition}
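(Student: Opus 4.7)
The strategy is to exhibit $(-)^-$ as the transpose $\mate{\Phi}$ of a $\Tth$-distributor $\Phi:X\kmodto VX$ and then invoke Theorem \ref{CharTDist}. I define $\Phi:TX\times VX\to\V$ pointwise by
\[
 \Phi(\fx,\varphi):=\varphi^-(\fx)=\bigwedge_{x\in X}\hom(\varphi(x),a(\fx,x));
\]
equivalently, in the quantaloid $\Mat{\V}$, $\Phi$ is the lifting $\Phi=\ev_X\blackright a$, where $\ev_X:VX\relto X$ is the evaluation $\V$-relation sending $(\varphi,x)$ to $\varphi(x)$. The identification $\mate{\Phi}=(-)^-$ is immediate from the definition, so everything reduces to checking that $\Phi$ is a $\Tth$-distributor, i.e., $\Phi\kleisli a=\Phi$ and $\fspstrV{-}{-}\kleisli\Phi=\Phi$.

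Both verifications take place in $\Mat{\V}$ and are controlled by the adjunction $\ev_X\cdot(-)\dashv\ev_X\blackright(-)$, whose counit yields the basic estimate $\ev_X\cdot\Phi\le a$. For the axiom $\Phi\kleisli a=\Phi$, the inequality $\le$ is obtained by transposing across this adjunction and using transitivity of $a$ in the form $a\cdot\Txi a\cdot m_X^\circ=a$; the reverse inequality follows from the reflexivity estimate $\Txi a\cdot m_X^\circ\ge 1_{TX}$, which in turn comes from $e_X^\circ\le a$, the involution identity $\Txi e_X^\circ=(Te_X)^\circ$, and the monad law $m_X\cdot Te_X=1_{TX}$. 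For the axiom $\fspstrV{-}{-}\kleisli\Phi=\Phi$, the $\ge$ direction follows from the general lax-identity property $e_{VX}^\circ\kleisli\Phi\ge\Phi$ together with $\fspstrV{-}{-}\ge e_{VX}^\circ$. For $\le$, I use the formula $\fspstrV{-}{-}=\ev_X\blackright\mu$ (established in the earlier proof that $(\V^X)^\op$ is a $\Tth$-category) and $\mu=a\cdot\Txi\ev_X$ to reduce the desired inequality, after one further application of the adjunction, to the already-noted estimates $\ev_X\cdot\Phi\le a$ and $a\cdot\Txi a\cdot m_X^\circ=a$.

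The main obstacle is purely bookkeeping: one has to keep straight the several adjunctions available in $\Mat{\V}$ and exploit compatibility of the $T$-extension $\Txi$ with composition and involution, while ensuring that $\Phi=\ev_X\blackright a$ really does serve as a ``parametric'' version of the definition $\varphi^-=\varphi\whiteright 1_X^\Tast$ (indeed, by Lemma \ref{lemma:Tlifting}, $\varphi\whiteright a=\varphi\cdot e_1\blackright a$, which matches $\Phi(-,\varphi)$ upon freezing $\varphi$). Beyond the reduction via Theorem \ref{CharTDist} and this calculus of extensions/liftings, no additional ideas seem to be needed.
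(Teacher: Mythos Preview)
Your proof is correct, but the paper takes a much shorter route. Rather than verifying the two distributor axioms for $\Phi=\ev_X\blackright a$ by hand, the paper simply identifies $\Phi$ with $(\coyoneda_X)_\Tast$: using $\mu\cdot T\coyoneda_X=a$ (established in Proposition~\ref{prop:coYoneda}) and the formula $\fspstrV{\fp}{\varphi}=[\varphi,\mate{\mu}(\fp)]$, one computes
\[
 (\coyoneda_X)_\Tast(\fx,\varphi)
 =\fspstrV{T\coyoneda_X(\fx)}{\varphi}
 =[\varphi,a(\fx,-)]
 =\bigwedge_{x\in X}\hom(\varphi(x),a(\fx,x))
 =\varphi^-(\fx).
\]
Since $\coyoneda_X:X\to VX$ is already known to be a $\Tth$-functor, $(\coyoneda_X)_\Tast$ is automatically a $\Tth$-distributor by \eqref{eq:DistVsFun}, and Theorem~\ref{CharTDist} finishes the argument in one line.

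What this buys: the paper's argument explains \emph{why} $(-)^-$ is a $\Tth$-functor---it is $\mate{(\coyoneda_X)_\Tast}$, the ``representable'' distributor attached to the Yoneda map---and recycles work already done. Your approach, by contrast, is self-contained and shows directly how the distributor axioms follow from the $\Mat{\V}$-calculus of liftings together with transitivity of $a$ and the formula $c=\ev_X\blackright\mu$; it does not rely on Proposition~\ref{prop:coYoneda}. Both are valid, but once the identification $\Phi=(\coyoneda_X)_\Tast$ is spotted, your two verifications become redundant.
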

\begin{proof}
Note that
\[
 (\coyoneda_X)_\Tast(\fx,\varphi)=\fspstrV{T\coyoneda_X(\fx)}{\varphi}
=[\varphi,\mu(T\coyoneda_X(\fx),-)]=[\varphi,a(\fx,-)]=\bigwedge_{x\in X}\hom(\varphi(x),a(\fx,x)),
\]
for all $\varphi\in VX$ and $\fx\in TX$; hence $(-)^-$ is the mate of $(\coyoneda_X)_\Tast:X\kmodto VX$.
\end{proof}
However, the $\V$-functor $(-)^+$ is in general not a $\Tth$-functor of type $PX\to VX$. For instance, the representable approach space $\Pp^\op$ is complete (and cocomplete) but not totally cocomplete, and therefore $(-)^+$ cannot be a $\Tth$-functor for $X=\Pp^\op$.

\section{Totally complete $\Tth$-categories}\label{sect:TotCts}

At the beginning of Section \ref{sect:Cts} we pointed already out that the notion of complete $\Tth$-category cannot be strengthened to ``totally complete'' exactly the same way as it was done for cocompleteness, namely by allowing all $\Tth$-distributors $\psi:B\kmodto A$ as limit weights. Nevertheless, in this section we introduce a notion of totall completeness which turns out to be the dual of total cocompleteness.

\begin{definition}
A representable $\Tth$-category $X=(X,a)$ is called \emph{totally complete} if $\coyoneda_X:X\to VX$ has a right adjoint $\Inf_X:VX\to X$ in $\Repr{\Tth}$.
\end{definition}

Hence, a totally complete $\Tth$-category $X$ is a complete representable $\Tth$-category where, moreover, $\Inf_X:VX\to X$ is a pseudo-homomorphism. We write
\[
 \Cts{\Tth}
\]
for the category of totally complete $\Tth$-categories and pseudo-homomorphisms which preserve limits, and $\Cts{\Tth}_\sep$ denotes its full subcategory defined by the separated $\Tth$-categories. By definition, $\Cts{\Tth}_\sep\simeq(\Repr{\Tth})^\mV$. Clearly, every $\Tth$-category of the form $VX$ is totally complete. Since $(PX)^\op=V((TX)^\op)$, this includes the duals of $\Tth$-categories of the form $PX$. In fact, we will show that the totally complete $\Tth$-categories are precisely the duals of totally cocomplete $\Tth$-categories.

\begin{lemma}\label{lem:TwoDiagrams}
For every $\Tth$-category $X$, the diagrams of $\V$-functors
\begin{align*}
 \xymatrix{X^\op\ar[dr]_{\yoneda_X^\op}\ar[r]^{e_X^\op} & (TX)^\op\ar[d]^{\coyoneda_{(TX)^\op}}\\ & V((TX)^\op)}
&&\text{and}&&
 \xymatrix{X^\op\ar[dr]_{\yoneda_X^\op}\ar[r]^{\coyoneda_{X^\op}} & V(X^\op)\ar[d]^{(\alpha^\op)^\Tast\kleisli-}\\ & V((TX)^\op)}
\end{align*}
commute, where in the latter diagram we assume that $X$ is representable with left adjoint $\alpha:TX\to X$ of $e_X:X\to TX$.
\end{lemma}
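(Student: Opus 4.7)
The plan is to unpack the two composites in the first diagram pointwise and reduce everything to the $\V$-matrix identity $e_X^\circ\cdot\hat{a}=a$, where $\hat{a}:=\Txi a\cdot m_X^\circ$ is the underlying $\V$-category structure on $TX$; the second diagram will then follow formally from the first by naturality of $\coyoneda$ combined with the reversal of the adjunction $\alpha\dashv e_X$ on $\V$-duals.

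Concretely, for $x\in X$ the image $\yoneda_X^\op(x)\in V((TX)^\op)=(PX)^\op$ is represented by the function $\fy\mapsto a(\fy,x)$, while $\coyoneda_{(TX)^\op}(e_X(x))$ is represented by $\fy\mapsto\hat{a}(\fy,e_X(x))$, since the underlying $\V$-structure on $(TX)^\op$ is $\hat{a}^\circ$. Thus the first diagram reduces to the pointwise identity
\[
\hat{a}(\fy,e_X(x))=a(\fy,x)\qquad(\fy\in TX,\ x\in X).
\]
For ``$\ge$'', the particular choice $\fX=e_{TX}(\fy)\in m_X^{-1}(\fy)$, combined with the op-lax naturality of $e:1\to\Txi$ applied to $r=a$ (namely $e_X\cdot a\le\Txi a\cdot e_{TX}$ in $\Mat{\V}$), yields $a(\fy,x)\le\Txi a(e_{TX}(\fy),e_X(x))\le\hat{a}(\fy,e_X(x))$. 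For ``$\le$'', I would invoke transitivity $a\kleisli a\le a$ specialised at the middle point $\fx=e_X(x)$: this gives $\Txi a(\fX,e_X(x))\otimes a_0(x,x)\le a(\fy,x)$ for every $\fX$ with $m_X(\fX)=\fy$, and since $a_0(x,x)\ge k$ one can drop the second factor to obtain $\Txi a(\fX,e_X(x))\le a(\fy,x)$; supping over such $\fX$ completes the argument.

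For the second diagram, representability supplies the adjunction $\alpha\dashv e_X$ in $\Cat{\Tth}$, whose underlying $\V$-categorical adjunction dualises to $e_X^\op\dashv\alpha^\op$ in $\Cat{\V}$. By the characterisation $f\dashv g\iff f_\Tast=g^\Tast$ recalled in Subsection~\Roman{subsect:seven}, this yields $(e_X^\op)_\Tast=(\alpha^\op)^\Tast$ in $\Mod{\Tth}$, so the right-hand vertical arrow $(\alpha^\op)^\Tast\kleisli-$ coincides with $V(e_X^\op)=(e_X^\op)_\Tast\kleisli-$. Naturality of $\coyoneda:1\to V$ applied to the morphism $e_X^\op$ then gives $V(e_X^\op)\cdot\coyoneda_{X^\op}=\coyoneda_{(TX)^\op}\cdot e_X^\op$, and the first diagram identifies the right-hand side with $\yoneda_X^\op$.

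The main obstacle I anticipate is the first diagram itself: the identity $\hat{a}(\fy,e_X(x))=a(\fy,x)$ mixes the $\V$-structure on $TX$ (obtained via the $\Mat{\V}$-extension and $m_X^\circ$) with the original $\Tth$-structure on $X$, so one has to balance the transitivity axiom of $a$ (which supplies $\le$) against the op-lax naturality of $e$ in $\Mat{\V}$ (which supplies $\ge$). Once this is secured, the second diagram becomes essentially formal, modulo the check that the dualisation $(-)^\op:\Repr{\Tth}\to\Repr{\Tth}$ reverses the $\V$-categorical adjunction in the expected way.
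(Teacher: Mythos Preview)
Your argument for the first diagram is correct and coincides with the paper's reasoning: both reduce to the pointwise identity $\hat{a}(\fy,e_X(x))=a(\fy,x)$, which is exactly the statement that $e_X:X\to TX$ is fully faithful (recorded in Section~\ref{sect:ReprDual} right after the construction of the monad $\mT$ on $\Cat{\Tth}$). The paper simply quotes this fact; you re-derive it from transitivity and the op-lax naturality of $e$, which is fine.

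For the second diagram, however, there is a genuine gap. Your key step is the equality $(e_X^\op)_\Tast=(\alpha^\op)^\Tast$ in $\Mod{\Tth}$, obtained by dualising $\alpha\dashv e_X$ and invoking the characterisation $f\dashv g\iff f_\Tast=g^\Tast$. But that characterisation applies to adjunctions in $\Cat{\Tth}$, and $e_X^\op$ is in general \emph{not} a $\Tth$-functor: by Lemma~\ref{lem:dualTVGraphFun} this would require $e_X$ to be a pseudo-homomorphism, i.e.\ $e_X\cdot\alpha\simeq m_X\cdot Te_X=1_{TX}$, whereas one only has $1_{TX}\le e_X\cdot\alpha$. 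Concretely, $(e_X^\op)_\Tast(\fx,\fy)=\hat{a}(\fy,\fx)$ while $(\alpha^\op)^\Tast(\fx,\fy)=a_0(\alpha(\fy),\alpha(\fx))$, and these differ already for $X=[0,1]$ in $\TOP$ (take $\fy$ principal at $0$ and $\fx$ non-principal converging to $0$). Consequently $V(e_X^\op)$ is not available and the naturality square you invoke does not exist at the $\Tth$-level.

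The paper avoids this entirely by a direct computation: for any $\Tth$-functor $h$ and $\Tth$-distributor $\psi$ one has $h^\Tast\kleisli\psi=h^\circ\cdot\psi$ (since $b\kleisli\psi=\psi$), so
\[
 (\alpha^\op)^\Tast\kleisli\coyoneda_{X^\op}(x)
 =\coyoneda_{X^\op}(x)\bigl(\alpha(-)\bigr)
 =a_0^\circ(x,\alpha(-))
 =a_0(\alpha(-),x)=a(-,x)=\yoneda_X^\op(x),
\]
using only $a=a_0\cdot\alpha$. If you want to keep your adjunction idea, it works at the $\V$-level: $e_X^\op\dashv\alpha^\op$ in $\Cat{\V}$ gives $(e_X^\op)_*=(\alpha^\op)^*$ in $\Mod{\V}$, and one can then verify pointwise that $(\alpha^\op)^\Tast\kleisli\varphi$ and $(e_X^\op)_*\cdot\varphi$ agree on elements $\varphi\in V(X^\op)$; but this is no shorter than the paper's one-line computation.
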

\begin{proof}
Let $X=(X,a)$ be a $\Tth$-category, and put $\hat{a}=\Txi a\cdot m_X^\circ$. Then, for every $x\in X$,
\[
 \coyoneda_{(TX)^\op}(e_X^\op(x))=\hat{a}^\circ(e_X^\op(x),-)=a(-,x)=\yoneda_X^\op(x)=a_0^\circ(x,\alpha(-))=(\alpha^\op)^\Tast\kleisli\coyoneda_{X^\op}(x).\qedhere
\]
\end{proof}

\begin{proposition}
Let $X$ be a representable $\Tth$-category. Then $X$ is totally cocomplete if and only if $X^\op$ is totally complete.
\end{proposition}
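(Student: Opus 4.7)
The strategy is to exploit the identification $(PX)^\op = V((TX)^\op)$ together with the second triangle of Lemma~\ref{lem:TwoDiagrams}. Since $\alpha\dashv e_X$ holds in $\Cat{\Tth}$ by the Kock--Z\"oberlein property (established after Proposition~\ref{prop:CharReprCat}), dualising via the $(-)^\op$ 2-functor (which reverses the order on hom-sets) gives $e_X^\op\dashv\alpha^\op$, and hence $(\alpha^\op)^\Tast=(e_X^\op)_\Tast$ by the characterisation of adjoint $\Tth$-functors in Subsection~VII. Writing $V(e_X^\op):=(\alpha^\op)^\Tast\kleisli-:V(X^\op)\to V((TX)^\op)$, the second triangle of Lemma~\ref{lem:TwoDiagrams} thus exhibits the factorisation
\[
 \yoneda_X^\op=V(e_X^\op)\cdot\coyoneda_{X^\op}
\]
in $\Cat{\Tth}$. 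The 2-functoriality of $V$ gives $V(e_X^\op)\dashv V(\alpha^\op)$, and the pseudo-algebra identity $\alpha\cdot e_X\simeq 1_X$ passes through $(-)^\op$ and $V$ to yield $V(\alpha^\op)\cdot V(e_X^\op)\simeq 1_{V(X^\op)}$, so $V(e_X^\op)$ is a fully faithful left adjoint.

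A standard 2-categorical cancellation then implies that, for any $\Tth$-functor $f$ into $V(X^\op)$, $f$ has a right adjoint in $\Cat{\Tth}$ iff $V(e_X^\op)\cdot f$ does. Applied to $f=\coyoneda_{X^\op}$ together with the factorisation, and using that $(-)^\op$ swaps left and right adjoints, this shows that $\coyoneda_{X^\op}$ admits a right adjoint in $\Cat{\Tth}$ iff $\yoneda_X$ admits a left adjoint in $\Cat{\Tth}$, i.e., iff $X$ is totally cocomplete.

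It remains to upgrade the adjunction from $\Cat{\Tth}$ to $\Repr{\Tth}$, as the definition of total completeness requires. Assuming $X^\op$ is totally complete, the adjunction $\coyoneda_{X^\op}\dashv\Inf_{X^\op}$ in $\Repr{\Tth}$ holds a fortiori in $\Cat{\Tth}$ and composes with $V(e_X^\op)\dashv V(\alpha^\op)$ to produce $\yoneda_X^\op\dashv\Inf_{X^\op}\cdot V(\alpha^\op)$ in $\Cat{\Tth}$, whose dual is the required left adjoint of $\yoneda_X$. Conversely, assuming $X$ is totally cocomplete, the cancellation produces the candidate right adjoint $\Inf_{X^\op}:=\Sup_X^\op\cdot V(e_X^\op)$ of $\coyoneda_{X^\op}$; to see this is a pseudo-homomorphism I would combine the inequality $\tau\cdot T(V(e_X^\op))\le V(e_X^\op)\cdot\sigma$ (from $V(e_X^\op)$ being a $\Tth$-functor and Lemma~\ref{lem:MapsReprCats}, where $\sigma,\tau$ denote the pseudo-algebras on $V(X^\op)$ and $V((TX)^\op)$) with the pseudo-homomorphism property of $\Sup_X^\op$ (Proposition~\ref{prop:TotCoComplRepr}) to derive $\Inf_{X^\op}\cdot\sigma\ge\alpha\cdot T\Inf_{X^\op}$; the opposite inequality is automatic since $\Inf_{X^\op}$ is a $\Tth$-functor. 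The main delicacy is precisely this last pseudo-homomorphism check, since $V(e_X^\op)$ is typically not itself a pseudo-homomorphism and the slack disappears only after composing with $\Sup_X^\op$.
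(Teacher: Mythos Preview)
Your argument hinges on the adjunction $e_X^\op\dashv\alpha^\op$ holding in $\Cat{\Tth}$, so that you may apply the 2-functor $V$ and then cancel the fully faithful left adjoint $V(e_X^\op)$. But $(-)^\op$ is only a functor on $\Repr{\Tth}$, i.e.\ on pseudo-homomorphisms (Lemma~\ref{lem:dualTVGraphFun}); and $e_X:X\to TX$ is in general \emph{not} a pseudo-homomorphism: one has $m_X\cdot Te_X=1_{TX}$ while only $1_{TX}\le e_X\cdot\alpha$ from the unit of $\alpha\dashv e_X$. Hence $e_X^\op$ need not be a $\Tth$-functor at all, and your map $V(e_X^\op):=(\alpha^\op)^\Tast\kleisli-$ is, by Proposition~\ref{prop:nearlyOpen}, a $\Tth$-functor only if $\alpha^\op$ is downwards open---something you have not established and which does not follow from the hypotheses. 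The same problem afflicts $\yoneda_X^\op$: since $\yoneda_X$ is not known to be a pseudo-homomorphism, the factorisation $\yoneda_X^\op=V(e_X^\op)\cdot\coyoneda_{X^\op}$ from Lemma~\ref{lem:TwoDiagrams} is only guaranteed at the level of $\V$-functors, exactly as that lemma states. Consequently your cancellation argument, and the subsequent pseudo-homomorphism check for $\Inf_{X^\op}=\Sup_X^\op\cdot V(e_X^\op)$ via Lemma~\ref{lem:MapsReprCats}, both rest on an unverified premise.

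The paper sidesteps this by working with the \emph{right-adjoint} triangle instead. Taking right adjoints (in $\Cat{\V}$) of the second diagram in Lemma~\ref{lem:TwoDiagrams} yields
\[
 \Sup_X^\op\simeq\Inf_{X^\op}\cdot V(\alpha^\op),
\]
and here every map \emph{is} a $\Tth$-functor: $\Sup_X$ and $\alpha$ are left adjoints, hence pseudo-homomorphisms by Proposition~\ref{prop:TotCoComplRepr}, so $\Sup_X^\op$ and $V(\alpha^\op)$ lie in $\Repr{\Tth}$; and $\Inf_{X^\op}$ is a $\Tth$-functor by completeness of $X^\op$ (Theorem~\ref{thm:ComplIsCocompl}). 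Since $V(\alpha^\op)$ is moreover a split epimorphism (split in $\Cat{\V}$ by $(\alpha^\op)^\Tast\kleisli-$), one cancels it to conclude that $\Inf_{X^\op}$ is a pseudo-homomorphism. The converse direction likewise uses only $V(\alpha^\op)^\op$ and $\Inf_{X^\op}^\op$, never the problematic $V(e_X^\op)$.
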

\begin{proof}
Assume first that $X$ is totally cocomplete. By Theorem \ref{thm:ComplIsCocompl}, $X^\op$ is complete, we write $\Inf_{X^\op}:V(X^\op)\to X^\op$ for a right adjoint of $\coyoneda_{X^\op}:X^\op\to V(X^\op)$ in $\Cat{\Tth}$. We need to prove that $\Inf_{X^\op}$ is a pseudo-homomorphism. Firstly, the diagram
\begin{equation}\label{eq:LimColim}
 \xymatrix{(PX)^\op=V((TX)^\op)\ar[dr]_{\Sup_X^\op}\ar[r]^-{V(\alpha^\op)} & V(X^\op)\ar[d]^{\Inf_{X^\op}}\\ & X^\op}
\end{equation}
of $\Tth$-functors commutes up to equivalence since the underlying diagram in $\Cat{\V}$ consists precisely of the right adjoints of the $\V$-functors of the second diagram in Lemma \ref{lem:TwoDiagrams}. Since $\Sup_X:PX\to X$ and $\alpha:TX\to X$ are left adjoints in $\Cat{\Tth}$, they are in particular pseudo-homomorphisms, hence $\Sup_X^\op$ and $V(\alpha^\op)$ are pseudo-homomorphisms. Since $V(\alpha^\op)$ is a split epimorphism in $\Cat{\V}$, also $\Inf_{X^\op}$ is a pseudo-homomorphisms. Conversely, assume now that $X^\op$ is totally complete. Hence $\Inf_{X^\op}:V(X^\op)\to X^\op$ is a pseudo-homomorphism. We show that $\Inf_{X^\op}^\op\cdot V(\alpha^\op)^\op$ is a left adjoint (=left inverse in this case) of $\yoneda_X:X\to PX$. In fact, for the duals of the underlying $\V$-functors one verifies:
\[
 \Inf_{X^\op}\cdot V(\alpha^\op)\cdot\yoneda_X^\op
=\Inf_{X^\op}\cdot V(\alpha^\op)\cdot\coyoneda_{(TX)^\op}\cdot e_X^\op
=\Inf_{X^\op}\cdot\coyoneda_{X^\op}\cdot\alpha^\op\cdot e_X^\op\simeq 1_{X_0}.\qedhere
\]

\end{proof}
From commutativity of \eqref{eq:LimColim} we also deduce that a pseudo-homomorphism $f:X\to Y$ between totally cocomplete $\Tth$-categories is cocontinuous if and only if $f^\op:X^\op\to Y^\op$ is continuous. Hence:

\begin{theorem}\label{thm:TotCoComplDual}
Taking duals defines an equivalence functor
\[
 (-)^\op:\Cocts{\Tth}\to\Cts{\Tth}
\]
which commutes with the canonical forgetful functors to $\SET$. Furthermore, $(-)^\op$ restricts to separated objects, hence
\[
 (\Cat{\Tth}_\sep)^\mP\simeq(\Repr{\Tth}_\sep)^\mV.
\]
\end{theorem}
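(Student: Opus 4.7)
The plan is to package together the preceding proposition (which establishes the object-level bijection) and the remark following it (which establishes the morphism-level correspondence) into an equivalence of categories, then descend to separated objects using the identifications already established in Sections \ref{sect:CoCts} and \ref{sect:TotCts}.

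First I would verify that $(-)^\op$ is well-defined as a functor $\Cocts{\Tth}\to\Cts{\Tth}$. On objects, every totally cocomplete $\Tth$-category $X$ is representable by Proposition \ref{prop:TotCoComplRepr}, so $X^\op$ is a well-defined representable $\Tth$-category and lies in $\Cts{\Tth}$ by the preceding proposition. On morphisms, the key preliminary step is to observe that every cocontinuous $\Tth$-functor $f:X\to Y$ in $\Cocts{\Tth}$ is automatically a pseudo-homomorphism: total cocompleteness implies cocompleteness, preservation of generalised weighted colimits implies preservation of the ordinary ones, so $f$ is a left adjoint between cocomplete (and representable) $\Tth$-categories, and by Proposition \ref{prop:TotCoComplRepr} left adjoints between representable $\Tth$-categories are pseudo-homomorphisms. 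The remark following the preceding proposition then tells us directly that $f$ is a cocontinuous pseudo-homomorphism if and only if $f^\op$ is a continuous pseudo-homomorphism, so $(-)^\op$ indeed sends morphisms of $\Cocts{\Tth}$ to morphisms of $\Cts{\Tth}$.

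Functoriality of $(-)^\op$ is immediate since dualisation acts as the identity on underlying maps. Involutivity is also clear: for a representable $\Tth$-category $(X,a_0\cdot\alpha)$, the underlying $\V$-category of its dual is $(X,a_0^\circ)$ with the same $\alpha$, and applying the dual again restores $a_0$ since $(a_0^\circ)^\circ=a_0$. Hence $(-)^\op:\Cocts{\Tth}\to\Cts{\Tth}$ is an equivalence (in fact an isomorphism). Commutation with the forgetful functors to $\SET$ is immediate, since duality preserves the underlying set.

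For the last claim, separation of a representable $\Tth$-category is controlled by anti-symmetry of the underlying ordered set $\Cat{\Tth}(G,X)$, which is a self-dual property; hence $(-)^\op$ restricts to an equivalence $\Cocts{\Tth}_\sep\to\Cts{\Tth}_\sep$. Composing with the identifications $\Cocts{\Tth}_\sep\simeq(\Cat{\Tth}_\sep)^\mP$ (from the Eilenberg--Moore characterisation recalled in Section \ref{sect:CoCts}) and $\Cts{\Tth}_\sep\simeq(\Repr{\Tth}_\sep)^\mV$ (by definition in Section \ref{sect:TotCts}) yields the final equivalence. The only non-routine step is the verification that cocontinuity in $\Cocts{\Tth}$ implies the pseudo-homomorphism property, and this was handled above by chaining previously established results; everything else amounts to formal bookkeeping with the duality construction.
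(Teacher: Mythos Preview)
Your proposal is correct and follows the paper's approach: the theorem is stated immediately after the sentence ``a pseudo-homomorphism $f:X\to Y$ between totally cocomplete $\Tth$-categories is cocontinuous if and only if $f^\op:X^\op\to Y^\op$ is continuous'', and the paper offers no further proof beyond ``Hence:''. You have unpacked this correctly, and in particular you make explicit a point the paper glosses over: that morphisms in $\Cocts{\Tth}$ (defined merely as cocontinuous $\Tth$-functors) are automatically pseudo-homomorphisms, via the chain cocontinuous $\Rightarrow$ left adjoint $\Rightarrow$ pseudo-homomorphism from Proposition~\ref{prop:TotCoComplRepr}; without this, $f^\op$ need not even be a $\Tth$-functor.
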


\begin{remark}\label{rem:TildeVviaRepr}
We can write the canonical functor $\Cocts{\Tth}_\sep\to\SET^\mT$ as the composite
\[
 \Cocts{\Tth}_\sep\simeq(\Repr{\Tth}_\sep)^\mV\to\Repr{\Tth}_\sep\to\SET^\mT,
\]
hence its left adjoint sends a $\mT$-algebra $X$ to the totally cocomplete $\Tth$-category $\V^X$. Furthermore, we conclude that the monad $\mtV=\tvmonad$ on $\SET^\mT$ (see Remark \ref{rem:MonTildeV}) is also induced by the adjunction
\[
 \Cocts{\Tth}_\sep\adjunct{}{}\SET^\mT,
\]
and that $(\Repr{\Tth}_\sep)^\mV\simeq(\SET^\mT)^{\mtV}$.
\end{remark}

\begin{examples}\label{ex:TotCoComplDual}
For ordered sets, Theorem \ref{thm:TotCoComplDual} just states the trivial fact that the category $\SUP$ of $\sup$-lattices is equivalent to the category $\INF$ of $\inf$-lattices. We find it interesting to see that the topological counterpart of this result states that the category $\CONTLAT$ of continuous lattices and Scott-continuous and $\inf$-preserving maps is equivalent to the category of Eilenberg--Moore algebras for the lower Vietoris monad on the category $\STCOMP$ of stably compact spaces and spectral maps. Furthermore, by Remark \ref{rem:TildeVviaRepr}, $\CONTLAT$ is also equivalent to the category of Eilenberg--Moore algebras for the (classical) Vietoris monad on the category of compact Hausdorff spaces and continuous maps. We note that the latter equivalence was shown in \citep{Wyl81}.

For $\Tth=\Uth_{\Pp}$, the Eilenberg--Moore category $\COMPHAUS^{\mtV}$ of the monad $\mtV=\tvmonad$ on $\COMPHAUS$ (see Example \ref{ex:tildeVApp}) is equivalent to $\SET^\mP$, and this category is described in \citep{GH12} as the category of continuous lattices equipped with an internal action of $[0,\infty]$ and action-preserving morphisms of continuous lattices. A slighly different monad on $\COMPHAUS$ one obtains for $\Tth=\Uth_{\Pm}$, and the category of Eilenberg--Moore algebras of this monad is equivalent to the category of separated injective objects and left adjoint morphisms in $\UAP$.
\end{examples}

\begin{remark}
Following \citep{RW04}, we consider, for a monad $\monadfont{D}$ on a category $\catfont{C}$ where idempotents split, the full subcategory $\Spl(\catfont{C}^\monadfont{D})$ of $\catfont{C}^\monadfont{D}$ defined by the \emph{split structures}, that is, by those $\monadfont{D}$-algebras $(X,\alpha:DX\to X)$ for which exists a homomorphism $t:X\to DX$ with $\alpha\cdot t=1_X$. We put
\begin{align*}
\Cocts{\Tth}_\spl:=\Spl((\Cat{\Tth})^\mP) &&\text{and}&& \Cts{\Tth}_\spl:=\Spl((\Repr{\Tth})^\mV).
\end{align*}
Since our monads are of Kock-Z\"oberlein type, these splittings are actually adjoint to the algebra structure. Hence, a totally cocomplete separated $\Tth$-category $X$ belongs to $\Cocts{\Tth}_\spl$ if and only if $\Sup_X:PX\to X$ has a left adjoint in $\Cat{\Tth}$ (and hence in $\Cocts{\Tth}_\sep$), and a totally complete separated $\Tth$-category $X$ belongs to $\Cts{\Tth}_\spl$ if and only if $\Inf_X:VX\to X$ has a right adjoint in $\Repr{\Tth}$ (and hence in $\Cts{\Tth}_\sep$). For $X$ in $\Cocts{\Tth}_\spl$, the splitting $t:X\to PX$ of $\Sup_X:PX\to X$ is left adjoint and therefore a pseudo-homomorphism; hence, with the help of \eqref{eq:LimColim}, we see that $V(\alpha^\op)\cdot t^\op$ is a splitting of $\Inf_{X^\op}$ in $(\Repr{\Tth})^\mV$. Therefore the equivalence functor $(-)^\op:\Cocts{\Tth}\to\Cts{\Tth}$ of Theorem \ref{thm:TotCoComplDual} restricts to a functor
\[
(-)^\op:\Cocts{\Tth}_\spl\to\Cts{\Tth}_\spl;
\]
however, in general we do not obtain an equivalence as the following example shows.
\end{remark}

\begin{example}\label{ex:VXspectral} 
We consider the case of topological spaces, that is $\Tth=\Uth_\two$. For a topological space $X$, $PX$ is the filter space of $X$ (see \citep[Example 4.10]{HT10}) which is known to be spectral, and so is every split algebra for $\mP$. Since the dual of a spectral space is spectral, the image of $(-)^\op:\Cocts{\Tth}\to\Cts{\Tth}$ contains only spectral spaces. For a stably compact space $X$, $VX$ is spectral if and only if $X$ is spectral. In fact, since $\coyoneda_X:X\to VX$ is in $\STCOMP$ and a topological embedding, $X$ is spectral if $VX$ is so. If $X$ is spectral, then the topology of $VX$ is generated by the sets $V^\Diamond$ where $V$ runs through all compact opens of $X$; and for such $V$ one easily sees that $V^\Diamond$ is compact in $VX$ (using Alexander's Subbase Lemma and $(\bigcup_i V_i)^\Diamond=\bigcup_i V_i^\Diamond$). Since $VX$ is always a split algebra for $\mV$, we conclude that $(-)^\op:\Cocts{\Tth}\to\Cts{\Tth}$ is not essentially surjective on objects.

Similarly, for a compact Hausdorff space $X$, $X$ is a Stone space if and only if $\widetilde{V}X$ is a Stone space (if $X$ is Stone, then $VX$ is spectral and hence $\widetilde{V}X$ is Stone).
\end{example}

\section{The Kleisli category of the Vietoris monad}\label{sect:Kleisli}
 
Every $\Tth$-functor $r:X\to VY$ gives rise to a $\V$-matrix $\umate{r}:X\relto Y$ where $\umate{r}(x,y)=r(x)(y)$. In the sequel we are interested in the case where $X=(X,a)$ and $Y=(Y,b)$ are representable $\Tth$-categories and $r:X\to VY$ is a pseudo-homomorphism.
\begin{proposition}\label{prop:reprDist}
Let $X=(X,a)$ and $Y=(Y,b)$ be representable $\Tth$-categories. Then $r\mapsto\umate{r}$ defines a bijection between $\Repr{\Tth}(X,VY)$ and the subset of $\Mod{\V}(X_0,Y_0)$ consisting of all those $\V$-distributors $\psi:X_0\modto Y_0$ making the diagram
\begin{equation}\label{diag:QMod}
 \xymatrix{T(X_0)\ar|-{\object@{o}}[d]_a\ar|-{\object@{o}}[r]^{\Txi\psi} & T(Y_0)\ar|-{\object@{o}}[d]^b\\
 X_0\ar|-{\object@{o}}[r]_\psi & Y_0}
\end{equation}
commutative.
\end{proposition}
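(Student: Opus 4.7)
The plan is first to observe that $r\mapsto\umate{r}$ is injective since $r(x)=\umate{r}(x,-)$, and then to characterise its image. Throughout, write $\psi=\umate{r}$. Since $Y$ is core-compact, $VY$ is separated (Corollary \ref{cor:reprspaceVpowerX}); hence a pseudo-homomorphism $r:X\to VY$ satisfies $\mate{\mu_Y}\cdot Tr=r\cdot\alpha$ as an on-the-nose identity, which will greatly simplify bookkeeping.

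The central technical step is to derive, from the weak-pullback hypothesis on $T$, the identity
\[
\Txi\psi(\fx,\fy)=\Txi\ev_Y(Tr(\fx),\fy)\qquad(\fx\in TX,\;\fy\in TY).
\]
This follows by applying $T$ to the pullback square with corners $X\times Y$, $\V^Y\times Y$, $X$, $\V^Y$, connected by $r\times 1_Y$, $r$, and the two first projections, combined with $\psi=\ev_Y\cdot(r\times 1_Y)$: the inequality $\le$ is automatic, while $\ge$ uses the weak-pullback property to lift every $\fw\in T(\V^Y\times Y)$ with $T\pi_1(\fw)=Tr(\fx)$ to some $\fz\in T(X\times Y)$ with $T(r\times 1_Y)(\fz)=\fw$. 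The analogous, easier identity $\xi\cdot T(\psi(x,-))(\fy)=\Txi\psi(e_X(x),\fy)$ is obtained in the same way from the pullback $Y\xrightarrow{\iota_x}X\times Y\xrightarrow{\pi_1}X$, using $T1=1$.

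Armed with these identities, the three clauses defining ``$r$ is a pseudo-homomorphism'' translate as: (i) $r(x)=\psi(x,-):Y\to\V$ is a $\Tth$-functor, equivalent (via the second identity) to $b\cdot\Txi\psi\cdot e_X\le\psi$; (ii) $r:X_0\to(VY)_0=((\V^Y)_0)^\op$ is a $\V$-functor, equivalent to $\psi\cdot a_0\le\psi$ (by Lemma \ref{lem:MapsReprCats}); and (iii) $\mate{\mu_Y}\cdot Tr=r\cdot\alpha$, which via $\mate{\mu_Y}(\fq)=b\cdot\Txi\ev_Y(\fq,-)$ and the first identity becomes the $\V$-matrix equality $b\cdot\Txi\psi=\psi\cdot\alpha$.

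It remains to match these three conditions with the claim. Specialising (i) at $\fy=e_Y(y')$ and using $\xi\cdot e_\V=1_\V$ together with the naturality of $e$ gives $b_0\cdot\psi\le\psi$; combined with (ii) this makes $\psi$ a $\V$-distributor $X_0\modto Y_0$, and then $\psi\cdot\alpha=\psi\cdot a_0\cdot\alpha=\psi\cdot a$ turns (iii) into the claimed commutative square $b\cdot\Txi\psi=\psi\cdot a$. Conversely, given a $\V$-distributor $\psi$ with $b\cdot\Txi\psi=\psi\cdot a$, condition (ii) is immediate, condition (iii) follows from $\psi\cdot a=\psi\cdot\alpha$, and condition (i) reduces to $b\cdot\Txi\psi\cdot e_X=\psi\cdot a\cdot e_X=\psi\cdot a_0=\psi$. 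The main obstacle is the weak-pullback identity for $\Txi\psi$; the rest is systematic bookkeeping from the characterisations already established in Lemma \ref{lem:MapsReprCats} and Corollary \ref{cor:reprspaceVpowerX}.
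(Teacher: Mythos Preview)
Your argument is correct, and the overall shape matches the paper's: reduce the pseudo-homomorphism condition on $r$ to the $\V$-matrix equation $b\cdot\Txi\psi=\psi\cdot\alpha$, and then identify this with the commuting square via $\psi\cdot\alpha=\psi\cdot a$. The two proofs differ in how they reach the intermediate steps.

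First, for the identity $\Txi\psi(\fx,\fy)=\Txi\ev_Y(Tr(\fx),\fy)$, you argue via an explicit weak-pullback lift. The paper observes instead that $\psi=\umate{r}=\ev_Y\cdot r$ as $\V$-relations (with $r$ regarded as a map $X\to\V^Y$), and then simply uses that $\Txi$ is a \emph{functor} on $\Mat{\V}$: hence $\Txi\psi=\Txi\ev_Y\cdot Tr$ on the nose. This is a one-line replacement for your pullback computation (the weak-pullback hypothesis is of course what makes $\Txi$ functorial, so the content is the same, but the packaging is shorter). Your ``second identity'' likewise follows from $\Txi(\psi\cdot x)=\Txi\psi\cdot Tx$ and naturality of $e$.

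Second, to see that $\psi$ is a $\V$-distributor, you decompose into conditions (i) and (ii) and extract $b_0\cdot\psi\le\psi$ and $\psi\cdot a_0\le\psi$ separately. The paper instead notes that a pseudo-homomorphism $r:X\to VY$ yields a $\Tth$-functor $r^\op:X^\op\to\V^Y$ (Lemma~\ref{lem:dualTVGraphFun}, using that $Y$ is core-compact so $\V^Y\in\Cat{\Tth}$), and hence by the tensor--hom adjunction a $\Tth$-functor $\umate{r}:X^\op\otimes Y\to\V$; passing to underlying $\V$-categories gives the $\V$-distributor property in one stroke.

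In summary: your route is more hands-on and self-contained, the paper's exploits the functoriality of $\Txi$ and the dualisation machinery of Section~\ref{sect:ReprDual} to shortcut both computations. Either way the bijection follows.
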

\begin{proof}
Let $\alpha:TX\to X$ and $\beta:TY\to Y$ be pseudo-algebra structures of $X$ and $Y$ respectively. Assume first that $r:X\to VY$ is a homomorphism. Then $\umate{r}$ is a $\Tth$-functor $\umate{r}:X^\op\otimes Y\to\V$ and hence also a $\V$-functor $\umate{r}:X_0^\op\otimes Y_0\to\V$. But the latter is equivalent to $\umate{r}$ being a $\V$-distributor $\umate{r}:X_0\modto Y_0$. Furthermore, for all $\fx\in TX$ and $y\in Y$,
\[
 \mu_Y\cdot Tr(\fx)(y)
=b\cdot\Txi\ev_Y(Tr(\fx),y)
=b\cdot\Txi\umate{r}(\fx,y);
\]
hence $r\cdot\alpha=\mu_Y\cdot Tr$ if and only if $\umate{r}\cdot a=b\cdot \Txi\umate{r}$ (note that $\umate{r}\cdot\alpha=\umate{r}\cdot a_0\cdot\alpha=\umate{r}\cdot a$ since $\umate{r}$ is a $\V$-distributor).

Assume now that $\psi:X\modto Y$ is a $\V$-distributor making the diagram \eqref{diag:QMod} commutative. Then, for every $x\in X$,
\[
 b\cdot\Txi(\psi\cdot x)\cdot e_1
=\psi\cdot a\cdot e_X\cdot x=\psi\cdot x,
\]
hence $\psi\cdot x$ can be seen a $\Tth$-distributor of type $G\kmodto Y$. We conclude that $\psi=\umate{r}$ for $r:X\to VY,\,x\mapsto\psi\cdot x$. Finally, $r$ is a $\V$-functor since $\psi$ is a $\V$-distributor, and $r$ is a homomorphism by the considerations at the end of the first part of the proof.
\end{proof}

\begin{remark}
For a $\V$-distributor $\psi:X_0\modto Y_0$, commutativity of \eqref{diag:QMod} is equivalent to $\psi\cdot\alpha=b\cdot\Txi\psi$ since $\psi\cdot a_0=\psi$. Hence, if $Y$ is of the form $Y=(Y,\beta:TY\to Y)$, \eqref{diag:QMod} commutes if and only if $\psi\cdot\alpha=\beta\cdot\Txi\psi$. Furthermore, for every pseudo-homomorphism $f:X\to Y$, the $\V$-distributor $f_*:X_0\modto Y_0$ is a morphism $f_*:X\modto Y$ in $\ReprDist{\Tth}$ since
\[
 f_*\cdot \alpha_*=(f\cdot\alpha)_*=(\beta\cdot Tf)_*=b\cdot T(f_*).
\]
More generally, given a $\V$-functor $f:X_0\to Y_0$, the $\V$-distributor $f_*:X_0\modto Y_0$ makes \eqref{diag:QMod} commutative if and only $f:X\to Y$ is a pseudo-homomorphism.
\end{remark}

We write $\ReprDist{\Tth}$ for the category with objects all representable $\Tth$-categories, and a morphisms $\psi:X\modto Y$ in $\ReprDist{\Tth}$ is a $\V$-distributor $\psi:X_0\modto Y_0$ making \eqref{diag:QMod} commutative. Composition in $\ReprDist{\Tth}$ is given by $\V$-relational composition, and $a_0:X\modto X$ is the identify arrow on $X$. Hence, $(X,a)\mapsto(X,a_0)$ defines a faithful functor
\[
 \ReprDist{\Tth}\to\Mod{\V}.
\]
The following lemma is obvious.

\begin{lemma}\label{CancelationMonoReprDist}
Let $X=(X,a)$, $Y=(Y,b)$ and $Z=(Z,c)$ be in $\ReprDist{\Tth}$ and let $\varphi:(X,a_0)\to(Y,b_0)$ and $\psi:(Y,b_0)\to(Z,c_0)$ be $\V$-distributors. If $\psi\cdot\varphi:X\modto Z$ and $\psi:Y\modto Z$ are actually morphisms in $\ReprDist{\Tth}$ and $\psi$ is mono in $\Mod{\V}$, then $\varphi:X\modto Y$ is in $\ReprDist{\Tth}$.
\end{lemma}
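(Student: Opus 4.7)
The strategy is the direct one: chase the assumption that $\psi\cdot\varphi$ lies in $\ReprDist{\Tth}$ through the defining square, use functoriality of $\Txi$ on $\Mat{\V}$ to split $\Txi(\psi\cdot\varphi)$ as $\Txi\psi\cdot\Txi\varphi$, use that $\psi$ itself lies in $\ReprDist{\Tth}$ to rewrite $c\cdot\Txi\psi$ as $\psi\cdot b$, and finally cancel $\psi$ on the left using the monomorphism hypothesis.

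Concretely, the plan is the following. First I would record the two equations handed to us: $(\psi\cdot\varphi)\cdot a=c\cdot\Txi(\psi\cdot\varphi)$ and $\psi\cdot b=c\cdot\Txi\psi$. Then, applying functoriality of $\Txi:\Mat{\V}\to\Mat{\V}$ (Subsection \Roman{subsect:one}--II) to decompose $\Txi(\psi\cdot\varphi)=\Txi\psi\cdot\Txi\varphi$, the right hand side of the first equation becomes $c\cdot\Txi\psi\cdot\Txi\varphi$, which by the second equation equals $\psi\cdot b\cdot\Txi\varphi$. On the left hand side, associativity gives $\psi\cdot(\varphi\cdot a)$. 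Hence
\[
 \psi\cdot(\varphi\cdot a)\;=\;\psi\cdot(b\cdot\Txi\varphi)
\]
in $\Mat{\V}$. Since by assumption $\psi$ is mono in $\Mod{\V}$ (equivalently in $\Mat{\V}$, as the forgetful functor $\Mod{\V}\to\Mat{\V}$ is faithful and both arrows in the equation are indeed $\V$-distributors of type $X_0\modto Z$), we can cancel $\psi$ on the left to obtain $\varphi\cdot a=b\cdot\Txi\varphi$. This is exactly commutativity of the diagram \eqref{diag:QMod} for $\varphi$, so $\varphi:X\modto Y$ belongs to $\ReprDist{\Tth}$.

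There is essentially no obstacle: the only point deserving a moment of thought is the justification that $\Txi$ preserves the composition $\psi\cdot\varphi$, which is the standard functoriality of the extension and is available under the running hypotheses on $\mT$ (weak pullback preservation and Beck--Chevalley for $m$) recalled in Subsection \Roman{subsect:one}--II. Everything else is associativity of matrix composition and a single left-cancellation by the mono $\psi$.
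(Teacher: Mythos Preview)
Your argument is correct and is exactly the ``obvious'' computation the paper alludes to (the paper gives no proof beyond calling the lemma obvious). One small correction to your parenthetical: the two arrows you cancel, $\varphi\cdot a$ and $b\cdot\Txi\varphi$, are $\V$-distributors of type $T(X_0)\modto Y_0$ (not $X_0\modto Z$), and ``mono in $\Mod{\V}$'' is not literally equivalent to ``mono in $\Mat{\V}$''---what you actually need, and have, is that both sides are genuine $\V$-distributors so that the hypothesis ``$\psi$ mono in $\Mod{\V}$'' applies directly.
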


By definition, $f:X\to Y$ in $\Repr{\Tth}$ is downwards open (see Definition \ref{def:downwards_open}) precisely if $a\cdot \Txi(f^*)=f^*\cdot b$, and therefore:
\begin{proposition}
The following assertions are equivalent, for $f:X\to Y$ in $\Repr{\Tth}$.
\begin{eqcond}
\item $f$ is downwards open.
\item The $\V$-distributor $f^*:Y_0\modto X_0$ makes \eqref{diag:QMod} commutative (that is, $f^*:Y\modto X$ is a morphism in $\ReprDist{\Tth}$).
\item $f_*:X\modto Y$ is left adjoint in $\ReprDist{\Tth}$.
\end{eqcond}
\end{proposition}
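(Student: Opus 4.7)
My plan is to recognize that, once the definitions are unwound, conditions (i) and (ii) turn out to be literally the same equation, while (ii) $\RLw$ (iii) is a general nonsense consequence of the faithful embedding $\ReprDist{\Tth}\to\Mod{\V}$ together with uniqueness of adjoints. The only real input from representability is a single identity, which I would isolate at the outset.

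Concretely, I would first observe the following $\Mat{\V}$-identity for representable $Y=(Y,b)$ with pseudo-algebra structure $\beta:TY\to Y$: since $b=b_0\cdot\beta$ and the $\V$-category structure $b_0$ is transitive, $b_0\cdot b = b_0\cdot b_0\cdot\beta = b_0\cdot\beta = b$. Second, since $T$ preserves weak pullbacks, $\Txi$ is a 2-functor on $\Mat{\V}$, so it preserves composition and involution; in particular $\Txi f^* = \Txi(f^\circ\cdot b_0) = Tf^\circ\cdot\Txi b_0$.

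Granted these two facts, the equivalence (i) $\RLw$ (ii) becomes a direct comparison. Downwards openness is, by Definition \ref{def:downwards_open} together with the automatic reverse inequality, the equation $a\cdot Tf^\circ\cdot\Txi b_0 = f^\circ\cdot b$. Commutativity of \eqref{diag:QMod} for $\psi = f^* : Y_0\modto X_0$ reads $f^*\cdot b = a\cdot\Txi f^*$; the left-hand side rewrites as $f^\circ\cdot b_0\cdot b = f^\circ\cdot b$ using $b_0\cdot b=b$, and the right-hand side rewrites as $a\cdot Tf^\circ\cdot\Txi b_0$ using the 2-functoriality of $\Txi$. The two equations coincide.

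For (ii) $\RLw$ (iii), I would use that $f_* : X\modto Y$ is known to lie in $\ReprDist{\Tth}$ because $f$ is a pseudo-homomorphism (as recorded in the Remark preceding Lemma \ref{CancelationMonoReprDist}) and that $f_*\dashv f^*$ already in $\Mod{\V}$. If (ii) holds, then $f^*$ also lives in $\ReprDist{\Tth}$, and the unit/counit inequalities of $f_*\dashv f^*$, being already satisfied at the level of $\V$-distributors, witness the same adjunction in $\ReprDist{\Tth}$. Conversely, if $f_*$ has a right adjoint $g$ in $\ReprDist{\Tth}$, then forgetting to $\Mod{\V}$ produces $f_*\dashv g$ there; by uniqueness of right adjoints $g=f^*$, so $f^*$ lies in $\ReprDist{\Tth}$, which is (ii). There is no real obstacle here; the only place one has to be a little careful is the bookkeeping check that $b_0\cdot b = b$ and the application of 2-functoriality of $\Txi$, both of which rely on the standing assumptions (representability of $Y$ and $T$ preserving weak pullbacks).
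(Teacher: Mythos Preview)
Your proposal is correct and follows the same approach as the paper, which is even terser: it simply records in one line before the proposition that $f$ is downwards open precisely when $a\cdot\Txi(f^*)=f^*\cdot b$, and treats (ii)$\RLw$(iii) as immediate. Your unpacking via $b_0\cdot b=b$ and $\Txi f^*=Tf^\circ\cdot\Txi b_0$ just makes explicit what the paper leaves to the reader.
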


\begin{theorem}
The Kleisli category $\Repr{\Tth}_\mV$ of $\mV$ is equivalent to $\ReprDist{\Tth}$.
\end{theorem}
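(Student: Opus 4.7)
The plan is to define a functor $F: \Repr{\Tth}_\mV \to \ReprDist{\Tth}$ that is the identity on objects and that sends a Kleisli morphism $r: X \to VY$ to its umate $\umate{r}: X_0 \modto Y_0$, defined by $\umate{r}(x, y) = r(x)(y)$. By Proposition~\ref{prop:reprDist}, $F$ is a bijection on hom-sets, so it suffices to verify functoriality to conclude that $F$ is an isomorphism of categories, and in particular an equivalence. Preservation of identities is immediate: the identity on $X$ in $\Repr{\Tth}_\mV$ is $\coyoneda_X: X \to VX$, and $\umate{\coyoneda_X}(x, x') = \coyoneda_X(x)(x') = a_0(x, x')$, which is the identity on $X$ in $\ReprDist{\Tth}$.

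The core of the argument is preservation of composition. Given $r: X \to VY$ and $s: Y \to VZ$ in $\Repr{\Tth}$, the Kleisli composite is $s \circ_\mV r = \coyonmult_Z \cdot Vs \cdot r$, and I must show
\[
\umate{s \circ_\mV r}(x, z) = \bigvee_{y \in Y} \umate{r}(x, y) \otimes \umate{s}(y, z).
\]
Since $s$ is a pseudo-homomorphism, Remark~\ref{rem:fVastfast} yields $Vs(r(x)) = s_\Tast \kleisli r(x) = s_* \cdot r(x)$, which is explicitly the $\V$-functor $\psi \mapsto \bigvee_y \umate{r}(x, y) \otimes (VZ)_0(s(y), \psi)$. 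Applying $\coyonmult_Z$ and using the $\V$-enriched co-Yoneda identity $\bigvee_\psi (VZ)_0(s(y), \psi) \otimes \psi(z) = s(y)(z) = \umate{s}(y, z)$ then reduces the left-hand side to the sum on the right, provided one knows the pointwise description $\coyonmult_Z(\Phi)(z) = \bigvee_{\psi \in VZ} \Phi(\psi) \otimes \psi(z)$ for $\Phi \in V(VZ)$.

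The main obstacle is establishing this pointwise formula for $\coyonmult_Z$. The expression is the expected \emph{$\V$-enriched union}, but deriving it directly from the definition $\coyonmult_Z = \coyoneda_Z^\Tast \kleisli (-)$ (with $\coyoneda_Z^\Tast = \mu$ by Proposition~\ref{prop:coYoneda}) requires unpacking the Kleisli convolution of $\Tth$-distributors. A cleaner route is to verify that the right-hand side defines a $\V$-functor which is both a left inverse and a left adjoint to $V\coyoneda_Z$; by the Kock-Z\"oberlein character of $\mV$ (Theorem~\ref{thm:Vietoris}) it must then coincide with $\coyonmult_Z$. With this formula in hand, the composition check is a routine substitution followed by a single application of Yoneda, completing the proof.
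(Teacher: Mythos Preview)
Your proposal is correct and follows the same overall strategy as the paper: use Proposition~\ref{prop:reprDist} for the hom-set bijection, then verify that $\umate{(-)}$ preserves identities and composition. The only real difference is in how you handle the multiplication $\coyonmult_Z$ when checking composition.

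You identify the ``main obstacle'' as proving the pointwise formula $\coyonmult_Z(\Phi)(z)=\bigvee_{\psi\in VZ}\Phi(\psi)\otimes\psi(z)$, and you propose to establish it by showing the right-hand side gives a left adjoint of $V\coyoneda_Z$ and invoking uniqueness of adjoints. This works, but it is heavier than necessary. The paper avoids the general formula for $\coyonmult_Z$ altogether: since $\coyoneda_Z^\Tast=\coyoneda_Z^\circ\cdot c$ (with $c$ the $\Tth$-structure on $VZ$) and $c\kleisli\Phi=\Phi$ for any $\Tth$-distributor $\Phi:G\kmodto VZ$, one has directly $\coyonmult_Z(\Phi)=\coyoneda_Z^\Tast\kleisli\Phi=\coyoneda_Z^\circ\cdot\Phi$, i.e.\ $\coyonmult_Z(\Phi)(z)=\Phi(\coyoneda_Z(z))$. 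Combined with the elementary computation $\coyoneda_Z^\circ\cdot s_*(y,z)=s_*(y,\coyoneda_Z(z))=[\coyoneda_Z(z),s(y)]=s(y)(z)=\umate{s}(y,z)$ and Remark~\ref{rem:fVastfast} (which gives $Vs(\varphi)=s_*\cdot\varphi$), the paper obtains $\coyonmult_Z\cdot Vs(\varphi)=\umate{s}\cdot\varphi$ in one line. Your sum formula is of course equivalent to $\Phi(\coyoneda_Z(z))$ by the $\V$-distributor property of $\Phi$, so your co-Yoneda step is just unpacking and repacking this identity. In short: same proof, but you can replace your adjoint-uniqueness detour with the one-line observation $\coyonmult_Z(\Phi)(z)=\Phi(\coyoneda_Z(z))$.
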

\begin{proof}
It is left to show that $r\mapsto\umate{r}$ preserves composition. To see this, let $r:X\to VY$ and $s:Y\to VZ$ be in $\Repr{\Tth}_\mV$. First note that, for every $y\in Y$ and $z\in Z$,
\[
 \coyoneda_Z^\circ\cdot s_*(y,z)
=s_*(y,\coyoneda_Z(z))=[\coyoneda_Z(z),s(y)]
=s(y)(z)=\umate{s}(y,z),
\]
and therefore $\coyonmult_Y\cdot Vs:VY\to VZ$ sends $\varphi:G\kmodto Y$ to $\umate{s}\cdot\varphi$ (see Remark \ref{rem:fVastfast}). Consequently, 
\[
 \umate{s}\cdot\umate{r}(x,z)
=\umate{s}\cdot\umate{r}\cdot x(z)
=\coyonmult_Y\cdot Vs\cdot r(x)(z),
\]
for all $x\in X$ and $z\in Z$.
\end{proof}

\begin{corollary}
The functor $(-)_*:\Repr{\Tth}\to\ReprDist{\Tth}$ is left adjoint to
\begin{align*}
\ReprDist{\Tth}\to\Repr{\Tth},\;(\psi:X\modto Y)\mapsto (\psi\cdot-:VX\to VY).
\end{align*}
Here we think of an element $\varphi\in VX$ as a morphism $\varphi:G\modto X$ in $\ReprDist{\Tth}$. The units and counits are given by $\coyoneda_X:X\to VX$ and $\coyoneda_X^*:VX\modto X$ respectively.
\end{corollary}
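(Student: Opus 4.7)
The plan is to derive this corollary by transporting the canonical Kleisli adjunction
\[
 \Repr{\Tth}\adjunct{K_\mV}{R_\mV}\Repr{\Tth}_\mV,
\]
which exists because $\mV=\vmonad$ is a monad on $\Repr{\Tth}$ by Theorem \ref{thm:Vietoris}, along the equivalence $\Phi:\Repr{\Tth}_\mV\to\ReprDist{\Tth}$, $r\mapsto\umate{r}$, established in the preceding theorem. Since an equivalence of categories preserves adjunctions together with their units, counits, and triangle identities, the only task left is to match the transported data piece by piece with the description given in the statement.

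Recall that, as Kleisli adjunctions always behave, $K_\mV$ is the identity on objects and sends a pseudo-homomorphism $f:X\to Y$ to $\coyoneda_Y\cdot f:X\to VY$, while $R_\mV$ sends $X$ to $VX$ and a Kleisli morphism $r:X\to VY$ to $\coyonmult_Y\cdot Vr:VX\to VY$; the unit at $X$ is $\coyoneda_X$ and the counit at $X$ is $1_{VX}$ regarded as a morphism $VX\to X$ in $\Repr{\Tth}_\mV$. I then verify four identifications. For the left adjoint, any pseudo-homomorphism $f:(X,a)\to(Y,b)$ gives
\[
 \umate{\coyoneda_Y\cdot f}(x,y)=\coyoneda_Y(f(x))(y)=b_0(f(x),y)=f_*(x,y),
\]
so $\Phi\circ K_\mV=(-)_*$. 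For the right adjoint, a morphism $\psi:X\modto Y$ in $\ReprDist{\Tth}$ corresponds, under Proposition \ref{prop:reprDist}, to the pseudo-homomorphism $r_\psi:X\to VY$, $x\mapsto\psi\cdot x$, and the calculation at the end of the proof of the preceding theorem yields $R_\mV r_\psi=\coyonmult_Y\cdot Vr_\psi:\varphi\mapsto\umate{r_\psi}\cdot\varphi=\psi\cdot\varphi$, exactly as stated. The unit $\coyoneda_X$ already lies in $\Repr{\Tth}$ and transports unchanged. Finally, the counit at $X$ becomes the $\V$-distributor $\umate{1_{VX}}:VX\modto X$ with
\[
 \umate{1_{VX}}(\varphi,x)=\varphi(x)=[\coyoneda_X(x),\varphi]=\coyoneda_X^*(\varphi,x)
\]
by Yoneda in the underlying $\V$-category of $VX$; hence the counit is $\coyoneda_X^*$.

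No substantive obstacle is anticipated, since everything reduces to a careful unpacking of the Kleisli structure through $\Phi$, and the most delicate step — the evaluation of $R_\mV$ on a general Kleisli arrow — has already been carried out in the proof of the preceding theorem. The only mild consistency point is that $\coyoneda_X^*$ really belongs to $\ReprDist{\Tth}$, which is automatic from the fact that $1_{VX}$ is a Kleisli morphism (so its image under the equivalence $\Phi$ lies in $\ReprDist{\Tth}$) but also follows directly from $\coyoneda_X$ being downwards open by Proposition \ref{prop:coYoneda}, combined with the characterisation of downwards openness via $f^*$ lying in $\ReprDist{\Tth}$ given just before the theorem.
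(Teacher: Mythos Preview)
Your proposal is correct and follows precisely the approach the paper intends: the corollary is stated without proof immediately after the theorem identifying $\Repr{\Tth}_\mV\simeq\ReprDist{\Tth}$, so the implicit argument is exactly the transport of the canonical Kleisli adjunction along this equivalence, which you carry out accurately (including the identification of the counit via the Yoneda-type calculation already used in the proof of that theorem).
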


\begin{remark}
Certainly, the adjunction above can be restricted to separated $\Tth$-categories to yield
\[
 \ReprDist{\Tth}_\sep\adjunct{(-)_*}{}\Repr{\Tth}_\sep.
\]
Furthermore, the monad $\mtV=\tvmonad$ on $\SET^\mT$ of Remark \ref{rem:MonTildeV} is also induced by the composite adjunction
\[
 \ReprDist{\Tth}_\sep\adjunct{}{}\Repr{\Tth}_\sep\adjunct{}{}\SET^\mT.
\]
The fully faithful comparison functor $(\SET^\mT)_{\mtV}\to\ReprDist{\Tth}_\sep$ induces an equivalence between the Kleisli category $(\SET^\mT)_{\mtV}$ of $\mtV$ and the full subcategory $\ReprDist{\Tth}_=$ of $\ReprDist{\Tth}$ defined by objects of the form $X=(X,\alpha:TX\to X)$ (i.e.\ where $X_0$ is a discrete $\V$-category).
\end{remark}

\begin{example}
An \emph{Esakia space} \citep{Esa74} is a Priestley space $(X,\le,\alpha)$ where the down-closure of every open (with respect to $\alpha$) subset $A\subseteq X$ is again open (with respect to $\alpha$ or, equivalently, with respect to $a=\le\cdot\alpha$). We find it worthwhile to mention that this condition just states that $i:(X,\alpha)\to (X,\le\cdot\alpha),\,x\mapsto x$ is downwards open. A \emph{morphism of Esakia spaces} (also called bounded morphism or p-morphism) is a homomorphism $f:(X,\le,\alpha)\to(Y,\le,\beta)$ such that, for all $x\in X$ and $y\in Y$ with $f(x)\le y$, there is some $x'\in X$ with $x\le x'$ and $f(x')=y$; and this condition just means that the diagram
\[
 \xymatrix{(X,\le\cdot\alpha)\ar|-{\object@{o}}[d]_{f_*}\ar|-{\object@{o}}[r]^{i_X^*} & (X,\alpha)\ar|-{\object@{o}}[d]^{f}\\
	    (Y,\le\cdot\beta)\ar|-{\object@{o}}[r]_{i_Y^*} & (Y,\beta)}
\]
commutes.
\end{example}

Motivated by the example above, we introduce the following notion.

\begin{definition}
A separated representable $\Tth$-category $(X,a)$ (with algebra structure $\alpha:TX\to X$) is an \emph{Esakia $\Tth$-category} whenever $i:(X,\alpha)\to(X,a)$ is downwards open. 
\end{definition}

\begin{proposition}\label{prop:EsakiaSplIdemp}
The following assertions are equivalent, for a separated representable $\Tth$-category $X=(X,a)$.
\begin{eqcond}
\item $X$ is an Esakia $\Tth$-category.
\item The $\V$-relation $a_0:X\relto X$ is a morphism $a_0:(X,a)\modto(X,\alpha)$ in $\ReprDist{\Tth}$, that is, the diagram
\begin{equation}\label{diag:Esa}
 \xymatrix{TX\ar[d]_\alpha\ar|-{\object@{|}}[r]^{\Txi a_0} & TX\ar[d]^\alpha\\
 X\ar|-{\object@{|}}[r]_{a_0} & X}
\end{equation}
commutes in $\Mod{\V}$.
\item $X$ is a split subobject in $\ReprDist{\Tth}$ of a $\mT$-algebra $(Y,\beta)\in\SET^\mT$.
\end{eqcond}
\end{proposition}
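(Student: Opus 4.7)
The plan is to handle (i)$\Leftrightarrow$(ii) via the proposition characterising downwards open morphisms, and to reduce (ii)$\Leftrightarrow$(iii) to a direct computation in $\ReprDist{\Tth}$, using $(X,\alpha)$ itself as the ambient $\mT$-algebra.

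For (i)$\Leftrightarrow$(ii), I will invoke the proposition before Theorem \ref{thm:Vietoris}: for $f:X\to Y$ in $\Repr{\Tth}$, $f$ is downwards open if and only if $f^*:Y\modto X$ is a morphism in $\ReprDist{\Tth}$, i.e.\ makes \eqref{diag:QMod} commute. Applied to the set-theoretic identity $i:(X,\alpha)\to(X,a)$, the induced $\V$-relation $i^*$ is simply $a_0$. Since both source and target carry the same underlying $\mT$-algebra $\alpha$, the commutativity condition \eqref{diag:QMod} for $i^*=a_0$ reads $a_0\cdot\alpha=\alpha\cdot\Txi a_0$, which is exactly \eqref{diag:Esa}.

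For (iii)$\Rightarrow$(ii), suppose $s:(X,a)\modto(Y,\beta)$ and $r:(Y,\beta)\modto(X,a)$ in $\ReprDist{\Tth}$ with $r\cdot s=a_0$. Applying \eqref{diag:QMod} to both $s$ and $r$ and composing,
\[
a_0\cdot\alpha=r\cdot s\cdot\alpha=r\cdot\beta\cdot\Txi s=\alpha\cdot\Txi r\cdot\Txi s=\alpha\cdot\Txi(r\cdot s)=\alpha\cdot\Txi a_0,
\]
which is \eqref{diag:Esa}.

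The key observation for (ii)$\Rightarrow$(iii) is that the splitting $\mT$-algebra can be taken to be $(X,\alpha)$ itself, viewed as an object of $\ReprDist{\Tth}$ via the embedding $\SET^\mT\to\Repr{\Tth}_\sep$ of Remark \ref{rem:AdjReprTalg} (so with discrete underlying $\V$-category), and that the splitting pair is $s=r=a_0$. The $\V$-relation $a_0$ is a $\V$-distributor in both directions by reflexivity and transitivity, the commutativity condition \eqref{diag:QMod} in each direction reduces to $a_0\cdot\alpha=\alpha\cdot\Txi a_0$ and is therefore supplied by (ii), and the composite $r\cdot s=a_0\cdot a_0=a_0$ is the identity on $(X,a)$ in $\ReprDist{\Tth}$. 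There is no serious obstacle here; the only thing to be mindful of is keeping track of which underlying $\V$-structures ($a_0$ on the $\Tth$-categorical side, $1_X$ on the $\mT$-algebra side) live on which object, so that the distributor laws and the composition in $\ReprDist{\Tth}$ are interpreted correctly.
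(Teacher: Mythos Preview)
Your treatments of (i)$\Leftrightarrow$(ii) and (ii)$\Rightarrow$(iii) are correct and essentially identical to the paper's (the paper writes the splitting pair as $i_*$ and $i^*$, which are both $a_0$).

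Your argument for (iii)$\Rightarrow$(ii) has a genuine gap. In the chain
\[
a_0\cdot\alpha=r\cdot s\cdot\alpha=r\cdot\beta\cdot\Txi s=\alpha\cdot\Txi r\cdot\Txi s=\alpha\cdot\Txi(r\cdot s)=\alpha\cdot\Txi a_0,
\]
the third equality is wrong. The commutativity condition \eqref{diag:QMod} for $r:(Y,\beta)\modto(X,a)$ reads $r\cdot\beta=a\cdot\Txi r$, with the \emph{full} $\Tth$-structure $a=a_0\cdot\alpha$ on the target side, not just $\alpha$. (The simplification to a bare algebra structure only applies when the \emph{codomain} has discrete underlying $\V$-category, which is the case for $s$ but not for $r$.) With the correct equation your computation yields only
\[
a_0\cdot\alpha=a_0\cdot\alpha\cdot\Txi a_0,
\]
which does not imply $a_0\cdot\alpha=\alpha\cdot\Txi a_0$.

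The paper avoids this obstacle by a different route: it factors $\psi\cdot\varphi=(\psi\cdot i_*)\cdot(i^*\cdot\varphi)$ in $\Mod{\V}$, observes that $\psi\cdot\varphi$ and $\psi\cdot i_*$ lie in $\ReprDist{\Tth}$ with $\psi\cdot i_*$ monic in $\Mod{\V}$, and then invokes the cancellation lemma (Lemma~\ref{CancelationMonoReprDist}) to conclude that $i^*\cdot\varphi$, and hence $i^*=i^*\cdot\varphi\cdot\psi$, is in $\ReprDist{\Tth}$. The point is that composing on the \emph{left} with the monomorphism $\psi\cdot i_*$ (which lands in a discrete-$\V$ object) lets one strip off the problematic $a_0$ on the target side.
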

\begin{proof}
The equivalence (i)$\RLw$(ii) is clear by definition since $i^*=a_0$, and (ii)$\Rw$(iii) follows from $i_*\cdot i^*=a_0$. To see (iii)$\Rw$(ii), let $(Y,\beta)$ be a $\mT$-algebra and $\psi:(X,a)\modto(Y,\beta)$, $\varphi:(Y,\beta)\modto(X,a)$ be in $\ReprDist{\Tth}$ with $\varphi\cdot\psi=a_0$. Then
\[
 \psi\cdot\varphi=(\psi\cdot i_*)\cdot(i^*\cdot\varphi)
\]
in $\Mod{\V}$, $\psi\cdot\varphi$, $\psi\cdot i_*$ are in $\ReprDist{\Tth}$ and $\psi\cdot i_*$ is mono in $\Mod{\V}$, hence, by Lemma \ref{CancelationMonoReprDist}, $i^*\cdot\varphi$ is in $\ReprDist{\Tth}$. Consequently, $i^*=i^*\cdot\varphi\cdot\psi$ is in $\ReprDist{\Tth}$.
\end{proof}

\begin{example}\label{ex:HeytSplCpl}
We return to the case of topological spaces. By Example \ref{ex:VXspectral}, the monad $\mV$ on $\STCOMP$ restricts to $\SPEC$, and the Kleisli category $\SPEC_\mV$ corresponds to the full subcategory $\SPECDIST$ of $\ReprDist{\Uth_\two}$ defined by all spectral spaces. Similarly, the monad $\mtV$ on $\COMPHAUS$ restricts to $\STONE$ and the Kleisli category $\STONE_{\mtV}$ corresponds to the full subcategory $\STONEDIST$ of $\SPECDIST$ defined by all Stone spaces. Then $\STONEDIST$ is dually equivalent to the category $\BOOL_{\bot,\vee}$ of Boolean algebras and finite suprema preserving maps (see \citep{Hal62,SV88}), and $\SPECDIST$ is is dually equivalent to the category $\DLAT_{\bot,\vee}$ of distributive lattices and finite suprema preserving maps (see \citep{CLP91}). Furthermore, these dualities are closely related to duality results for Boolean algebras with operator (see \citep{KKV04}) and distributive lattices with operator (see \citep{Pet96,BKR07}). One easily verifies that $\DLAT_{\bot,\vee}$ is idempotent split complete (since it is a full subcategory of the algebraic category of sup-semilattices and homomorphisms and it is closed there under split quotients), and therefore also $\SPECDIST$ is so. Consequently, by Proposition \ref{prop:EsakiaSplIdemp}, the full subcategory of $\SPECDIST$ defined by all Esakia spaces is the idempotent split completion of $\STONEDIST$; which, by Esakia duality \citep{Esa74}, then implies that the category $\HEYT_{\bot,\vee}$ of Heyting algebras and finite suprema preserving maps is the idempotent split completion of $\BOOL_{\bot,\vee}$. 
\end{example}

A \emph{morphism $f:X\to Y$ of Esakia $\Tth$-categories} $X=(X,a)$ and $Y=(Y,b)$ (where $b=b_0\cdot\beta$) is a homomorphism making
\[
 \xymatrix{(X,a)\ar|-{\object@{o}}[d]_{f_*}\ar|-{\object@{o}}[r]^{i_X^*} & (X,\alpha)\ar|-{\object@{o}}[d]^{f}\\
	    (Y,b)\ar|-{\object@{o}}[r]_{i_Y^*} & (Y,\beta)}
\]
commutative in $\ReprDist{\Tth}$, which can be equivalently expressed by saying that either of the diagrams
\begin{align*}
 \xymatrix{X\ar[r]^-{\mate{a_0}}\ar[d]_f & V(X,\alpha)\ar[d]^{Vf}\\ Y\ar[r]_-{\mate{b_0}} & V(Y,\beta)}
&&\text{or}&&
 \xymatrix{(X,\alpha)\ar[r]^-{\mate{a_0}}\ar[d]_f & \widetilde{V}(X,\alpha)\ar[d]^{\widetilde{V}f}\\ 
           (Y,\beta)\ar[r]_-{\mate{b_0}} & \widetilde{V}(Y,\beta)}
\end{align*}
commutes in $\Repr{\Tth}$ or $\SET^\mT$ respectively. Let now $X=(X,\alpha)$ be in $\SET^\mT$ and $r:X\to\widetilde{V}X$ be a homomorphism. Then, with $a_0:=\umate{r}:X\relto X$, the diagram \eqref{diag:Esa} commutes by Proposition \ref{prop:reprDist}. Therefore $(X,a_0\cdot\alpha)$ is an Esakia $\Tth$-category provided that $a_0$ is a separated $\V$-category structure on the set $X$. Summing up, we have identified the category of Esakia $\Tth$-categories and morphisms as the full subcategory of the category $\Coalg(\widetilde{V})$ of coalgebras for $\widetilde{V}:\SET^\mT\to\SET^\mT$ defined by those coalgebras $r:X\to\widetilde{V}X$ whose mate $a_0:=\umate{r}:X\relto X$ is a separated $\V$-category structure on the set $X$. This observation represents a generalisation of \citep{DG03}.

\def\cprime{$'$}


\end{document}